\documentclass[preprint,12pt,authoryear,nopreprintline]{elsarticle}
\makeatletter
\let\robustsd@elsappendix\appendix
\renewcommand{\appendix}{%
  \robustsd@elsappendix
  \gdef\thetheorem{\@Alph\c@section.\arabic{theorem}}}
\makeatother
\usepackage{amsmath, amssymb, amsthm}
\usepackage{tikz}
\usepackage{graphicx}
\usepackage[UKenglish]{isodate}
\usepackage[font=small,labelfont=it]{caption}
\usepackage{subcaption}
\usepackage[hyperfootnotes=false]{hyperref}
\hypersetup{
  hidelinks,
  pdftitle={Proper-Score Observation-Driven Filters: Local Geometry, Estimation, and Continuous-Time Limits},
  pdfauthor={Giulia Livieri and Gianluca Palmari},
  pdfsubject={Observation-driven filtering under proper scoring rules},
  pdfkeywords={proper scoring rule, score-driven model, continuous-time limit, moving target, robust filtering}
}
\usepackage{multirow}
\usepackage{fancyvrb}
\usepackage[margin=1in]{geometry}
\usepackage{setspace}
\usepackage{algorithm}
\usepackage{algpseudocode}
\usepackage{algcompatible}
\usepackage{mathtools}
\usepackage{booktabs}
\usepackage{array}
\usepackage{tabularx}
\usepackage{longtable}
\usepackage{pdflscape}
\usepackage{siunitx}
\usepackage{makecell}
\usepackage{placeins}
\newcolumntype{Y}{>{\centering\arraybackslash}X}
\usepackage{enumitem}
\mathtoolsset{showonlyrefs}
\graphicspath{{figures/}}

\DeclareMathOperator{\Var}{Var}
\DeclareMathOperator{\Cov}{Cov}

\makeatletter
\@ifclassloaded{elsarticle}
  {\newcommand{\appref}[1]{\ref{#1}}}
  {\newcommand{\appref}[1]{\ref{#1}}}
\makeatother

\renewcommand{\phi}{\varphi}

\let\emptyset\varnothing

\DeclareMathOperator*{\argmin}{arg\,min}

\newcommand{\ud}{\mathrm{d}}

\def \ud{\mathrm{d}}

\newtheorem*{theorem*}{Theorem}
\newtheorem{theorem}{Theorem}[section]

 \newtheorem{proposition}[theorem]{Proposition}
\newtheorem{corollary}[theorem]{Corollary}
\newtheorem{remark}[theorem]{Remark}
\newtheorem{lemma}[theorem]{Lemma}
\theoremstyle{definition}
\newtheorem{example}[theorem]{Example}
\theoremstyle{plain}
\newtheorem{assumption}[theorem]{Assumption}
\newtheorem{definition}[theorem]{Definition}
\newcommand{\proofref}[1]{\par\noindent The proof is provided in \ref{#1}.\par}

\numberwithin{equation}{section}

\makeatletter
\@namedef{subjclassname@2020}{\textup{2020}Mathematics Subject Classification} 
\makeatother

\begin{document}
\onehalfspacing
\raggedbottom
\begin{frontmatter}

\title{Proper-Score Observation-Driven Filters: Local Geometry, Estimation, and Continuous-Time Limits}

\author[lse]{Giulia Livieri}
\ead{g.livieri@lse.ac.uk}
\author[sns]{Gianluca Palmari\corref{cor1}}
\ead{gianluca.palmari@sns.it}
\cortext[cor1]{Corresponding author.}

\affiliation[lse]{organization={Department of Statistics, London School of Economics and Political Science},
  addressline={Columbia House, 69 Aldwych}, city={London}, postcode={WC2A 2AE}, country={United Kingdom}}
\affiliation[sns]{organization={Scuola Normale Superiore},
  addressline={Piazza dei Cavalieri 7}, city={Pisa}, postcode={56126}, country={Italy}}

\begin{abstract}
Observation-driven filters usually use the likelihood score, tying their updates to the logarithmic scoring rule. We study recursions driven instead by the negative parameter derivative of a differentiable proper scoring rule, under a declared working family and predictable scaling. This separates three roles: the rule determines the conditional risk projection and tail response; the scaling converts its derivative into the implemented update; and the autoregressive component determines the composite dynamic centre. We characterize local mean reversion and update noise through risk curvature and innovation variance, which coincide for the log score under the information identity but generally differ. For high-frequency scale models, centred updates converge to diffusions, whereas non-centred updates follow deterministic mean flows. When the rule-specific projection moves over time, the local tracking error admits an Ornstein--Uhlenbeck approximation, valid up to a stopping time and allowing update and target shocks to be correlated. For static parameters, we establish consistency and asymptotic normality under explicit stability and fixed-tuning conditions. Simulations and an international-equity application illustrate how criterion choice affects robustness, adaptation, variance-forecast loss, value-at-risk calibration, and probability-integral-transform diagnostics. The results show that predictive density and updating criterion are distinct design choices whose relative performance depends on the target and disturbance.
\end{abstract}

\begin{keyword}
score-driven model \sep high-frequency limit \sep misspecification \sep proper scoring rule \sep robust filtering \sep stochastic approximation
\MSC[2020] 62M10 \sep 62F35 \sep 60F17
\medskip\par\noindent\textit{JEL classification:} C22 \sep C32 \sep C58
\end{keyword}

\end{frontmatter}
\section{Introduction}\label{sec::Introduction}

Observation-driven filters update a time-varying parameter using past observations and filtered states. Because the state is available recursively, filtering and forecasting avoid the latent-state integration required by parameter-driven models \citep{cox1981statistical,koopman2016predicting}. The leading likelihood-score class---generalised autoregressive score and dynamic conditional score models---was introduced by \citet{creal2013gas} and \citet{harvey2013dynamic}; see also \citet{artemova2022score} and \citet{blasques2019accelerating}, and the resources at \url{https://www.gasmodel.com}.

In these models, the likelihood score plays two roles. It supplies the realised update direction and, in conditional expectation, directs the filter toward the Kullback--Leibler projection of the true conditional distribution onto the working family \citep{blasques2015information,gorgi2024optimality,de2024kullback}. Under correct specification and identification, this projection coincides with the true parameter. Under misspecification, it is only the working family's best log-score approximation. Thus the predictive density and the criterion used to update it are tied together.

We retain the observation-driven recursion but replace the logarithmic criterion with a differentiable proper scoring rule \citep{gneiting2007strictly}. This separates the ingredients of the filter. The working family determines the predictive distribution; the scoring rule determines conditional risk, its projection, and the tail response of the derivative; and predictable scaling converts that derivative into the implemented update. Holding the family fixed, changing the rule can therefore change the target and robustness properties. Changing only the scaling changes the filter path but not the minimiser of unscaled risk.

That risk projection need not centre the full autoregressive recursion. The intercept and autoregressive pull generally shift the dynamic centre to the zero of a composite mean field. Locally, risk curvature governs the mean component of the update, whereas innovation variance governs the noise transmitted to the filter path. These quantities coincide for the log score under the information identity but generally differ for other proper rules, paralleling sensitivity and variability in scoring-rule inference \citep{dawid2016minimum}. The tail of the scaled driver then determines whether the effect of an extreme observation remains unbounded, saturates, or eventually redescends.

The paper makes five contributions, culminating in the high-frequency and moving-target results. First, we formulate observation-driven updates generated by differentiable proper scoring rules and distinguish their conditional risk projection from the composite centre of the full autoregressive recursion. Second, under local curvature conditions, we establish realised-loss descent, conditional-mean contraction, and an exact one-step mean-squared-error comparison. Third, we derive exact impulse responses and state-dependent propagation bounds, distinguishing unbounded, tail-saturating, and redescending scaled drivers. Fourth, for recursions satisfying explicit uniform-invertibility, fixed-tuning, and differentiability conditions, we establish uniform convergence, consistency, and dependent-data sandwich asymptotic normality for the batch minimum-scoring-risk estimator. Fifth, for high-frequency scale models, we derive diffusion and mean-flow limits for centred and non-centred updates, respectively, together with a stopped local Ornstein--Uhlenbeck approximation around a moving rule-specific projection that retains update--target covariance.

Controlled experiments and an international-equity application illustrate the distinct roles of the working family, scoring rule, scaling, and autoregressive parameters.

Recent work has broadened observation-driven filtering beyond likelihood scores. \citet{creal2024moment} use the influence function of a conditional moment criterion and establish expected local improvement of that criterion. \citet{catania2025cdf} construct robust location filters using a transformation of the conditional distribution function and an integrated quantile loss. Closest to our setting, \citet{depunder2026prada} develops the PRADA framework, which allows updates based on strictly proper or locally proper scoring rules and strictly consistent scoring functions, establishes expected local-divergence reduction under misspecification, and studies bounded and censored updates. \citet{depunder2026localizing} also show how strictly proper scoring rules can be localised while preserving strict propriety and an associated local divergence.

This literature provides the natural benchmark for our finite-step analysis of realised and conditional criteria under local curvature. For the asymptotic results, our continuous-time analysis builds on the score-driven and quasi-score-driven limits of \citet{buccheri2021continuous} and \citet{wuhe2026continuous}. The moving-target framework is closest to \citet{beutner2026consistency}, who establish in-fill consistency, distributional convergence, and variance-optimal filtering around Kullback--Leibler projections, and to \citet{donker2025stability}, who provide stability and mean-squared-error bounds under misspecification. In our setting, the moving projection is determined by the chosen proper rule, and the stopped local limit retains update--target covariance.

Sections 2--4 define the filters and develop their finite-step geometry and robustness properties. Sections 5 and 6 study continuous-time limits and moving-target tracking, while Sections 7 and 8 present the numerical and empirical evidence. The appendices contain proofs, scoring-rule formulas, extensions, and static-estimation details.

\section{Scoring-rule-driven filters}\label{section::introduction}

Let $\{{y_t}\}_{t\in\mathbb Z}$ take values in a measurable space $(\mathcal Y,\mathcal F)$, and let
$
\mathcal F_{t-1}:=\sigma(y_{t-1},y_{t-2},\ldots)
$
denote the information available before observing $y_t$. The true conditional law of $y_t$ given $\mathcal F_{t-1}$ is denoted by $\widetilde P_t$. We assume that $\widetilde P_t$ is dominated by a fixed sigma-finite measure $\nu$ and write
$
\widetilde P_t(\mathrm dy)
=
\widetilde{\mathfrak p}_t(y)\,\nu(\mathrm dy).
$
We sometimes write $\widetilde P_t=\widetilde P_{\widetilde\lambda_t}$ for a descriptive state index; under misspecification, this law need not belong to the working model below. The main-text results are scalar, with $\widetilde\lambda_t\in\widetilde\Lambda\subset\mathbb R$; \appref{app::multivariate} gives the stopped vector extension.

Conditional expectations are always taken with respect to the true predictive law. Thus, for any $\mathcal F_{t-1}$-measurable random variable $Z$ and any integrable jointly measurable function $h$,
\[
\mathsf E_{t-1}\big[h(y_t,Z)\big]
=
\int_{\mathcal Y}
h(y,Z)\,\widetilde{\mathfrak p}_t(y)\,\nu(\mathrm dy),
\]
almost surely.
We specify a working conditional family
$
\mathcal P_\theta
=
\{P_{\lambda,\theta}:\lambda\in\Lambda\},
$
where
$
P_{\lambda,\theta}(\mathrm dy)
=
\mathfrak p(y\mid\lambda;\theta)\,\nu(\mathrm dy)
$, and $\theta\in\Theta$.
Here $\lambda$ is dynamic and $\theta$ static. The $\mathcal F_{t-1}$-measurable filter $\lambda_t=\lambda_t(y^{t-1},\theta,\lambda_1)$ is generated recursively. The family may be misspecified: scoring rules are evaluated at $P_{\lambda,\theta}$, but expectations are under $\widetilde P_t$.

We use negatively oriented proper scoring rules \citep[see, e.g.,][]{savage1954foundations,dawid2004probability,gneiting2007strictly,dawid2016minimum}. Let $\mathcal P_{\mathsf S}(\mathcal Y)$ be a class of probability measures on $(\mathcal Y,\mathcal F)$. A scoring rule is a measurable map $\mathsf S:\mathcal P_{\mathsf S}(\mathcal Y)\times\mathcal Y\to\mathbb R\cup\{+\infty\}$, with smaller values indicating better predictive performance. For $P,Q\in\mathcal P_{\mathsf S}(\mathcal Y)$, write
\[
\mathsf S(Q,P):=\mathsf E_Q[\mathsf S(P,Y)]
\]
whenever the expectation is well defined. The rule is proper if $\mathsf S(Q,Q)\leq\mathsf S(Q,P)$ for all $P,Q$, and strictly proper if equality implies $P=Q$. Its divergence is
\[
\mathsf D_{\mathsf S}(Q,P):=\mathsf S(Q,P)-\mathsf S(Q,Q)\geq0,
\]
whenever both terms are finite; under strict propriety, equality holds only at $P=Q$. The logarithmic score $\mathsf S_{\log}(P_{\lambda,\theta},y)=-\log\mathfrak p(y\mid\lambda;\theta)$ induces the Kullback--Leibler divergence.

To avoid ambiguity, $\mathsf S(P_{\lambda,\theta},y)$ denotes realised loss, whereas $\mathsf S(Q,P)$ denotes expected loss under the reference law $Q$. Throughout, $\widetilde P_t$ is the true law, $P_{\lambda,\theta}$ the working law, $\lambda_t$ the filtered state, and $\theta$ the static parameter. Other notation is introduced when first used.

For fixed $\theta$, write $\ell_{\mathsf S}(y,\lambda;\theta):=\mathsf S(P_{\lambda,\theta},y)$ for the loss induced by $P_{\lambda,\theta}\in\mathcal P_\theta$.
When finite, the conditional scoring risk is
\begin{equation}\label{eq::conditional_scoring_risk}
\mathsf R_{\mathsf S,t}(\lambda;\theta)
:=
\mathsf E_{t-1}
\left[
\mathsf S(P_{\lambda,\theta},y_t)
\right]
=
\int_{\mathcal Y}
\mathsf S(P_{\lambda,\theta},y)
\,\widetilde{\mathfrak p}_t(y)\,\nu(\mathrm dy).
\end{equation}
The risk varies with the working parameter $\lambda$, but integrates under the true law. We suppress fixed $\theta$ below. For a set $A$, $\operatorname{int}(A)$ denotes its interior.

\begin{assumption}[Conditional risk and measurable pseudo-true target]
\label{ass:measurable_target}
Let $\Lambda$ be a non-empty Borel subset of $\mathbb R$, and take a version of the regular conditional law $\widetilde P_t(\omega,\mathrm dy)$ that is an $\mathcal F_{t-1}$-measurable probability kernel. The map $(\omega,\lambda)\mapsto\mathsf R_{\mathsf S,t}(\omega,\lambda)$ is $\mathcal F_{t-1}\otimes\mathcal B(\Lambda)$-measurable and is lower semicontinuous in $\lambda$ almost surely. Its sublevel sets are compact almost surely, and its argmin is almost surely a singleton contained in $\operatorname{int}(\Lambda)$.
\end{assumption}
The measurable argmin theorem makes this unique minimiser predictable. Existence and uniqueness are imposed by the singleton-argmin condition; in the log-variance application $\Lambda=\mathbb R$, compact sublevel sets express the required coercivity. Without uniqueness, the same conditions yield a measurable selector, but distance contraction must be reformulated for the argmin set. Strict propriety and identification imply uniqueness under correct specification, not existence or uniqueness under misspecification.

The conditional risk equals the divergence $\mathsf D_{\mathsf S}(\widetilde P_t,P_{\lambda,\theta})$ up to a $\lambda$-free term.

\begin{proposition}\label{prop::divergence}
Fix $t$. Suppose there is a probability-one event on which $\widetilde{P}_t(\omega) \in  \mathcal{P}_{\mathsf{S}}(\mathcal{Y})$, $\mathcal{P}_{\theta} \subseteq  \mathcal{P}_{\mathsf{S}}(\mathcal{Y})$, and, simultaneously for every $\lambda\in\Lambda$, the candidate risk $\mathsf{S}(\widetilde{P}_t(\omega),P_{\lambda,\theta})$ and the self-risk $\mathsf{S}(\widetilde{P}_t(\omega),\widetilde{P}_t(\omega))$ are finite. Then, on that event, for every $\lambda \in \Lambda$,
\begin{equation}\label{eq::criteria}
    \mathsf{R}_{\mathsf{S},t}(\lambda) = \mathsf{D}_{\mathsf{S}}(\widetilde{P}_t,P_{\lambda,\theta})+\mathsf{E}_{t-1}[\mathsf{S}(\widetilde{P}_t,y_t)].
\end{equation}
The second term does not depend on $\lambda$. Hence, minimising $\mathsf{R}_{\mathsf{S},t}(\lambda)$ over $\Lambda$ is equivalent to minimising $\mathsf{D}_{\mathsf{S}}(\widetilde{P}_t,P_{\lambda,\theta})$.
\end{proposition}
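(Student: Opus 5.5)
The plan is to fix $\omega$ in the probability-one event of the hypothesis and reduce everything to deterministic identities between the extended-real quantities $\mathsf S(Q,P)$. The work lies in confirming that the conditional risk is exactly the expected score under the reference law $Q=\widetilde P_t(\omega)$ and that every subtraction involves finite numbers.

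\emph{Step 1.} Identify $\mathsf R_{\mathsf S,t}(\lambda)$ with $\mathsf S(\widetilde P_t,P_{\lambda,\theta})$. Fix the regular version of the conditional law from Assumption~\ref{ass:measurable_target}. By the displayed formula for $\mathsf E_{t-1}$ and the definition \eqref{eq::conditional_scoring_risk},
\[
\mathsf R_{\mathsf S,t}(\omega,\lambda)=\int_{\mathcal Y}\mathsf S(P_{\lambda,\theta},y)\,\widetilde P_t(\omega,\mathrm dy)=\mathsf S(\widetilde P_t(\omega),P_{\lambda,\theta}),
\]
which is finite on the event for every $\lambda$ simultaneously. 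The deterministic function $y\mapsto\mathsf S(P_{\lambda,\theta},y)$ needs no joint measurability beyond that of $\mathsf S$ itself, because $P_{\lambda,\theta}$ is fixed once $\lambda$ is fixed.

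\emph{Step 2.} Treat the self-risk term in the same way. Set $h(y,\omega):=\mathsf S(\widetilde P_t(\omega),y)$. Here the forecast $\widetilde P_t(\omega)$ is $\mathcal F_{t-1}$-measurable, so it plays the role of $Z$ in the conditional-expectation formula, and we obtain $\mathsf E_{t-1}[\mathsf S(\widetilde P_t,y_t)]=\mathsf S(\widetilde P_t(\omega),\widetilde P_t(\omega))$. This quantity is finite on the event and does not depend on $\lambda$.

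\emph{Main obstacle.} Step 2 is the delicate point, because it needs $(\omega,y)\mapsto\mathsf S(\widetilde P_t(\omega),y)$ to be jointly measurable. I would secure this by composing the measurable map $\omega\mapsto\widetilde P_t(\omega)$ (a kernel, so measurable into $\mathcal P_{\mathsf S}(\mathcal Y)$ with its evaluation $\sigma$-field) with the measurable scoring rule $\mathsf S$. Alternatively, I would simply read the term $\mathsf E_{t-1}[\mathsf S(\widetilde P_t,y_t)]$ as notation for $\mathsf S(\widetilde P_t,\widetilde P_t)$, which is how the statement uses it.

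\emph{Step 3.} Since both $\mathsf S(\widetilde P_t,P_{\lambda,\theta})$ and $\mathsf S(\widetilde P_t,\widetilde P_t)$ are finite, the divergence $\mathsf D_{\mathsf S}(\widetilde P_t,P_{\lambda,\theta})=\mathsf S(\widetilde P_t,P_{\lambda,\theta})-\mathsf S(\widetilde P_t,\widetilde P_t)$ is well defined, and propriety makes it nonnegative. Rearranging and substituting Steps 1 and 2 gives \eqref{eq::criteria} for every $\lambda$ on the event.

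\emph{Step 4.} On the event, the two objective functions $\lambda\mapsto\mathsf R_{\mathsf S,t}(\lambda)$ and $\lambda\mapsto\mathsf D_{\mathsf S}(\widetilde P_t,P_{\lambda,\theta})$ differ by a finite constant. They therefore have identical argmin sets and identical sublevel-set structure, which proves the equivalence of the two minimisation problems.
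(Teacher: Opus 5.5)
Your proposal is correct and follows essentially the same route as the paper: fix $\omega$ in the event, identify the conditional risk with the kernel integral $\mathsf S(\widetilde P_t(\omega),P_{\lambda,\theta})$, and use the definition of $\mathsf D_{\mathsf S}$ to split it into the divergence plus the finite, $\lambda$-free self-risk. Your joint-measurability remark in Step 2 and the explicit argmin comparison in Step 4 are details the paper leaves implicit, since it reads $\mathsf E_{t-1}[\mathsf S(\widetilde P_t,y_t)]$ directly as $\mathsf S(\widetilde P_t(\omega),\widetilde P_t(\omega))$; including them is harmless.
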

\proofref{app:proof_divergence}

Thus the rule selects the pure-update target: the working parameter closest to the truth in its divergence. The autoregressive terms instead yield the composite target in Subsection~\ref{subsec:applied_ar_recursion}. If $\widetilde P_t\notin\mathcal P_{\mathsf S}(\mathcal Y)$, the divergence in \eqref{eq::criteria} need not exist.

Properness is domain-specific, and Proposition~\ref{prop::divergence} requires finite candidate and self-risks. For the rules used below, the logarithmic score requires $\widetilde P_t\ll P_{\lambda,\theta}$ together with finite self-risk and cross-entropy; CRPS on $\mathcal Y\subseteq\mathbb R$ requires finite first moments under both laws; and the density-power rule with $\beta>0$ requires both densities to belong to $L^{1+\beta}(\nu)$, which controls the cross term by H\"older's inequality. For MMD on a standard Borel space, a bounded measurable positive-definite kernel makes every term finite, while a characteristic kernel ensures strict propriety. Although we retain common domination for uniform notation, this condition is not intrinsic to bounded-kernel MMD, which itself requires neither moment conditions nor common domination. Proposition~\ref{prop::divergence} and \eqref{eq::pseudotrueset} are invoked only on these domains; experiments outside them, including the Cauchy endpoint, are flagged explicitly \citep{gneiting2007strictly,dawid2016minimum}.

Subject to these domain conditions, define the rule-specific projection
\begin{equation}\label{eq::pseudotrueset}
    \Lambda_{\mathsf{S},t}^{\star} = \argmin_{\lambda \in \Lambda} \mathsf{R}_{\mathsf{S},t}(\lambda).
\end{equation}
Its relation to the autoregressive filter path requires the tracking conditions developed in Section~\ref{sec::local_consistency}. Under Assumption~\ref{ass:measurable_target}, this set is a predictable singleton; write it as $\lambda^{\star}_{\mathsf S,t}$. Under correct specification, strict propriety and identification give $\Lambda_{\mathsf S,t}^{\star}(\theta)=\{\widetilde\lambda_t\}$. Under misspecification, $\lambda^{\star}_{\mathsf S,t}$ is the working-family approximation that minimises the chosen risk: changing $\mathsf S$ changes the estimand, not merely the update. We restrict first-order conditions to interior singletons; boundary values require variational inequalities.

We require some regularity conditions for defining the scoring-rule update.
\begin{assumption}[Conditional differentiation]
\label{ass:conditional_differentiation}
Fix $t$ and $\theta$. For every compact $K\subset\operatorname{int}(\Lambda)$, choose an anchor $\lambda_K\in K$. The loss $(y,\lambda)\mapsto\ell_{\mathsf S}(y,\lambda;\theta)$ is jointly measurable; for $\widetilde P_t$-almost every $y$, it is continuously differentiable in $\lambda$ on $K$; and
\[
 \mathsf E_{t-1}|\ell_{\mathsf S}(y_t,\lambda_K;\theta)|<\infty,
 \qquad
 \sup_{\lambda\in K}|\partial_\lambda\ell_{\mathsf S}(y,\lambda;\theta)|\le M_{t,K}(y),
 \qquad
 \mathsf E_{t-1}M_{t,K}(y_t)<\infty
\]
almost surely. The conditional risk is understood as the jointly measurable version obtained by integrating against the conditional probability kernel $\widetilde P_t(\omega,\mathrm dy)$. Then, almost surely, $\mathsf R_{\mathsf S,t}$ is continuously differentiable on $K$ and, simultaneously for every $\lambda\in K$,
\[
 \mathsf R'_{\mathsf S,t}(\lambda)
 =\mathsf E_{t-1}[\partial_\lambda\mathsf S(P_{\lambda,\theta},y_t)].
\]
\end{assumption}

Define the innovation $\psi_{\mathsf S}(y,\lambda):=-\partial_\lambda\mathsf S(P_{\lambda,\theta},y)$. Assumption~\ref{ass:conditional_differentiation} permits interchange of differentiation and conditional expectation.
\begin{definition}[Scoring-rule-driven update]\label{def::scoring-rule-filter}
Suppose that Assumption~\ref{ass:conditional_differentiation} holds locally. Let $G_{\mathsf S,t}:\Omega\times\Lambda\to(0,\infty)$ be $\mathcal F_{t-1}\otimes\mathcal B(\Lambda)$-measurable and almost surely continuous in $\lambda$, and let $\alpha>0$ be a gain parameter. The scaled update is
\begin{equation}\label{eq::scoring_update}
    \lambda_{t+1}=\lambda_t+\alpha u_{\mathsf{S},t}(y_t,\lambda_t),\qquad
    u_{\mathsf{S},t}(y,\lambda)=G_{\mathsf{S},t}(\lambda)\psi_{\mathsf{S}}(y,\lambda).
\end{equation}
We either take $\Lambda=\mathbb R$, as in the log-variance application, or assume that the update map sends $\Lambda$ into $\Lambda$ almost surely. When the date and random environment are clear we suppress the subscript $t$ and write $G_{\mathsf S}(\lambda)$ and $u_{\mathsf S}(y,\lambda)$.
\end{definition}

When finite and positive, inverse conditional-risk curvature provides an oracle predictable scaling under misspecification; the finite-step empirical filters instead use the feasible deterministic, cell-specific state scalings reported in \appref{app::formulas}.

The GAS update \citep[Definition~3]{blasques2015information} sets $\mathsf S=\mathsf S_{\rm log}$, so $\psi_{\mathsf S}(y,\lambda_t)=\partial_\lambda\log\mathfrak p(y\mid\lambda;\theta)_{|\lambda=\lambda_t}$; information scaling recovers the usual recursion. Other rules change the target, local geometry, and tail response while retaining the observation-driven form. \appref{app::appendix_section_2} gives an example.

\begin{proposition}\label{prop::proposition_2}
Suppose Assumption \ref{ass:conditional_differentiation} holds locally, the predictable state satisfies $\lambda_t\in\operatorname{int}(\Lambda)$ almost surely, and $\mathsf{E}_{t-1}[\bigl|u_{\mathsf{S},t}(y_t,\lambda_t)\bigr|]<\infty$ almost surely. If both $\lambda_t$ and $\lambda_{t+1}$ are conditionally integrable, then the recursion in \eqref{eq::scoring_update} satisfies
\begin{equation}\label{eq::gradient_update}
    \mathsf{E}_{t-1}[\lambda_{t+1}]=\lambda_t - \alpha G_{\mathsf{S}}(\lambda_t) \mathsf{R}_{\mathsf{S},t}^{'}(\lambda_t);
\end{equation}
i.e., the conditional mean increment is a positively preconditioned negative gradient of the conditional scoring risk.
\end{proposition}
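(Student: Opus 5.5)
The plan is to take conditional expectations in the recursion \eqref{eq::scoring_update} term by term. Each factor that depends only on the past can be pulled out of $\mathsf E_{t-1}$. The remaining conditional expectation of the innovation is then identified with $-\mathsf R'_{\mathsf S,t}(\lambda_t)$ using Assumption~\ref{ass:conditional_differentiation}. Three steps carry this out: justify linearity, pull out the predictable factors, and evaluate the derivative identity at a random point.

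\textbf{Linearity and the predictable terms.} Since $\lambda_{t+1}=\lambda_t+\alpha u_{\mathsf S,t}(y_t,\lambda_t)$, and both $\lambda_t$ and $u_{\mathsf S,t}(y_t,\lambda_t)$ are conditionally integrable by hypothesis, linearity gives
\[
\mathsf E_{t-1}[\lambda_{t+1}]=\mathsf E_{t-1}[\lambda_t]+\alpha\,\mathsf E_{t-1}[u_{\mathsf S,t}(y_t,\lambda_t)].
\]
The state $\lambda_t$ is $\mathcal F_{t-1}$-measurable, so $\mathsf E_{t-1}[\lambda_t]=\lambda_t$. The scaling $G_{\mathsf S,t}$ is $\mathcal F_{t-1}\otimes\mathcal B(\Lambda)$-measurable, so $G_{\mathsf S,t}(\lambda_t)$ is $\mathcal F_{t-1}$-measurable and positive. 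Working on the kernel representation $\mathsf E_{t-1}[h(y_t,Z)]=\int h(y,Z)\,\widetilde{\mathfrak p}_t(y)\,\nu(\mathrm dy)$ with $Z=(\omega,\lambda_t)$, this gives
\[
\mathsf E_{t-1}[u_{\mathsf S,t}(y_t,\lambda_t)]=G_{\mathsf S,t}(\lambda_t)\int\psi_{\mathsf S}(y,\lambda_t)\,\widetilde P_t(\mathrm dy).
\]
The integral is finite: it equals $\mathsf E_{t-1}|u_{\mathsf S,t}(y_t,\lambda_t)|/G_{\mathsf S,t}(\lambda_t)$, which is finite almost surely because $G_{\mathsf S,t}>0$.

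\textbf{Derivative at a random point (main obstacle).} It remains to show that $\int\psi_{\mathsf S}(y,\lambda_t)\,\widetilde P_t(\mathrm dy)=-\mathsf R'_{\mathsf S,t}(\lambda_t)$ almost surely. Assumption~\ref{ass:conditional_differentiation} gives the identity $\mathsf R'_{\mathsf S,t}(\lambda)=\int\partial_\lambda\ell_{\mathsf S}(y,\lambda)\,\widetilde P_t(\mathrm dy)$ only for deterministic $\lambda$ in a fixed compact $K$. Because $\lambda_t$ is random, I will proceed as follows.

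\begin{enumerate}
\item Take an exhausting sequence of compacts $K_n\uparrow\operatorname{int}(\Lambda)$, for instance $K_n=\{\lambda\in\Lambda:|\lambda|\le n,\ \operatorname{dist}(\lambda,\Lambda^c)\ge 1/n\}$.
\item For each $n$, Assumption~\ref{ass:conditional_differentiation} supplies a single null set outside which the identity holds simultaneously for every $\lambda\in K_n$.
\item Discard the countable union of these null sets, together with the null set where $\lambda_t\notin\operatorname{int}(\Lambda)$.
\item For each remaining $\omega$, pick $n$ with $\lambda_t(\omega)\in K_n$. Since the identity holds for all $\lambda\in K_n$ at that $\omega$, it holds in particular at $\lambda=\lambda_t(\omega)$.
\end{enumerate}

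The subtle point is that the conditional expectation evaluated at $\lambda_t$ must agree with the kernel integral at the frozen value $\lambda_t(\omega)$. This is the standard freezing lemma for regular conditional distributions. It applies here because joint measurability of $\ell_{\mathsf S}$ and of its derivative, the latter obtained as a limit of difference quotients, makes $(\omega,y)\mapsto\psi_{\mathsf S}(y,\lambda_t(\omega))$ measurable.

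Combining the three displays gives $\mathsf E_{t-1}[\lambda_{t+1}]=\lambda_t-\alpha G_{\mathsf S}(\lambda_t)\mathsf R'_{\mathsf S,t}(\lambda_t)$, which is \eqref{eq::gradient_update}. The interpretation as a positively preconditioned negative gradient follows because $G_{\mathsf S}(\lambda_t)>0$.
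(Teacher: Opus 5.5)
Your proposal is correct and follows the paper's proof. You take conditional expectations in \eqref{eq::scoring_update}, pull out the predictable factors $\lambda_t$ and $G_{\mathsf S,t}(\lambda_t)$, and then use the simultaneous-in-$\lambda$ form of Assumption~\ref{ass:conditional_differentiation} on increasing compacts exhausting $\operatorname{int}(\Lambda)$ to evaluate the derivative identity at the random state $\lambda_t$; your explicit $K_n$, countable null-set union and freezing-lemma step just spell out what the paper compresses into ``localised on increasing compact sets if necessary'' (one small slip: it is $\int|\psi_{\mathsf S}(y,\lambda_t)|\,\widetilde P_t(\mathrm dy)$, not the signed integral, that equals $\mathsf E_{t-1}|u_{\mathsf S,t}(y_t,\lambda_t)|/G_{\mathsf S,t}(\lambda_t)$).
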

\proofref{app:proof_mean_update}

This identity does not by itself imply contraction towards a global minimiser. The next section separates the realised-loss geometry of the implemented update from conditional contraction under an additional monotone mean-field condition. For the logarithmic rule, Proposition~\ref{prop::proposition_2} recovers the usual GAS identity.

\section{Finite-step geometry: realised-loss descent and conditional contraction}\label{section::local_interpretation}
This section studies the filter at a fixed date, separating its realised and conditional behaviour. Subsection~\ref{subsec:realised_descent} shows that the scaled update is a first-order descent direction for realised scoring loss and gives its local quadratic-metric interpretation. Subsection~\ref{section::optimality_conditional_expectation} turns to conditional risk, establishing conditions for contraction of the conditional mean and an exact criterion for reducing one-step mean-squared error. Subsection~\ref{subsec:applied_ar_recursion} adapts these pure-integrator results to the autoregressive recursion used in the continuous-time analysis and empirical application, distinguishing the scoring-risk projection from the composite dynamic centre.

\subsection{Realised-loss descent and local metric}\label{subsec:realised_descent}
Fix a date and suppress its subscript. We first ask an observation-level question: whether the update decreases the realised scoring loss. This is a local statement in the gain and does not by itself imply movement towards the conditional-risk minimiser.
\begin{proposition}[Realised-loss descent]\label{prop::scoring_direction}
Let $y \in \mathcal{Y}$ and $\lambda \in \operatorname{int}(\Lambda)$ be fixed. Suppose that $\lambda \mapsto \mathsf{S}(P_{\lambda,\theta},y)$ is differentiable at $\lambda$ and that $G_{\mathsf S}(\lambda)>0$. Set $u_{\mathsf S}(y,\lambda)=G_{\mathsf S}(\lambda)\psi_{\mathsf S}(y,\lambda)$. Then, as $\alpha\downarrow0$ through values for which $\lambda+\alpha u_{\mathsf S}(y,\lambda)\in\Lambda$,
\begin{equation}\label{eq::directionscore}
\mathsf S(P_{\lambda+\alpha u_{\mathsf S}(y,\lambda),\theta},y)
-\mathsf S(P_{\lambda,\theta},y)
=-\alpha G_{\mathsf S}(\lambda)\psi_{\mathsf S}^2(y,\lambda)+o(\alpha).
\end{equation}
Consequently, whenever $\psi_{\mathsf S}(y,\lambda)\ne0$, the realised scoring loss decreases for every sufficiently small $\alpha>0$ whose update remains in $\Lambda$.
\end{proposition}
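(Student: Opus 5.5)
The argument is a first-order Taylor expansion of the scalar map $f(h):=\mathsf S(P_{\lambda+h,\theta},y)$ at $h=0$, evaluated along the step $h_\alpha:=\alpha u_{\mathsf S}(y,\lambda)$. Differentiability of $\lambda\mapsto\mathsf S(P_{\lambda,\theta},y)$ at the interior point $\lambda$ gives
\[
f(h)-f(0)=f'(0)h+r(h),\qquad r(h)=o(h)\ \text{as } h\to0,
\]
where $f'(0)=\partial_\lambda\mathsf S(P_{\lambda,\theta},y)=-\psi_{\mathsf S}(y,\lambda)$. The expansion only needs $h$ to range over points with $\lambda+h\in\Lambda$, which is exactly the admissibility restriction in the statement. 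Interiority of $\lambda$ ensures such $h$ exist on both sides of $0$, so $\alpha$ can indeed tend to zero through admissible values.

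We substitute $h=h_\alpha$. Since $u_{\mathsf S}(y,\lambda)=G_{\mathsf S}(\lambda)\psi_{\mathsf S}(y,\lambda)$ is a fixed finite number, independent of $\alpha$, we have $h_\alpha\to0$ as $\alpha\downarrow0$. The linear term is
\[
f'(0)h_\alpha=-\psi_{\mathsf S}\cdot\alpha G_{\mathsf S}\psi_{\mathsf S}=-\alpha G_{\mathsf S}(\lambda)\psi_{\mathsf S}^2(y,\lambda).
\]
For the remainder:
\begin{itemize}
\item If $u_{\mathsf S}\neq0$, then $r(h_\alpha)/\alpha=u_{\mathsf S}\cdot r(h_\alpha)/h_\alpha\to0$.
\item If $u_{\mathsf S}=0$, then $h_\alpha=0$ and $r(h_\alpha)=0$ identically.
\end{itemize}
In both cases $r(h_\alpha)=o(\alpha)$, which gives \eqref{eq::directionscore}.

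For the descent consequence, suppose $\psi_{\mathsf S}(y,\lambda)\neq0$. Then $c:=G_{\mathsf S}(\lambda)\psi_{\mathsf S}^2(y,\lambda)>0$, because $G_{\mathsf S}(\lambda)>0$. Choose $\alpha_0>0$ such that $|o(\alpha)|\le c\alpha/2$ for all admissible $\alpha<\alpha_0$. For those $\alpha$ the loss difference is at most $-c\alpha/2<0$, so the realised scoring loss strictly decreases.

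The only delicate point is bookkeeping rather than analysis. The $o(\cdot)$ must be read along the admissible set of $\alpha$, and the degenerate case $\psi_{\mathsf S}=0$ must be handled separately as above. No conditional expectations, integrability conditions, or Assumption~\ref{ass:conditional_differentiation} are needed: the claim is pointwise in $(y,\lambda)$ and uses only differentiability at the single point $\lambda$.
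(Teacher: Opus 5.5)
Your proof is correct and follows the paper's argument. Both expand $f(\lambda)=\mathsf S(P_{\lambda,\theta},y)$ to first order at $\lambda$ along the step $\alpha u_{\mathsf S}(y,\lambda)$, use $f'(\lambda)=-\psi_{\mathsf S}(y,\lambda)$ to obtain the coefficient $-G_{\mathsf S}(\lambda)\psi_{\mathsf S}^2(y,\lambda)<0$ whenever $\psi_{\mathsf S}\neq0$, and conclude descent for small $\alpha$. Your explicit conversion of $o(h)$ into $o(\alpha)$, including the degenerate case $u_{\mathsf S}=0$, only spells out a step the paper leaves implicit.
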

\proofref{app:proof_scoring_direction}

Equivalently, for any $G_{\mathsf S}(\lambda)>0$,
\[
u_{\mathsf S}(y,\lambda)
=\operatorname*{arg\,min}_{d\in\mathbb R}
\left\{-\psi_{\mathsf S}(y,\lambda)d+\frac{d^2}{2G_{\mathsf S}(\lambda)}\right\}
=G_{\mathsf S}(\lambda)\psi_{\mathsf S}(y,\lambda).
\]
The minimiser is unique because the objective is strictly convex. Thus the scaled update is steepest first-order descent under the corresponding local scalar metric.

We now turn from this observation-level geometry to conditional movement towards the rule-specific projection.

\subsection{Conditional-mean contraction and mean-squared error}\label{section::optimality_conditional_expectation}

Following \citet[Definition~1]{gorgi2024optimality}, an update is optimal in conditional expected variation when $\mathsf E_{t-1}[\lambda_{t+1}]$ lies closer to $\lambda^\star_{\mathsf S,t}$ than $\lambda_t$ does, off target. We apply this notion to conditional scoring risk rather than expected log-likelihood. Throughout, $\theta$ is fixed and suppressed. The next condition parallels Assumptions~1--3 of \citet{gorgi2024optimality}.
\begin{assumption}\label{ass:scalar_conditional_geometry}
Fix $t\in\{1,\ldots,T\}$. Let $I_t \subset \Lambda$ be an open interval containing the current state $\lambda_t$ and the pseudo-true target $\lambda^{\star}_{\mathsf{S},t}$ associated with \eqref{eq::pseudotrueset}. The following properties hold:
\begin{enumerate}[leftmargin=2em,label=(\roman*)]
    \item $\mathsf{R}_{\mathsf{S},t}(\cdot)$ is twice continuously differentiable on $I_t$ with probability one;
    \item  $\lambda^{\star}_{\mathsf{S},t}$ is the unique minimiser of $\mathsf{R}_{\mathsf{S},t}(\cdot)$ on $I_t$, and $\mathsf{R}^{'}_{\mathsf{S},t}(\lambda^{\star}_{\mathsf{S},t})=0$ with probability one;
    \item the scaled conditional scoring risk derivative $H_{\mathsf{S},t}(\lambda)=G_{\mathsf{S}}(\lambda)\mathsf{R}^{'}_{\mathsf{S},t}(\lambda)$ is continuously differentiable, strictly increasing on $I_t$, and satisfies, almost surely,
    \begin{equation}\label{eq::bound}
        0 < H_{\mathsf{S},t}'(\lambda)\le \bar c_t,
        \qquad \lambda\in I_t
    \end{equation}
    for some finite $\mathcal F_{t-1}$-measurable constant $\bar c_t$.
\end{enumerate}
\end{assumption}

Parts (i)--(ii) impose smoothness and a unique target; part (iii) makes the scaled risk derivative point monotonically towards it.
\begin{theorem}[One-date contraction of the conditional mean]\label{th::optimality}
Suppose that Assumptions \ref{ass:measurable_target}, \ref{ass:conditional_differentiation}, and \ref{ass:scalar_conditional_geometry} hold, and that the gain $\alpha$ in Definition \ref{def::scoring-rule-filter} satisfies $0 < \alpha < 2/\bar c_t$ almost surely. Then, whenever $\lambda_t \neq \lambda^{\star}_{\mathsf{S},t}$,
\begin{equation}
    \left| \mathsf{E}_{t-1}[\lambda_{t+1}]-\lambda^{\star}_{\mathsf{S},t} \right| < \left| \lambda_t - \lambda^{\star}_{\mathsf{S},t}\right|.
\end{equation}
If $\lambda_t=\lambda^{\star}_{\mathsf{S},t}$, then $\mathsf{E}_{t-1}[\lambda_{t+1}]=\lambda^{\star}_{\mathsf{S},t}$.
\end{theorem}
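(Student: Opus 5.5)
The plan is to reduce the statement to a one-dimensional mean-value argument on the scaled risk derivative $H_{\mathsf S,t}$. First, Proposition~\ref{prop::proposition_2} gives the conditional mean update. Its hypotheses are met: Assumption~\ref{ass:conditional_differentiation} holds on a compact subinterval of $I_t$ containing $\lambda_t$ and $\lambda^\star_{\mathsf S,t}$, and $\lambda_t\in I_t\subset\operatorname{int}(\Lambda)$. Integrability of $u_{\mathsf S,t}$ follows from the envelope $M_{t,K}$ and the $\mathcal F_{t-1}$-measurability of $G_{\mathsf S,t}(\lambda_t)$. Conditional integrability of $\lambda_t$ and $\lambda_{t+1}$ follows because $\lambda_t$ is predictable. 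We obtain
\[
\mathsf E_{t-1}[\lambda_{t+1}]=\lambda_t-\alpha H_{\mathsf S,t}(\lambda_t),
\]
with $H_{\mathsf S,t}=G_{\mathsf S}\mathsf R'_{\mathsf S,t}$ as in Assumption~\ref{ass:scalar_conditional_geometry}(iii). By part (ii), $\mathsf R'_{\mathsf S,t}(\lambda^\star_{\mathsf S,t})=0$, and hence $H_{\mathsf S,t}(\lambda^\star_{\mathsf S,t})=0$. Subtracting the target gives
\[
\mathsf E_{t-1}[\lambda_{t+1}]-\lambda^\star_{\mathsf S,t}=(\lambda_t-\lambda^\star_{\mathsf S,t})-\alpha\bigl(H_{\mathsf S,t}(\lambda_t)-H_{\mathsf S,t}(\lambda^\star_{\mathsf S,t})\bigr).
\]
The on-target case is immediate from this display.

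For the off-target case, $I_t$ is an interval, so it contains the segment between $\lambda_t$ and $\lambda^\star_{\mathsf S,t}$, and $H_{\mathsf S,t}$ is $C^1$ there. The mean value theorem therefore gives a point $\xi_t$ strictly between them with
\[
\mathsf E_{t-1}[\lambda_{t+1}]-\lambda^\star_{\mathsf S,t}=\bigl(1-\alpha H'_{\mathsf S,t}(\xi_t)\bigr)(\lambda_t-\lambda^\star_{\mathsf S,t}).
\]
Using \eqref{eq::bound}, we have $0<H'_{\mathsf S,t}(\xi_t)\le\bar c_t$. Together with $0<\alpha<2/\bar c_t$, this yields $0<\alpha H'_{\mathsf S,t}(\xi_t)<2$, so $|1-\alpha H'_{\mathsf S,t}(\xi_t)|<1$. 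Since $\lambda_t\ne\lambda^\star_{\mathsf S,t}$, the strict inequality follows. To avoid measurability issues with $\xi_t$, one can instead write the increment as $\int_0^1 H'_{\mathsf S,t}(\lambda^\star+s(\lambda_t-\lambda^\star))\,\mathrm ds$ times $(\lambda_t-\lambda^\star)$. This integral average also lies in $(0,\bar c_t]$, and it is manifestly $\mathcal F_{t-1}$-measurable. All statements hold on the probability-one event where the assumptions hold.

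The main obstacle is bookkeeping rather than analysis. One must check that the random, $\mathcal F_{t-1}$-measurable interval $I_t$ and bound $\bar c_t$ can be handled pathwise. One must also check that the local differentiation hypothesis of Proposition~\ref{prop::proposition_2} applies at the random point $\lambda_t$, for example by taking a compact $K\subset I_t$ containing both points on each $\omega$, or a countable exhaustion of $\operatorname{int}(\Lambda)$ by compacts. Finally, the strict upper bound $\alpha H'<2$ needs $\alpha<2/\bar c_t$ strictly, which is exactly the stated gain condition.
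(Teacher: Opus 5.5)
Your proof is correct. The core tool is the same as the paper's: the mean value theorem for $H_{\mathsf S,t}$ between $\lambda_t$ and $\lambda^{\star}_{\mathsf S,t}$, combined with $0<H'_{\mathsf S,t}\le\bar c_t$ and $\alpha<2/\bar c_t$. You organise that step differently, however.

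The paper uses the mean value theorem only to derive the co-coercivity inequality $H_{\mathsf S,t}(\lambda_t)(\lambda_t-\lambda^{\star}_{\mathsf S,t})\ge\bar c_t^{-1}H_{\mathsf S,t}^2(\lambda_t)$. It then expands the square of $\lambda_t-\alpha H_{\mathsf S,t}(\lambda_t)-\lambda^{\star}_{\mathsf S,t}$, which bounds the decrement of the squared distance below by $\alpha(2/\bar c_t-\alpha)H_{\mathsf S,t}^2(\lambda_t)$. Finally it invokes strict monotonicity to ensure $H_{\mathsf S,t}(\lambda_t)\neq0$ off target.

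You instead apply the mean value theorem to the error itself and obtain the factor $1-\alpha H'_{\mathsf S,t}(\xi_t)\in(-1,1)$. Your route is shorter and exhibits the actual contraction ratio. It also shows that the mean step overshoots the target exactly when $\alpha H'_{\mathsf S,t}(\xi_t)>1$. And it needs no separate nonvanishing argument, since positivity of $H'_{\mathsf S,t}$ on the segment does that work.

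The paper's route follows the gradient-descent template of Gorgi et al. Its payoff is a decrement expressed through the mean-field magnitude $H_{\mathsf S,t}^2(\lambda_t)$. That is the same cross-term structure that reappears in the MSE identity of Proposition~\ref{prop::proposition_mse}.

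Your explicit checks of the hypotheses of Proposition~\ref{prop::proposition_2} are bookkeeping that the paper's proof leaves implicit. These are integrability of $u_{\mathsf S,t}$ via the envelope $M_{t,K}$ and localisation through a countable compact exhaustion. The same holds for the integral-form remedy for measurability of $\xi_t$.
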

\proofref{app:proof_optimality}

The theorem gives a one-date conditional-mean contraction on $I_t$. Corollary~\ref{cor:uniform_fixed_gain} provides a uniform fixed-gain condition when the state and target remain in the relevant intervals across dates.

\begin{corollary}[Uniform fixed-gain sufficient condition]\label{cor:uniform_fixed_gain}
Suppose the assumptions of Theorem~\ref{th::optimality} hold at every date on intervals containing the realised state and target. If there is a known deterministic constant $\bar c<\infty$ such that $0<H'_{\mathsf S,t}(\lambda)\le\bar c$ almost surely for every relevant $t$ and $\lambda$, and the recursion remains in those intervals, then every fixed gain $0<\alpha<2/\bar c$ gives the one-step conditional-mean contraction at all dates. This is a sufficient uniform restriction, not a necessary invertibility condition.
\end{corollary}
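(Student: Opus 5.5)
The corollary follows by applying Theorem~\ref{th::optimality} date by date, after replacing the date-specific random bound $\bar c_t$ with the deterministic constant $\bar c$. Fix any relevant date $t$. By hypothesis, Assumptions~\ref{ass:measurable_target}, \ref{ass:conditional_differentiation} and \ref{ass:scalar_conditional_geometry} hold on an open interval $I_t$ containing the realised state $\lambda_t$ and the target $\lambda^\star_{\mathsf S,t}$. The assumption that the recursion remains in those intervals is what makes $\lambda_t\in I_t$ for every $t$, and not only at the initial date. On $I_t$ we have $0<H'_{\mathsf S,t}(\lambda)\le\bar c$ almost surely. So part (iii) of Assumption~\ref{ass:scalar_conditional_geometry} holds with $\bar c_t:=\bar c$, which is trivially $\mathcal F_{t-1}$-measurable because it is deterministic.

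Next we check the gain condition. A fixed $\alpha$ with $0<\alpha<2/\bar c$ satisfies $0<\alpha<2/\bar c_t$ almost surely at every date, since $\bar c_t=\bar c$. If the theorem is instead used with its original random $\bar c_t\le\bar c$, then $2/\bar c\le 2/\bar c_t$ and the same conclusion holds. Theorem~\ref{th::optimality} therefore applies at date $t$. It gives $|\mathsf E_{t-1}[\lambda_{t+1}]-\lambda^\star_{\mathsf S,t}|<|\lambda_t-\lambda^\star_{\mathsf S,t}|$ off target, and equality of $\mathsf E_{t-1}[\lambda_{t+1}]$ with the target when $\lambda_t=\lambda^\star_{\mathsf S,t}$. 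Because $t$ was arbitrary and $\alpha$ does not depend on $t$, the contraction holds at all dates. The almost-sure statements hold jointly over the countably many dates after intersecting countably many probability-one events.

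For completeness, the mechanism inside the theorem is as follows. Proposition~\ref{prop::proposition_2} gives $\mathsf E_{t-1}[\lambda_{t+1}]-\lambda^\star=\lambda_t-\lambda^\star-\alpha H_{\mathsf S,t}(\lambda_t)$. Since $H_{\mathsf S,t}(\lambda^\star)=0$, the mean value theorem gives this as $(1-\alpha H'_{\mathsf S,t}(\xi))(\lambda_t-\lambda^\star)$ for some $\xi\in I_t$. The factor lies in $(1-\alpha\bar c,1)\subset(-1,1)$, and that is precisely where the uniform bound enters.

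The only delicate point, which I expect to be the main obstacle, is the qualifier ``relevant $t$ and $\lambda$''. The bound must hold on the whole segment between $\lambda_t$ and $\lambda^\star_{\mathsf S,t}$, which lies in $I_t$ because $I_t$ is an interval. The invariance hypothesis must be taken as given rather than derived. For the closing remark, I would note that the condition is sufficient but not necessary. Contraction needs only $\alpha H'_{\mathsf S,t}(\xi)\in(0,2)$ at the particular mean-value point, so gains exceeding $2/\bar c$ can still contract whenever the curvature is smaller along the realised path.
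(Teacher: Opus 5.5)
Your proposal is correct and follows the paper's proof: set $\bar c_t=\bar c$ at every date, observe that a fixed $0<\alpha<2/\bar c$ then meets the gain restriction of Theorem~\ref{th::optimality}, and apply the theorem date by date. One small correction to your side remark: because $H'_{\mathsf S,t}$ may attain $\bar c$, the mean-value factor lies in the half-open interval $[1-\alpha\bar c,1)$ rather than the open one, which is still contained in $(-1,1)$, so the conclusion is unaffected.
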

\proofref{app:proof_uniform_fixed_gain}

We next compare the mean-squared distance before and after one step \citep[cf.][Corollary~1]{gorgi2024optimality}. This requires a finite conditional second moment at the predictable state; for the log-score driver $z^2-1$, the condition corresponds to a finite fourth moment. Sections~\ref{section::continuous_time} and \ref{sec::empirical} cover alternative regimes.

\begin{proposition}\label{prop::proposition_mse}
Suppose that Assumptions \ref{ass:measurable_target}, \ref{ass:conditional_differentiation}, and \ref{ass:scalar_conditional_geometry} hold. Suppose, in addition, that the second moment at the predictable state is finite, $\mathsf{E}_{t-1}[\psi_{\mathsf{S}}^2(y_t,\lambda_t)]<\infty$ almost surely. Then,
\begin{equation}\label{eq::mse}
    \mathsf{E}_{t-1}[(\lambda_{t+1}-\lambda^{\star}_{\mathsf{S},t})^2] = (\lambda_{t}-\lambda^{\star}_{\mathsf{S},t})^2 - 2 \alpha H_{\mathsf{S},t}(\lambda_{t})(\lambda_{t}-\lambda^{\star}_{\mathsf{S},t})+\alpha^2 G_{\mathsf{S}}^2(\lambda_t)\mathsf{E}_{t-1}[\psi_{\mathsf{S}}^2(y_t,\lambda_t)].
\end{equation}
Moreover, for any $(\lambda_t,\lambda^{\star}_{\mathsf{S},t})$ with $\lambda_t \neq \lambda^{\star}_{\mathsf{S},t}$, it holds that $H_{\mathsf{S},t}(\lambda_{t})(\lambda_t-\lambda^{\star}_{\mathsf{S},t})>0$ and $\mathsf{E}_{t-1}[\psi_{\mathsf{S}}^2(y_t,\lambda_t)]>0$ automatically. At every such off-target state, the filter reduces the conditional mean-squared distance if and only if the gain satisfies
\begin{equation}\label{eq::bound_alpha}
    0 < \alpha < 2\,\alpha^{\rm oracle}_t,\qquad
    \alpha^{\rm oracle}_t:=\frac{ H_{\mathsf{S},t}(\lambda_t)(\lambda_t-\lambda^{\star}_{\mathsf{S},t})}{G_{\mathsf{S}}^2(\lambda_t) \mathsf{E}_{t-1}[\psi_{\mathsf{S}}^2(y_t,\lambda_t)]},
\end{equation}
and the reduction is largest at $\alpha=\alpha^{\rm oracle}_t$. At $\lambda_t=\lambda^{\star}_{\mathsf S,t}$, a non-degenerate innovation strictly increases the one-step conditional mean-squared distance for every $\alpha>0$; if the innovation is degenerate, the update is inert and the distance remains zero.
\end{proposition}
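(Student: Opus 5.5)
The plan is to expand the square directly. Write $e_t:=\lambda_t-\lambda^\star_{\mathsf S,t}$, which is $\mathcal F_{t-1}$-measurable by Assumption~\ref{ass:measurable_target}, and write $u_t:=G_{\mathsf S}(\lambda_t)\psi_{\mathsf S}(y_t,\lambda_t)$. The recursion \eqref{eq::scoring_update} then gives $\lambda_{t+1}-\lambda^\star_{\mathsf S,t}=e_t+\alpha u_t$, so that
\[
(\lambda_{t+1}-\lambda^\star_{\mathsf S,t})^2=e_t^2+2\alpha e_t u_t+\alpha^2 u_t^2 .
\]
Since $G_{\mathsf S}(\lambda_t)$ is predictable and positive, the second-moment hypothesis gives $\mathsf E_{t-1}u_t^2<\infty$. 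By conditional Cauchy--Schwarz this also gives conditional integrability of $u_t$, so all three terms are conditionally integrable. Predictable factors can be pulled out of the conditional expectations. Under Assumption~\ref{ass:conditional_differentiation}, evaluated on a compact neighbourhood of $\lambda_t$ inside $I_t$, we have $\mathsf E_{t-1}[\psi_{\mathsf S}(y_t,\lambda_t)]=-\mathsf R'_{\mathsf S,t}(\lambda_t)$. This is the same step as in Proposition~\ref{prop::proposition_2}. Hence $\mathsf E_{t-1}[u_t]=-H_{\mathsf S,t}(\lambda_t)$, and substituting yields \eqref{eq::mse}.

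For the sign claims I would use Assumption~\ref{ass:scalar_conditional_geometry}. Part (ii) gives $\mathsf R'_{\mathsf S,t}(\lambda^\star_{\mathsf S,t})=0$, so $H_{\mathsf S,t}(\lambda^\star_{\mathsf S,t})=0$. Part (iii) says $H_{\mathsf S,t}$ is strictly increasing on $I_t$, so $H_{\mathsf S,t}(\lambda_t)$ has the sign of $e_t$. Therefore $H_{\mathsf S,t}(\lambda_t)e_t>0$ whenever $e_t\neq0$. For positivity of the second moment, conditional Jensen gives
\[
\mathsf E_{t-1}\psi_{\mathsf S}^2\ge(\mathsf E_{t-1}\psi_{\mathsf S})^2=\mathsf R'_{\mathsf S,t}(\lambda_t)^2 .
\]
This is strictly positive because $H_{\mathsf S,t}(\lambda_t)\neq0$ and $G_{\mathsf S}>0$ force $\mathsf R'_{\mathsf S,t}(\lambda_t)\neq0$. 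This is the one point that needs care: positivity has to come from the mean, not from any nondegeneracy assumption.

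The gain characterization is then an elementary quadratic analysis. Write the change in conditional MSE as
\[
\alpha\bigl(-2H_{\mathsf S,t}(\lambda_t)e_t+\alpha G_{\mathsf S}^2(\lambda_t)\mathsf E_{t-1}\psi_{\mathsf S}^2\bigr).
\]
For $\alpha>0$ this is negative if and only if $\alpha<2\alpha^{\rm oracle}_t$, with $\alpha^{\rm oracle}_t$ well defined and positive by the previous step. The quadratic in $\alpha$ is strictly convex, so its minimum sits at the vertex $\alpha=\alpha^{\rm oracle}_t$.

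At the target, $e_t=0$, so the change equals $\alpha^2G_{\mathsf S}^2(\lambda_t)\mathsf E_{t-1}\psi_{\mathsf S}^2$. This is strictly positive when the innovation is nondegenerate, meaning $\psi_{\mathsf S}(y_t,\lambda_t)\neq0$ with positive conditional probability. If instead $\psi_{\mathsf S}=0$ almost surely, then $u_t=0$ almost surely, so $\lambda_{t+1}=\lambda_t$ and the distance stays zero. I expect no serious obstacle. The only delicate bookkeeping is justifying that the differentiation interchange applies at the random predictable point $\lambda_t$, which follows from the "simultaneously for every $\lambda\in K$" clause of Assumption~\ref{ass:conditional_differentiation}.
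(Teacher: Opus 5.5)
Your proposal is correct and follows essentially the same route as the paper: expand the square, use predictability and the differentiation interchange to obtain \eqref{eq::mse}, get the sign from strict monotonicity of $H_{\mathsf S,t}$ with $H_{\mathsf S,t}(\lambda^\star_{\mathsf S,t})=0$, and then analyse the convex quadratic in $\alpha$ with roots $0$ and $2\alpha^{\rm oracle}_t$. The only difference is cosmetic. You obtain strict positivity of the second moment from conditional Jensen, $\mathsf E_{t-1}\psi_{\mathsf S}^2\ge \mathsf R'_{\mathsf S,t}(\lambda_t)^2>0$, whereas the paper argues by contradiction that a zero second moment would force $\mathsf R'_{\mathsf S,t}(\lambda_t)=0$; both rest on the same nonzero conditional mean, and your integrability bookkeeping is a small addition the paper leaves implicit.
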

\proofref{app:proof_mse}

The last term in \eqref{eq::mse} is a raw second moment; subtracting the squared conditional mean gives
\[
\mathsf{Var}_{t-1}(\lambda_{t+1})
=
\alpha^2 G_{\mathsf S}^2(\lambda_t)
\mathsf{Var}_{t-1}\left[\psi_{\mathsf S}(y_t,\lambda_t)\right].
\]
Thus, \eqref{eq::mse} balances the population-directed pull against a quadratic penalty comprising innovation variance and deterministic overshoot. Section~\ref{section::role_scoring_rules} develops the corresponding mean--noise decomposition in terms of scoring-rule curvature and innovation moments.

The oracle gain depends on the population law and target and is therefore a benchmark rather than a feasible tuning rule. Proposition~\ref{prop::proposition_mse} is a one-date result; tracking $\lambda^\star_{\mathsf S,t+1}$ requires additional assumptions on target motion.\footnote{\citet{gorgi2024optimality} consider martingale or mean-reverting targets.}

\subsection{The applied autoregressive recursion}\label{subsec:applied_ar_recursion}
Definition~\ref{def::scoring-rule-filter} and the preceding results concern the pure-integrator
update $\lambda_{t+1}=\lambda_t+\alpha u_{\mathsf S,t}$. The continuous-time sections and the
empirical application instead use
$\lambda_{t+1}=\omega+\varphi\lambda_t+\alpha u_{\mathsf S,t}$ with $|\varphi|<1$. Write
$\bar\lambda:=\omega/(1-\varphi)$ and
\[
 b_{\rm AR}(\lambda):=\alpha^{-1}(1-\varphi)(\lambda-\bar\lambda),
 \qquad
 \widetilde H_{\mathsf S,t}(\lambda):=b_{\rm AR}(\lambda)+H_{\mathsf S,t}(\lambda).
\]
Then
$\mathsf E_{t-1}[\lambda_{t+1}]=\lambda_t-\alpha\widetilde H_{\mathsf S,t}(\lambda_t)$.
Consequently, a contraction argument based on $\widetilde H_{\mathsf S,t}$ is directed towards
its zero $\widetilde\lambda^\star_{\mathsf S,t}$, when that zero exists and the derivative and
interval conditions of Theorem~\ref{th::optimality} hold. The scoring-risk minimiser
$\lambda^\star_{\mathsf S,t}$ is recovered as the contraction target when the autoregressive
centring condition stated below holds.

The squared-error algebra also changes. For any predictable reference value $c_t$,
\[
\begin{split}
\mathsf E_{t-1}\!\left[(\lambda_{t+1}-c_t)^2\right]
={}&(\lambda_t-c_t)^2
-2\alpha\widetilde H_{\mathsf S,t}(\lambda_t)(\lambda_t-c_t)\\
&+\alpha^2\mathsf E_{t-1}\!\left[
 \{u_{\mathsf S,t}-b_{\rm AR}(\lambda_t)\}^2
\right].
\end{split}
\]
The predictable autoregressive increment leaves
$\mathsf{Var}_{t-1}(\lambda_{t+1})=\alpha^2\mathsf{Var}_{t-1}(u_{\mathsf S,t})$
unchanged, but the MSE identity now contains the composite quadratic term. With
$c_t=\widetilde\lambda^\star_{\mathsf S,t}$, one-step MSE reduction is governed by the
corresponding mean-field sign and gain conditions; using $c_t=\lambda^\star_{\mathsf S,t}$
instead requires checking the sign relative to the scoring-risk target. The two targets coincide
when $\omega=(1-\varphi)\lambda^\star_{\mathsf S,t}$ and are locally aligned when the
autoregressive pull is negligible at the relevant scale. Otherwise, shrinkage towards
$\bar\lambda$ introduces an additional target component, so tracking is naturally defined
relative to the composite target.

\section{Scoring-rule geometry, robustness, and dynamic equivalence}\label{section::role_scoring_rules}
Under correct specification, strict propriety and identification give every rule the same target $\lambda^\dagger$ in \eqref{eq::pseudotrueset}. The resulting filters can nevertheless differ through risk curvature, innovation variance, tail response, and scaling. Subsection~\ref{subsec:mean_noise_geometry} separates the local mean and noise components of the update. Subsection~\ref{subsec:tail_impulse} studies the response to extreme observations and its propagation through the filter. Subsection~\ref{subsec:dynamic_equivalence} characterises when different rule--scaling combinations generate the same dynamic path.

\subsection{Local mean--noise geometry}\label{subsec:mean_noise_geometry}

Fix $t$ and $\theta$, and suppose the model is correctly specified at an interior value $\lambda^\dagger$, so that $\widetilde P_t=P_{\lambda^\dagger,\theta}$. Write
$
\psi_{\mathsf S}(y,\lambda)
=
-\partial_\lambda \mathsf S(P_{\lambda,\theta},y).
$
Differentiation under conditional expectation gives
$
\mathsf E_{t-1}\left[\psi_{\mathsf S}(y_t,\lambda^\dagger)\right]
=
-\mathsf R'_{\mathsf S,t}(\lambda^\dagger)
=
0.
$

Define local curvature (sensitivity) and innovation second moment as

\begin{equation}\label{eq::distinction}
    J_{\mathsf S,t}(\lambda^\dagger)
    :=
    \mathsf R''_{\mathsf S,t}(\lambda^\dagger),
    \qquad
    K_{\mathsf S,t}(\lambda^\dagger)
    :=
    \mathsf E_{t-1}\!\left[
        \psi_{\mathsf S}^2(y_t,\lambda^\dagger)
    \right].
\end{equation}

The pair $(J,K)$ is the dynamic counterpart of sensitivity and variability in scoring-rule inference \citep{gneiting2007strictly,dawid2016minimum}. Because the innovation is centred at $\lambda^\dagger$, $K$ is its conditional variance. For the logarithmic rule, the information identity gives $J=K$, equal to Fisher information; for a general rule, curvature and update variance need not coincide \citep{dawid2016minimum}. Proposition~\ref{prop::local_mean} shows how they enter the local dynamics under the actual predictable scaling, with curvature scaling as an oracle special case.
\begin{proposition}[Local mean--noise decomposition]\label{prop::local_mean}
Let $\lambda^\dagger$ be an interior centred point, so that $\mathsf R'_{\mathsf S,t}(\lambda^\dagger)=0$. Suppose Assumption~\ref{ass:conditional_differentiation} holds on a neighbourhood of $\lambda^\dagger$, the conditional risk is twice continuously differentiable there, $J_{\mathsf S,t}(\lambda^\dagger)>0$, $K_{\mathsf S,t}(\lambda^\dagger)<\infty$, $G_{\mathsf S,t}$ is continuous at $\lambda^\dagger$, and $\lambda\mapsto\mathsf E_{t-1}[\psi_{\mathsf S}^2(y_t,\lambda)]$ is continuous at $\lambda^\dagger$. Then, almost surely as $\lambda_t\to\lambda^\dagger$,
\begin{equation}\label{eq::mean_local_mean}
    \mathsf{E}_{t-1}[\lambda_{t+1}-\lambda^\dagger]
    =\{1-\alpha G_{\mathsf S,t}(\lambda^\dagger)J_{\mathsf S,t}(\lambda^\dagger)\}(\lambda_t-\lambda^\dagger)+o(|\lambda_t-\lambda^\dagger|),
\end{equation}
and
\begin{equation}\label{eq::variance_local_mean}
    \mathsf{Var}_{t-1}(\lambda_{t+1})=\alpha^2G_{\mathsf S,t}(\lambda^\dagger)^2K_{\mathsf S,t}(\lambda^\dagger)+o(1).
\end{equation}
\end{proposition}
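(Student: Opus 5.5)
The plan is to derive both expansions directly from the conditional mean identity of Proposition~\ref{prop::proposition_2} and the variance identity stated after Proposition~\ref{prop::proposition_mse}. Both are then combined with first-order Taylor expansion and continuity at $\lambda^\dagger$.

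\emph{Mean expansion.} Restrict to $\lambda_t$ in a compact neighbourhood $K\subset\operatorname{int}(\Lambda)$ of $\lambda^\dagger$ on which Assumption~\ref{ass:conditional_differentiation} holds. The domination bound $M_{t,K}$ then gives conditional integrability of $\psi_{\mathsf S}(y_t,\lambda_t)$, and hence of $u_{\mathsf S,t}$, since $G_{\mathsf S,t}$ is continuous and so locally bounded near $\lambda^\dagger$. Proposition~\ref{prop::proposition_2} therefore gives
\[
\mathsf E_{t-1}[\lambda_{t+1}-\lambda^\dagger]=(\lambda_t-\lambda^\dagger)-\alpha G_{\mathsf S,t}(\lambda_t)\mathsf R'_{\mathsf S,t}(\lambda_t).
\]
Because $\mathsf R_{\mathsf S,t}$ is $C^2$ near $\lambda^\dagger$ and $\mathsf R'_{\mathsf S,t}(\lambda^\dagger)=0$, the mean value theorem yields
\[
\mathsf R'_{\mathsf S,t}(\lambda_t)=J_{\mathsf S,t}(\lambda^\dagger)(\lambda_t-\lambda^\dagger)+o(|\lambda_t-\lambda^\dagger|),
\]
using continuity of $\mathsf R''$. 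Write $G_{\mathsf S,t}(\lambda_t)=G_{\mathsf S,t}(\lambda^\dagger)+o(1)$. The product is then
\[
G_{\mathsf S,t}(\lambda^\dagger)J_{\mathsf S,t}(\lambda^\dagger)(\lambda_t-\lambda^\dagger)+o(|\lambda_t-\lambda^\dagger|),
\]
because the cross term is $o(1)\cdot O(|\lambda_t-\lambda^\dagger|)$. Substituting gives \eqref{eq::mean_local_mean}. All these statements hold $\omega$-wise on the almost-sure event where the assumptions hold, with $\lambda_t$ treated as a deterministic point approaching $\lambda^\dagger$.

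\emph{Variance expansion.} Since $\lambda_t$ is predictable,
\[
\mathsf{Var}_{t-1}(\lambda_{t+1})=\alpha^2G_{\mathsf S,t}(\lambda_t)^2\{\mathsf E_{t-1}[\psi_{\mathsf S}^2(y_t,\lambda_t)]-(\mathsf E_{t-1}[\psi_{\mathsf S}(y_t,\lambda_t)])^2\}.
\]
The first moment equals $-\mathsf R'_{\mathsf S,t}(\lambda_t)=O(|\lambda_t-\lambda^\dagger|)\to0$ by the previous step, so its square is $o(1)$. The second moment tends to $K_{\mathsf S,t}(\lambda^\dagger)<\infty$ by the assumed continuity, and in particular is finite near $\lambda^\dagger$, so the variance is well defined. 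Finally $G_{\mathsf S,t}(\lambda_t)^2\to G_{\mathsf S,t}(\lambda^\dagger)^2$. Multiplying these limits gives \eqref{eq::variance_local_mean}.

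The main obstacle is bookkeeping rather than depth. We must ensure that the conditional moments are finite for $\lambda_t$ near, but not at, $\lambda^\dagger$. This is why the second-moment continuity hypothesis is needed: finiteness of $K$ alone at $\lambda^\dagger$ would not suffice. The $o(\cdot)$ terms are random, $\mathcal F_{t-1}$-measurable quantities, and the statement is a pathwise limit on a probability-one event, not a uniform one. I would state this explicitly and fix $\omega$ in that event throughout.
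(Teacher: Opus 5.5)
Your proposal is correct and follows essentially the same route as the paper: a first-order expansion of $\mathsf R'_{\mathsf S,t}$ at the centred point, combined with continuity of $G_{\mathsf S,t}$ and substituted into the conditional-mean identity of Proposition~\ref{prop::proposition_2}, and then the conditional variance written as $\alpha^2G_{\mathsf S,t}(\lambda_t)^2$ times second moment minus squared mean, with the squared mean $O(|\lambda_t-\lambda^\dagger|^2)$ and the second moment converging by the assumed continuity. Your extra bookkeeping makes explicit what the paper leaves implicit, namely conditional integrability via the envelope $M_{t,K}$, local boundedness of $G_{\mathsf S,t}$, and the $\omega$-wise reading of the limit.
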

\proofref{app:proof_local_mean}

The oracle choice $G=J^{-1}$ gives mean coefficient $1-\alpha$ and variance $\alpha^2K/J^2$. This provides a scale-free population benchmark; under misspecification, implemented filters use the cell-specific feasible scalings in \appref{app::formulas}. Generic scalings are compared through $GJ$ and $G^2K$.

\subsection{Tail behaviour and impulse propagation}\label{subsec:tail_impulse}
Robustness is governed by the scaled innovation $u_{\mathsf S}=G_{\mathsf S}\psi_{\mathsf S}$. Let $z_\lambda(y)$ denote the standardised residual and $y_\lambda(z)$ its inverse. Define $g_{\mathsf S}(\lambda,z):=u_{\mathsf S}(y_\lambda(z),\lambda)$.

At fixed $\lambda$, its tail envelope is\footnote{This parallels static robust inference: bounded estimating functions give bounded influence, while redescending functions give vanishing influence for extreme observations \citep{dawid2016minimum}.}:
\begin{equation}\label{eq::tail_envelope}
    \mathcal{T}_{\mathsf{S}}(r;\lambda):=\sup_{|z| \geq r}\mid g_{\mathsf{S}}(\lambda,z)\mid.
\end{equation}
At state $\lambda$, the driver has \emph{bounded sensitivity} if $\mathcal T_{\mathsf S}(0;\lambda)<\infty$, and \emph{unbounded sensitivity} if $\mathcal T_{\mathsf S}(r;\lambda)=\infty$ for every $r$. A bounded driver is \emph{redescending} if $\mathcal T_{\mathsf S}(r;\lambda)\to0$, and \emph{tail-saturating} if $g_{\mathsf S}(\lambda,z)\to c_\pm(\lambda)\ne0$ as $z\to\pm\infty$. Thus, gross-outlier contributions vanish only for redescending drivers; tail-saturating drivers cap them at non-zero levels.
\begin{proposition}[Additive residual impulse response]\label{prop::outlier}
Let
\begin{equation*}
    \lambda_{t+1}=\omega+\varphi\lambda_t+\alpha g_{\mathsf S}(Z_t),
    \qquad |\varphi|<1,\quad \alpha>0.
\end{equation*}
Consider two paths with the same state at time $t$ and identical residual sequences except that $Z_t^{(B)}=B$ and $Z_t^{(0)}=z_0$. Then, for every $k\ge0$,
\begin{equation*}
    \lambda_{t+1+k}^{(B)}-\lambda_{t+1+k}^{(0)}
    =\alpha\varphi^k\{g_{\mathsf S}(B)-g_{\mathsf S}(z_0)\}.
\end{equation*}
If $\sup_z|g_{\mathsf S}(z)|\le M$, then
\begin{equation*}
    \sup_{B\in\mathbb R,\,k\ge0}
    \left|\lambda_{t+1+k}^{(B)}-\lambda_{t+1+k}^{(0)}\right|
    \le 2\alpha M,
\end{equation*}
and
\begin{equation}
    \sum_{k=0}^{\infty}
    \left|\lambda_{t+1+k}^{(B)}-\lambda_{t+1+k}^{(0)}\right|
    =\frac{\alpha|g_{\mathsf S}(B)-g_{\mathsf S}(z_0)|}{1-|\varphi|}
    \le\frac{2\alpha M}{1-|\varphi|}.
\end{equation}
\end{proposition}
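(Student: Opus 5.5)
The plan is an induction on $k$ for the difference $D_k:=\lambda^{(B)}_{t+1+k}-\lambda^{(0)}_{t+1+k}$, followed by two elementary bounds. The one point to fix first is that $g_{\mathsf S}$ is treated, as in the statement, as a fixed function of the residual alone. Its state dependence is suppressed, and the residual sequences are taken as given inputs: $Z_s^{(B)}=Z_s^{(0)}$ for $s\neq t$. The recursion is then an affine map of the state driven by an exogenous forcing term, and the difference of two paths obeys a homogeneous linear recursion. This residual-driven, additive reading is exactly what makes the statement exact, so I will state it explicitly at the start of the proof. If $g_{\mathsf S}$ depended on $\lambda$, only a propagation bound would be available, which is the content of the later state-dependent results.

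\emph{Base case.} Both paths share $\lambda_t$, so subtracting the two updates at date $t$ cancels $\omega+\varphi\lambda_t$ and leaves $D_0=\alpha\{g_{\mathsf S}(B)-g_{\mathsf S}(z_0)\}$.

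\emph{Inductive step.} For $k\ge0$ the residuals at date $t+1+k$ coincide across the paths, so the forcing terms cancel on subtraction:
\[
D_{k+1}=\varphi D_k+\alpha\{g_{\mathsf S}(Z^{(B)}_{t+1+k})-g_{\mathsf S}(Z^{(0)}_{t+1+k})\}=\varphi D_k .
\]
Induction then gives $D_k=\alpha\varphi^k\{g_{\mathsf S}(B)-g_{\mathsf S}(z_0)\}$ for every $k\ge0$.

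\emph{Uniform bound.} Suppose $\sup_z|g_{\mathsf S}(z)|\le M$. The triangle inequality gives $|g_{\mathsf S}(B)-g_{\mathsf S}(z_0)|\le 2M$, and $|\varphi|^k\le1$ for every $k\ge0$. Hence $|D_k|\le 2\alpha M$. This bound holds for all $k$ and all $B\in\mathbb R$, so it also bounds the supremum over $B$ and $k$.

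\emph{Cumulative bound.} Since $|\varphi|<1$, the geometric series converges and
\[
\sum_{k\ge0}|D_k|=\alpha|g_{\mathsf S}(B)-g_{\mathsf S}(z_0)|\sum_{k\ge0}|\varphi|^k=\frac{\alpha|g_{\mathsf S}(B)-g_{\mathsf S}(z_0)|}{1-|\varphi|}.
\]
Applying the same $2M$ bound to the numerator yields $2\alpha M/(1-|\varphi|)$.

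The algebra is routine; the only real obstacle is the modelling point above. I expect to spend the care there, stating clearly that the identity holds because the residual enters additively and independently of the state.
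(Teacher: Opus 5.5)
Your proposal is correct and follows essentially the same route as the paper's proof. In both, the shared state $\lambda_t$ cancels at the shock date, the identical later residuals give the homogeneous recursion $D_{k+1}=\varphi D_k$, and the two bounds follow from $|g_{\mathsf S}(B)-g_{\mathsf S}(z_0)|\le 2M$ together with $\sum_{k\ge0}|\varphi|^k=(1-|\varphi|)^{-1}$. Your caveat that exactness relies on the additive, state-free driver also matches the paper's own remark after the proposition (additive residual recursion or frozen-state approximation).
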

\proofref{app:proof_outlier}
For a redescending driver, the direct contribution $\alpha\varphi^k g_{\mathsf S}(B)$ vanishes as $|B|\to\infty$; when $\varphi^k\ne0$, the baseline-relative difference vanishes if and only if $g_{\mathsf S}(z_0)=0$. If $g_{\mathsf S}(B)\to c_\pm\ne0$ as $B\to\pm\infty$, the corresponding limits are $\alpha\varphi^k c_\pm$ and $\alpha\varphi^k\{c_\pm-g_{\mathsf S}(z_0)\}$.

Proposition~\ref{prop::outlier} is exact for an additive residual recursion, or a frozen-state approximation $g_{\mathsf S}(z)=g_{\mathsf S}(\bar\lambda,z)$, and isolates direct propagation through the autoregressive term. The next result allows state dependence under one-step contraction.
\begin{proposition}[State-dependent propagation under uniform contraction]\label{prop::state_dependent_outlier}
For $j\in\{B,0\}$, let
\[
\lambda_{s+1}^{(j)}=F_s^{(j)}(\lambda_s^{(j)},y_s^{(j)}).
\]
Suppose the paths share the state $\lambda_t$, have $y_t^{(B)}=B$ and $y_t^{(0)}=y_0$, and receive identical raw outcomes after $t$. At the shock date, let both paths use the common map $F_t$; for every $s>t$, suppose their maps coincide, $F_s^{(B)}=F_s^{(0)}=:F_s$. Assume there exists a deterministic $q\in[0,1)$ such that
\[
|F_s(\lambda,y)-F_s(\lambda',y)|\le q|\lambda-\lambda'|
\]
for every relevant $s>t,\lambda,\lambda',y$. Then, for every $k\ge0$,
\begin{equation}\label{eq::state_dependent_bound}
    \big|\lambda^{(B)}_{t+1+k}-\lambda^{(0)}_{t+1+k}\big|\le q^{k}\,\big|F_t(\lambda_t,B)-F_t(\lambda_t,y_0)\big|.
\end{equation}
If $F_t(\lambda,y)=b_t(\lambda)+\alpha u_{\mathsf S,t}(y,\lambda)$ with common $b_t$, the initial displacement equals
\[
\alpha|u_{\mathsf S,t}(B,\lambda_t)-u_{\mathsf S,t}(y_0,\lambda_t)|.
\]
Hence, if $\sup_y|u_{\mathsf S,t}(y,\lambda_t)|\le M_t$, then
\[
\big|\lambda^{(B)}_{t+1+k}-\lambda^{(0)}_{t+1+k}\big|
\le 2\alpha M_t q^k,
\qquad
\sum_{k=0}^{\infty}\big|\lambda^{(B)}_{t+1+k}-\lambda^{(0)}_{t+1+k}\big|
\le\frac{2\alpha M_t}{1-q}.
\]
\end{proposition}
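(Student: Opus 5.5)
The plan is a direct induction on $k$, driven by the uniform Lipschitz contraction of the post-shock maps. Write $\Delta_k:=\lambda^{(B)}_{t+1+k}-\lambda^{(0)}_{t+1+k}$.

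\emph{Base case.} Both paths share $\lambda_t$ and use the common map $F_t$ at the shock date. Hence $\lambda^{(B)}_{t+1}=F_t(\lambda_t,B)$ and $\lambda^{(0)}_{t+1}=F_t(\lambda_t,y_0)$. This gives $|\Delta_0|=|F_t(\lambda_t,B)-F_t(\lambda_t,y_0)|$, which is \eqref{eq::state_dependent_bound} with equality at $k=0$.

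\emph{Inductive step.} For $s=t+1+k>t$, both paths apply the same map $F_s$ to the same raw outcome $y_s$, because the outcomes after $t$ are identical. The contraction hypothesis, which holds for every relevant $\lambda,\lambda',y$, then gives
\[
|\Delta_{k+1}|=\bigl|F_s(\lambda^{(B)}_s,y_s)-F_s(\lambda^{(0)}_s,y_s)\bigr|\le q|\Delta_k|.
\]
Iterating this inequality yields $|\Delta_k|\le q^k|\Delta_0|$ for every $k\ge0$.

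The only point needing care is the scope of the contraction hypothesis. The constant $q$ must be deterministic and the Lipschitz bound must hold at the states actually visited by both paths. The statement grants this by quantifying over all relevant $s$, $\lambda$, $\lambda'$ and $y$, so no stopping or localisation argument is needed. It also matters that the raw outcomes $y_s$ are held fixed across paths, rather than the standardised residuals. This is exactly why state dependence enters only through $F_s$ and the Lipschitz step applies cleanly.

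\emph{Specialisation.} If $F_t(\lambda,y)=b_t(\lambda)+\alpha u_{\mathsf S,t}(y,\lambda)$, the common predictable term $b_t(\lambda_t)$ cancels in $\Delta_0$. What remains is
\[
|\Delta_0|=\alpha\,\bigl|u_{\mathsf S,t}(B,\lambda_t)-u_{\mathsf S,t}(y_0,\lambda_t)\bigr|.
\]
Under $\sup_y|u_{\mathsf S,t}(y,\lambda_t)|\le M_t$, the triangle inequality gives $|\Delta_0|\le 2\alpha M_t$, and therefore $|\Delta_k|\le 2\alpha M_t q^k$.

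\emph{Cumulative bound.} Summing the geometric series with $q\in[0,1)$ gives
\[
\sum_{k\ge0}|\Delta_k|\le \frac{2\alpha M_t}{1-q}.
\]
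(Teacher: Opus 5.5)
Your proof is correct and follows the paper's own argument essentially step for step. The shared state and common shock-date map $F_t$ give the initial displacement. Identical raw outcomes under common post-shock maps give the one-step $q$-contraction, which you iterate. The cancellation of $b_t(\lambda_t)$, the triangle inequality with $M_t$, and the geometric series then yield the remaining bounds.
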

\proofref{app:proof_state_outlier}

A fixed-state bound yields shock-size-uniform propagation, while a state-uniform bound extends it over the visited region. The deterministic $q<1$ condition is stronger than standard random-Lipschitz, negative-top-Lyapunov conditions \citep{straumann2006quasi,blasques2018feasible}, but provides explicit pathwise bounds.

\subsection{Dynamic equivalence across rules}\label{subsec:dynamic_equivalence}
The path identifies the scaled derivative $-G_{\mathsf S}(\lambda)\partial_\lambda\mathsf S(P_{\lambda,\theta},y)$, and reciprocal rescaling can make different rule derivatives dynamically equivalent.
\begin{proposition}[Exact dynamic equivalence]\label{prop::dynamic_equivalence}
Consider, for $j=1,2$,
\[
 \lambda^{(j)}_{t+1}=\omega_{j,t}+\varphi_{j,t}\lambda^{(j)}_t
 +\alpha_jG_{j,t}(\lambda^{(j)}_t)\psi_j(y_t,\lambda^{(j)}_t).
\]
Suppose the observations, initial states, intercepts, and autoregressive coefficients are common:
$\lambda^{(1)}_0=\lambda^{(2)}_0$, $\omega_{1,t}=\omega_{2,t}$, and
$\varphi_{1,t}=\varphi_{2,t}$ for every $t$. The two specifications generate the same path if
\[
 \alpha_1G_{1,t}(\lambda)\psi_1(y,\lambda)=\alpha_2G_{2,t}(\lambda)\psi_2(y,\lambda)
\]
for every relevant $(t,y,\lambda)$. In particular, if $\psi_2(y,\lambda)=a_t(\lambda)\psi_1(y,\lambda)$ with $a_t(\lambda)>0$, path equality follows when $\alpha_2G_{2,t}(\lambda)a_t(\lambda)=\alpha_1G_{1,t}(\lambda)$.

At a common centred target $\lambda^\dagger$, suppose also that $\psi_2(y,\lambda)=a(\lambda)\psi_1(y,\lambda)$, with $a>0$ continuous at $\lambda^\dagger$, $J_i,K_i$ well defined, and $J_1\ne0$. Then
\[
J_2=a(\lambda^\dagger)J_1,
\qquad
K_2=a(\lambda^\dagger)^2K_1,
\qquad
\frac{K_2}{J_2^2}=\frac{K_1}{J_1^2}.
\]
\end{proposition}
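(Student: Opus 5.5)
The statement has two independent parts, and I will prove them in order. The first is a pathwise induction on $t$. The second is a pair of identities for the local curvature $J$ and the innovation second moment $K$, obtained by differentiating the proportionality relation $\psi_2=a\psi_1$.

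\emph{Path equality.} I will show that $\lambda^{(1)}_t=\lambda^{(2)}_t$ for every $t$, by induction on $t$ along a fixed realisation of the common observation sequence. The base case is the assumption $\lambda^{(1)}_0=\lambda^{(2)}_0$. For the inductive step, suppose $\lambda^{(1)}_t=\lambda^{(2)}_t=:\lambda$. Then $\omega_{1,t}=\omega_{2,t}$ and $\varphi_{1,t}=\varphi_{2,t}$ make the predictable parts of the two recursions coincide. The scaled innovations $\alpha_jG_{j,t}(\lambda)\psi_j(y_t,\lambda)$ also coincide by the hypothesis, which is imposed at every relevant $(t,y,\lambda)$, in particular at $(t,y_t,\lambda)$. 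Hence $\lambda^{(1)}_{t+1}=\lambda^{(2)}_{t+1}$. The word ``relevant'' only needs to cover the states and observations actually visited, and that is exactly what the induction uses. For the special case, substitute $\psi_2=a_t\psi_1$: then
\[
\alpha_2G_{2,t}\psi_2=\alpha_2G_{2,t}a_t\psi_1=\alpha_1G_{1,t}\psi_1,
\]
so the general condition holds.

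\emph{Curvature identity.} With $\psi_2=a\psi_1$ near $\lambda^\dagger$, I apply Assumption~\ref{ass:conditional_differentiation} to both rules. This gives
\[
\mathsf R_2'(\lambda)=-\mathsf E_{t-1}[\psi_2(y_t,\lambda)]=-a(\lambda)\mathsf E_{t-1}[\psi_1(y_t,\lambda)]=a(\lambda)\mathsf R_1'(\lambda),
\]
and in particular the two rules share the centred point. To obtain $J_2$, I will avoid assuming that $a$ is differentiable and use a difference quotient at $\lambda^\dagger$ instead. Since $\mathsf R_1'(\lambda^\dagger)=0$,
\[
\frac{\mathsf R_2'(\lambda)-\mathsf R_2'(\lambda^\dagger)}{\lambda-\lambda^\dagger}
= a(\lambda)\,\frac{\mathsf R_1'(\lambda)-\mathsf R_1'(\lambda^\dagger)}{\lambda-\lambda^\dagger}.
\]
Letting $\lambda\to\lambda^\dagger$, continuity of $a$ at $\lambda^\dagger$ and existence of $\mathsf R_1''(\lambda^\dagger)=J_1$ give $J_2=a(\lambda^\dagger)J_1$. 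Here ``$J_i$ well defined'' is read as twice differentiability of $\mathsf R_i$ at $\lambda^\dagger$. The $K$ identity is immediate:
\[
K_2=\mathsf E_{t-1}\bigl[a(\lambda^\dagger)^2\psi_1^2(y_t,\lambda^\dagger)\bigr]=a(\lambda^\dagger)^2K_1.
\]
Because $J_1\ne0$ and $a(\lambda^\dagger)>0$, we have $J_2\neq0$, so dividing gives $K_2/J_2^2=K_1/J_1^2$.

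\emph{Main obstacle.} The only delicate step is justifying $J_2=aJ_1$ without differentiability of $a$. The difference-quotient argument above handles this, because it uses $\mathsf R_1'(\lambda^\dagger)=0$ to remove the term that would otherwise involve $a'$. The proportionality must also hold $\widetilde P_t$-almost surely on a neighbourhood of $\lambda^\dagger$, so that the interchange of derivative and expectation applies to both rules there.
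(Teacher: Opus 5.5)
Your proposal is correct and follows the paper's own proof. Path equality comes from induction on the identical one-step recursions, and the proportional case from substitution. You obtain $J_2=a(\lambda^\dagger)J_1$ from a difference quotient at $\lambda^\dagger$, using the vanishing first-order condition and continuity of $a$, and $K_2=a(\lambda^\dagger)^2K_1$ directly. The only cosmetic difference is that the paper writes the curvature step in terms of $m_i=\mathsf E_{t-1}[\psi_i]$ rather than $\mathsf R_i'$, and these differ only by sign.
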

\proofref{app:proof_dynamic_equivalence}

A constant multiplicative change in $G$ can be absorbed into $\alpha$; a state-dependent change preserves the path only when offset pointwise by a proportional change in the rule derivative. Accordingly, cross-rule comparisons use $GJ$ and $G^2K$ under generic scaling, and $1$ and $K/J^2$ under curvature scaling. Tail class likewise belongs to the fully scaled update.

Properness supplies the divergence representation and centres the update at a correctly specified target; strict propriety, together with identification, makes that target unique. The local and asymptotic results below then use the differentiability, moment, and stability conditions stated for each result.

\section{The continuous-time limit of scoring-rule scale models}\label{section::continuous_time}
Consider the scale family used in score-driven volatility models \citep{buccheri2021continuous}. Conditional on $\mathcal{F}_{t-1}$, $y_t$ has density
\begin{equation}\label{scalefamily}
    \mathfrak{p}(y \mid c_t; \theta) = \frac{1}{\sqrt{c_t}} \Phi\left(\frac{y}{\sqrt{c_t}},\theta\right).
\end{equation}
Here $c_t=\Gamma(\lambda_t)>0$ is predictable, with $\Gamma$ monotone and differentiable, so the standardised residual $y_t/\sqrt{c_t}$ has density $\Phi(\cdot,\theta)$. We augment Definition~\ref{def::scoring-rule-filter} with intercept $\omega$ and autoregressive coefficient $\varphi$; for the logarithmic rule, related GARCH-type limits appear in \citet{nelson1990arch} and \citet{corradi2000reconsidering}.

This section treats two regimes. With a centred driver, an $O(\sqrt h)$ gain retains update noise and yields a diffusion limit. With a non-centred order-one mean, an $O(h)$ gain removes update noise and places the mean update in the deterministic state drift. Section~\ref{sec::local_consistency} then recentres the recursion around a moving pseudo-true path.

At mesh $h$, let $Q_{\lambda_0}^{(h)}$ be the true local law at state $\lambda_0$ and $P_a^{(h)}$ the working law at candidate $a$. The canonical two-argument risk is
\begin{equation}\label{eq::two_argument_risk}
 \mathsf R_{\mathsf S,h}(a;\lambda_0)
 :=\mathsf E_{Q_{\lambda_0}^{(h)}}[\mathsf S(P_a^{(h)},Y)].
\end{equation}
Only the candidate $a$ is differentiated; the true law is held fixed. Write $a^\star=a^\star_{\mathsf S,h}(\lambda_0)$ for a unique interior minimiser and define
\[
\begin{aligned}
 \psi_{\mathsf S,h}(a,y)&:=-\partial_a\mathsf S(P_a^{(h)},y), \\
 J_{\mathsf S,h}(\lambda_0)&:=\left.\partial_a^2\mathsf R_{\mathsf S,h}(a;\lambda_0)\right|_{a=a^\star},\qquad
 K_{\mathsf S,h}(\lambda_0):=\mathsf{Var}_{Q_{\lambda_0}^{(h)}}[\psi_{\mathsf S,h}(a^\star,Y)].
\end{aligned}
\]
Under correct specification, identification yields $a^\star=\lambda_0$, and differentiable propriety gives $\mathsf E_{Q_{\lambda_0}^{(h)}}[\psi_{\mathsf S,h}(\lambda_0,Y)]=0$. For Gaussian log score, $J_{\log,h}=1/2$; varying candidate and truth together would instead differentiate entropy along the diagonal.

The limit theorems below use the scaled update
\begin{equation}\label{eq::array_driver}
 u_{\mathsf S,h,k}(\lambda,y)
 :=G_{\mathsf S,h,k}(\lambda)\psi_{\mathsf S,h}(\lambda,y),
 \qquad G_{\mathsf S,h,k}(\lambda)>0,
\end{equation}
where the scaling is positive and predictable. The two-state limit theorems
require every predictable input to the transition law to be current-state
measurable. A random plug-in scaling is covered if it is asymptotically
replaceable by a deterministic state-based design, with accumulated stopped
drift and martingale replacement errors $o_p(1)$ on every finite horizon;
otherwise a separate augmented-state theorem is required.

Under misspecification, inverse true-risk curvature $J_{\mathsf S,h}^{-1}$ provides an oracle scaling. A feasible alternative uses working-model sensitivity:
\[
 J_{\mathsf S,h}^{\mathrm{work}}(a)
 :=
 \mathsf E_{P_a^{(h)}}\!\left[\partial_a^2\mathsf S(P_a^{(h)},Y)\right],
 \qquad
 G_{\mathsf S,h}^{\mathrm{work}}(a)
 :=
 \{J_{\mathsf S,h}^{\mathrm{work}}(a)\}^{-1},
\]
assuming $0<J_{\mathsf S,h}^{\mathrm{work}}(a)<\infty$. At a correctly specified, identified target
$a^\star=\lambda_0$, the working and true sensitivities coincide:
$J_{\mathsf S,h}^{\mathrm{work}}(a^\star)=J_{\mathsf S,h}(\lambda_0)$.
\appref{app::formulas} gives the other feasible designs.

At a centred target, let $Z$ denote the standardised innovation and set $C:=\mathsf E[Z\psi]$. Under generic scaling $G$, the local mean-reversion loading, update variance, and update--observation covariance are $GJ$, $G^2K$, and $GC$; curvature scaling $G=J^{-1}$ yields $1$, $K/J^2$, and $C/J$. The convergence arguments use predictable characteristics and conditional Lindeberg conditions \citep{stroock2007multidimensional,ethier1986markov,ethier1994convergence,kushner1984approximation,nelson1990arch}; $\Rightarrow$ denotes weak convergence.

At mesh $h$, allow static parameters to depend on $h$ and write
\begin{align}\label{eq::discretised}
    \begin{split}
        x_{(k+1)h}^{(h)}-x_{kh}^{(h)}
        &=
        \sqrt{h\,\Gamma(\lambda_{kh}^{(h)})}\,
        z_{(k+1)h},
        \\
        \lambda_{(k+1)h}^{(h)}-\lambda_{kh}^{(h)}
        &=
        \omega_h-(1-\varphi_h)\lambda_{kh}^{(h)}
        +
        \alpha_h u_{\mathsf S,h,k}(\lambda_{kh}^{(h)},Y_{(k+1)h}^{(h)}),
        \qquad
        Y_{(k+1)h}^{(h)}=\sqrt{h\Gamma(\lambda_{kh}^{(h)})}\,z_{(k+1)h}.
    \end{split}
\end{align}

The initial state has law $\nu_h$. In the frozen-family verification, $z_{(k+1)h}$ is drawn conditionally independently from the specified standardised Gaussian or Student law; the limit theorems also permit conditional triangular arrays. For $kh\le t<(k+1)h$, use the piecewise-constant interpolation of~\eqref{eq::discretised}:
\[
 x_t^{(h)}=x_{kh}^{(h)},
 \qquad
 \lambda_t^{(h)}=\lambda_{kh}^{(h)}.
\]
Write $\mathsf P_0$ for the limiting initial law of $(x_0,\lambda_0)$. Driver centring distinguishes the two limits below.

The limiting transition characteristics are built from the conditional update mean, second moment, and update--innovation cross-moment:
\begin{equation}\label{eq::localvarandcov}
\begin{aligned}
 \mu_{\mathsf S,h,k}(\lambda)
 &=\mathsf E_{kh}[u_{\mathsf S,h,k}(\lambda,Y)],
 &\Xi_{\mathsf S,h,k}(\lambda)
 &=\mathsf E_{kh}[u_{\mathsf S,h,k}(\lambda,Y)^2],\\
 C_{\mathsf S,h,k}(\lambda)
 &=\mathsf E_{kh}[Z\,u_{\mathsf S,h,k}(\lambda,Y)].
\end{aligned}
\end{equation}
Here $Y=\sqrt{h\Gamma(\lambda)}Z$. When the update is centred, $\Xi$ is its conditional variance and $C$ its covariance with $Z$; otherwise they are, respectively, a second moment and a cross-moment.

For the calculations below, suppress $(h,k)$ and set $g_{\mathsf S}(\lambda,z):=u_{\mathsf S,h,k}(\lambda,\sqrt{h\Gamma(\lambda)}z)$. Let $\mathsf E_{kh}^{\pi_z}$ denote expectation under the current conditional law of the standardised innovation $Z$, and write $\zeta^{(j)}=\mathsf E_{kh}^{\pi_z}[Z^j]$ for $j=2,4$. The limit assumptions require these moments to converge locally uniformly.

Writing $m_h(\lambda)=\omega_h-(1-\varphi_h)\lambda$, direct conditioning gives
\begin{equation}\label{eq::moments}
\begin{split}
    h^{-1}\mathsf E_{kh}^{\pi_z}
    [x_{(k+1)h}^{(h)}-x_{kh}^{(h)}]
    &=
    h^{-1/2}\sqrt{\Gamma(\lambda_{kh}^{(h)})}\,
        \mathsf E_{kh}^{\pi_z}[Z],
    \\
    h^{-1}\mathsf E_{kh}^{\pi_z}
    [(x_{(k+1)h}^{(h)}-x_{kh}^{(h)})^2]
    &=
    \Gamma(\lambda_{kh}^{(h)})\zeta^{(2)},
    \\
    h^{-1}\mathsf E_{kh}^{\pi_z}
    [\lambda_{(k+1)h}^{(h)}-\lambda_{kh}^{(h)}]
    &=
    h^{-1}m_h(\lambda_{kh}^{(h)})
    +
    h^{-1}\alpha_h
    \mathsf E_{kh}^{\pi_z}
    [g_{\mathsf S}(\lambda_{kh}^{(h)},Z)],
    \\
    h^{-1}\mathsf E_{kh}^{\pi_z}
    [(\lambda_{(k+1)h}^{(h)}-\lambda_{kh}^{(h)})^2]
    &=
    h^{-1}m_h^2(\lambda_{kh}^{(h)})
    \\
    &\quad+
    2h^{-1}m_h(\lambda_{kh}^{(h)})\alpha_h
    \mathsf E_{kh}^{\pi_z}
    [g_{\mathsf S}(\lambda_{kh}^{(h)},Z)]
    \\
    &\quad+
    h^{-1}\alpha_h^2
    \mathsf E_{kh}^{\pi_z}
    [g_{\mathsf S}^2(\lambda_{kh}^{(h)},Z)].
\end{split}
\end{equation}

The mixed second moment supplies the limiting off-diagonal covariance:
\begin{equation}\label{eq::mix_term}
    \begin{split}
        h^{-1}\mathsf E_{kh}^{\pi_z}
        \left[
        (x_{(k+1)h}^{(h)}-x_{kh}^{(h)})
        (\lambda_{(k+1)h}^{(h)}-\lambda_{kh}^{(h)})
        \right]
        &=
        h^{-1/2}\alpha_h
        \sqrt{\Gamma(\lambda_{kh}^{(h)})}
        C_{\mathsf S,h,k}(\lambda_{kh}^{(h)})
        \\
        &\quad+
        h^{-1/2}
        \sqrt{\Gamma(\lambda_{kh}^{(h)})}
        m_h(\lambda_{kh}^{(h)})
        \mathsf E_{kh}^{\pi_z}[Z].
    \end{split}
\end{equation}

\enlargethispage{1pt}
Fourth moments provide a sufficient large-jump negligibility condition,\footnote{It suffices that a $2+\delta$ increment moment vanish locally on compacts for some $\delta>0$ \citep{nelson1990arch}; we take $\delta=2$.}
\begin{equation}\label{eq::fourth_moments}
\begin{split}
    h^{-1}\mathsf E_{kh}^{\pi_z}
    [(x_{(k+1)h}^{(h)}-x_{kh}^{(h)})^4]
    &=
    h\,\Gamma^2(\lambda_{kh}^{(h)})\zeta^{(4)},
    \\
    h^{-1}\mathsf E_{kh}^{\pi_z}
    [(\lambda_{(k+1)h}^{(h)}-\lambda_{kh}^{(h)})^4]
    &\le
    8h^{-1}m_h^4(\lambda_{kh}^{(h)})
    \\
    &\quad+
    8h^{-1}\alpha_h^4
    \mathsf E_{kh}^{\pi_z}
    [g_{\mathsf S}^4(\lambda_{kh}^{(h)},Z)].
\end{split}
\end{equation}

Both regimes use the common drift scaling
\begin{equation}\label{eq::condition_1}
    \lim_{h \rightarrow 0} h^{-1}\omega_h = \omega,
\end{equation}
and
\begin{equation}\label{eq::condition_2}
    \lim_{h \rightarrow 0} h^{-1}(1-\varphi_h) = \kappa.
\end{equation}
Hence, locally uniformly for $\lambda$ in compact sets, $m_h(\lambda)=\omega_h-(1-\varphi_h)\lambda=h(\omega-\kappa\lambda)+o(h)$. The gain order then separates the limits: a centred $O(\sqrt h)$ update retains a quadratic characteristic for $\lambda$, whereas an order-one update mean requires an $O(h)$ gain, which removes the $\lambda$-update noise and leaves the Brownian driver only in $x$.\footnote{The regimes have different option-pricing implications when $\lambda$ is non-tradeable volatility.}
\subsection{Centred diffusion limit}
\begin{assumption}\label{ass::assumption_61}
For each $h$, the joint array $(x^{(h)},\lambda^{(h)})$ is a time-homogeneous
Markov chain on $\mathbb R\times\Lambda$, with every predictable input to its
transition law current-state measurable. For every compact $K\subset\Lambda$
and finite horizon $T$, the conditions below hold locally uniformly in the state
on $\mathbb R\times K$, and every displayed transition-characteristic limit is
deterministic.
\begin{enumerate}[leftmargin=2em,label=(\roman*)]
    \item In addition to~\eqref{eq::condition_1}--\eqref{eq::condition_2},
    \begin{equation}\label{eq::condition_3}
        \lim_{h \rightarrow 0} h^{-1/2} \alpha_h = \alpha.
    \end{equation}
    \item Throughout (ii)--(iv), write $u:=u_{\mathsf S,h,k}(\lambda,Y)$. In the baseline case, the true conditional law at state $\lambda$ is $P_\lambda^{(h)}$, and $Z$ and $u$ are centred state by state. The theorem also permits the approximate centring rates
    \[
       \sup_{k\le T/h,\lambda\in K}|\mathsf E_{kh}Z|=o(\sqrt h),
       \qquad
       \sup_{k\le T/h,\lambda\in K}|\mathsf E_{kh}u|=o(\sqrt h).
    \]
    Nonzero first-order limits $h^{-1/2}\mathsf E_{kh}Z\to r_x(\lambda)$ or $h^{-1/2}\mathsf E_{kh}u\to r_\lambda(\lambda)$ would instead add $\sqrt{\Gamma(\lambda)}r_x(\lambda)$ or $\alpha r_\lambda(\lambda)$ to the limiting drift and therefore fall outside the stated limit. At each fixed mesh, differentiation with respect to the candidate parameter may be passed through the conditional expectation up to second order in a neighbourhood of the target:
    \[
      \partial_a\mathsf R_{\mathsf S,h}(a;\lambda)
      =\mathsf E_{kh}\!\left[\partial_a\mathsf S(P_a^{(h)},Y)\right],
      \qquad
      \partial_a^2\mathsf R_{\mathsf S,h}(a;\lambda)
      =\mathsf E_{kh}\!\left[\partial_a^2\mathsf S(P_a^{(h)},Y)\right].
    \]
    Together with interiority, this interchange justifies exact update centring and the mean-field derivative in the correctly specified baseline.
    \item The conditional characteristics converge:
    \[
      \mathsf E_{kh}[Z^2]\to\zeta_2(\lambda),\qquad
      \mathsf E_{kh}[u^2]\to\Xi_{\mathsf S}(\lambda),\qquad
      \mathsf E_{kh}[Zu]\to C_{\mathsf S}(\lambda),
    \]
    with continuous limits and $\zeta_2(\lambda)>0$.
    \item For every $\varepsilon>0$, the residual and update Lindeberg conditions hold:
    \[
      \sup_{k,\lambda\in K}\mathsf E_{kh}\!\left[Z^2\mathbf 1\{\sqrt{h\Gamma(\lambda)}|Z|>\varepsilon\}\right]\to0,
      \qquad
      \sup_{k,\lambda\in K}\mathsf E_{kh}\!\left[u^2\mathbf 1\{|\alpha_hu|>\varepsilon\}\right]\to0.
    \]
    A sufficient condition is a locally uniform $(2+\delta)$ moment bound for some $\delta>0$ on both $Z$ and $u$.
    \item The initial laws converge and the joint array satisfies compact containment.
    \item The limiting martingale problem with the drift and covariance below is well posed.
\end{enumerate}
\end{assumption}
These requirements are imposed on the scaled update and supplement properness. Proposition~\ref{prop:compact_containment} verifies containment; on $\Lambda=\mathbb R$, one may use $V(\lambda)=1+\lambda^2$ when its drift inequality holds.
The next result verifies the score-specific centring, characteristic, and Lindeberg conditions.

\begin{proposition}[Score-specific verification]\label{prop:verified_score_arrays}
Let $q_h(\lambda)=he^\lambda$, $Y=\sqrt{q_h(\lambda)}Z$, and let $Z$ have either the standard Gaussian law or the variance-standardised Student law with $\nu=6$. For the logarithmic, density-power with fixed $\beta>0$, and CRPS rules, choose
\[
 G_{\log,h}=2,
 \qquad
 G_{\beta,h}=2q_h^{\beta/2},
 \qquad
 G_{{\rm CRPS},h}=2q_h^{-1/2}.
\]
The resulting scaled updates depend only on $Z$ and satisfy
\[
 \mathsf E[u_{\mathsf S}]=0,
 \qquad
 \mathsf E[Zu_{\mathsf S}]=0,
 \qquad
 0<\mathsf E[u_{\mathsf S}^2]<\infty.
\]
Consequently, the centring, characteristic, and Lindeberg requirements in Assumption~\ref{ass::assumption_61}-(ii)--(iv) hold for all six score--density pairs. Initial convergence, compact containment, and well posedness remain separate conditions.
\end{proposition}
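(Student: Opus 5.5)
The plan rests on scale invariance. With $\Gamma(\lambda)=e^\lambda$ and $c=q_h(\lambda)=he^\lambda$, every working law is $P_c(\mathrm dy)=c^{-1/2}\Phi(y/\sqrt c)\,\mathrm dy$, and $\partial_\lambda c=c$. For each rule I will write $\mathsf S(P_c,y)$ as a power of $c$ times a function of $z=y/\sqrt c$, plus possibly a function of $c$ alone. Then I will differentiate in $\lambda$ with $y$ held fixed, using $\partial_\lambda z=-z/2$.

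\textbf{Step 1: reduce each scaled update to a function of $Z$ alone.}
\begin{itemize}
\item Logarithmic rule. $\mathsf S_{\log}=\tfrac12\log c-\log\Phi(z)$, so $\psi_{\log}=-\tfrac12\{1+z\,\Phi'(z)/\Phi(z)\}$. This depends on $z$ only, and $G_{\log,h}=2$ gives $u=-\{1+z\Phi'(z)/\Phi(z)\}$. In the Gaussian case this is $z^2-1$. In the Student case it is $(\nu+1)z^2/(\nu-2+z^2)-1$ up to the standardisation constant, which is bounded.
\item Density-power rule. $\mathsf S_\beta(P,y)=-(1+\beta^{-1})\mathfrak p(y)^\beta+\int\mathfrak p^{1+\beta}$. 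Here $\mathfrak p(y)^\beta=c^{-\beta/2}\Phi(z)^\beta$ and $\int \mathfrak p^{1+\beta}=c^{-\beta/2}\int\Phi^{1+\beta}$. Hence $\mathsf S_\beta=c^{-\beta/2}s_\beta(z)$ and $\psi_\beta=c^{-\beta/2}\tilde s_\beta(z)$ for an explicit $\tilde s_\beta$. The factor $G_{\beta,h}=2q_h^{\beta/2}$ cancels the power of $c$.
\item CRPS. $\mathrm{CRPS}(P_c,y)=\sqrt c\,\mathrm{CRPS}(P_1,z)$, so $\psi_{\rm CRPS}=\sqrt c\,\tilde s(z)$. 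The factor $G_{{\rm CRPS},h}=2q_h^{-1/2}$ cancels it.
\end{itemize}
So in all six cases $u_{\mathsf S}=g_{\mathsf S}(Z)$, where $g_{\mathsf S}$ does not depend on $(h,\lambda)$.

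\textbf{Step 2: the moment identities.}
\begin{itemize}
\item Centring. Under the baseline, $Z$ has density $\Phi$, so $\mathsf E[u_{\mathsf S}]=-2\,(\text{power of }c)\,\partial_a \mathsf R_{\mathsf S,h}(a;\lambda)|_{a=\lambda}$. This vanishes because propriety makes $a=\lambda$ a minimiser of the risk. Since everything reduces to a one-dimensional integral in $z$, I will instead verify $\mathsf E[g_{\mathsf S}(Z)]=0$ directly in each case. For the log rule this is integration by parts, $\int z\Phi'(z)\,\mathrm dz=-1$. For the density-power rule it is the analogous identity for $\int z\Phi^\beta\Phi'$. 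For CRPS I will use the explicit derivative formula.
\item Zero cross-moment. Both $\Phi$ (Gaussian and Student) are symmetric. Each $g_{\mathsf S}$ is even in $z$: it is built from $\Phi$ and $z\Phi'(z)$, and for CRPS from a symmetric kernel. Hence $Zg_{\mathsf S}(Z)$ is odd, and $\mathsf E[Zu]=0$.
\item Second moment. Positivity holds because no $g_{\mathsf S}$ is a.s. constant. Finiteness is trivial for the bounded drivers, which are the Student log rule, both density-power cases ($\Phi^\beta$ times polynomially dominated factors are bounded) and CRPS (its derivative grows at most like $|z|$). For the Gaussian log rule, $z^2-1$ has all moments. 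Growth at most $|z|$ needs only $\mathsf E Z^2<\infty$, and $\nu=6>4$ gives finite fourth moments anyway.
\end{itemize}

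\textbf{Step 3: conditions (ii)--(iv) of Assumption~\ref{ass::assumption_61}.}
\begin{itemize}
\item The characteristics are constants: $\zeta_2=1$, $\Xi_{\mathsf S}=\mathsf E g_{\mathsf S}^2$ and $C_{\mathsf S}=0$. They are trivially continuous.
\item The residual Lindeberg condition holds because $\sup_{\lambda\in K}\sqrt{he^\lambda}\to0$ and $Z$ has a fixed square-integrable law, so dominated convergence applies.
\item The update Lindeberg condition holds because $\alpha_h=O(\sqrt h)\to0$ and the law of $u$ is fixed. Hence $\mathsf E[u^2\mathbf 1\{|u|>\varepsilon/\alpha_h\}]\to0$, uniformly in $k$ and $\lambda$.
\item The second-order interchange in (ii) follows by dominated convergence. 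The $\lambda$-derivatives of $\mathsf S$ are $c^{\text{power}}$ times functions of $z$ dominated by integrable envelopes, locally uniformly in $a$.
\end{itemize}

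The main obstacle I expect is the explicit CRPS and density-power verifications for the Student $\nu=6$ law. There I must confirm $\Phi\in L^{1+\beta}$, obtain an explicit or at least dominated form for $\partial_a\mathrm{CRPS}$, and check the centring identity. My plan is to rely on the propriety and differentiability argument through the interchange, rather than computing closed forms.
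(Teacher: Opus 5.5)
Your proposal is correct and follows essentially the same route as the paper. Scale homogeneity ($\partial_\lambda q=q$, $\partial_\lambda z=-z/2$) reduces each scaled update to a fixed function of $Z$; centring follows from propriety with differentiation under the integral; $\mathsf E[ZU(Z)]=0$ follows from evenness of the drivers under symmetric residual laws; and finiteness follows from boundedness, or from $Z^2-1$ in the Gaussian log case. The only minor difference is that you get the update Lindeberg condition directly from the fixed law of $u$ and $\alpha_h\to0$, which needs only $\mathsf E[u^2]<\infty$, whereas the paper invokes the sufficient $(2+\delta)$-moment bound; also, the CRPS driver is in fact bounded (tail-saturating), not merely of linear growth.
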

\proofref{app:proof_verified_arrays}
The only unbounded update among these cases is Gaussian log: a $(2+\delta)$ update moment requires a residual moment of order $4+2\delta$.
Fixed-data-bandwidth MMD remains part of the finite-step and empirical analyses but is not covered by Proposition~\ref{prop:verified_score_arrays}, whose verification exploits exact scale homogeneity; its pointwise curvature-normalised limit and the distinct mesh-dependent-bandwidth design are discussed in \appref{app::formulas}.

The limiting second moments in \eqref{eq::moments}--\eqref{eq::mix_term} determine the covariance matrix
\begin{equation}\label{eq::covariance}
    \mathcal{A}_{\mathsf{S}}(\lambda)=
    \begin{bmatrix}
    \Gamma(\lambda) \zeta_2(\lambda)
    &
    \alpha\sqrt{\Gamma(\lambda)}\,C_{\mathsf{S}}(\lambda)
    \\
    \alpha\sqrt{\Gamma(\lambda)}\,C_{\mathsf{S}}(\lambda)
    &
    \alpha^2\Xi_{\mathsf{S}}(\lambda)
    \end{bmatrix}.
\end{equation}

Since $C_{\mathsf S}^2(\lambda)\le\zeta_2(\lambda)\Xi_{\mathsf S}(\lambda)$ by Cauchy--Schwarz and $\zeta_2(\lambda)>0$, define
$\Delta_{\mathsf S}(\lambda):=\Xi_{\mathsf S}(\lambda)-\zeta_2(\lambda)^{-1}C_{\mathsf S}^2(\lambda)\ge0$,
the component of the scaled-update variance orthogonal to $Z$. An independent second Brownian factor is needed only where $\Delta_{\mathsf S}>0$. A convenient lower-triangular square root of $\mathcal A_{\mathsf S}$ is
\begin{equation}\label{eq::triangular}
   \Sigma^{\rm tr}_{\mathsf{S}}(\lambda)=
    \begin{bmatrix}
    \sqrt{\zeta_2(\lambda)\Gamma(\lambda)}
    &
    0
    \\
    \alpha \zeta_2(\lambda)^{-1/2} C_{\mathsf{S}}(\lambda)
    &
    \alpha \sqrt{\Delta_{\mathsf{S}}(\lambda)}
    \end{bmatrix}\,.
\end{equation}

\begin{samepage}
The covariance matrix identifies the only possible stochastic component of the limit, and
\eqref{eq::triangular} gives a convenient factorisation. To turn these local characteristics into a
well-defined diffusion, we now impose regularity and well-posedness conditions.

\begin{assumption}\label{ass::assumption_62}
The following additional conditions hold.
\begin{enumerate}
    \item[(i)] The functions $\zeta_2(\cdot)$, $C_{\mathsf{S}}(\cdot)$ and $\sqrt{\Delta_{\mathsf{S}}}(\cdot)$ are locally Lipschitz on $\Lambda$, while $\sqrt{\Gamma(\cdot)}$ is continuous and locally bounded.
    \item[(ii)] The scalar $\lambda$-equation with drift $\omega-\kappa\lambda$ and diffusion coefficients given by the second row of \eqref{eq::triangular} admits a pathwise unique, non-explosive strong solution in $\Lambda$ for every initial law under consideration. Given this solution, the first row of \eqref{eq::triangular}, driven by the same $W^1$, defines $x$ uniquely on every finite horizon.
\end{enumerate}
\end{assumption}
\end{samepage}
\begin{theorem}\label{th::diffusion_limit}
Suppose that Assumptions \ref{ass::assumption_61} and \ref{ass::assumption_62} hold. Then, for every finite $T$,
\[
 (x^{(h)},\lambda^{(h)})\Rightarrow(x,\lambda)
 \quad\text{in }D([0,T],\mathbb R\times\Lambda),
\]
where $(x,\lambda)$ is the unique weak solution of
\begin{equation}
    \ud \begin{bmatrix}
    x_t\\
    \lambda_t
    \end{bmatrix}
    =
    \begin{bmatrix}
    0\\
    \omega-\kappa\lambda_t
    \end{bmatrix}\,\ud t
    +
    \Sigma^{\rm tr}_{\mathsf{S}}(\lambda_t)\,\ud W_t,
\end{equation}
where $W$ is a two-dimensional Brownian motion with independent components and $\Sigma^{\rm tr}_{\mathsf{S}}$ is given by \eqref{eq::triangular}.
\end{theorem}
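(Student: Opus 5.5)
The plan is to apply the Stroock--Varadhan/Ethier--Kurtz convergence theorem for Markov chains to diffusions (Ethier and Kurtz, Theorem 7.4.1; Nelson 1990, Theorem 2.2). It requires four ingredients. First, the rescaled conditional drift and second-moment characteristics of the chain must converge locally uniformly on compacts. Second, a Lindeberg-type condition must rule out large jumps. Third, the initial laws must converge and the array must satisfy compact containment. Fourth, the limiting martingale problem must be well posed.

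Since the chain is time-homogeneous and Markov with current-state measurable inputs (Assumption~\ref{ass::assumption_61}), the characteristics are functions of the current state. I would read them off from \eqref{eq::moments} and \eqref{eq::mix_term}.

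\textbf{Drift.} The $x$-drift is $h^{-1/2}\sqrt{\Gamma}\,\mathsf E_{kh}Z$, which tends to $0$ uniformly on compacts by the $o(\sqrt h)$ centring in (ii) and local boundedness of $\sqrt\Gamma$. The $\lambda$-drift is $h^{-1}m_h(\lambda)+h^{-1}\alpha_h\mathsf E_{kh}u$. By \eqref{eq::condition_1}--\eqref{eq::condition_2} the first term tends to $\omega-\kappa\lambda$. The second term equals $(h^{-1/2}\alpha_h)(h^{-1/2}\mathsf E_{kh}u)\to\alpha\cdot 0$ by \eqref{eq::condition_3} and (ii).

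\textbf{Second moments.} The $x$-entry is $\Gamma\,\mathsf E_{kh}Z^2\to\Gamma\zeta_2$. For the $\lambda$-entry, $h^{-1}m_h^2=O(h)$ and the cross term is $O(h)\cdot o(1)$. The remaining term satisfies $h^{-1}\alpha_h^2\mathsf E u^2\to\alpha^2\Xi_{\mathsf S}$. In the mixed moment \eqref{eq::mix_term}, $h^{-1/2}\alpha_h\sqrt\Gamma\,C_{\mathsf S,h,k}\to\alpha\sqrt\Gamma C_{\mathsf S}$, and the term $h^{-1/2}\sqrt\Gamma\,m_h\,\mathsf E Z$ is $O(\sqrt h)$. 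These limits recover $\mathcal A_{\mathsf S}$ in \eqref{eq::covariance}. It is immaterial whether one uses truncated or untruncated moments, because of the Lindeberg step below.

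\textbf{Lindeberg.} The increments are $\sqrt{h\Gamma}Z$ and $m_h+\alpha_h u$, with $m_h=O(h)$ uniformly on compacts. Hence, for small $h$, an increment exceeding $\varepsilon$ forces $\sqrt{h\Gamma}|Z|>\varepsilon/2$ or $|\alpha_h u|>\varepsilon/2$. Then $h^{-1}\mathsf P_{kh}(|\Delta|>\varepsilon)$ is bounded by $\varepsilon^{-2}$ times the truncated second moments in (iv), multiplied by $\Gamma$ or by $h^{-1}\alpha_h^2\to\alpha^2$, and this tends to $0$. The same bound makes the truncated and untruncated characteristics asymptotically equal.

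\textbf{Identification and well-posedness.} Any limit point of $(x^{(h)},\lambda^{(h)})$ solves the martingale problem for the generator with drift $(0,\omega-\kappa\lambda)^\top$ and covariance $\mathcal A_{\mathsf S}$; this follows from tightness (compact containment (v) plus vanishing jumps) and the characteristic convergence. Well-posedness (vi) then gives uniqueness of the limit, hence weak convergence in $D([0,T])$. The remaining task is to identify the solution with the stated SDE. A direct computation shows $\Sigma^{\rm tr}_{\mathsf S}(\Sigma^{\rm tr}_{\mathsf S})^\top=\mathcal A_{\mathsf S}$: the off-diagonal entry is $\sqrt{\zeta_2\Gamma}\,\alpha\zeta_2^{-1/2}C=\alpha\sqrt\Gamma C$, and the lower-right entry is $\alpha^2(\zeta_2^{-1}C^2+\Delta)=\alpha^2\Xi$. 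Assumption~\ref{ass::assumption_62} provides a pathwise unique strong solution, so the weak solution exists and is unique in law by Yamada--Watanabe; it coincides with the unique martingale-problem solution.

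\textbf{Main obstacle.} I expect the delicate step to be the passage from local to global on the non-compact state space $\mathbb R\times\Lambda$. The characteristic convergence is only locally uniform. I would handle this by stopping at the exit times from $\{|x|\le n\}\times K_n$ and using compact containment to let $n\to\infty$, as in Ethier--Kurtz Corollary 4.8.7 / Theorem 7.4.1. A second point is that the $x$-coordinate is only continuous and not Lipschitz in $\lambda$. Uniqueness for the joint system then relies on the triangular structure: $\lambda$ is pathwise unique on its own, and $x$ is obtained as a stochastic integral given $\lambda$ and $W^1$, exactly as in Assumption~\ref{ass::assumption_62}(ii).
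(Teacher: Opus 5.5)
Your proposal is correct and follows essentially the same route as the paper. Both verify locally uniform convergence of the drift, covariance, and negligible-jump characteristics from Assumption~\ref{ass::assumption_61}-(ii)--(iv), and then invoke the Stroock--Varadhan/Ethier--Kurtz Markov-chain martingale-problem criterion with conditions (v)--(vi); your explicit check that $\Sigma^{\rm tr}_{\mathsf S}(\Sigma^{\rm tr}_{\mathsf S})^\top=\mathcal A_{\mathsf S}$ and the Yamada--Watanabe identification via Assumption~\ref{ass::assumption_62} fill in steps the paper leaves implicit.
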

\proofref{app:proof_diffusion}
Under Assumption~\ref{ass::assumption_61}, the score update contributes no limiting drift, so the $\lambda$-drift is $\omega-\kappa\lambda_t$ for every covered rule. Correct specification, differentiable properness, interiority, and differentiation interchange ensure this centring. The logarithmic case recovers the corresponding result of \citet{buccheri2021continuous}, while the other rules modify the diffusion factor through $\Xi_{\mathsf S}$ and $C_{\mathsf S}$.

Theorem~\ref{th::diffusion_limit} leaves compact containment as an abstract condition. The following
Lyapunov criterion makes that requirement operational by reducing it to a one-step drift bound.

\begin{proposition}[Lyapunov criterion for compact containment]\label{prop:compact_containment}
Let $V:\Lambda\to[1,\infty)$ be $C^2$ with compact sublevel sets in $\Lambda$. Suppose $\sup_h\mathsf E[V(\lambda_0^{(h)})]<\infty$ and, for all sufficiently small $h$ under the actual recursion,
\[
 \mathsf E_{kh}[V(\lambda_{(k+1)h}^{(h)})-V(\lambda_{kh}^{(h)})]
 \le ch\{1+V(\lambda_{kh}^{(h)})\}.
\]
Then $\lambda^{(h)}$ is compactly contained on every finite horizon. Suppose additionally that $x_0^{(h)}$ is tight and that $\Gamma$ and the conditional residual second moments are locally bounded. If, after stopping on each compact subset of $\Lambda$ and for every finite $T$, the predictable partial-sum maxima
\[
 \sup_{t\le T}\left|
 \sum_{k<t/h}\sqrt{h\Gamma(\lambda_{kh}^{(h)})}\,
 \mathsf E_{kh}[Z]
 \right|
\]
form a tight family, then $(x^{(h)},\lambda^{(h)})$ is jointly compactly contained. Exact conditional centring, $\mathsf E_{kh}[Z]=0$, makes the last condition automatic.
\end{proposition}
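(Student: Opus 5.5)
The plan has two parts: compact containment of $\lambda^{(h)}$ by a discrete Gronwall/supermartingale argument, then tightness of the $x$-coordinate by stopping on compacts and splitting $x$ into a predictable drift plus a martingale.

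\emph{First part: containment of $\lambda^{(h)}$.} Write $V_k:=V(\lambda_{kh}^{(h)})$.

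1. Taking full expectations in the drift inequality gives $\mathsf E V_{k+1}\le (1+ch)\mathsf E V_k+ch$. Iterating over $k\le T/h$ yields the uniform bound $\sup_h\sup_{k\le T/h}\mathsf E V_k\le (\sup_h\mathsf E V_0+1)e^{cT}$.

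2. Uniform-in-$k$ bounds do not control the running maximum, so I would pass to the process $M_k:=(1+ch)^{-k}(1+V_k)$. The drift inequality gives $\mathsf E_{kh}[1+V_{k+1}]\le(1+ch)(1+V_k)$, so $M_k$ is a nonnegative supermartingale.

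3. Ville's (Doob's maximal) inequality gives $\mathsf P(\max_{k\le T/h}M_k\ge R)\le \mathsf E M_0/R$. Hence
\[
\mathsf P\Bigl(\max_{k\le T/h}V_k\ge R\Bigr)\le e^{cT}\,\frac{1+\sup_h\mathsf E V_0}{R+1},
\]
uniformly in small $h$.

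4. Since the sublevel sets $\{V\le R\}$ are compact in $\Lambda$ and the interpolation is piecewise constant, $\lambda^{(h)}$ stays in a compact set on $[0,T]$ with probability at least $1-\eta$. This is compact containment. The $C^2$ hypothesis is not actually needed here; only the drift bound is used.

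\emph{Second part: joint containment.} Fix $\eta>0$.

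1. By the first part, choose a compact $K_\eta\subset\Lambda$ with $\mathsf P(\lambda^{(h)}_t\in K_\eta\ \forall t\le T)\ge1-\eta$. Let $\tau_h$ be the first exit time of $\lambda^{(h)}$ from $K_\eta$.

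2. Decompose the stopped increments as $x_{(k+1)h}-x_{kh}=D_k+\Delta N_k$. Here $D_k=\sqrt{h\Gamma(\lambda_{kh})}\mathsf E_{kh}[Z]$ is predictable and $\Delta N_k$ is a martingale difference.

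3. For the martingale part: since $\Gamma$ and $\mathsf E_{kh}[Z^2]$ are bounded on $K_\eta$ by some $B_\eta$, the conditional variance satisfies $\mathsf E_{kh}[\Delta N_k^2]\le h\Gamma(\lambda_{kh})\mathsf E_{kh}[Z^2]\le hB_\eta$. Doob's $L^2$ inequality on the stopped martingale then gives $\mathsf E\sup_{k\le (T/h)\wedge\tau_h}N_k^2\le 4TB_\eta$, so the stopped maximum of $N$ is tight.

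4. For the drift part: the maxima of the stopped sums of $D_k$ are tight by hypothesis.

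5. Combining with tightness of $x_0^{(h)}$ via a union bound on $\{\tau_h\le T\}$, whose probability is at most $\eta$, gives $\mathsf P(\sup_{t\le T}|x^{(h)}_t|>R)\le 3\eta$ for large $R$ uniformly in $h$. Together with the first part, $(x^{(h)},\lambda^{(h)})$ is jointly compactly contained in $\mathbb R\times\Lambda$.

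6. Under exact centring $\mathsf E_{kh}[Z]=0$, every $D_k$ vanishes, so the partial-sum condition holds trivially.

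\emph{Main obstacle.} The delicate step is obtaining a pathwise maximum bound rather than fixed-time bounds. The hypothesis gives only a one-step conditional drift, which is a submartingale-type estimate with positive drift, and $V$ is not assumed bounded below by anything beyond $1$. The discounting $(1+ch)^{-k}$ resolves this by producing a supermartingale. I would also verify that the drift inequality is used only while the chain remains in $\Lambda$, which is automatic because $V$ is defined on $\Lambda$ and the recursion maps into it. A secondary technicality is that the stopped martingale must be handled with predictable stopping at the grid times, which the discrete setting makes routine.
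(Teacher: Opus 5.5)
Your proposal is correct and follows the paper's proof. The paper bounds the running maximum by stopping at $\tau_R^{(h)}=\inf\{k:V(\lambda_{kh}^{(h)})\ge R\}$, iterating $\mathsf E[U(\lambda^{(h)}_{(n\wedge\tau_R)h})]\le(1+ch)^n\mathsf E[U(\lambda^{(h)}_0)]$ with $U=1+V$, and applying Markov's inequality; this is the optional-stopping argument behind the Ville inequality you apply to $(1+ch)^{-k}(1+V_k)$, and it gives the same bound $e^{cT}(1+\sup_h\mathsf E V_0)/(1+R)$, while your $x$-coordinate argument (stopping on a compact sublevel set, predictable-plus-martingale split, Doob's inequality, union bound) matches the paper's.
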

\proofref{app:proof_compact_containment}

Thus the abstract containment requirement reduces to a one-step Lyapunov drift bound together with standard control of the observation coordinate.

\subsection{Non-centred mean-flow limit}
Let $\tilde\pi_{z,\lambda}^{(h)}$ denote the true conditional residual law at working state $\lambda$, and define the mean field of the actual scaled update by $\tilde\mu_{\mathsf S,h}(\lambda):=\mathsf E^{\tilde\pi_{z,\lambda}^{(h)}}[u_{\mathsf S,h}(\lambda,Y)]$.
The non-centred regime is characterized by $\tilde\mu_{\mathsf S,h}\to\tilde\mu_{\mathsf S}$, with $\tilde\mu_{\mathsf S}\ne0$ on part of the state space. This does not itself imply misspecification: a misspecified update can be centred at its pseudo-true value. The decomposition $u_{\mathsf S,h}=\tilde\mu_{\mathsf S,h}+(u_{\mathsf S,h}-\tilde\mu_{\mathsf S,h})$ shows why the gain changes. With $a_h=O(h)$, the mean term contributes to the drift, whereas the centred fluctuation vanishes.

\begin{assumption}\label{ass::assumption_63}
For each $h$, the joint array $(x^{(h)},\lambda^{(h)})$ is a time-homogeneous
Markov chain on $\mathbb R\times\Lambda$, with every predictable input to its
transition law current-state measurable. For every compact $K\subset\Lambda$
and finite $T$, the following conditions hold locally uniformly over the state
in $\mathbb R\times K$.
  \begin{enumerate}[leftmargin=2em,label=(\roman*)]
    \item The parameter recursion uses the actual update and an $O(h)$ gain ($\Delta\lambda_{(k+1)h}^{(h)}:=\lambda_{(k+1)h}^{(h)}-\lambda_{k h}^{(h)}$),
    \[
      \Delta\lambda_{(k+1)h}^{(h)}=\omega_h-(1-\varphi_h)\lambda_{kh}^{(h)}+a_hu_{\mathsf S,h,k}(\lambda_{kh}^{(h)},Y_{(k+1)h}^{(h)}),
    \]
    with~\eqref{eq::condition_1}--\eqref{eq::condition_2} and
    \begin{equation}\label{eq::condition_3_deg}
        \lim_{h \rightarrow 0} h^{-1} a_h = \bar{\alpha}.
    \end{equation}
    \item Throughout (ii)--(iv), write $u:=u_{\mathsf S,h,k}(\lambda,Y)$. Then $\tilde\mu_{\mathsf S,h}(\lambda)\to\tilde\mu_{\mathsf S}(\lambda)$ locally uniformly, where $\tilde\mu_{\mathsf S}$ is continuous and non-zero on at least part of the state space. The family $u^2$ is locally uniformly integrable.
    \item The conditional residual mean is zero, $\mathsf E_{kh}[Z]=0$, and $\mathsf E_{kh}[Z^2]\to\tilde\zeta_2(\lambda)$ locally uniformly for a continuous positive function $\tilde\zeta_2$.
    \item For every $\varepsilon>0$, the residual and update conditional Lindeberg conditions hold locally uniformly:
    \[
      \sup_{k,\lambda\in K}\mathsf E_{kh}\!\left[Z^2\mathbf 1\{\sqrt{h\Gamma(\lambda)}|Z|>\varepsilon\}\right]\to0,
      \qquad
      \sup_{k,\lambda\in K}h^{-1}a_h^2\mathsf E_{kh}\!\left[u^2\mathbf 1\{|a_hu|>\varepsilon\}\right]\to0.
    \]
    \item The initial laws satisfy $\nu_h\Rightarrow\mathsf P_0$, where $\mathsf P_0$ is a specified law on $\mathbb R\times\Lambda$ whose $\lambda$-marginal is the point mass at $\lambda_{\rm init}$, and the joint array satisfies compact containment.
    \item The ordinary differential equation
    \begin{equation*}
        \ud \lambda_{t} = (\omega - \kappa\lambda_t + \bar{\alpha} \tilde \mu_{\mathsf{S}}(\lambda_t))\ud t,\quad \lambda_{0}=\lambda_{\rm init} \in \Lambda,
    \end{equation*}
    has a unique non-explosive solution on every finite interval.
\end{enumerate}
\end{assumption}
Proposition~\ref{prop:compact_containment} again verifies containment, with $V(\lambda)=1+\lambda^2$ available on $\mathbb R$ after checking its drift inequality.

\begin{theorem}\label{th::mean_flow}
Suppose that Assumption \ref{ass::assumption_63} holds. Then, for every finite $T$,
\[
 (x^{(h)},\lambda^{(h)})\Rightarrow(x,\lambda)
 \quad\text{in }D([0,T],\mathbb R\times\Lambda),
\]
where $(x,\lambda)$ is the unique weak solution of
\begin{equation}
    \begin{split}
        \ud x_t &= \sqrt{\tilde\zeta_2(\lambda_t)\Gamma(\lambda_t)}\,\ud W_t,\\
        \ud \lambda_t &= (\omega-\kappa\lambda_t+\bar\alpha\tilde\mu_{\mathsf S}(\lambda_t))\,\ud t,
    \end{split}
\end{equation}
with initial law $\mathsf P_0$, where $W$ is a one-dimensional Brownian motion.
\end{theorem}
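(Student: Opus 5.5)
The plan is to apply the Stroock--Varadhan / Ethier--Kurtz convergence theorem for Markov chains to diffusions (Ethier and Kurtz, Theorem 7.4.1; Nelson 1990). This reduces Theorem~\ref{th::mean_flow} to four ingredients:
\begin{enumerate}[label=(\alph*)]
\item locally uniform convergence of the rescaled conditional drift and covariance characteristics of $(x^{(h)},\lambda^{(h)})$;
\item a large-jump negligibility condition;
\item convergence of initial laws and compact containment;
\item well-posedness of the limiting martingale problem.
\end{enumerate}
Items (c) are exactly Assumption~\ref{ass::assumption_63}(v). For (d), the limit has a deterministic $\lambda$-component solving the ODE in (vi), which is unique and non-explosive. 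Given that path, $x$ is a time-changed Brownian motion with deterministic quadratic variation $\int_0^t\tilde\zeta_2(\lambda_s)\Gamma(\lambda_s)\,\ud s$. Its law is therefore unique, with Gaussian increments and $x_0$ distributed from $\mathsf P_0$, so the martingale problem is well posed.

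\textbf{Drift characteristics.} Using \eqref{eq::moments} with $\alpha_h$ replaced by $a_h$, the $x$-drift is $h^{-1/2}\sqrt{\Gamma}\,\mathsf E_{kh}[Z]=0$ by (iii). For the $\lambda$-drift, $h^{-1}m_h(\lambda)\to\omega-\kappa\lambda$ locally uniformly by \eqref{eq::condition_1}--\eqref{eq::condition_2}. The gain term is $h^{-1}a_h\tilde\mu_{\mathsf S,h}(\lambda)\to\bar\alpha\tilde\mu_{\mathsf S}(\lambda)$ by \eqref{eq::condition_3_deg} and (ii). Here one must note that $\mathsf E_{kh}[u]$ evaluated at the current state is $\tilde\mu_{\mathsf S,h}(\lambda)$, which uses the current-state measurability of the transition law.

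\textbf{Second moments.} The $xx$ entry is $\Gamma(\lambda)\mathsf E_{kh}[Z^2]\to\Gamma\tilde\zeta_2$.

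For the $\lambda\lambda$ entry, the terms $h^{-1}m_h^2=O(h)$ and $2h^{-1}m_ha_h\mathsf E[u]=O(h)$ vanish. The remaining term is $h^{-1}a_h^2\mathsf E_{kh}[u^2]=h\,(h^{-1}a_h)^2\mathsf E_{kh}[u^2]$. This tends to zero because local uniform integrability of $u^2$ gives $\sup_{k,\lambda\in K}\mathsf E_{kh}[u^2]<\infty$.

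For the cross term, \eqref{eq::mix_term} gives $h^{-1/2}a_h\sqrt\Gamma\,\mathsf E_{kh}[Zu]$, which Cauchy--Schwarz bounds by $h^{1/2}(h^{-1}a_h)\sqrt\Gamma\,(\mathsf E Z^2\,\mathsf E u^2)^{1/2}\to0$. The other part of the cross term carries $\mathsf E[Z]=0$. The limiting covariance is therefore $\mathrm{diag}(\Gamma\tilde\zeta_2,0)$, and a single Brownian motion suffices.

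\textbf{Large jumps.} Instead of fourth moments I would use the conditional Lindeberg form
\[
h^{-1}\mathsf E_{kh}\bigl[|\Delta|^2\mathbf 1\{|\Delta|>\varepsilon\}\bigr]\to0 .
\]
This version of the convergence theorem works with truncated characteristics, and using it shows the truncated and untruncated characteristics coincide asymptotically.

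For the $x$-increment, the quantity equals $\Gamma\,\mathsf E[Z^2\mathbf 1\{\sqrt{h\Gamma}|Z|>\varepsilon\}]\to0$ by (iv). For the $\lambda$-increment, note that $m_h=O(h)$ is eventually below $\varepsilon/2$ on $K$. Splitting $\Delta\lambda=m_h+a_hu$ then reduces the claim to $h^{-1}a_h^2\mathsf E[u^2\mathbf 1\{|a_hu|>\varepsilon/2\}]$, which vanishes by (iv), plus cross terms controlled by $m_h^2/h\to0$. Jumps of the joint vector are dominated componentwise. Finally, Ethier--Kurtz yields weak convergence in $D([0,T],\mathbb R\times\Lambda)$ to the unique solution.

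\textbf{Main obstacle.} I expect the delicate point to be the localisation. Assumption~\ref{ass::assumption_63} gives only locally uniform conditions on $\mathbb R\times K$ together with compact containment, and $\Lambda$ may be a proper subset. I would stop the chains at the exit time from a compact $K_n$, apply the theorem to the stopped processes, and remove the stopping using compact containment and non-explosion of the ODE, as in Ethier--Kurtz Corollary 4.8.?/Theorem 7.4.1's localised form. The limit is deterministic in $\lambda$, so $\lambda^{(h)}$ converges in probability uniformly on $[0,T]$. This simplifies the identification step, since degenerate limits need no separate uniqueness argument for the $\lambda$-noise.
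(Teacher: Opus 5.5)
Your proposal is correct and follows essentially the same route as the paper. Both arguments establish the limiting drift, show that the $\lambda\lambda$ and mixed characteristics vanish (using the bounded second moment implied by uniform integrability together with Cauchy--Schwarz), check the conditional Lindeberg conditions, and then apply the Markov-chain martingale-problem criterion with the initial-convergence, compact-containment and well-posedness parts of Assumption~\ref{ass::assumption_63}. Your explicit well-posedness argument (the $\lambda$-coordinate is forced onto the unique ODE solution, and $x$ is then a continuous martingale with deterministic bracket) and your localisation step spell out what the paper's proof leaves implicit.
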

\proofref{app:proof_mean_flow}

Theorem~\ref{th::mean_flow} complements Theorem~\ref{th::diffusion_limit}. With a centred update and an $O(\sqrt h)$ gain, score noise survives in the $\lambda$-diffusion; an order-one non-centred mean instead requires an $O(h)$ gain and contributes only to deterministic drift. Thus centring determines the qualitative form of the continuous-time limit, not merely its coefficients. Section~\ref{sec::local_consistency} exploits this distinction by re-centring the update around a moving pseudo-true path and recovering an $O(\sqrt h)$ stochastic tracking regime.

The matched-scale restriction $\tilde\zeta_2(\lambda)=\zeta_2(\lambda)$, used in the mean-flow experiment of Section~\ref{sec::exp_hf}, holds residual variance fixed so that differences across criteria isolate shape misspecification. It does not define non-centring, which is governed by the mean field.

\section{Local tracking and distributional approximation of scoring-rule-driven filters}\label{sec::local_consistency}

This section studies local tracking of a moving rule-specific pseudo-true target. The related analysis of \citet{beutner2026consistency} tracks a Kullback--Leibler projection; here the projection is induced by $\mathsf S$, so the scoring rule shapes both the target path and the local tracking error. The finite-moment diffusion limits of Section~\ref{section::continuous_time} provide the benchmark, while Proposition~\ref{prop::heavy-tail-diagnostic} and the stress tests examine local update and tracking variance when fourth moments are infinite.

The analysis is local by construction, which accommodates bounded and redescending drivers without imposing global monotonicity. It complements global stationarity and invertibility results based on stochastic recurrence equations \citep{bougerol1993kalman,straumann2006quasi,blasques2014stationarity,blasques2018feasible}; global extensions may additionally use containment or Lyapunov conditions. The local scope is explicit in Assumption~\ref{ass:local-tracking}, Proposition~\ref{prop::condition-2}, and the stopped bound of Theorem~\ref{th::local-tracking}.

We proceed in three steps. First, we construct the smooth pseudo-true map and
its target-motion expansion. Second, we derive a stopped mean-square tracking
bound. Third, on the $h^{-1/2}$ fast time scale, we obtain a stopped
Ornstein--Uhlenbeck approximation.

\enlargethispage{3pt}
Suppressing the static parameter $\theta$ from the notation, let $\widetilde P_{\tilde\lambda}^{(h)}$ and $P_\lambda^{(h)}$ denote the true and working local laws. When finite, define the local risk by
\begin{equation}\label{eq::local_scoring}
\widetilde{\mathsf R}_{\mathsf S}^{(h)}(\lambda;\tilde\lambda)
:=
\mathsf E^{\widetilde P_{\tilde\lambda}^{(h)}}
\left[
\mathsf S(P_{\lambda}^{(h)},Y)
\right].
\end{equation}

When the minimiser is unique, define the rule-specific pseudo-true parameter at $\tilde\lambda$ by
\begin{equation}\label{eq::minimiser_pseudo_true}
    \lambda^{\star}_{\mathsf{S},h}(\tilde\lambda) \in \argmin_{\lambda \in \Lambda} \widetilde{\mathsf{R}}_{\mathsf{S}}^{(h)}(\lambda;\tilde\lambda)\,.
\end{equation}
Distinct proper rules may project the same true law onto different points of one working family; the rule therefore helps select the tracked target.

\begin{lemma}[Smoothness of the pseudo-true map]\label{lem:pseudotrue_implicit}
Fix $h>0$ and write
$F_h(a,\tilde\lambda):=\partial_a\widetilde{\mathsf R}_{\mathsf S}^{(h)}
(a;\tilde\lambda)$. Suppose that $F_h$ is twice continuously differentiable on a
neighbourhood of an interior solution
$a=\lambda^\star_{\mathsf S,h}(\tilde\lambda)$, that
$F_h(\lambda^\star_{\mathsf S,h}(\tilde\lambda),\tilde\lambda)=0$, and that
$\partial_aF_h=\partial_{aa}^2\widetilde{\mathsf R}_{\mathsf S}^{(h)}$ is
bounded away from zero there. Then the pseudo-true map is locally $C^2$ and
\[
 \partial_{\tilde\lambda}\lambda^\star_{\mathsf S,h}
 =
 -\frac{\partial_{a\tilde\lambda}^2
 \widetilde{\mathsf R}_{\mathsf S}^{(h)}}
 {\partial_{aa}^2\widetilde{\mathsf R}_{\mathsf S}^{(h)}}.
\]
All risk derivatives are evaluated at
$a=\lambda^\star_{\mathsf S,h}(\tilde\lambda)$. If, on a common neighbourhood
for $0<h\le h_0$, the curvature is uniformly bounded away from zero and the
mixed second derivative
$\partial_{a\tilde\lambda}^2\widetilde{\mathsf R}_{\mathsf S}^{(h)}$ and third
derivatives $\partial_{aaa}^3\widetilde{\mathsf R}_{\mathsf S}^{(h)}$,
$\partial_{aa\tilde\lambda}^3\widetilde{\mathsf R}_{\mathsf S}^{(h)}$, and
$\partial_{a\tilde\lambda\tilde\lambda}^3\widetilde{\mathsf R}_{\mathsf S}^{(h)}$
are uniformly bounded, then the first two derivatives of the pseudo-true map
are uniformly bounded in $h$.
\end{lemma}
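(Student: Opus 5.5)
The plan is to apply the implicit function theorem to $F_h(a,\tilde\lambda)=0$ at the interior solution $(a_0,\tilde\lambda_0)$ with $a_0=\lambda^\star_{\mathsf S,h}(\tilde\lambda_0)$. The hypotheses give three facts. First, $F_h$ is $C^2$ on a neighbourhood. Second, $F_h(a_0,\tilde\lambda_0)=0$. Third, $\partial_aF_h(a_0,\tilde\lambda_0)\neq0$. The $C^k$ implicit function theorem then yields a neighbourhood $U$ of $\tilde\lambda_0$ and a unique $C^2$ map $a(\cdot)$ on $U$ with $F_h(a(\tilde\lambda),\tilde\lambda)=0$. To identify $a(\cdot)$ with the pseudo-true map, I would use uniqueness of the interior minimiser together with the first-order condition. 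Any interior minimiser near $a_0$ solves $F_h=0$, so by the local uniqueness part of the implicit function theorem it coincides with $a(\tilde\lambda)$. Here I would invoke continuity of the argmin, which follows from the compact sublevel sets and unique minimiser of Assumption~\ref{ass:measurable_target}, so that $\lambda^\star_{\mathsf S,h}(\tilde\lambda)$ stays inside the implicit-function window for $\tilde\lambda$ near $\tilde\lambda_0$. This identification is the only genuinely non-routine step. If continuity of the argmin is not available, I would state the result for the locally defined branch of solutions.

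Differentiating the identity $F_h(\lambda^\star(\tilde\lambda),\tilde\lambda)\equiv0$ once gives
\[
\partial_{aa}^2\widetilde{\mathsf R}\,\partial_{\tilde\lambda}\lambda^\star+\partial^2_{a\tilde\lambda}\widetilde{\mathsf R}=0 .
\]
Solving this gives the displayed formula for $\partial_{\tilde\lambda}\lambda^\star$, since the curvature is nonzero. Differentiating a second time, which is legitimate because $F_h\in C^2$, gives
\[
\partial^2_{\tilde\lambda\tilde\lambda}\lambda^\star
=-\frac{\partial^3_{aaa}\widetilde{\mathsf R}\,(\partial_{\tilde\lambda}\lambda^\star)^2
+2\,\partial^3_{aa\tilde\lambda}\widetilde{\mathsf R}\,\partial_{\tilde\lambda}\lambda^\star
+\partial^3_{a\tilde\lambda\tilde\lambda}\widetilde{\mathsf R}}
{\partial^2_{aa}\widetilde{\mathsf R}} .
\]
All derivatives in this expression are evaluated along $(\lambda^\star(\tilde\lambda),\tilde\lambda)$.

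For the uniform statement, suppose that on the common neighbourhood $|\partial^2_{aa}\widetilde{\mathsf R}^{(h)}|\ge c>0$, and that the mixed second derivative and the three third derivatives are bounded by $M$, for all $0<h\le h_0$. The first-derivative formula then gives $|\partial_{\tilde\lambda}\lambda^\star|\le M/c$. Substituting this into the second-derivative formula gives
\[
|\partial^2_{\tilde\lambda\tilde\lambda}\lambda^\star|\le \frac{M(M/c)^2+2M(M/c)+M}{c}.
\]
Both bounds are independent of $h$. One point needs care: the pseudo-true point must remain in the common neighbourhood for all $h$, so that the bounds are evaluated where they hold. I would take this as implicit in "on a common neighbourhood".
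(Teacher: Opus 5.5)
Your proposal is correct and follows the paper's proof essentially line for line. The paper applies the $C^2$ implicit function theorem to $F_h=0$ and differentiates implicitly once and twice to obtain exactly the displayed formulas. It then uses the curvature lower bound and the second- and third-derivative bounds to get $h$-uniform bounds on $\partial_{\tilde\lambda}\lambda^\star_{\mathsf S,h}$ and $\partial^2_{\tilde\lambda\tilde\lambda}\lambda^\star_{\mathsf S,h}$.

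The paper passes silently over your identification of the implicit-function branch with the argmin. If you keep that step, continuity of the argmin in $\tilde\lambda$ needs joint continuity and locally uniform level-boundedness of $\widetilde{\mathsf R}^{(h)}_{\mathsf S}(\cdot;\tilde\lambda)$. Assumption~\ref{ass:measurable_target} is a fixed-date condition and does not supply these. Your fallback of stating the result for the local solution branch is therefore the reading the paper effectively uses.
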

\proofref{app:proof_pseudotrue_implicit}

Lemma~\ref{lem:pseudotrue_implicit} therefore provides a risk-based route to the smooth-target conditions of Proposition~\ref{prop::latent_space_recursion}; that proposition also permits direct verification.

Along the latent state, the pseudo-true path is
\begin{equation}\label{eq::target}
    m_{k h}^{(h)}:=\lambda^{\star}_{\mathsf{S},h}(\tilde\lambda^{(h)}_{k h}).
\end{equation}
It need not equal the latent state. Define its local conditional risk by
\begin{equation}\label{eq::risk}
\widetilde{\mathsf R}_{\mathsf S,kh}^{(h)}(\lambda)
:=
\widetilde{\mathsf R}_{\mathsf S}^{(h)}
\bigl(\lambda;\tilde\lambda_{kh}^{(h)}\bigr)
=
\mathsf E^{\widetilde P_{\tilde\lambda_{kh}^{(h)}}^{(h)}}
\left[
\mathsf S(P_{\lambda}^{(h)},Y_{(k+1)h})
\right].
\end{equation}

Allow $\psi_{\mathsf S}$, $G_{\mathsf S}$, and $u_{\mathsf S}$ to depend on $h$. Derivatives are in working parameter $\lambda$, while expectations use $\widetilde P_{\tilde\lambda}^{(h)}$; this distinction yields the recentring result.

\begin{proposition}[Recentering at the pseudo-true target]\label{prop::proposition_recentering}
Fix $h>0$ and a latent state $\tilde\lambda$. Suppose that the minimiser $\lambda^{\star}_{\mathsf{S},h}(\tilde\lambda)$ in \eqref{eq::minimiser_pseudo_true} exists, belongs to the interior of $\Lambda$, and differentiation can be passed under the expectation in a neighbourhood of it:
\begin{equation*}
    \partial_\lambda
    \widetilde{\mathsf R}_{\mathsf S}^{(h)}(\lambda;\tilde\lambda)
    =
    \mathsf E^{\widetilde P_{\tilde\lambda}^{(h)}}
    \left[
        \partial_\lambda
        \mathsf S(P_\lambda^{(h)},y)
    \right].
\end{equation*}
Then
\begin{equation}\label{eq::recentering}
    \mathsf{E}^{\widetilde P_{\tilde\lambda}^{(h)}}[\psi_{\mathsf{S}}^{(h)}(y,\lambda^{\star}_{\mathsf{S},h}(\tilde\lambda))]=0.
\end{equation}
If, in addition, $G_{\mathsf{S}}^{(h)}(\lambda^{\star}_{\mathsf{S},h}(\tilde\lambda))$ is finite and does not depend on the observation $y$, then
\begin{equation}\label{eq::recentering2}
    \mathsf E^{\widetilde P_{\tilde\lambda}^{(h)}}
    \left[
        u_{\mathsf S}^{(h)}
        \bigl(y,\lambda^{\star}_{\mathsf S,h}(\tilde\lambda)\bigr)
    \right]
    =
    G_{\mathsf S}^{(h)}
    \bigl(\lambda^{\star}_{\mathsf S,h}(\tilde\lambda)\bigr)
    \mathsf E^{\widetilde P_{\tilde\lambda}^{(h)}}
    \left[
        \psi_{\mathsf S}^{(h)}
        \bigl(y,\lambda^{\star}_{\mathsf S,h}(\tilde\lambda)\bigr)
    \right]
    =
    0.
\end{equation}

\end{proposition}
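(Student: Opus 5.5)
The argument is a first-order optimality condition followed by the interchange hypothesis, so the plan is short.

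\emph{Stationarity at the target.} Write $a^\star:=\lambda^\star_{\mathsf S,h}(\tilde\lambda)$. By assumption $a^\star$ minimises $\lambda\mapsto\widetilde{\mathsf R}_{\mathsf S}^{(h)}(\lambda;\tilde\lambda)$ over $\Lambda$ and lies in $\operatorname{int}(\Lambda)$. The interchange hypothesis, which holds on a neighbourhood of $a^\star$, implies that the risk is differentiable at $a^\star$ with derivative $\mathsf E^{\widetilde P_{\tilde\lambda}^{(h)}}[\partial_\lambda\mathsf S(P_{a^\star}^{(h)},y)]$. I will only use that this expectation is finite, which is implicit in the displayed identity.

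Because $a^\star$ is interior, there is $\delta>0$ with $(a^\star-\delta,a^\star+\delta)\subset\Lambda$. On this interval $\widetilde{\mathsf R}_{\mathsf S}^{(h)}(a^\star+\epsilon;\tilde\lambda)-\widetilde{\mathsf R}_{\mathsf S}^{(h)}(a^\star;\tilde\lambda)\ge0$. Dividing by $\epsilon>0$ and by $\epsilon<0$ and letting $\epsilon\to0$ gives one-sided derivatives of opposite signs. Hence the derivative at $a^\star$ is zero. This is the standard Fermat argument, and it is where interiority is needed: at a boundary point one would only obtain a variational inequality.

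\emph{First identity, \eqref{eq::recentering}.} Combining the zero derivative with the interchange gives $\mathsf E^{\widetilde P_{\tilde\lambda}^{(h)}}[\partial_\lambda\mathsf S(P_{a^\star}^{(h)},y)]=0$. Since $\psi_{\mathsf S}^{(h)}(y,\lambda)=-\partial_\lambda\mathsf S(P_\lambda^{(h)},y)$, linearity of expectation yields \eqref{eq::recentering}.

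\emph{Second identity, \eqref{eq::recentering2}.} Here $u_{\mathsf S}^{(h)}(y,a^\star)=G_{\mathsf S}^{(h)}(a^\star)\psi_{\mathsf S}^{(h)}(y,a^\star)$. By hypothesis $G_{\mathsf S}^{(h)}(a^\star)$ is a finite constant that does not depend on $y$. It can therefore be pulled out of the expectation, and integrability of $u$ follows from that of $\psi$. This gives the first equality in \eqref{eq::recentering2}, and the second equality follows from \eqref{eq::recentering}.

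\emph{Main point needing care.} The only step that is more than bookkeeping is making sure that the interchange hypothesis really delivers finiteness of the conditional mean of $\psi$ at $a^\star$ and differentiability of the risk there. I read the stated identity as asserting both, so that the Fermat argument applies. No convexity or uniqueness beyond the existence of an interior minimiser is needed.
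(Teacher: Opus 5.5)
Your proposal is correct and follows essentially the same route as the paper. Both arguments use the first-order condition at the interior minimiser, then the interchange of differentiation and expectation together with the definition of $\psi_{\mathsf S}^{(h)}$ to get the centring of the raw innovation, and finally pull the finite, observation-independent scaling $G_{\mathsf S}^{(h)}$ out of the expectation. Your explicit Fermat argument and your remark that the interchange identity implicitly supplies finiteness and differentiability of the risk at the target only spell out what the paper's proof leaves implicit.
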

\proofref{app:proof_recentering}

Thus misspecification may shift the target away from the latent state, but it does not introduce a predictable update at the pseudo-true target. We therefore study the $\sqrt h$-gain recursion
\begin{equation}\label{eq::dynamics}
    \lambda_{(k+1)h}^{(h)} = \lambda_{k h}^{(h)} + \rho \sqrt{h} u_{\mathsf{S}}^{(h)}(y_{(k+1)h}, \lambda_{k h}^{(h)}),
\end{equation}
where $\rho>0$ is the limiting rescaling rate.\footnote{Equivalently, $h^{-1/2}\alpha_h\to\rho$.}

\begin{samepage}
Section~\ref{subsec:applied_ar_recursion} distinguished the scoring-risk projection
$m_{kh}^{(h)}$ from the zero of the composite autoregressive mean field. Recursion
\eqref{eq::dynamics} deliberately tracks the former. Intercept and autoregressive drift are omitted;
predictable $O(h)$ drift terms may be added without changing the local Ornstein--Uhlenbeck
approximation because their cumulative contribution over the $h^{-1/2}$ local time scale is of lower
order. If the autoregressive pull instead enters at order $\sqrt h$, the relevant centre is the zero of
the composite field, and the limiting drift coefficient must be modified accordingly. Thus the results
below describe local tracking of the scoring-risk projection under the declared scaling; they are not a
direct approximation to the fixed-$\varphi$ empirical recursion.
\end{samepage}

Since $m_{kh}^{(h)}$ moves, the error $e_{kh}^{(h)}:=\lambda_{kh}^{(h)}-m_{kh}^{(h)}$ obeys
\begin{equation}\label{eq::recursion_tracking_error}
    e_{(k+1)h}^{(h)} = e_{k h}^{(h)} + \rho \sqrt{h} u_{\mathsf{S}}^{(h)}(y_{(k+1)h}, \lambda_{k h}^{(h)}) - (m_{(k+1) h}^{(h)}-m_{k h}^{(h)}).
\end{equation}

This decomposition separates filter innovation from movement of the pseudo-true target. Proposition~\ref{prop::latent_space_recursion} gives a primitive local expansion for the latter.

\begin{proposition}[Pseudo-true target expansion]\label{prop::latent_space_recursion}
    Suppose that the latent state follows
    \begin{equation}\label{eq::latent_space_recursion}
        \tilde\lambda_{(k+1)h}^{(h)}=\tilde\lambda_{k h}^{(h)} + h b_h(\tilde\lambda_{k h}^{(h)}) + \sqrt{h} \sigma_h(\tilde\lambda_{k h}^{(h)}) \eta_{(k+1) h},
    \end{equation}
where $b_h(\tilde\lambda_{k h}^{(h)})$ and $\sigma_h(\tilde\lambda_{k h}^{(h)})$ are $\mathcal F_{kh}$-measurable and, conditional on the current information, the latent innovation satisfies
\begin{equation*}
    \mathsf{E}_{k h}[\eta_{(k+1)h}]=0,\qquad \mathsf{E}_{k h}[\eta^{4}_{(k+1)h}]\leq C_{\eta}<\infty.
\end{equation*}
Let $m_{k h}^{(h)}$ be as in \eqref{eq::target}. Suppose there exist $h_0>0$ and deterministic convex regions $D_h$ containing every line segment between
$\tilde\lambda_{kh}^{(h)}$ and $\tilde\lambda_{(k+1)h}^{(h)}$ such that
\[
 \sup_{0<h\le h_0}\sup_{x\in D_h}
 \left\{|b_h(x)|+|\sigma_h(x)|
 +|\partial_{\tilde\lambda}\lambda^\star_{\mathsf S,h}(x)|
 +|\partial^2_{\tilde\lambda\tilde\lambda}\lambda^\star_{\mathsf S,h}(x)|\right\}<\infty.
\]
Define the $\mathcal F_{kh}$-measurable local innovation loading
\begin{equation}\label{eq::bar_Gamma}
    \bar{\Gamma}_{k h}=\partial_{\tilde\lambda} \lambda^{\star}_{\mathsf{S},h}(\tilde\lambda_{k h}^{(h)}) \sigma_h(\tilde\lambda_{k h}^{(h)}),
\end{equation}
which combines the sensitivity of the pseudo-true target to the latent state with the local volatility of the latent state. Then
\begin{equation}\label{eq::increments_m}
    m_{(k+1) h}^{(h)}-m_{k h}^{(h)} = \sqrt{h}\,\bar{\Gamma}_{k h} \eta_{(k+1) h} + r_{(k+1) h}^{(h)},
\end{equation}
where, for a finite constant $C$ independent of $h$ and $k$,
$\mathsf{E}_{k h}[r^2_{(k+1)h}] \leq C h^2$.

\end{proposition}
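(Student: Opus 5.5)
The plan is a second-order Taylor expansion of the pseudo-true map $f:=\lambda^\star_{\mathsf S,h}$ along the segment joining consecutive latent states. Write $\Delta_k:=\tilde\lambda^{(h)}_{(k+1)h}-\tilde\lambda^{(h)}_{kh}=hb_h+\sqrt h\sigma_h\eta_{(k+1)h}$, with $b_h,\sigma_h$ evaluated at $\tilde\lambda^{(h)}_{kh}$. The segment lies in the convex region $D_h$, so Taylor's theorem with Lagrange remainder gives
\[
m^{(h)}_{(k+1)h}-m^{(h)}_{kh}=f'(\tilde\lambda^{(h)}_{kh})\Delta_k+\tfrac12 f''(\xi_k)\Delta_k^2
\]
for some $\xi_k$ on that segment. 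We only need $f\in C^2$ on $D_h$, which the uniform derivative bounds presuppose; it can also be obtained from Lemma~\ref{lem:pseudotrue_implicit}. Substituting $\Delta_k$ splits the first-order term into $\sqrt h\,\bar\Gamma_{kh}\eta_{(k+1)h}$, using \eqref{eq::bar_Gamma}, plus $h f'(\tilde\lambda^{(h)}_{kh})b_h(\tilde\lambda^{(h)}_{kh})$. We therefore define
\[
r^{(h)}_{(k+1)h}:=h f'(\tilde\lambda^{(h)}_{kh})b_h(\tilde\lambda^{(h)}_{kh})+\tfrac12 f''(\xi_k)\Delta_k^2 .
\]

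Next we bound the conditional second moment of $r^{(h)}_{(k+1)h}$. Let $B<\infty$ be the uniform constant in the displayed supremum. By $(a+b)^2\le 2a^2+2b^2$,
\[
\mathsf E_{kh}[r^2]\le 2h^2B^4+\tfrac12 B^2\,\mathsf E_{kh}[\Delta_k^4].
\]
The bound $|f''(\xi_k)|\le B$ holds pathwise because $\xi_k\in D_h$, so no measurability of $\xi_k$ is needed; only $\Delta_k^4$ is integrated. Then
\[
\Delta_k^4\le 8h^4b_h^4+8h^2\sigma_h^4\eta_{(k+1)h}^4 .
\]
Since $b_h,\sigma_h$ are $\mathcal F_{kh}$-measurable and bounded by $B$, this gives $\mathsf E_{kh}[\Delta_k^4]\le 8h^4B^4+8h^2B^4C_\eta$. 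For $h\le h_0$ the total is at most $h^2\{2B^4+4B^6(h_0^2+C_\eta)\}=:Ch^2$. The constant $C$ depends only on $B$, $h_0$ and $C_\eta$, hence not on $h$ or $k$.

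The main delicacy is justifying the expansion itself. The random intermediate point must lie where the derivative bounds hold, and this is exactly why the region $D_h$ is assumed convex and to contain the whole segment. A secondary point is that $|f'|\le B$ is used at the current state, which lies in $D_h$ as an endpoint. Beyond these, the argument is routine. The centring $\mathsf E_{kh}[\eta_{(k+1)h}]=0$ is not needed for the remainder bound. It only makes the leading term a martingale difference, which will be used later in the tracking analysis.
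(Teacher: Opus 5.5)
Your proposal is correct and follows essentially the same route as the paper. Both arguments use a Lagrange-form second-order Taylor expansion of $\lambda^\star_{\mathsf S,h}$ along the segment in $D_h$, define the same remainder $r^{(h)}_{(k+1)h}=h\,\partial_{\tilde\lambda}\lambda^\star_{\mathsf S,h}\,b_h+\tfrac12\partial^2_{\tilde\lambda\tilde\lambda}\lambda^\star_{\mathsf S,h}(\xi_k)\Delta_k^2$, and bound it via $(a+b)^2\le 2a^2+2b^2$ together with $\mathsf E_{kh}[\Delta_k^4]=O(h^2)$. Your explicit use of $(a+b)^4\le 8(a^4+b^4)$ makes the paper's ``collecting powers of $h$'' step concrete, and, as you note, shows that the centring of $\eta$ is not needed for the remainder bound.
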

\proofref{app:proof_latent_recursion}

Proposition~\ref{prop::latent_space_recursion} is a primitive verification device. The tracking theory below requires only the target expansion~\eqref{eq::increments_m}, local mean reversion, and a second-moment bound for the composite innovation; it does not otherwise depend on the latent-state recursion~\eqref{eq::latent_space_recursion}. Write $\bar u_{kh}(\lambda):=\mathsf E_{kh}[u_{\mathsf S}^{(h)}(y_{(k+1)h},\lambda)]$ for the conditional mean update direction.

\begin{assumption}[Local tracking]\label{ass:local-tracking}
    Fix $T_0<\infty$ and set $N_h:=\lfloor T_0/h\rfloor$. There exist constants $r,\mu,L>0$ and $C<\infty$ such that the following conditions hold uniformly for $0\le k\le N_h$:
    \begin{enumerate}
        \item[(i)] \emph{Target motion.} The pseudo-true path satisfies \eqref{eq::increments_m}, where $\bar{\Gamma}_{k h}$ is $\mathcal{F}_{k h}$-measurable, $\mathsf{E}_{k h}[\eta_{(k+1)h}]=0$, $\mathsf{E}_{k h}[\eta^{4}_{(k+1)h}]\leq C_{\eta}$, and $\mathsf{E}_{k h}[(r_{(k+1)h}^{(h)})^2] \leq C h^2$.
        \item[(ii)] \emph{Local mean reversion.} For every $\lambda$ such that $|\lambda-m_{k h}^{(h)}|\leq r$,
        \[
        \begin{aligned}
        \bar u_{kh}(m_{kh}^{(h)})&=0,\\
        (\lambda-m_{kh}^{(h)})\bar u_{kh}(\lambda)&\leq-\mu(\lambda-m_{kh}^{(h)})^2,
        & |\bar u_{kh}(\lambda)|&\leq L|\lambda-m_{kh}^{(h)}|.
        \end{aligned}
        \]
        \item[(iii)] \emph{Composite innovation.} For the rescaling rate $\rho>0$ and every $\lambda$ such that $|\lambda-m_{k h}^{(h)}|\leq r$, the difference between the filter and target innovations has a uniformly bounded conditional second moment:
        \[
            \mathsf{E}_{k h}\left[\left(\rho (u_{\mathsf{S}}^{(h)}(y_{(k+1)h},\lambda)-\bar{u}_{k h}(\lambda)) - \bar{\Gamma}_{k h} \eta_{(k+1)h}\right)^2\right] \leq C.
        \]

    \end{enumerate}
\end{assumption}

Items~(i)--(iii) control, respectively, target motion, deterministic contraction towards the current pseudo-true value, and filter noise net of target innovation. These are precisely the ingredients used in Theorem~\ref{th::local-tracking}.
\begin{proposition}[Criterion for local mean reversion]\label{prop::condition-2}
Suppose, uniformly over $0\le k\le N_h$, that $m_{kh}^{(h)}$ belongs to the interior of $\Lambda$ and
\[
\partial_\lambda
\widetilde{\mathsf R}_{\mathsf S,kh}^{(h)}
(m_{kh}^{(h)})
=
0.
\]
On the local region $|\lambda-m_{kh}^{(h)}|\le r$, suppose that differentiation can be passed under the conditional expectation and
\[
u_{\mathsf S}^{(h)}(y,\lambda)
=
G_{\mathsf S}^{(h)}(\lambda)
\psi_{\mathsf S}^{(h)}(y,\lambda),
\qquad
\psi_{\mathsf S}^{(h)}(y,\lambda)
=
-\partial_\lambda \mathsf S(P_\lambda^{(h)},y).
\]
Assume also that $G_{\mathsf S}^{(h)}(\lambda)$ is finite and $\mathcal F_{kh}$-measurable on this region. Define the scaled risk derivative
\[
\mathcal H_{h,k}(\lambda)
:=
G_{\mathsf S}^{(h)}(\lambda)
\partial_\lambda\widetilde{\mathsf R}_{\mathsf S,kh}^{(h)}(\lambda)
=-\bar u_{kh}(\lambda).
\]
If $\mathcal H_{h,k}$ is continuously differentiable on the local region and, uniformly over $0\le k\le N_h$ and $|\lambda-m_{kh}^{(h)}|\le r$,
\[
0<\underline b
\le
\mathcal H_{h,k}'(\lambda)
\le
\overline b
<\infty,
\]
then Assumption~\ref{ass:local-tracking}-(ii) holds with
$\mu=\underline b$ and $L=\overline b$.
\end{proposition}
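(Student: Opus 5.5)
The plan is to reduce everything to a scalar mean-value argument applied to $\mathcal H_{h,k}$ on the interval $I_{h,k}:=\{\lambda:|\lambda-m_{kh}^{(h)}|\le r\}$. First I will identify $\bar u_{kh}$ with $-\mathcal H_{h,k}$. Because $G_{\mathsf S}^{(h)}(\lambda)$ is finite and $\mathcal F_{kh}$-measurable, it factors out of the conditional expectation, so
\[
\bar u_{kh}(\lambda)=G_{\mathsf S}^{(h)}(\lambda)\,\mathsf E_{kh}\bigl[\psi_{\mathsf S}^{(h)}(y_{(k+1)h},\lambda)\bigr].
\]
Passing differentiation under the conditional expectation then gives $\mathsf E_{kh}[\psi_{\mathsf S}^{(h)}(y_{(k+1)h},\lambda)]=-\partial_\lambda\widetilde{\mathsf R}_{\mathsf S,kh}^{(h)}(\lambda)$. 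This is the same computation as Proposition~\ref{prop::proposition_recentering}, done here at a generic $\lambda$ in the region and not only at the target. It yields $\bar u_{kh}=-\mathcal H_{h,k}$ on $I_{h,k}$. Evaluating at $m_{kh}^{(h)}$ and using the first-order condition $\partial_\lambda\widetilde{\mathsf R}_{\mathsf S,kh}^{(h)}(m_{kh}^{(h)})=0$ gives $\bar u_{kh}(m_{kh}^{(h)})=0$. This is the first line of Assumption~\ref{ass:local-tracking}-(ii).

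For the two inequalities, fix $\lambda\in I_{h,k}$ and write $e:=\lambda-m_{kh}^{(h)}$. Since $\mathcal H_{h,k}$ is $C^1$ on the interval and vanishes at $m_{kh}^{(h)}$, the mean value theorem gives a point $\xi$ between $m_{kh}^{(h)}$ and $\lambda$ with $\mathcal H_{h,k}(\lambda)=\mathcal H_{h,k}'(\xi)\,e$. An equivalent route is the integral form $\int_0^1\mathcal H'_{h,k}(m_{kh}^{(h)}+se)\,\mathrm ds\cdot e$. In either form $\xi$ lies in $I_{h,k}$, so $\underline b\le\mathcal H_{h,k}'(\xi)\le\overline b$. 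It follows that
\[
e\,\bar u_{kh}(\lambda)=-\mathcal H'_{h,k}(\xi)e^2\le-\underline b\,e^2,
\qquad
|\bar u_{kh}(\lambda)|=\mathcal H'_{h,k}(\xi)|e|\le\overline b\,|e|.
\]
These are exactly the required bounds with $\mu=\underline b$ and $L=\overline b$. The case $e=0$ is trivial.

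The only delicate point is the measure-theoretic bookkeeping. The identity $\bar u_{kh}=-\mathcal H_{h,k}$ must hold simultaneously for all $\lambda$ in the region on a single almost-sure event, so that the mean value theorem can be applied pathwise. I will handle this as in Assumption~\ref{ass:conditional_differentiation}: use the jointly measurable version of the conditional risk obtained from the conditional kernel, for which differentiation holds for all $\lambda$ in the compact interval simultaneously almost surely. The constants $\underline b$ and $\overline b$ are deterministic and uniform in $k\le N_h$ and in $h$, so the resulting $\mu$ and $L$ are uniform as Assumption~\ref{ass:local-tracking} demands.
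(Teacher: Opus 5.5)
Your proposal is correct and follows the paper's proof essentially step for step. You identify $\bar u_{kh}=-\mathcal H_{h,k}$ via differentiation under the conditional expectation and the predictability of $G_{\mathsf S}^{(h)}$, use the first-order condition to get $\bar u_{kh}(m_{kh}^{(h)})=0$, and then apply the mean value theorem with the lower and upper derivative bounds to obtain $\mu=\underline b$ and $L=\overline b$; your remark about securing the identity simultaneously for all $\lambda$ on one almost-sure event only makes explicit some bookkeeping the paper leaves implicit.
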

\proofref{app:proof_condition_two}

Proposition~\ref{prop::condition-2} shows that local stability is governed by the derivative of the scaled risk, $\mathcal H_{h,k}'$, rather than by raw risk curvature alone. This distinction matters for density-power and CRPS drivers, whose feasible scaling may vary with the mesh; the required uniform bound can be verified by matching scaling and curvature.
\begin{theorem}[Stopped local tracking bound]\label{th::local-tracking}
Under Assumption~\ref{ass:local-tracking}, consider the recursion in \eqref{eq::dynamics} and the tracking error $e_{kh}^{(h)}:=\lambda_{k h}^{(h)}-m_{k h}^{(h)}$. Define
\begin{equation*}
    \tau_{r,h}^{(h)}:=\inf\{0 \leq k \leq N_h\,:\,|e_{k h}^{(h)}|>r\},
\end{equation*}
with the convention $\inf\emptyset=N_h+1$. If $\sup_{h}\mathsf{E}[(e_{0}^{(h)})^2]<\infty$, then there exist constants $c>0$ and $C_r<\infty$, independent of $h$ and $k$, such that, for all sufficiently small $h$ and every $0\le k\le N_h$,
\begin{equation}
    \mathsf{E}[(e_{k h}^{(h)})^2 \mathsf{1}_{\{k < \tau_{r,h}^{(h)}\}}] \leq (1-c \sqrt{h})^k \mathsf{E}[(e_{0}^{(h)})^2] + C_r \sqrt{h}.
\end{equation}
Moreover, for each fixed $t\in(0,T_0]$, let $k_h(t):=\lfloor t/h\rfloor$. If
\[
\mathsf{E}[(e_{k_h(t) h}^{(h)})^2
\mathsf{1}_{\{\tau_{r,h}^{(h)}\leq k_h(t)\}}]=o(h^{1/2}),
\]
then $\lambda_{k_h(t) h}^{(h)}-m_{k_h(t) h}^{(h)}=O_{L^2}(h^{1/4})$, where $O_{L^2}(\cdot)$ denotes big-$O$ in mean square.
\end{theorem}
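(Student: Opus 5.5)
The plan is a stopped one-step mean-square recursion for the error $e_k:=e_{kh}^{(h)}$, iterated in $k$. First, rewrite \eqref{eq::recursion_tracking_error} using the conditional mean $\bar u_{kh}$ and Assumption~\ref{ass:local-tracking}-(i):
\[
 e_{k+1}=e_k+\rho\sqrt h\,\bar u_{kh}(\lambda_{kh}^{(h)})+\sqrt h\,\xi_{k+1}-r_{(k+1)h}^{(h)},
\]
where
\[
 \xi_{k+1}:=\rho\{u_{\mathsf S}^{(h)}(y_{(k+1)h},\lambda_{kh}^{(h)})-\bar u_{kh}(\lambda_{kh}^{(h)})\}-\bar\Gamma_{kh}\eta_{(k+1)h}.
\]
The term $\xi_{k+1}$ has $\mathsf E_{kh}[\xi_{k+1}]=0$, because both parts are conditionally centred. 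On the event $\{k<\tau_{r,h}^{(h)}\}$, which is $\mathcal F_{kh}$-measurable, we have $|e_k|\le r$. On that event Assumption~\ref{ass:local-tracking}-(ii) gives $e_k\bar u_{kh}\le-\mu e_k^2$ and $|\bar u_{kh}|\le L|e_k|$, and (iii) gives $\mathsf E_{kh}\xi_{k+1}^2\le C$.

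Second, square and take $\mathsf E_{kh}$ on $\{k<\tau\}$. The drift part contributes
\[
 (e_k+\rho\sqrt h\,\bar u)^2\le(1-2\rho\mu\sqrt h+\rho^2L^2h)\,e_k^2 .
\]
The martingale cross term with $e_k+\rho\sqrt h\bar u$ vanishes, and the noise contributes at most $hC$. The remainder $r$ is not centred, so handle it with Young's inequality: for any $\epsilon>0$,
\[
 2|\mathsf E_{kh}[(e_k+\rho\sqrt h\bar u)r]|\le \epsilon\sqrt h\,e_k^2+\epsilon^{-1}h^{-1/2}\mathsf E_{kh}r^2\le\epsilon\sqrt h\,e_k^2+C\epsilon^{-1}h^{3/2}.
\]
The cross terms $\sqrt h\,\xi r$ and $r^2$ are $O(h^{3/2})$ by Cauchy--Schwarz. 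Taking $\epsilon=\rho\mu$ and $h$ small yields
\[
 \mathsf E_{kh}[e_{k+1}^2]\mathbf 1_{\{k<\tau\}}\le(1-c\sqrt h)\,e_k^2\mathbf 1_{\{k<\tau\}}+C'h
\]
with $c=\rho\mu/2$.

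Third, use $\mathbf 1_{\{k+1<\tau\}}\le\mathbf 1_{\{k<\tau\}}$ and nonnegativity to obtain
\[
 a_{k+1}\le(1-c\sqrt h)\,a_k+C'h,\qquad a_k:=\mathsf E[e_k^2\mathbf 1_{\{k<\tau\}}].
\]
Iterating gives $a_k\le(1-c\sqrt h)^ka_0+C'h\sum_j(1-c\sqrt h)^j\le(1-c\sqrt h)^ka_0+(C'/c)\sqrt h$, with $a_0\le\mathsf E e_0^2$.

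For the final claim, split $\mathsf E[e_{k_h(t)}^2]$ into the stopped part and the part on $\{\tau\le k_h(t)\}$. The first part is at most $(1-c\sqrt h)^{t/h}\sup_h\mathsf E e_0^2+C_r\sqrt h$, and $(1-c\sqrt h)^{t/h}\le e^{-ct/\sqrt h}=o(\sqrt h)$ for fixed $t>0$. The second part is $o(h^{1/2})$ by hypothesis. Hence $\mathsf E e^2=O(\sqrt h)$, which is the stated $O_{L^2}(h^{1/4})$.

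The main obstacle is the non-centred remainder $r$. A naive bound $2\mathsf E|e_kr|\le Ch|e_k|$ is only $O(h)$ per step, and accumulating it over $\sqrt h^{-1}$ effective steps would still give $O(\sqrt h)$. Even so, it is cleaner to absorb it into the contraction via the $\epsilon\sqrt h$ Young split, which produces exactly the $h^{3/2}$ per-step error.

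A second delicate point is measurability. The stopping indicator must be taken at time $k$, not $k+1$, so that the local conditions (ii)--(iii) apply at $\lambda_{kh}^{(h)}$. The conditional moment bounds also require $\mathsf E_{kh}r^2\le Ch^2$ to hold conditionally, which Assumption~\ref{ass:local-tracking}-(i) provides.
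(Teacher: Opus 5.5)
Your proposal is correct and follows essentially the same route as the paper. You use the same martingale difference $\xi$, the same stopped one-step recursion $\mathsf E[e_{k+1}^2\mathbf 1_{\{k+1<\tau\}}]\le(1-c\sqrt h)\,\mathsf E[e_k^2\mathbf 1_{\{k<\tau\}}]+Ch$, the same geometric iteration, and the same split on the exit event for the fixed-$t$ rate.

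The only difference is how the non-centred remainder cross term is handled. The paper takes the ``naive'' route you mention: it bounds $|A_{kh}\,\mathsf E_{kh}[r]|\le C_r h$ using $|e_k|\le r$ on the stopped event, which is why its constant depends on $r$. Your Young split instead absorbs that term into the contraction. Both give the same final bound.
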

\proofref{app:proof_local_tracking}

The theorem quantifies tracking up to the local exit time. With an $O(h^{1/2})$ gain, mean error contracts by approximately $1-c\sqrt h$, while per-step variance is $O(h)$. Over the resulting $h^{-1/2}$ memory length, the mean-square error is $O(h^{1/2})$, giving the $O_{L^2}(h^{1/4})$ tracking rate.

The preceding primitive results make Theorem~\ref{th::local-tracking} directly applicable. Proposition~\ref{prop::latent_space_recursion} establishes the target expansion in Assumption~\ref{ass:local-tracking}-(i), while Proposition~\ref{prop::condition-2} establishes the mean-reversion and Lipschitz conditions in item~(ii). If the second-moment condition in item~(iii) also holds, then, under the theorem's exit condition,
\[
\lambda_{k_h(t)h}^{(h)}-m_{k_h(t)h}^{(h)}
=O_{L^2}(h^{1/4})
\]
for each fixed $t\in(0,T_0]$.

We now refine this tracking rate into a local distributional approximation. For $T\in(0,T_0)$, set $k_T=\lfloor T/h\rfloor$, $n_h(\tau)=\lfloor\tau/\sqrt h\rfloor$, and $Z_{\mathsf S,h}(\tau)=h^{-1/4}e^{(h)}_{(k_T+n_h(\tau))h}$. The spatial rate comes from Theorem~\ref{th::local-tracking}; the $h^{-1/2}$ time scale reveals contraction $1-\rho b_{\mathsf S}(T)\sqrt h$.

The rescaled error has composite innovation
\begin{equation}
\label{eq:composite_innovation}
     \tilde\xi_{(k+1)h}^{(h)} :=
    \rho u_{\mathsf{S}}^{(h)}
    (y_{(k+1)h},m_{kh}^{(h)})
    -
    \bar{\Gamma}_{kh}\eta_{(k+1)h}
\end{equation}
with conditional variance $q^2_{\mathsf S,h,kh}:=\mathsf E_{kh}[(\tilde\xi_{(k+1)h}^{(h)})^2]$. It combines update noise and target motion. The next assumption freezes local coefficients at $T$, controls off-target driver evaluation, and imposes martingale central-limit conditions.

\begin{assumption}[Local Ornstein--Uhlenbeck approximation]
\label{ass::local-ou}
Fix $T\in(0,T_0)$ and $M<\infty$, put $k_T=\lfloor T/h\rfloor$, and work on $k_T\le k\le k_T+\lfloor M/\sqrt h\rfloor$. For each $K<\infty$, define the common local exit time
\[
 \sigma_{h,K}:=\inf\{\tau\in[0,M]:|Z_{\mathsf S,h}(\tau)|\ge K\}\wedge M.
\]
\begin{enumerate}[leftmargin=2em,label=(\roman*)]
\item \emph{Initial condition and local tube.} $Z_{\mathsf S,h}(0)$ is $\mathcal F_{k_Th}$-measurable and $Z_{\mathsf S,h}(0)\Rightarrow Z_0$. For every fixed $K<\infty$, the local tube $m_{kh}^{(h)}+h^{1/4}[-K,K]$ is contained in $\Lambda$, uniformly over the stated window, almost surely for all sufficiently small $h$; this is automatic when $\Lambda=\mathbb R$.
\item \emph{Frozen local drift.} At the target, $\bar u_{kh}(m_{kh}^{(h)})=0$ and $b_{\mathsf S,h,kh}:=-\partial_\lambda\bar u_{kh}(m_{kh}^{(h)})\to_p b_{\mathsf S}(T)>0$ uniformly on the window.
\item \emph{Mean-field linearisation.} For each $K<\infty$, the stopped accumulated Taylor remainder satisfies
\[
 \sup_{0\le\tau\le M}\left|
 \sum_{j<n_h(\tau\wedge\sigma_{h,K})}\rho h^{1/4}
 \begin{aligned}[t]
 \bigl\{&\bar u_{(k_T+j)h}\!\left(m_{(k_T+j)h}^{(h)}
       +h^{1/4}Z_{\mathsf S,h}(j\sqrt h)\right)\\[-0.2em]
 &+b_{\mathsf S,h,(k_T+j)h}h^{1/4}Z_{\mathsf S,h}(j\sqrt h)\bigr\}
 \end{aligned}
 \right|\to_p0.
\]
\item \emph{Evaluation noise.} The centred evaluation perturbation satisfies the local $L^2$ Lipschitz bound
\begin{equation}
\mathsf E_{kh}\!\left[\left|
\begin{aligned}
&u_{\mathsf S}^{(h)}(y_{(k+1)h},m_{kh}^{(h)}+x)
-u_{\mathsf S}^{(h)}(y_{(k+1)h},m_{kh}^{(h)})\\[-0.2em]
&\quad-\{\bar u_{kh}(m_{kh}^{(h)}+x)-\bar u_{kh}(m_{kh}^{(h)})\}
\end{aligned}
\right|^2\right]\le L^2x^2.
\label{eq::local_driver_lipschitz}
\end{equation}
\item \emph{Martingale limit.} Set
$\mathcal G_{h,j}:=\mathcal F_{(k_T+j)h}$. For
$0\le j<n_h(M)$, suppose that
$\tilde\xi_{(k_T+j+1)h}^{(h)}$ is
$\mathcal G_{h,j+1}$-measurable and square-integrable, with
\[
 \mathsf E\!\left[
 \tilde\xi_{(k_T+j+1)h}^{(h)}\mid\mathcal G_{h,j}
 \right]=0.
\]
Uniformly in $0\le\tau\le M$,
\[
 \sum_{j<n_h(\tau)}h^{1/2}q^2_{\mathsf S,h,(k_T+j)h}\to_p q_{\mathsf S}(T)^2\tau,
\]
and the conditional Lindeberg condition
\begin{equation}
\sum_{j<n_h(M)}h^{1/2}\mathsf E_{(k_T+j)h}\!\left[(\tilde\xi_{(k_T+j+1)h}^{(h)})^2\mathsf1_{\{|h^{1/4}\tilde\xi_{(k_T+j+1)h}^{(h)}|>\varepsilon\}}\right]\to_p0
\label{eq::local_lindeberg_composite}
\end{equation}
holds for every $\varepsilon>0$.
\end{enumerate}
\end{assumption}
Define $M_h(\tau):=\sum_{j<n_h(\tau)}h^{1/4}\tilde\xi_{(k_T+j+1)h}^{(h)}$.
Conditions~(i) and~(v) also yield the joint functional limit needed below, including asymptotic independence from a random initial error.
\begin{corollary}[Joint martingale FCLT]\label{cor:primitive_joint_fclt}
Under Assumption~\ref{ass::local-ou}-(i) and (v),
\[
 \left(Z_{\mathsf S,h}(0),M_h\right)
 \Rightarrow
 \left(Z_0,q_{\mathsf S}(T)W\right)
 \quad\text{in }\mathbb R\times D([0,M]),
\]
where $W$ is a standard Brownian motion independent of $Z_0$.
\end{corollary}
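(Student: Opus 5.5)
The plan is to apply the martingale functional central limit theorem for triangular arrays of martingale differences, for example \citet[Theorem~7.1.4]{ethier1986markov}. The initial coordinate is handled by a conditioning argument that exploits the $\mathcal G_{h,0}$-measurability of $Z_{\mathsf S,h}(0)$.

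Write $M_h(\tau)=\sum_{j<n_h(\tau)}\zeta_{h,j+1}$ with $\zeta_{h,j+1}:=h^{1/4}\tilde\xi^{(h)}_{(k_T+j+1)h}$. By Assumption~\ref{ass::local-ou}-(v), $\zeta_{h,j+1}$ is $\mathcal G_{h,j+1}$-measurable, square integrable, and satisfies $\mathsf E[\zeta_{h,j+1}\mid\mathcal G_{h,j}]=0$. Its predictable quadratic variation is
\[
\langle M_h\rangle(\tau)=\sum_{j<n_h(\tau)}h^{1/2}q^2_{\mathsf S,h,(k_T+j)h},
\]
which converges to $q_{\mathsf S}(T)^2\tau$ uniformly in probability. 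The conditional Lindeberg condition \eqref{eq::local_lindeberg_composite} implies that the maximal jump is negligible and that the conditional Lindeberg requirement of the FCLT holds. These three ingredients give $M_h\Rightarrow q_{\mathsf S}(T)W$ in $D([0,M])$.

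For the joint statement, it suffices to prove the following for every bounded continuous $f$ on $\mathbb R$ and every bounded continuous $g$ on $D([0,M])$:
\[
\mathsf E[f(Z_{\mathsf S,h}(0))g(M_h)]\to \mathsf E[f(Z_0)]\,\mathsf E[g(q_{\mathsf S}(T)W)].
\]
The key observation is that the limit of $\langle M_h\rangle$ is deterministic. I would therefore use the \emph{stable} (mixing) version of the martingale FCLT, for instance \citet{hall1980martingale}, Theorem~3.2 and its functional extension. Since the quadratic variation converges in probability to a constant and the filtrations $\mathcal G_{h,0}\subset\mathcal G_{h,1}\subset\cdots$ are nested, $M_h$ converges $\mathcal G_{h,0}$-stably to $q_{\mathsf S}(T)W$, with $W$ independent of every $\mathcal G_{h,0}$-measurable limit. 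Concretely, I would show
\[
\mathsf E\bigl[g(M_h)\mid\mathcal G_{h,0}\bigr]\to_p\mathsf E[g(q_{\mathsf S}(T)W)].
\]
Then
\[
\mathsf E[f(Z_{\mathsf S,h}(0))g(M_h)]=\mathsf E\bigl[f(Z_{\mathsf S,h}(0))\,\mathsf E[g(M_h)\mid\mathcal G_{h,0}]\bigr]
\]
splits, by bounded convergence, into $\mathsf E[f(Z_{\mathsf S,h}(0))]\,\mathsf E[g(q_{\mathsf S}(T)W)]+o(1)$. Finally, $Z_{\mathsf S,h}(0)\Rightarrow Z_0$ from Assumption~\ref{ass::local-ou}-(i) completes the argument. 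Tightness of the pair follows from tightness of each coordinate.

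The main obstacle is obtaining the conditional convergence given $\mathcal G_{h,0}$ rigorously, because the arrays are not nested across $h$. Stability results in the Hall--Heyde framework assume nested $\sigma$-fields. I would first try the characteristic-function route. Fix $0=\tau_0<\dots<\tau_m$ and real $s_i$, and consider the exponential martingale
\[
\prod_j \exp\bigl(i s\,\zeta_{h,j+1}\bigr)\Big/\mathsf E_{(k_T+j)h}\bigl[\exp(i s\,\zeta_{h,j+1})\bigr].
\]
Its conditional expectation given $\mathcal G_{h,0}$ is one. The Lindeberg condition and the convergence of quadratic variation give
\[
\prod_j\mathsf E_{(k_T+j)h}\bigl[e^{is\zeta_{h,j+1}}\bigr]\to_p e^{-s^2q_{\mathsf S}(T)^2\Delta\tau/2}.
\]
This product is deterministic in the limit, so a truncation or stopping argument at the times where the partial quadratic variation exceeds $q_{\mathsf S}(T)^2M+1$ gives
\[
\mathsf E\bigl[e^{i\sum_i s_i(M_h(\tau_i)-M_h(\tau_{i-1}))}\mid\mathcal G_{h,0}\bigr]\to_p \prod_i e^{-s_i^2q_{\mathsf S}(T)^2(\tau_i-\tau_{i-1})/2}.
\]
That stopping step is needed to keep the reciprocal products bounded. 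Multiplying by $e^{iuZ_{\mathsf S,h}(0)}$ and taking expectations then yields joint finite-dimensional convergence with independence. Combined with the tightness above, this gives the functional statement.
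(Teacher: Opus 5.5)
Your argument is correct, but it obtains independence by a different mechanism than the paper.

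The paper's route is structural:
\begin{itemize}
\item It gets joint tightness from the marginal martingale FCLT together with tightness of $Z_{\mathsf S,h}(0)$.
\item It then identifies an arbitrary subsequential limit $(Z_0,M)$. It multiplies the martingale identities for $M_h$ and $M_h^2-\langle M_h\rangle$ by bounded continuous cylinder functionals of $(Z_{\mathsf S,h}(0),M_h|_{[0,s]})$ and passes them to the limit after localisation and jump truncation.
\item This shows that $M$ is a continuous martingale with bracket $q_{\mathsf S}(T)^2\tau$, in a filtration whose time-zero $\sigma$-field contains $Z_0$.
\item L\'evy's characterisation then makes $M=q_{\mathsf S}(T)W$ with $W$ Brownian in that filtration, so independence from $Z_0$ is automatic.
\end{itemize}

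You instead prove a conditional CLT inside each array. The multiplicative martingale $\prod_j e^{is\zeta_{h,j+1}}/\varphi_j(s)$ and the deterministic limit of the predictable product $\prod_j\varphi_j(s)$ give convergence of $\mathsf E\bigl[\exp\{i\sum_i s_i(M_h(\tau_i)-M_h(\tau_{i-1}))\}\mid\mathcal G_{h,0}\bigr]$ to a constant in $L^1$. The tower property then factorises the joint characteristic function with $Z_{\mathsf S,h}(0)$, and tightness upgrades this finite-dimensional convergence to the functional statement.

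What each route buys:
\begin{itemize}
\item Yours is more elementary and quantitative. It sidesteps the non-nested-filtration problem entirely; you rightly note that Hall--Heyde stability is not directly available, and you do not need it.
\item Yours also gives a stronger conclusion: $M_h$ is asymptotically independent of all of $\mathcal G_{h,0}$, not only of $Z_{\mathsf S,h}(0)$.
\item The paper's route avoids characteristic-function bookkeeping and extends directly to the vector case through linear projections. Its cost is a uniform-integrability step for passing the two martingale identities to the limit, which the paper only sketches.
\end{itemize}

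One detail in your stopping step needs fixing. Capping the cumulative bracket at $q_{\mathsf S}(T)^2M+1$ does not by itself keep each factor $\varphi_j(s)$ away from zero; for large $|s|$, a single moderate conditional variance can make $\varphi_j(s)$ vanish. Also stop, predictably, once an individual variance $h^{1/2}q^2_{\mathsf S,h,(k_T+j)h}$ exceeds $1/\max_i s_i^2$. This event has vanishing probability, because the Lindeberg condition forces $\max_j h^{1/2}q^2_{\mathsf S,h,(k_T+j)h}\to_p0$. After this stop, $|\varphi_j(s)|\ge 1-s^2h^{1/2}q^2_{\mathsf S,h,(k_T+j)h}/2\ge 1/2$. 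The product is then bounded below by $\exp\{-s^2(q_{\mathsf S}(T)^2M+1)\}$, which is what your reciprocal-product argument needs.
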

\proofref{app:proof_primitive_joint_fclt}

Corollary~\ref{cor:primitive_joint_fclt} supplies the stochastic input to the approximation. The
remaining conditions linearise the local mean field and ensure that off-target evaluation contributes
no additional first-order noise, yielding the stopped Ornstein--Uhlenbeck limit.

\begin{theorem}[Stopped local Ornstein--Uhlenbeck limit]
\label{th::local-ou}
Suppose Assumption~\ref{ass:local-tracking}-(i) and
Assumption~\ref{ass::local-ou} hold. Let $Z_{\mathsf S}$ be the unique solution of
\[
    dZ_{\mathsf S}(\tau)
    =
    -\rho b_{\mathsf S}(T)Z_{\mathsf S}(\tau)\,d\tau
    +
    q_{\mathsf S}(T)\,dW_{\tau},
    \qquad
    Z_{\mathsf S}(0)=Z_0,
\]
where $W$ is a standard Brownian motion independent of $Z_0$.
\begin{samepage}
For $x\in D([0,M])$ and $K>0$, define
\[
 \sigma_K(x):=\inf\{\tau\in[0,M]:|x(\tau)|\ge K\}\wedge M,
 \qquad
 \Phi_K(x):=x(\,\cdot\wedge\sigma_K(x)).
\]
\end{samepage}
Set $Z_{\mathsf S,h}^K:=\Phi_K(Z_{\mathsf S,h})$ and
$Z_{\mathsf S}^K:=\Phi_K(Z_{\mathsf S})$. If $K$ is a continuity radius, meaning that
$\Phi_K$ is continuous at $Z_{\mathsf S}$ almost surely, then
\[
    Z_{\mathsf S,h}^K\Rightarrow Z_{\mathsf S}^K,
\]
weakly in $D([0,M])$. If $Z_0=z$ is deterministic, the unstopped frozen OU marginal is
\begin{equation}
    Z_{\mathsf S}(\tau)
    \overset{\rm d}{\sim}
    \mathsf{Normal}\left(
    e^{-\rho b_{\mathsf S}(T)\tau}z,\,
    \frac{q_{\mathsf S}^{2}(T)}
    {2\rho b_{\mathsf S}(T)}
    (1-e^{-2\rho b_{\mathsf S}(T)\tau})
    \right).
\end{equation}
The invariant variance of the frozen limiting OU is
\begin{equation}
    V_{\mathsf S}^{\star}(T)
    =
    \frac{q_{\mathsf S}^{2}(T)}
    {2\rho b_{\mathsf S}(T)} .
    \label{eq::stationary_local_variance}
\end{equation}
\end{theorem}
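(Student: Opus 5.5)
We will write the rescaled error recursion in discrete integral form, show that it is a stopped perturbation of an Euler scheme for the frozen Ornstein--Uhlenbeck equation, and transfer the limit through a continuous mapping argument.

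\textbf{Decomposition of the error.} Combining \eqref{eq::recursion_tracking_error} with the target expansion \eqref{eq::increments_m} from Assumption~\ref{ass:local-tracking}-(i), and multiplying by $h^{-1/4}$, we get, for $j<n_h(\tau)$ and $k=k_T+j$,
\[
Z_{\mathsf S,h}((j+1)\sqrt h)-Z_{\mathsf S,h}(j\sqrt h)
=\rho h^{1/4}\bar u_{kh}(\lambda_{kh}^{(h)})+h^{1/4}\tilde\xi^{(h)}_{(k+1)h}+h^{1/4}\epsilon_{(k+1)h}-h^{-1/4}r^{(h)}_{(k+1)h}.
\]
Here $\epsilon_{(k+1)h}:=\rho\{u(y,\lambda_{kh})-u(y,m_{kh})-(\bar u_{kh}(\lambda_{kh})-\bar u_{kh}(m_{kh}))\}$ is the centred evaluation perturbation, and we have used $\bar u_{kh}(m_{kh})=0$. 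Summing, we obtain
\[
Z_{\mathsf S,h}(\tau)=Z_{\mathsf S,h}(0)-\rho\sum_{j<n_h(\tau)}\sqrt h\, b_{\mathsf S,h,kh}Z_{\mathsf S,h}(j\sqrt h)+M_h(\tau)+\mathcal R_h(\tau).
\]
The remainder $\mathcal R_h$ collects four pieces:
\begin{itemize}
\item[(a)] the Taylor remainder of Assumption~\ref{ass::local-ou}-(iii);
\item[(b)] the evaluation-noise martingale $\sum h^{1/4}\epsilon$;
\item[(c)] the target remainder $\sum h^{-1/4}r$;
\item[(d)] the error from replacing $b_{\mathsf S,h,kh}$ by $b_{\mathsf S}(T)$, which we treat within the drift term below.
\end{itemize}

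\textbf{Negligibility of the remainders after stopping at $\sigma_{h,K}$.} Term (a) is $o_p(1)$ uniformly by Assumption~\ref{ass::local-ou}-(iii). For term (b), before $\sigma_{h,K}$ we have $|x|=h^{1/4}|Z|\le h^{1/4}K$. By \eqref{eq::local_driver_lipschitz}, each increment then has conditional second moment at most $\rho^2h^{1/2}L^2h^{1/2}K^2$. Over $n_h(M)\approx M h^{-1/2}$ steps, Doob's inequality bounds the stopped supremum in $L^2$ by $O(h^{1/2})\to0$. Stopping preserves the martingale property since $\{j<n_h(\sigma_{h,K})\}$ is $\mathcal G_{h,j}$-measurable; Assumption~\ref{ass::local-ou}-(i) guarantees the evaluations stay in $\Lambda$. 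For term (c), Cauchy--Schwarz together with $\mathsf E_{kh}r^2\le Ch^2$ gives $\mathsf E\sum|h^{-1/4}r|\le n_h(M)h^{-1/4}C^{1/2}h=O(h^{1/4})$.

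\textbf{Passage to the limit.} The stopped process then satisfies $Z^K_{\mathsf S,h}=\Psi(Z_{\mathsf S,h}(0),M_h)+o_p(1)$ up to stopping. Here $\Psi$ solves the linear Volterra equation $z(\tau)=z_0-\rho b_{\mathsf S}(T)\int_0^\tau z+m(\tau)$, whose solution is $z(\tau)=e^{-\rho b\tau}z_0+m(\tau)-\rho b\int_0^\tau e^{-\rho b(\tau-s)}m(s)\,ds$. This solution map is continuous from $\mathbb R\times D$ to $D$ in the Skorokhod topology at continuous $m$. The Riemann-sum drift differs from this integral by $o_p(1)$, uniformly, because $b_{\mathsf S,h,kh}\to_p b_{\mathsf S}(T)$ uniformly on the window (Assumption~\ref{ass::local-ou}-(ii)) and because $|Z|\le K$ before stopping. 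This is handled by a discrete Gronwall comparison between the discrete recursion and $\Psi$ applied to the discrete martingale. By Corollary~\ref{cor:primitive_joint_fclt} and the continuous mapping theorem, $\Psi(Z_{\mathsf S,h}(0),M_h)\Rightarrow Z_{\mathsf S}$ with $W$ independent of $Z_0$.

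Stopping needs care. Before $\sigma_{h,K}$ the discrete process coincides with $\Psi(\cdot)+o_p(1)$, so $\Phi_K(Z_{\mathsf S,h})$ and $\Phi_K(\Psi(\ldots))$ differ by $o_p(1)$. This is the main obstacle, since the exit times themselves could differ. The continuity-radius hypothesis resolves it: $\Phi_K$ is continuous at $Z_{\mathsf S}$ a.s., so Slutsky together with the continuous mapping theorem yields $Z^K_{\mathsf S,h}\Rightarrow Z^K_{\mathsf S}$. We must also check that the $o_p(1)$ perturbation does not alter the stopping. To do so, we apply the argument with radius $K'>K$ and use the fact that $\Phi_K\circ\Phi_{K'}=\Phi_K$.

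\textbf{Uniqueness and marginal formulas.} The OU equation is linear with constant coefficients, so its solution is unique and is given by the formula for $\Psi$. For deterministic $z$, the Itô isometry gives a Gaussian marginal with mean $e^{-\rho b\tau}z$ and variance $q^2\int_0^\tau e^{-2\rho b(\tau-s)}ds=\frac{q^2}{2\rho b}(1-e^{-2\rho b\tau})$. Letting $\tau\to\infty$ gives the invariant variance \eqref{eq::stationary_local_variance}.
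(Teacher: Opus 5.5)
Your proposal is correct and follows essentially the same route as the paper: the same fast-scale decomposition and remainder bounds (Doob for the evaluation noise, Assumption~\ref{ass::local-ou}-(iii) for the Taylor term, an $L^1$ bound of order $h^{1/4}$ for the target remainder), a Gronwall comparison with the solution $\widehat Z_h$ of the linear integral equation driven by $M_h$, the joint FCLT of Corollary~\ref{cor:primitive_joint_fclt} with continuous mapping, and the continuity-radius argument for the stopped paths. Your auxiliary radius $K'>K$ is a harmless variant of the paper's stopping at $\sigma_{h,K}\wedge\widehat\sigma_{h,K}$. The cleanest way to make the stopping step precise is to continue $Z_{\mathsf S,h}$ after $\sigma_{h,K}$ by a shifted copy of $\widehat Z_h$; this gives a process that is uniformly within $o_p(1)$ of $\widehat Z_h$ and has the same image under $\Phi_K$.
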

\proofref{app:proof_local_ou}

The stopping is essential under purely local assumptions. It can be removed when the normalised
tracking errors are themselves compactly contained.

\begin{corollary}[Removal of localisation]\label{cor:unstopped_local_ou}
If, in addition, for every finite $M$,
\[
 \lim_{K\to\infty}\limsup_{h\to0}\mathsf P\!\left(\sup_{0\le\tau\le M}|Z_{\mathsf S,h}(\tau)|>K\right)=0,
\]
then $Z_{\mathsf S,h}\Rightarrow Z_{\mathsf S}$ in $D([0,M])$. This normalised compact-containment condition is stronger than the fixed-time exit-contribution condition used in Theorem~\ref{th::local-tracking}.
\end{corollary}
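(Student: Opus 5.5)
The plan is to combine the stopped limit of Theorem~\ref{th::local-ou} with the added compact-containment condition, using a standard localisation argument in the Skorokhod space $D([0,M])$. Fix a bounded Lipschitz (or bounded continuous) functional $F$ on $D([0,M])$. The aim is to show $\mathsf E F(Z_{\mathsf S,h})\to\mathsf E F(Z_{\mathsf S})$. For this, I would split
\[
|\mathsf E F(Z_{\mathsf S,h})-\mathsf E F(Z_{\mathsf S})|
\le |\mathsf E F(Z_{\mathsf S,h})-\mathsf E F(Z^K_{\mathsf S,h})|
+|\mathsf E F(Z^K_{\mathsf S,h})-\mathsf E F(Z^K_{\mathsf S})|
+|\mathsf E F(Z^K_{\mathsf S})-\mathsf E F(Z_{\mathsf S})|.
\]
On the event $\{\sup_{\tau\le M}|Z_{\mathsf S,h}(\tau)|<K\}$, the stopping time $\sigma_K(Z_{\mathsf S,h})$ equals $M$, so $Z^K_{\mathsf S,h}=Z_{\mathsf S,h}$. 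The first term is therefore at most $2\|F\|_\infty\mathsf P(\sup_{\tau\le M}|Z_{\mathsf S,h}|\ge K)$. By hypothesis, its $\limsup_{h\to0}$ vanishes as $K\to\infty$.

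The third term is handled in the same way for the limit process. The OU process $Z_{\mathsf S}$ has continuous paths and is finite on $[0,M]$, so $\mathsf P(\sup_{\tau\le M}|Z_{\mathsf S}|\ge K)\to0$ as $K\to\infty$. The middle term tends to zero for each fixed continuity radius $K$ by Theorem~\ref{th::local-ou}.

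It remains to check that continuity radii can be taken arbitrarily large. I expect this to be the main technical point. Because $Z_{\mathsf S}$ is continuous, $\Phi_K$ is continuous at $Z_{\mathsf S}$ whenever the path does not just touch level $K$ without crossing. This holds for all but countably many $K$: the events $\{\sup|Z_{\mathsf S}|=K\}$ are disjoint in $K$, so only countably many of them can have positive probability. For a non-degenerate diffusion coefficient $q_{\mathsf S}(T)>0$, touching without crossing has probability zero. In the degenerate case $q_{\mathsf S}=0$, I would restrict to $K$ outside the countable set of atoms of $\sup_{\tau\le M}|Z_{\mathsf S}(\tau)|$.

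I would therefore let $K\to\infty$ along continuity radii. Taking $\limsup_h$ first and then $K\to\infty$ gives the claimed convergence $Z_{\mathsf S,h}\Rightarrow Z_{\mathsf S}$ in $D([0,M])$.

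The remark comparing the two conditions then follows by contrast. Compact containment of the normalised error controls the whole path uniformly on $[0,M]$ at the $h^{1/4}$ scale. The exit condition of Theorem~\ref{th::local-tracking}, by comparison, concerns only a fixed time, only the unnormalised error, and only in an $o(h^{1/2})$ mean-square sense. Hence the former is the stronger requirement, as stated.
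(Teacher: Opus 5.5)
Your proposal is correct and is essentially the paper's proof. You localise at continuity radii $K_\ell\uparrow\infty$, apply Theorem~\ref{th::local-ou} at each radius, and remove the stop using compact containment of $Z_{\mathsf S,h}$ and of the continuous OU path; your three-term split is just the converging-together theorem the paper cites, and you use the same case split on $q_{\mathsf S}(T)$ to produce the radii.

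One sentence should be dropped. Discontinuity of $\Phi_K$ at $Z_{\mathsf S}$ does not force $\sup_{\tau\le M}|Z_{\mathsf S}(\tau)|=K$, because the path can touch $K$ at its first hitting time and exceed $K$ later, so the disjointness argument is not the right general reason. Your case analysis already supplies the correct reason: immediate crossing when $q_{\mathsf S}(T)>0$, and $\sup|Z_{\mathsf S}|=|Z_0|$ when $q_{\mathsf S}(T)=0$.
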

\proofref{app:proof_unstopped_local_ou}

The invariant variance in~\eqref{eq::stationary_local_variance} is a frozen-OU benchmark, not a
finite-$h$ stationary variance. Its two coefficients have direct scoring-risk interpretations.
Suppose
\begin{equation}
    \bar u_{kh}(\lambda)
    =
    -
    G_{\mathsf S}^{(h)}(\lambda)
    \partial_{\lambda}
    \widetilde{\mathsf R}_{\mathsf S,kh}^{(h)}(\lambda).
    \label{eq::mean_field_scoring_risk}
\end{equation}
Differentiating~\eqref{eq::mean_field_scoring_risk} at $m_{kh}^{(h)}$, the derivative of the scaling term drops out because the first-order condition $\partial_{\lambda}\widetilde{\mathsf R}_{\mathsf S,kh}^{(h)}(m_{kh}^{(h)})=0$ holds at the interior pseudo-true point. Hence
\begin{equation}
    b_{\mathsf S,h,kh}
    =
    -\partial_{\lambda}\bar u_{kh}(m_{kh}^{(h)})
    =
    G_{\mathsf S}^{(h)}(m_{kh}^{(h)})
    \partial_{\lambda\lambda}^{2}
    \widetilde{\mathsf R}_{\mathsf S,kh}^{(h)}
    (m_{kh}^{(h)}).
    \label{eq::local_curvature_interpretation}
\end{equation}
Thus $b_{\mathsf S}(T)$ is scaled risk curvature: larger values correct deviations faster. The coefficient $q_{\mathsf S}^2(T)$ is the variance of the composite innovation, not of the update alone.

At the pseudo-true point, both components are centred, so
\begin{align}
    q_{\mathsf S,h,kh}^{2}
    &=
    \rho^2
    \mathsf{Var}_{kh}
    \left[
        u_{\mathsf S}^{(h)}
        (y_{(k+1)h},m_{kh}^{(h)})
    \right]
    \nonumber\\
    &\quad
    +
    \bar \Gamma_{kh}^{2}
    \mathsf{Var}_{kh}
    \left[
        \eta_{(k+1)h}
    \right]
    \nonumber\\
    &\quad
    -
    2\rho\bar\Gamma_{kh}
    \mathsf{Cov}_{kh}
    \left[
        u_{\mathsf S}^{(h)}
        (y_{(k+1)h},m_{kh}^{(h)}),
        \eta_{(k+1)h}
    \right].
    \label{eq::q_variance_decomposition}
\end{align}

The terms are update noise, target motion, and their conditional covariance. Hence the rule affects the target path and, conditional on it, curvature, driver variance, and update--target covariance. Together with~\eqref{eq::stationary_local_variance}, this decomposition summarises the stability--noise trade-off: curvature stabilises, while noisy or poorly aligned updates raise tracking variability.

Within the setting of Theorem~\ref{th::local-ou}, consider the deterministic-target special case.
Over the local window, write
\[
 J_{\mathsf S,h,kh}
 :=\partial_{\lambda\lambda}^2
 \widetilde{\mathsf R}_{\mathsf S,kh}^{(h)}(m_{kh}^{(h)}),
 \qquad
 K_{\mathsf S,h,kh}
 :=\mathsf{Var}_{kh}\!\left[
 \psi_{\mathsf S}^{(h)}(y_{(k+1)h},m_{kh}^{(h)})
 \right].
\]
Suppose that, eventually and throughout this window, $\bar\Gamma_{kh}=0$,
$0<J_{\mathsf S,h,kh}<\infty$, and
$G_{\mathsf S}^{(h)}(m_{kh}^{(h)})=J_{\mathsf S,h,kh}^{-1}$, and that
\[
 \sup_{0\le j<n_h(M)}
 \left|
 \frac{K_{\mathsf S,h,(k_T+j)h}}
 {J_{\mathsf S,h,(k_T+j)h}^{2}}
 -\kappa_{\mathsf S}
 \right|\to_p0.
\]
Then~\eqref{eq::local_curvature_interpretation} gives
$b_{\mathsf S,h,kh}=1$, while~\eqref{eq::q_variance_decomposition} and
Assumption~\ref{ass::local-ou}-(v) give
$q_{\mathsf S}(T)^2=\rho^2\kappa_{\mathsf S}$. Therefore the frozen OU invariant variance reduces to
\[
 V_{\mathsf S}^{\star}(T)=\frac{\rho\kappa_{\mathsf S}}{2}.
\]
If target-motion variance or update--target covariance is non-zero, this reduction to $K/J^2$ is false and the full composite variance in~\eqref{eq::q_variance_decomposition} must be used.

Finally, specialise Section~\ref{section::continuous_time} to a Gaussian working scale with $\Gamma(\lambda)=e^\lambda$. At date $T$, let the true law be
\begin{equation}
    Y = \sqrt{h e^{\chi_T}}\varepsilon = \sqrt h\,e^{\chi_T/2}\varepsilon,
    \label{eq::true_local_gaussian_scale}
\end{equation}
where $\chi_T$ is true log-scale and $\varepsilon$ need not be Gaussian. Logarithmic and bounded drivers then require different moments for local tracking variance.

\begin{proposition}[Fourth-moment boundary for local tracking variance]
\label{prop::heavy-tail-diagnostic}
In the Gaussian scale specialisation above, suppose that $0<\mathsf E[\varepsilon^2]<\infty$ and $\mathsf E[\varepsilon^4]=\infty$. Then:

\begin{enumerate}
\item[(i)] For the logarithmic score, the pseudo-true log-scale and the update
second moment satisfy
\[
m_{\log}(T)=\chi_T+\log \mathsf E[\varepsilon^2],
\qquad
\mathsf E\!\left[u_{\log}^{(h)}(y,m_{\log}(T))^2\right]=\infty.
\]
The infinite second moment persists under any predictable scaling of the
log-score derivative that is bounded away from zero. Consequently, if
$|\bar \Gamma_T|<\infty$ and $\mathsf E[\eta^2]<\infty$, the composite
innovation $\rho u_{\log}^{(h)}(y,m_{\log}(T))-\bar\Gamma_T\eta$ is not
square-integrable. Assumption~\ref{ass::local-ou}-(v) therefore fails, and
Theorem~\ref{th::local-ou} does not apply.
\item[(ii)] Let $\mathsf S$ have a locally existing pseudo-true value
$m_{\mathsf S}(T)$, and let $U$ be a neighbourhood of that value. Suppose
that its scaled driver has the bounded standardised form
\begin{equation}
    u_{\mathsf S}^{(h)}(y,\lambda)
    =
    g_{\mathsf S}
    \left(
        \lambda,
        \frac{y}{\sqrt{h e^{\lambda}}}
    \right),
    \qquad
    \sup_{\lambda\in U}\sup_{z\in\mathbb R}
    |g_{\mathsf S}(\lambda,z)|<\infty
    \label{eq::bounded_standardised_driver}
\end{equation}
for $\lambda\in U$. Then the update is locally uniformly square-integrable.
If $|\bar \Gamma_T|<\infty$ and $\mathsf E[\eta^2]<\infty$, the composite
innovation is also locally square-integrable. If, in addition, the remaining
martingale-limit conditions of Assumption~\ref{ass::local-ou}-(v) hold with
$q_{\mathsf S}(T)<\infty$ and $0<b_{\mathsf S}(T)<\infty$, then
$V_{\mathsf S}^{\star}(T)<\infty$.
\end{enumerate}
\end{proposition}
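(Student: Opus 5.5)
For part (i) I will compute everything explicitly in the Gaussian working scale family with $\Gamma(\lambda)=e^\lambda$. The working log score is
\[
\mathsf S_{\log}(P^{(h)}_\lambda,y)=\tfrac12\log(2\pi h)+\tfrac12\lambda+\frac{y^2}{2he^{\lambda}},
\qquad
\psi_{\log}(y,\lambda)=-\tfrac12+\frac{y^2}{2he^\lambda}.
\]
Under the true law \eqref{eq::true_local_gaussian_scale} we have $y^2/(he^\lambda)=e^{\chi_T-\lambda}\varepsilon^2$. The local risk is therefore
\[
\widetilde{\mathsf R}(\lambda)=\mathrm{const}+\tfrac12\lambda+\tfrac12 e^{\chi_T-\lambda}\mathsf E[\varepsilon^2],
\]
which is finite because $\mathsf E[\varepsilon^2]<\infty$. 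It is strictly convex and coercive on $\mathbb R$. Its first-order condition $1=e^{\chi_T-\lambda}\mathsf E[\varepsilon^2]$ gives the unique minimiser $m_{\log}(T)=\chi_T+\log\mathsf E[\varepsilon^2]$. Differentiation under the expectation is immediate here, since $\psi_{\log}$ is affine in $\varepsilon^2$.

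At this target, $\psi_{\log}(y,m_{\log})=\tfrac12(\varepsilon^2/\mathsf E[\varepsilon^2]-1)$, which is an affine function of $\varepsilon^2$ with nonzero slope. Its second moment is infinite precisely because $\mathsf E[\varepsilon^4]=\infty$. For any predictable scaling $G\ge g_0>0$, we have $u^2\ge g_0^2\psi^2$, so $\mathsf E[u^2]=\infty$. Here $G$ is a constant at fixed $\mathcal F_{kh}$, and I will take conditional expectations.

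For the composite innovation $\xi=\rho u-\bar\Gamma_T\eta$, I will argue by contradiction. If $\xi\in L^2$, then $\rho u=\xi+\bar\Gamma_T\eta\in L^2$ by Minkowski, because $\bar\Gamma_T\eta\in L^2$ under the stated hypotheses. This contradicts $\mathsf E[u^2]=\infty$, since $\rho>0$. This argument needs no independence between $u$ and $\eta$. It follows that $q^2_{\mathsf S,h,kh}=\infty$, so the square-integrability requirement and the finite quadratic-variation limit in Assumption~\ref{ass::local-ou}-(v) fail, and Theorem~\ref{th::local-ou} does not apply.

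For part (ii), the bound $|u|\le M:=\sup_{\lambda\in U,z}|g_{\mathsf S}(\lambda,z)|$ gives $\sup_{\lambda\in U}\mathsf E[u^2]\le M^2$ whatever the law of $\varepsilon$. Hence $u^2$ is uniformly bounded, and in particular uniformly integrable, on $U$. The composite innovation then satisfies
\[
\mathsf E[\xi^2]\le 2\rho^2M^2+2\bar\Gamma_T^2\mathsf E[\eta^2]<\infty
\]
locally. Given the remaining conditions of Assumption~\ref{ass::local-ou}-(v) with $q_{\mathsf S}(T)<\infty$ and $0<b_{\mathsf S}(T)<\infty$, formula \eqref{eq::stationary_local_variance} gives $V^\star_{\mathsf S}(T)=q_{\mathsf S}^2(T)/(2\rho b_{\mathsf S}(T))<\infty$.

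The main obstacle is modest. It is making precise in part (i) that failure of square-integrability at the target point, and conditionally at each date, falsifies Assumption~\ref{ass::local-ou}-(v) as stated, rather than just one sufficient route to it. I will handle this by noting that (v) explicitly requires each $\tilde\xi$ to be square-integrable, which is exactly what fails here.
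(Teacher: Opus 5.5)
Your proposal is correct and follows the paper's proof step by step. Both arguments use the same pieces:
\begin{enumerate}[leftmargin=2em,label=(\roman*)]
\item the explicit first-order condition $e^{\chi_T-\lambda}\mathsf E[\varepsilon^2]=1$ for the Gaussian log-score risk;
\item the driver at $m_{\log}(T)$ being an affine function of $\varepsilon^2$, so its second moment is infinite;
\item the lower bound $c^2\mathsf E[u^2]$ for scalings bounded away from zero;
\item the add-back (Minkowski) contradiction for the composite innovation;
\item the uniform bound $\sup_{\lambda\in U}\mathsf E[u^2]\le M^2$ in part (ii).
\end{enumerate}
Your check that the risk is strictly convex and coercive, which guarantees the minimiser exists and is unique, is a small addition; the paper identifies $m_{\log}(T)$ from the first-order condition alone.
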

\proofref{app:proof_heavy_tail}

For the logarithmic score, the scaled driver is
\begin{equation}
    u_{\log}^{(h)}(y,\lambda)
    =
    \left\{
    \left(
        \frac{y}{\sqrt{h e^{\lambda}}}
    \right)^2
    -
    1
    \right\}.
    \label{eq::gaussian_log_score_driver}
\end{equation}
Thus the local Gaussian OU approximation for the log update uses a finite fourth innovation moment. Bounded drivers remove this moment obstruction but do not by themselves establish the OU limit: the remaining martingale-limit, linearisation, local-tube, and identification conditions, and compact containment for the unstopped limit, are still required. Under these conditions, the rule shapes both the selected path and tracking variance.

We next isolate these mechanisms numerically, holding the predictive density fixed. A triangular-array experiment compares stopped finite-horizon tracking variance with its OU counterpart, including update noise and target motion; the empirical section then varies criterion and density separately.

\section{Numerical experiments}\label{sec::experiments}

To isolate the role of the updating criterion, we hold the predictive density fixed at the Gaussian log-variance family of Section~\ref{section::continuous_time}, with $\lambda=\log v$ and $z=y/\sqrt v$, and compare the logarithmic, density-power, CRPS, and radial-basis MMD rules. The experiments follow the mechanism developed in Sections~\ref{section::introduction}--\ref{sec::local_consistency}: the rule first determines the driver geometry and pseudo-true target; these features then govern the response of the filtered path to contamination; and the gain scaling determines the relevant continuous-time regime. Student-$t$ log filters appear only as explicitly labelled density benchmarks, while Section~\ref{sec::empirical} varies density and criterion systematically. The score formulas, reference-state geometry, scaling choices, tuning constants, and full simulation design are collected in \appref{app::formulas}.

\subsection{Driver geometry}\label{sec::exp_geometry}
Figure~\ref{fig::driver_geometry} compares the curvature-normalised drivers $z\mapsto\tilde g_{\mathsf S}(z)=\psi_{\mathsf S}(z)/J_{\mathsf S}$. The Gaussian log driver grows quadratically; the other three are bounded and tail-saturating. CRPS approaches its plateau monotonically, whereas density-power and MMD decline from an interior peak. None redescends; Section~\ref{sec::emp_paths} gives redescending location drivers. These shapes illustrate Propositions~\ref{prop::outlier} and \ref{prop::local_mean}.

\begin{figure}[H]
\centering
\includegraphics[width=\linewidth]{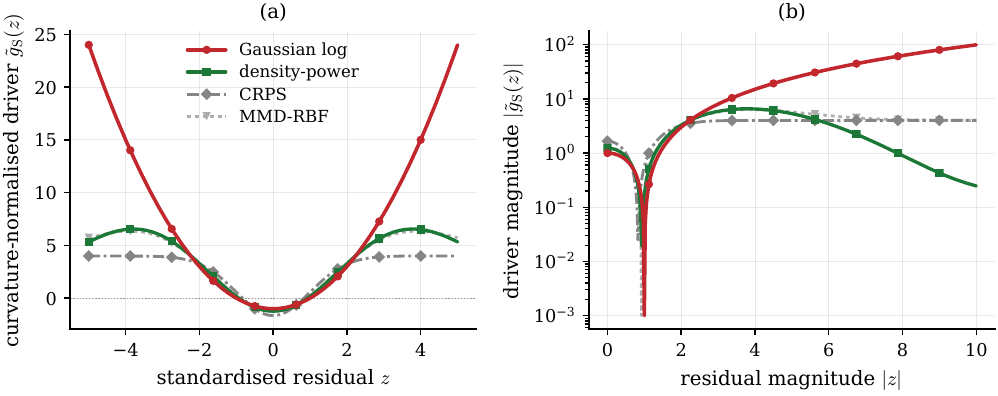}
\caption{Curvature-normalised scale drivers: signed response $\tilde g_{\mathsf S}(z)$ (a) and
magnitude $|\tilde g_{\mathsf S}(z)|$ on a logarithmic scale (b). The Gaussian log driver is
unbounded; the other three are bounded and tail-saturating, not redescending. Curvature scaling
$G_{\mathsf S}=J_{\mathsf S}^{-1}$ sets the local mean-field derivative to one; the plotted update
variance is $K_{\mathsf S}/J_{\mathsf S}^{2}$, the update contribution to $\Xi_{\mathsf S}$ in
Theorem~\ref{th::diffusion_limit}.}
\label{fig::driver_geometry}
\end{figure}

\subsection{Pseudo-true target under contamination}\label{sec::exp_target}
The second diagnostic illustrates Proposition~\ref{prop::divergence}. Under the variance-contamination law
$Q_{\epsilon,\tau}=(1-\epsilon)\mathcal{N}(0,v_0)+\epsilon\,\mathcal{N}(0,\tau v_0)$ with $\epsilon=0.05$, the
Kullback--Leibler pseudo-true variance is available in closed form, $v^\star_{\log}=v_0\{1+\epsilon(\tau-1)\}$,
so the log target absorbs contamination linearly. Figure~\ref{fig::contamination_target} shows its numerical root reproducing this line, while density-power, CRPS, and MMD targets saturate because their first-order conditions down-weight observations far from the central scale. This target shift is distinct from innovation noise and tail response.

\begin{figure}[H]
\centering
\includegraphics[width=\linewidth]{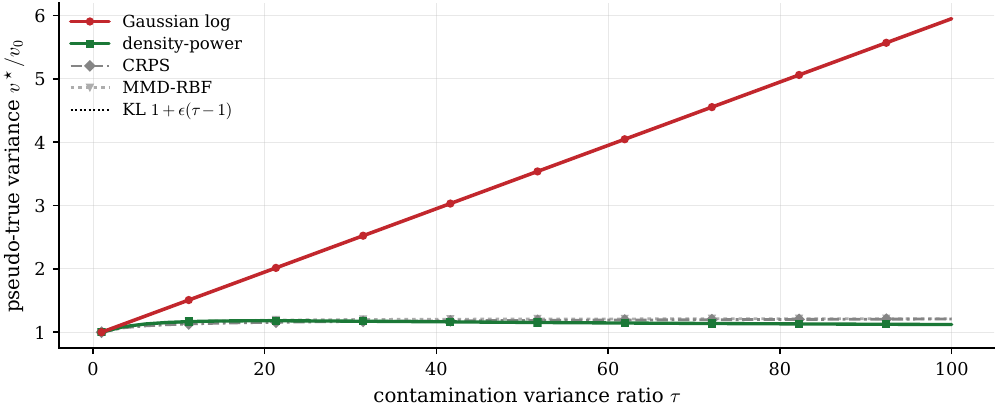}
\caption{Stationary pseudo-true variance ratio $v^\star/v_0$ under $Q_{\epsilon,\tau}$ with
$\epsilon=0.05$. The Gaussian log target follows $1+\epsilon(\tau-1)$, with maximum numerical-root
error $5.2\times10^{-7}$; the bounded criteria saturate near $v^\star/v_0\in[1.1,1.6]$, as
Proposition~\ref{prop::divergence} predicts.}
\label{fig::contamination_target}
\end{figure}

\subsection{Outlier impulse response}\label{sec::exp_impulse}
The third diagnostic uses Proposition~\ref{prop::outlier}. In the additive recursion $\lambda_{t+1}=\omega+\varphi\lambda_t+\alpha g_{\mathsf S}(Z_t)$ with
$|\varphi|<1$, replacing $Z_t=0$ by $Z_t=B$ shifts the path by
$\lambda^{(B)}_{t+1+k}-\lambda^{(0)}_{t+1+k}=\varphi^{k}\alpha\{g_{\mathsf S}(B)-g_{\mathsf S}(0)\}$.
The Gaussian log response grows quadratically in $B$; bounded rules cap it but, being tail-saturating, converge to non-zero per-step plateaus. Under curvature scaling, at $B=8$, $\alpha=0.05$, and $\varphi=0.97$, the log rule has initial response $3.200$ and no finite cumulative tail-envelope bound, whereas the three bounded Gaussian-density rules have initial responses $0.106$--$0.283$ and finite bounds $13.333$--$21.842$. Table~\ref{tab::impulse} in \appref{app::formulas} records the full comparison, including the Student-$t$ density benchmark. Section~\ref{sec::empirical} repeats the mechanism through a controlled single-return injection.

\subsection{Finite-sample filtering under contamination}\label{sec::exp_filtering}
We filter latent log volatility $x_t$ from $Y_t=e^{x_t/2}\varepsilon_t$ in paired Monte Carlo samples. A Bernoulli mask multiplies selected baseline Gaussian innovations by six; all other observations, latent paths, and masks are common. Curvature scaling $G_{\mathsf S}=J_{\mathsf S}^{-1}$ aligns reference-state local mean coefficients but not finite-sample RMSE.
Table~\ref{tab::filtering_mc} shows comparable clean-data RMSEs. Under contamination, the unbounded Gaussian log filter has much larger RMSE, maximum deviations, and flagged-day updates than bounded filters. Figure~\ref{fig::contaminated_filtering} illustrates its spikes and mean reversion. The clean-data proximity is not a recursive-contraction claim; the contamination gap illustrates Proposition~\ref{prop::outlier}.

\begin{figure}[t]
\centering
\includegraphics[width=\linewidth]{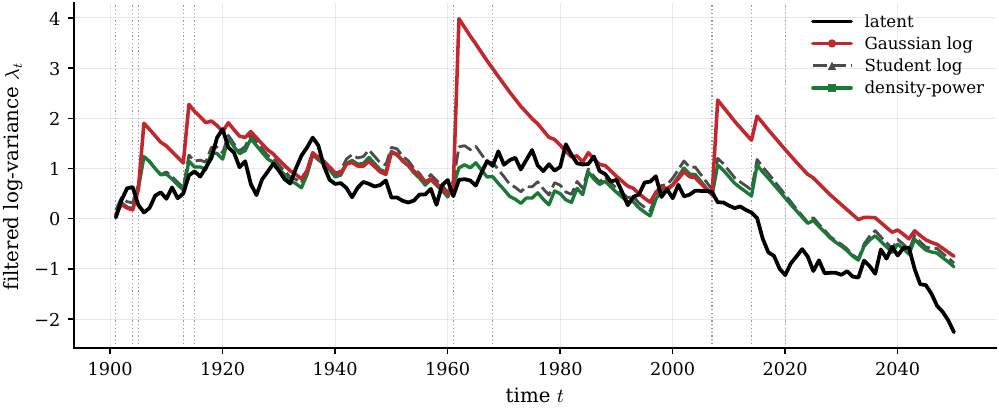}
\caption{Filtered log-variance on one paired replication; dotted lines mark dates on which the
baseline Gaussian innovation is multiplied by six. The unbounded Gaussian log-score filter
overshoots at selected dates and then mean reverts; the bounded density-power filter tracks the
latent path through the contamination.}
\label{fig::contaminated_filtering}
\end{figure}

\begin{table}[!t]
\centering\small
\resizebox{\linewidth}{!}{\begin{tabular}{@{}lrrrrrr@{}}
\toprule
& \multicolumn{2}{c}{clean} & \multicolumn{4}{c}{contaminated} \\
\cmidrule(lr){2-3}\cmidrule(lr){4-7}
Rule & RMSE & bias & bias & RMSE & max-dev & flagged update \\
\midrule
Gaussian log & 0.562 (0.0002) & +0.125 (0.0002) & +1.305 (0.0026) & 3.618 (0.0129) & 30.781 (0.1172) & 2.616 (0.0037) \\
density-power & 0.537 (0.0001) & +0.067 (0.0002) & +0.120 (0.0003) & 0.566 (0.0001) & 2.153 (0.0018) & 0.294 (0.0002) \\
CRPS & 0.544 (0.0001) & +0.029 (0.0002) & +0.083 (0.0003) & 0.560 (0.0001) & 2.081 (0.0016) & 0.325 (0.0001) \\
MMD-RBF & 0.615 (0.0002) & +0.072 (0.0004) & +0.123 (0.0004) & 0.633 (0.0003) & 2.432 (0.0021) & 0.394 (0.0002) \\
\midrule
Student log$^{\dagger}$ & 0.560 (0.0001) & +0.168 (0.0002) & +0.258 (0.0003) & 0.617 (0.0002) & 2.266 (0.0017) & 0.483 (0.0002) \\
\bottomrule
\end{tabular}
}
 \caption{Paired Monte Carlo filtering against the latent log-variance; entries are means with
across-replication standard errors in parentheses ($R=20000$, $T=4000$, seed $20260430$).
On independent Bernoulli$(0.02)$ dates, the contaminated sample multiplies the common baseline
Gaussian innovation by six (variance ratio $36$). The Gaussian-density filters use curvature
scaling. $^{\dagger}$The Student-$t$ log-score row changes the density and is included only as a
benchmark.}
\label{tab::filtering_mc}
\end{table}

\FloatBarrier
\subsection{High-frequency limits}\label{sec::exp_hf}
Figure~\ref{fig::hf} examines one finite-mesh implication per limiting regime, complementing the containment and weak-convergence conditions of the limit results.

For the centred regime of Theorem~\ref{th::diffusion_limit}, with $\alpha_h=\alpha\sqrt h$, the scaled second moment converges,
$|h^{-1}\mathsf E[(\Delta\lambda)^2]-\alpha^2\Xi_{\mathsf S}|=O(h)$, and the fourth moment vanishes,
providing a sufficient Lindeberg check; the fitted log--log slope is $1.00$ for every criterion. Realised quadratic variation matches $\alpha^2\Xi_{\mathsf S}$: drift is common, but volatility of volatility is rule-specific, with the log case recovering the corresponding result of \citet{buccheri2021continuous}.
Table~\ref{tab::hf_diagnostics} in \appref{app::formulas} reports the underlying curvature, variance, covariance, and centring diagnostics; independent quadrature gives $\max_h|\mathsf E[g_{\mathsf S,h}]|<10^{-15}$, confirming exact centring for all three arrays.

The bottom-left panel treats Theorem~\ref{th::mean_flow} with $O(h)$ gain and a variance-matched $t_5$ residual law. The log driver has zero mean field because it sees only variance; nonlinear rules shift the fixed point. The $t_5$ residual and bounded robust updates satisfy the stated Lindeberg conditions, while the simulation design imposes mean-field convergence, initial-law, containment, and ODE conditions. Gaussian log is shown only as the centred matched-variance baseline: its jump condition holds, but its zero mean field excludes it from the theorem's non-centred regime.

The bottom-right panel treats Theorem~\ref{th::local-ou}. With $\alpha_h=\rho\sqrt h$, a $\sqrt h\,\Gamma\eta$ target innovation, $h^{-1/4}$ error normalisation, and common stopping radius, the array-to-OU terminal-variance ratio approaches one. The variance decomposition retains
$q_{\mathsf S}^2=\rho^2K_{\mathsf S}/J_{\mathsf S}^2+\Gamma^2$ in the declared independent-shock
design; the zero cross term is design-specific. A companion arm draws common $Z_0\sim U[-1,1]$ for the array and OU and compares stopped terminal laws at $h=2^{-16}$.
Across Gaussian log, density-power and CRPS, respectively, the terminal-variance ratios are
$0.975$, $1.027$ and $1.015$, the two-sample Kolmogorov--Smirnov distances are $0.022$, $0.009$
and $0.010$; correlations between $Z_0$ and Brownian drivers lie between $-0.008$ and $0.013$. These diagnostics support the stated joint weak-convergence conditions.

\begin{figure}[t]
\centering
\includegraphics[width=\linewidth]{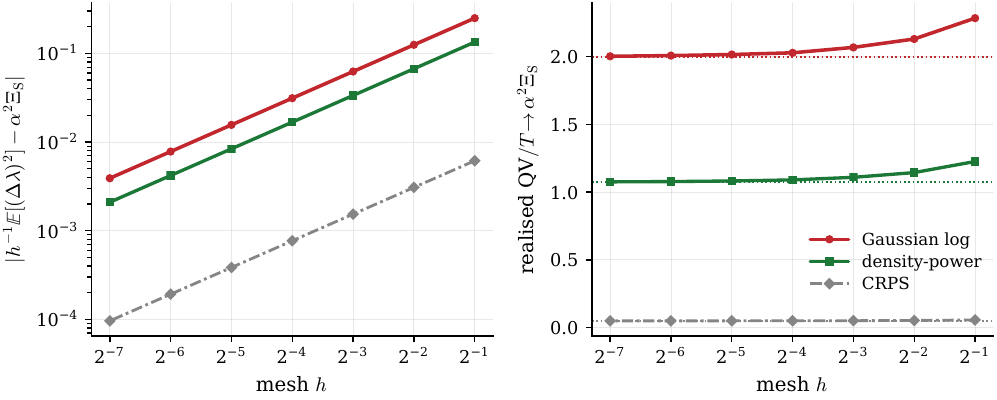}\\[2pt]
\includegraphics[width=.49\linewidth]{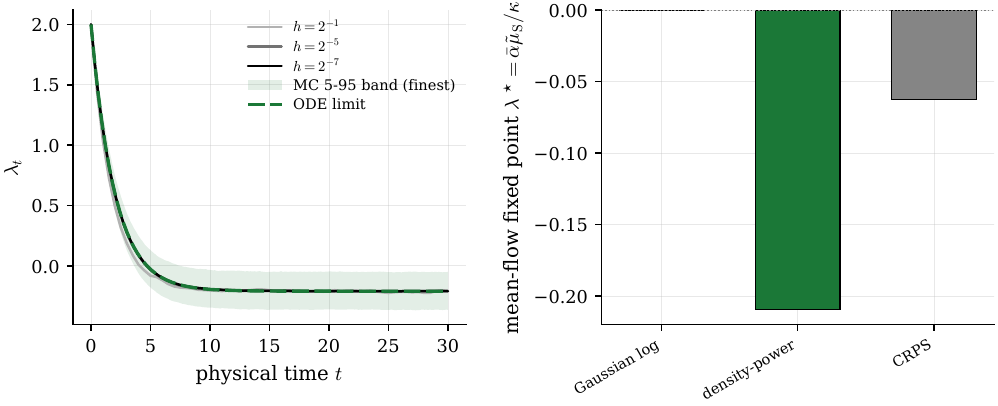}\hfill
\includegraphics[width=.49\linewidth]{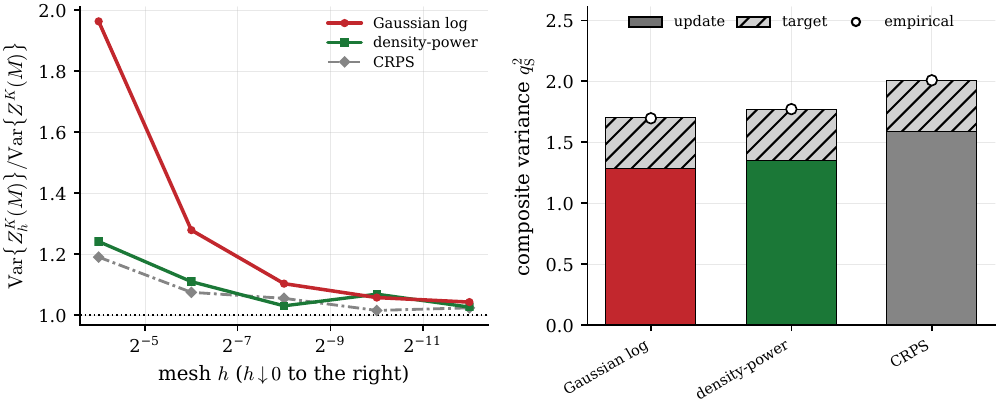}
\caption{High-frequency mesh refinement. Top: the centred-diffusion experiment of
Theorem~\ref{th::diffusion_limit}, showing increment-moment convergence (left) and realised
quadratic variation (right). Bottom left: the mean-flow experiment of
Theorem~\ref{th::mean_flow} under a variance-matched $t_5$ law. Bottom right: the stopped local
Ornstein--Uhlenbeck experiment of Theorem~\ref{th::local-ou}, showing terminal-variance convergence
(left) and the update--target-motion variance decomposition (right). The arrays are Gaussian log,
Gaussian density-power, and Gaussian CRPS, as verified in
Proposition~\ref{prop:verified_score_arrays}; Student log and fixed-bandwidth MMD remain part of the
finite-step analysis.}
\label{fig::hf}
\end{figure}

The simulations isolate target, innovation noise, and tail response at a fixed density. The empirical analysis now varies density and criterion separately.

\FloatBarrier
\section{Empirical application}\label{sec::empirical}

Section~\ref{sec::empirical} connects the theory to three empirical questions. First, how do
density and criterion choices affect the propagation of extreme observations? Second, which
updates retain stable tracking as innovation tails become heavier? Third, how do these choices
trade off point-variance accuracy, tail calibration, and adaptation to structural change? We begin
with the data, estimation framework, and evaluation criteria, then address these questions through
event paths, controlled experiments, and a twelve-market density-by-criterion factorial. A final
location experiment illustrates that the propagation mechanism extends beyond scale models.

\paragraph{\textbf{Data}} The panel comprises twelve international equity indices observed daily
over 2000--2024, with the EURO STOXX~50 beginning in 2007; Appendix
Table~\ref{tab::panel_data} reports the tickers, dates, and sample sizes. From Yahoo
Finance\footnote{\url{https://finance.yahoo.com/}.} open--high--low--close quotes, we form demeaned log returns in per cent
and the Parkinson and Garman--Klass range-based variance proxies
\citep{parkinson1980extreme,garman1980estimation}. The first 70\% of each market is used for
estimation and the final 30\% for out-of-sample evaluation. Squared-return QLIKE provides the
primary close-to-close variance target \citep{patton2011volatility}, while the range proxies provide
target-changing sensitivity checks. The descriptive event study additionally uses the Thai
baht/US dollar rate from FRED \citep{fred_dexthus}. For every event exercise, preprocessing and
estimation end strictly before the declared event date, after which parameters are frozen.

\paragraph{\textbf{Estimation}} The working density is Gaussian or variance-standardised Student-$t_6$, and the criterion is one of the four rules in Section~\ref{sec::experiments}.  For each density
$\times$ criterion cell, the empirical specification is
\[
\begin{aligned}
\widehat\lambda_{t+1}(\vartheta)
&=\omega+\phi\widehat\lambda_t(\vartheta)
  +\alpha u_{\mathsf S}\bigl(y_t,\widehat\lambda_t(\vartheta)\bigr),
&\widehat\lambda_1(\vartheta)&=\frac{\omega}{1-\phi},\\
\widehat\vartheta_{\mathsf S}
&\in\underset{\vartheta\in\Theta}{\arg\min}\,
  \widehat Q_{\mathsf S,T_{\mathrm{tr}}}(\vartheta),
&\widehat Q_{\mathsf S,T_{\mathrm{tr}}}(\vartheta)
&:=\frac1{T_{\mathrm{tr}}}\sum_{t=1}^{T_{\mathrm{tr}}}
\mathsf S\bigl(P_{\widehat\lambda_t(\vartheta)},y_t\bigr),
\\[-0.2em]
&&\vartheta&=(\omega,\phi,\alpha)'.
\end{aligned}
\]
The same rule drives the recursion and the training objective, so the
logarithmic cells are Gaussian or Student quasi-maximum-likelihood fits.
Estimation uses the first 70\% of each market; all reported diagnostics use
the final 30\%.  The parameterisation imposes $\phi\in(-1,1)$, $\alpha>0$,
and a bounded stationary level $\omega/(1-\phi)$.  Density shape and criterion
tuning are fixed rather than estimated: $\nu=6$ and density-power
$\beta=0.15$.

\paragraph{\textbf{Large-sample properties}}
Fix one density--criterion--scaling cell, set $T=T_{\mathrm{tr}}$, write
$\widehat\vartheta_T=\widehat\vartheta_{\mathsf S}$, and suppress the cell index.  The
hats distinguish the finite-start filter used in computation from the
stationary infinite-past solution $\lambda_t(\vartheta)$ of
\[
  \lambda_{t+1}(\vartheta)
  =\omega+\phi\lambda_t(\vartheta)
   +\alpha u_{\mathsf S}\bigl(y_t,\lambda_t(\vartheta)\bigr).
\]
Define
\[
  q_t(\vartheta):=\mathsf S(P_{\lambda_t(\vartheta)},y_t),
  \qquad Q(\vartheta):=\mathsf E[q_0(\vartheta)],
  \qquad
  \vartheta^\star:=\underset{\vartheta\in\Theta}{\arg\min}\,Q(\vartheta).
\]
\ref{app:static-estimation} collects the population stationarity,
uniform-in-parameter invertibility, regularity, identification, and
central-limit assumptions. The realised optimisation and stability gates below are diagnostics and are not substitutes for those population assumptions.

\begin{theorem}[Batch minimum-scoring-risk estimation]
\label{thm:static-estimation}
Suppose Assumptions~\ref{ass:est-data}--\ref{ass:est-criterion} hold.  Let
$\widehat\vartheta_T$ be a measurable numerical near-minimiser satisfying
\[
  \widehat Q_T(\widehat\vartheta_T)
  \le\inf_{\vartheta\in\Theta}\widehat Q_T(\vartheta)+r_T,
  \qquad r_T=o_p(1).
\]
Then
\[
  \sup_{\vartheta\in\Theta}
  |\widehat Q_T(\vartheta)-Q(\vartheta)|\longrightarrow0
  \quad\text{almost surely},
  \qquad
  \widehat\vartheta_T\xrightarrow{p}\vartheta^\star.
\]
If, in addition, Assumptions~\ref{ass:est-differentiability} and
\ref{ass:est-clt} hold, $\vartheta^\star$ is interior, and
$\|\nabla_\vartheta\widehat Q_T(\widehat\vartheta_T)\|
=o_p(T^{-1/2})$, then
\[
  \sqrt T(\widehat\vartheta_T-\vartheta^\star)
  \Longrightarrow
  \mathcal{N}\!\left(0,\mathcal A^{-1}\Omega(\mathcal A^{-1})^{\top}\right),
\]
where
\[
  \mathcal A=\mathsf E[\nabla_\vartheta^2q_0(\vartheta^\star)]
\]
and $\Omega$ is the long-run covariance of
$\mathfrak s_t(\vartheta^\star)=\nabla_\vartheta q_t(\vartheta^\star)$.
\end{theorem}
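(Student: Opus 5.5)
The plan follows the standard extremum-estimator route for observation-driven models, as in \citet{straumann2006quasi} and \citet{blasques2018feasible}. It proceeds in three stages: replace the finite-start filter by the stationary infinite-past filter; prove a uniform strong law for the stationary criterion and deduce consistency; then linearise the first-order condition and apply a dependent-data central limit theorem.

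\emph{Stage 1: invertibility and negligibility of the start-up error.} The uniform-in-parameter invertibility condition in Assumptions~\ref{ass:est-data}--\ref{ass:est-criterion} should give exponential almost-sure convergence
\[
\sup_{\vartheta\in\Theta}|\widehat\lambda_t(\vartheta)-\lambda_t(\vartheta)|\le C\gamma^t
\]
for some $\gamma<1$ and almost surely finite $C$. This is the contraction argument behind Proposition~\ref{prop::state_dependent_outlier}, made uniform over $\Theta$ and stated in random-Lipschitz form. The criterion-regularity assumption should supply a local Lipschitz envelope $|\mathsf S(P_\lambda,y)-\mathsf S(P_{\lambda'},y)|\le M(y,\lambda,\lambda')|\lambda-\lambda'|$ with a logarithmic moment. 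A Borel--Cantelli-type lemma, of the form "an exponentially almost surely convergent sequence times a log-moment stationary sequence is summable", then yields
\[
\sup_\vartheta|\widehat Q_T(\vartheta)-Q_T(\vartheta)|\to0
\]
almost surely, where $Q_T(\vartheta)=T^{-1}\sum_t q_t(\vartheta)$.

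\emph{Stage 2: uniform strong law and consistency.} Stationarity and ergodicity of $\{q_t(\cdot)\}$ as a $C(\Theta)$-valued sequence, with compact $\Theta$ and $\mathsf E\sup_\vartheta|q_0(\vartheta)|<\infty$, allow the ergodic theorem in separable Banach spaces (Rao/Straumann) to give $\sup_\vartheta|Q_T-Q|\to0$ almost surely. Combined with Stage 1, this proves the first display of Theorem~\ref{thm:static-estimation}. Uniqueness of $\vartheta^\star$ and continuity of $Q$ make $\vartheta^\star$ well separated. For any $\epsilon>0$,
\[
Q(\widehat\vartheta_T)\le \widehat Q_T(\widehat\vartheta_T)+o(1)\le \widehat Q_T(\vartheta^\star)+r_T+o(1)=Q(\vartheta^\star)+o_p(1),
\]
so the standard argmin argument gives $\widehat\vartheta_T\to_p\vartheta^\star$.

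\emph{Stage 3: asymptotic normality.} A mean-value expansion of $\nabla\widehat Q_T$ around $\vartheta^\star$ gives
\[
o_p(T^{-1/2})=\nabla\widehat Q_T(\vartheta^\star)+\nabla^2\widehat Q_T(\bar\vartheta_T)(\widehat\vartheta_T-\vartheta^\star).
\]
Three inputs are needed:
\begin{itemize}
\item $\sqrt T\,\sup_\vartheta\|\nabla\widehat Q_T-\nabla Q_T\|\to0$. This repeats Stage 1 for the derivative filters $\partial_\vartheta\lambda_t$, which follow a perturbed linear recursion that is also exponentially stable. The $\sqrt T$ factor is absorbed because $\sum_t\gamma^t<\infty$.
\item A uniform strong law for $\nabla^2 q_t$, which together with consistency gives $\nabla^2\widehat Q_T(\bar\vartheta_T)\to_p\mathcal A$, nonsingular by assumption.
\item $\sqrt T\nabla Q_T(\vartheta^\star)\Rightarrow\mathcal N(0,\Omega)$, taken from Assumption~\ref{ass:est-clt} (mixing or near-epoch dependence). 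The scores need not be martingale differences under misspecification, which is why the variance is the long-run covariance.
\end{itemize}
Slutsky's lemma then gives the sandwich form.

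The main obstacle is Stage 1 for the derivative processes. It requires uniform exponential forgetting not only of $\lambda_t$ but of $\partial_\vartheta\lambda_t$ and $\partial^2_\vartheta\lambda_t$, with moment bounds strong enough to control the products appearing in $\nabla^2 q_t$. My first attempt would be to write the derivative recursions as $X_{t+1}=A_t(\vartheta)X_t+B_t(\vartheta)$ with $A_t=\phi+\alpha\partial_\lambda u_{\mathsf S}$. I would then invoke the negative-Lyapunov/contraction condition, assumed uniformly on $\Theta$, to apply the Bougerol theorem on the product space, and use Hölder bounds on the envelopes. Bounded drivers simplify this step; the Gaussian log cell needs the stronger moment conditions the assumptions impose.
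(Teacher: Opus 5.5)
Your proposal is correct and follows the same architecture as the paper's proof: the finite start is shown to be negligible, a uniform law plus the identification gap gives consistency, and the empirical first-order condition is expanded and combined with the assumed score CLT and Slutsky. The differences lie in the tools, and in what you set out to prove versus what the theorem assumes.

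\textbf{Forgetting is assumed.} In the paper, exponential forgetting of the state and of the finite-start score and Hessian are hypotheses, \eqref{eq:est-state-forgetting} and \eqref{eq:est-derivative-forgetting}. They come with stationary integrable envelopes $D_t$ and $R^D_t$. Negligibility then follows because $\sum_t D_t\rho_\lambda^t$ and $\sum_t R^D_t\rho_D^t$ have finite expectation, hence are almost surely finite. So the Bougerol-type analysis of the derivative recursions that you flag as the main obstacle is not part of this proof. The paper gives such a primitive verification only for the state, in Proposition~\ref{prop:est-sre}. Your log-moment Borel--Cantelli lemma is a valid and slightly more general substitute, provided the envelope is stationary rather than a function of the non-stationary $\widehat\lambda_t$. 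That is exactly what \eqref{eq:est-state-lipschitz} supplies.

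\textbf{Uniform law.} The paper avoids the Banach-space ergodic theorem. It uses a finite $\delta$-net, the scalar ergodic theorem at the net points, and the integrable Lipschitz envelope $M_t$ of \eqref{eq:est-param-lipschitz}. Your Rao--Straumann route also works, since $\|q_0\|_\Theta\le B_0+M_0\operatorname{diam}(\Theta)$ is integrable, and it needs only continuity plus domination. The Lipschitz envelopes buy the paper elementary continuity of $Q$ and, via $L_t$, of $\vartheta\mapsto\mathsf E[\mathfrak h_0(\vartheta)]$. You also need the latter for your step $\nabla^2\widehat Q_T(\bar\vartheta_T)\to_p\mathcal A$.

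\textbf{Two small corrections.}
\begin{enumerate}
\item A single intermediate point $\bar\vartheta_T$ does not exist for a vector-valued mean-value expansion. Use row-wise intermediate points or, as the paper does, the integral form $\overline{\mathcal A}_T=\int_0^1\nabla^2_\vartheta\widehat Q_T(\vartheta^\star+s(\widehat\vartheta_T-\vartheta^\star))\,ds$.
\item Your closing remark on the Gaussian-log cell misplaces the difficulty; it is not a moment issue. The problem is that $\partial_\lambda f_\vartheta(\lambda,y)=\phi-\alpha y^2e^{-\lambda}$ is unbounded over $\lambda\in\mathbb R$. The uniform random Lipschitz coefficient required by the Bougerol route is therefore infinite whatever moments hold, and the paper explicitly does not claim the theorem for that cell.
\end{enumerate}
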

\proofref{app:est-proof}

Under misspecification, $\vartheta^\star$ is the static pseudo-true recursion
parameter: it minimises the stationary scoring risk generated jointly by the
working family, scoring rule, scaling, and autoregressive recursion.  It is
not the datewise risk projection $\lambda^\star_{\mathsf S,t}$ or the one-date
composite target $\lambda^\circ_{\mathsf S,t}$.  Under correct conditional
specification, strict propriety and dynamic identification give
$\vartheta^\star=\vartheta_0$.  \ref{app:static-estimation} shows that the corresponding
sensitivity and variability matrices are
\[
  \mathcal A=\mathsf E[J_{\mathsf S,t}\dot\lambda_t\dot\lambda_t^{\top}],
  \qquad
  \Omega=\mathsf E[K_{\mathsf S,t}\dot\lambda_t\dot\lambda_t^{\top}],
\]
which coincide only in the logarithmic/Bartlett case.  Thus the robust covariance is a dependent-data sandwich covariance, not in general an inverse Hessian or an unlagged OPG/BHHH covariance.

The empirical implementation separates cells covered by the static-estimation result from
fixed-fit cells. The MMD bandwidth is estimated from the training sample on a subsample capped at
1,024 observations, rather than fixed deterministically or estimated under the plug-in conditions
of Corollary~\ref{cor:est-nuisance}. Theorem~\ref{thm:static-estimation} is therefore not applied to
the MMD cells. For the unrestricted Gaussian log-variance recursion,
$\partial_\lambda f_\vartheta(\lambda,y)=\varphi-\alpha y^2e^{-\lambda}$ has unbounded supremum over
$\lambda\in\mathbb R$, so the population uniform-invertibility condition of
Proposition~\ref{prop:est-sre} remains to be verified on this state space. The theorem is likewise
not applied to that empirical cell. These cells are reported as fixed-fit numerical evidence
conditional on the accepted optimisation and path diagnostics.

\paragraph{\textbf{Implementation and fit gates}} We estimate each cell by L-BFGS-B with
automatic-differentiation gradients from eight fixed starts and retain only fits passing the
predeclared optimisation, curvature, stability, finiteness, no-projection, and multistart
diagnostics. Score evaluation is closed form except for Student MMD, which uses deterministic
quadrature. \appref{app::panel_fit_diagnostics} reports the complete gate definitions, numerical
implementation, benchmark specifications, and fitted coefficients.

\paragraph{\textbf{Inference}} Forecast losses are averaged within markets and then equally
across the twelve market means. QLIKE standard errors, the Model Confidence Set, paired comparisons,
and cross-evaluation use common provider-date stationary blocks, preserving shared-date and
within-series dependence at daily granularity. Coverage and dynamic-PIT diagnostics are assessed
market by market with Holm adjustment within each cell and coverage level; the three paired QLIKE
comparisons are adjusted within their predeclared family. Unsupported and low-event-count series
are excluded from the primary non-rejection counts. Marginal PIT uniformity is assessed separately.
All inference is conditional on fitted parameters and tuning values.
\appref{app::panel_fit_diagnostics} provides the resampling, support, multiplicity, and sensitivity
details.

\paragraph{\textbf{Diagnostics}} We evaluate each cell out of sample through three lenses. Write
$v_t=e^{\lambda_t}$ for the one-step predictive variance. Both working densities are
variance-standardised, so no additional factor $\nu/(\nu-2)$ enters the Student-$t_\nu$ prediction or
evaluation. Point-variance tracking uses the zero-safe, forecast-dependent QLIKE component against a
nonnegative proxy $q_t$,
\begin{equation}\label{eq::qlike}
\mathrm{QLIKE}^{\rm fd}(q,v)=\frac{1}{T_{\rm te}}\sum_t
\left(\log v_t+\frac{q_t}{v_t}\right).
\end{equation}
For $q_t>0$ this differs from conventional QLIKE,
$q_t/v_t-\log(q_t/v_t)-1$, only by the target-only term $\log q_t+1$ and therefore gives identical
model rankings, loss differences and inference; unlike the conventional display it remains defined
when $q_t=0$. With $q_t=y_t^2$, the summand is, up to a $v_t$-free constant, the Gaussian
logarithmic score. Squared-return QLIKE is therefore a \emph{home} rule for the Gaussian log-score
filter: a criterion that targets a different pseudo-true path is scored under a rule it was not
designed to optimise. We read this ranking with that qualification and use the Parkinson and
Garman--Klass intraday-range proxies as target-changing sensitivity checks; because they omit
overnight return variation, they are not interchangeable measurements of the close-to-close target.
Tail risk is scored by the one-step value-at-risk at level $p$, $\mathrm{VaR}_t(p)=q_\Phi(p)\sqrt{e^{\lambda_t}}$,
with $q_\Phi(p)$ the standardised lower-tail $p$-quantile of the working density $\Phi$; the empirical coverage
$\widehat p=T_{\rm te}^{-1}\sum_t\mathbf 1\{y_t<\mathrm{VaR}_t(p)\}$ is evaluated with the
market-specific stationary-bootstrap procedure described above. Marginal and dynamic distributional
diagnostics use the probability integral transform
$u_t=F_\Phi\!\big(y_t/\sqrt{e^{\lambda_t}}\big)$, which is uniform and serially independent under
correct dynamic specification; the two properties are assessed separately.

\subsection{Event paths and controlled outlier propagation}\label{sec::emp_outlier}
The two exercises serve different purposes. Figure~\ref{fig::emp_impulse} is descriptive: it
follows the Thai-baht float of 2 July 1997 and the S\&P~500 episode of 29 September 2008 using
estimates based only on pre-event data and then held fixed. The former is a regime change and the
latter a multi-day crisis, so neither path identifies the effect of an isolated observation. The
event returns are $20.77\%$ and $-9.22\%$, respectively. Around the baht float, Gaussian log reaches
$10^{7.17}$ annualised volatility, whereas the other three cells peak between $10^{1.63}$ and
$10^{1.70}$; around the S\&P episode, all four peaks lie between $10^{1.81}$ and $10^{1.88}$. The
contrast is therefore event-specific, not a universal ranking of the four filters.

Figure~\ref{fig::emp_injection} isolates the one-observation mechanism in a fixed-parameter SPX
counterfactual. Every recursion receives the same additive shock on the same 2017 date, so the path
differences reflect the fitted update maps rather than different observations or re-estimation.
Over $B\in\{35,60,100\}$, the Gaussian-log peak has a log--log slope of $1.99$; at $B=100$, its
peak perturbation is $541.64$ log-variance units, compared with $1.29$ for Student log, $0.25$ for
Student density-power, and $0.42$ for Student MMD. This sharp separation accords with the driver
geometry in Proposition~\ref{prop::outlier}. The non-monotone density-power and MMD curves fall
after an interior maximum over the displayed grid, but that finite-path pattern does not establish
tail redescent to zero. \appref{app::panel_fit_diagnostics} records the complete event-window,
shock-scaling, and path-computation protocol.

\begin{figure}[t]
\centering
\includegraphics[width=\linewidth]{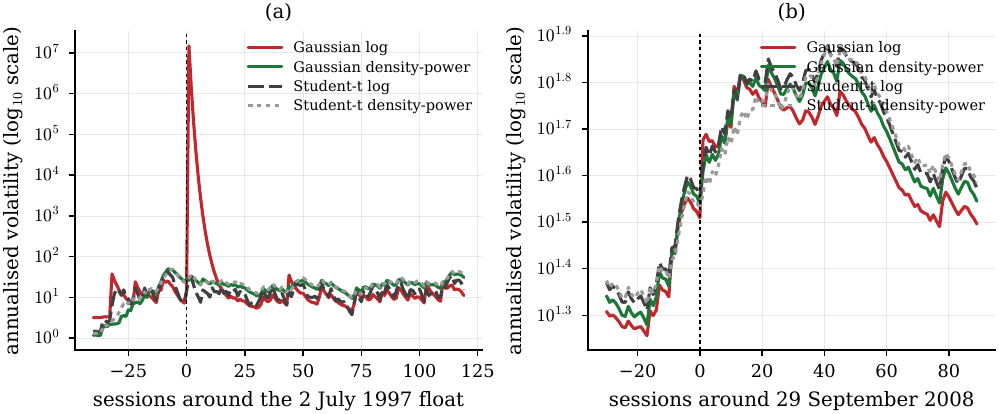}
\caption{Pre-event fixed-fit paths around the 2 July 1997 Thai-baht float (left) and
29 September 2008 S\&P~500 episode (right), for four density--criterion cells. Each fit is frozen
before the event, and annualised volatility is shown on a $\log_{10}$ scale. These regime/crisis
paths are descriptive and do not causally identify an isolated-observation response.}
\label{fig::emp_impulse}
\end{figure}

\begin{figure}[t]
\centering
\includegraphics[width=\linewidth]{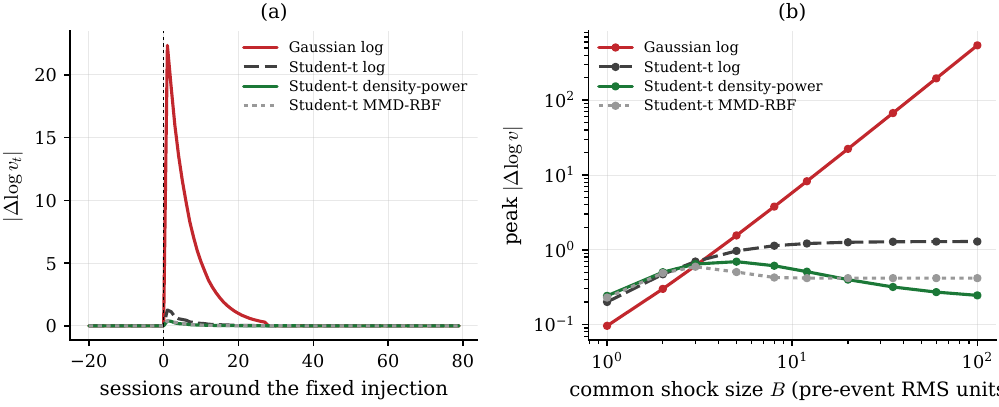}
\caption{Fixed-parameter SPX common-shock counterfactual. (a)~Absolute downstream log-variance
perturbation for $B=20$. (b)~Peak perturbation over the predeclared $B$ grid. The same
method-independent shock is applied to every recursion, parameters remain frozen, and no state
projection is used.}
\label{fig::emp_injection}
\end{figure}

\FloatBarrier
\subsection{Infinite-moment innovations: bounded updates retain variance control}\label{sec::emp_heavytail}
Proposition~\ref{prop::heavy-tail-diagnostic} identifies the relevant moment boundary. For Gaussian
log scale, $\psi_{\log}=\tfrac12(z^2-1)$, so
$K_{\log}=\tfrac14\mathsf E[(z^2-1)^2]$ and, under $G_{\log}=2$, the implemented-update second
moment is $\Xi_{\log}=\mathsf E[(z^2-1)^2]$. When the pseudo-true Gaussian scale exists but the
fourth moment is infinite, both quantities are infinite: the finite-MSE control of
Proposition~\ref{prop::proposition_mse} is then unavailable, and the square-integrability condition
of Theorem~\ref{th::local-ou} fails. This moment failure alone does not imply non-invertibility or nonstationarity. By
contrast, a bounded implemented update has finite second moment under any innovation law. At a
fixed state with $0<G_{\mathsf S}(\lambda)<\infty$, this is equivalent to finiteness of the raw
derivative moment $K_{\mathsf S}$.

Figure~\ref{fig::heavytail} shows the finite-sample consequence in a common latent-scale stress
test. Each of four representative filters is refitted as the innovation law moves from Gaussian to
Cauchy, with population median absolute innovation held fixed. At the Gaussian endpoint, their
tracking correlations lie between $0.777$ and $0.792$. Gaussian-log tracking falls from $0.712$ at
$\nu=5$ to $0.298$ at $\nu=3$ and is inadmissible on this realised path for $\nu\leq2$. The three
bounded-update filters remain admissible throughout and retain correlations between $0.533$ and
$0.639$ at the Cauchy endpoint. Boundedness can therefore arise either from the criterion or from
the Student likelihood score. Because every cell is refitted, the comparison combines target and
parameter adaptation with dynamic robustness; it is not a fixed-filter impulse experiment.

The Cauchy endpoint is descriptive and lies outside the finite-moment MSE theory. Its Gaussian-log
and CRPS risks are not finite, whereas density-power, bounded-kernel MMD, and Student-$t_6$ log
risks remain well defined under the score domains of Section~\ref{section::introduction}.
\appref{app::panel_fit_diagnostics} records the stress design, conditional bootstrap, and complete
admissibility rules.

\begin{figure}[t]
\centering
\includegraphics[width=\linewidth]{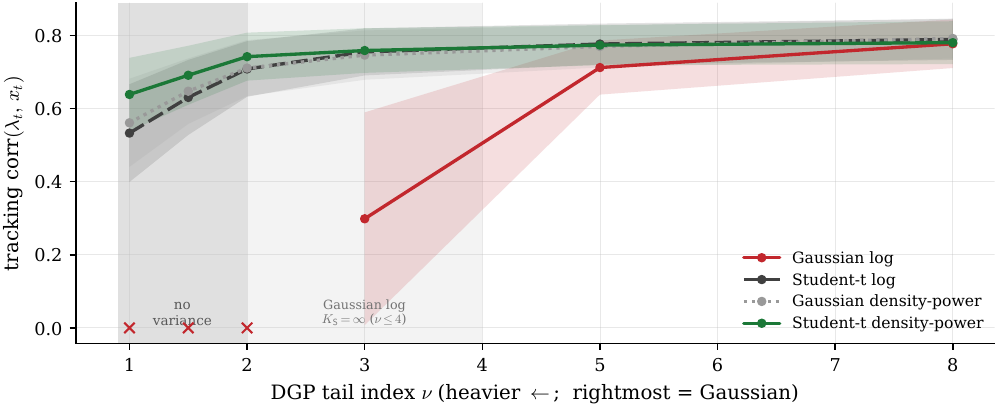}
\caption{Conditional tracking correlation against the latent log-scale for four refitted filters as
the innovation tail index varies (heavier to the left; rightmost is Gaussian), holding population
median absolute innovation fixed. Bands are $\pm2$ stationary-block-bootstrap standard errors on
the realised path, conditional on fitted parameters. A cross at zero marks an inadmissible
completed fit, not a zero-correlation estimate. The Cauchy endpoint is a descriptive stress point;
Proposition~\ref{prop::heavy-tail-diagnostic} supplies the moment conclusion.}
\label{fig::heavytail}
\end{figure}

\FloatBarrier
\subsection{The density \texorpdfstring{$\times$}{x} criterion factorial}\label{sec::emp_factorial}
We now ask whether criterion-induced robustness trades tail calibration for point-variance
accuracy. Figure~\ref{fig::panel_factorial} displays the density~$\times$~criterion factorial across
the twelve markets. Squared-return QLIKE \citep{patton2011volatility} is close across cells, and the
$90\%$ Model Confidence Set of \citet{hansen2011model} retains five of the eight.
The retained set is unchanged over the nine bootstrap designs in Appendix
Table~\ref{tab::emp_mcs_sensitivity}, and none of the three predeclared paired QLIKE differences is
statistically resolved (Appendix Table~\ref{tab::emp_factorial_support}, Panel~A). The ordering also changes under the
Parkinson and Garman--Klass range proxies, so the point-loss evidence does not identify one cell as
uniformly best.

The clearest sample pattern is instead in the predictive tails. For every training criterion,
changing the fitted family from Gaussian to
variance-standardised Student-$t_6$ moves equal-market $1\%$ coverage towards its target: the
Gaussian values range from $1.91\%$ to $2.63\%$, compared with $1.46\%$ to $1.56\%$ for the Student
cells. After market-wise stationary-bootstrap inference and Holm adjustment, the $1\%$ coverage null
is not rejected in all twelve markets for each Student cell, compared with six markets for Gaussian
log and one for each other Gaussian cell. These non-rejections indicate compatibility, not
equivalence, with the coverage null. Dynamic PIT adequacy is distinct: depending on the criterion,
Holm-adjusted non-rejection ranges from three to nine markets for the non-MMD cells, whereas both
MMD cells are rejected in all twelve markets. Figure~\ref{fig::pit} gives the complementary
marginal view \citep{gneiting2007probabilistic}: the equal-market Gaussian histograms have larger
maximum deviations from uniformity than their Student counterparts in this sample.

\begin{figure}[t]
\centering
\includegraphics[width=\linewidth]{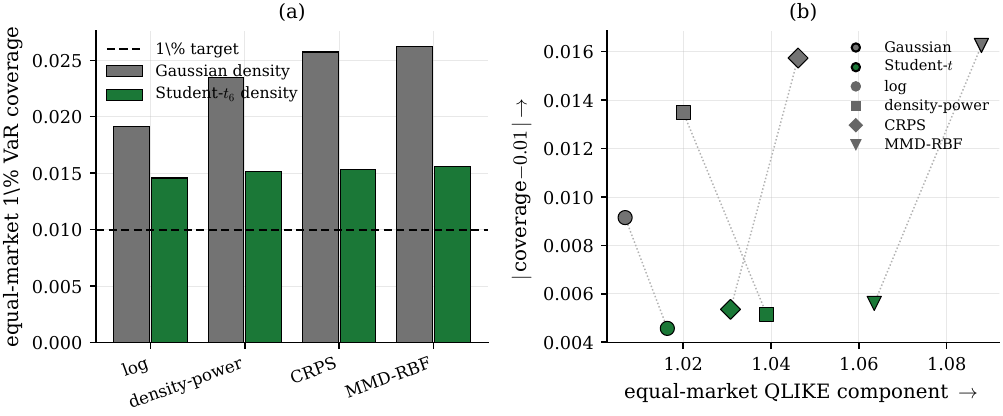}
\caption{The density~$\times$~criterion factorial on twelve equity indices. (a)~$1\%$ value-at-risk
coverage: the Student density (dark) generally moves coverage towards the target relative to the
Gaussian density (grey) in this sample. (b)~Each cell in the plane of QLIKE against the
coverage gap. Density and criterion changes often move cells in different directions, allowing
nonseparable movements and rankings that vary with the target.}
\label{fig::panel_factorial}
\end{figure}

\begin{figure}[t]
\centering
\includegraphics[width=\linewidth]{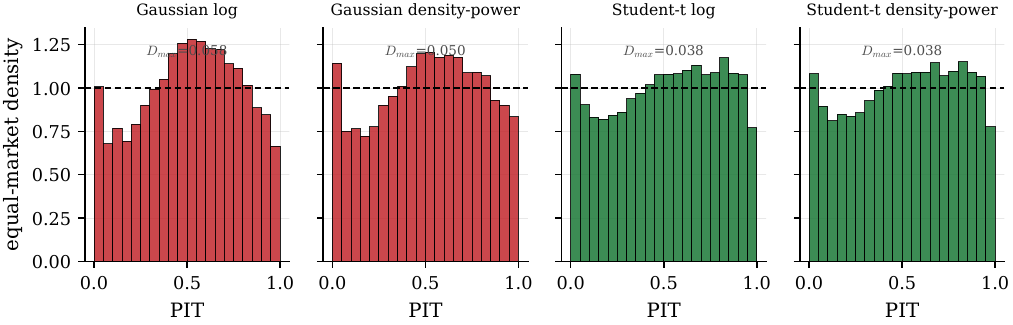}
\caption{Equal-market probability-integral-transform histograms. Each market contributes its own
normalised histogram with weight $1/12$, so longer test series do not dominate. The Gaussian
working-density cells (red) show larger marginal tail deviations than the Student cells (green) in
this sample. Dynamic calibration is assessed separately through the market-level
rank-permutation diagnostics.}
\label{fig::pit}
\end{figure}

Tables~\ref{tab::emp_var} and~\ref{tab::emp_qlike} retain the two principal numerical summaries.
The former separates marginal coverage, market-level coverage inference, and dynamic PIT
dependence; the latter separates close-to-close squared-return QLIKE from the two intraday range
targets. This distinction prevents a coverage non-rejection, a near tie in point loss, or a change
of target from being read as a general ranking.

\begin{table}[H]
\centering\footnotesize
\resizebox{\textwidth}{!}{%
\begin{tabular}{@{}llrcrccr@{}}
\toprule
& & $1\%$ cov. & $1\%$ coverage & $5\%$ cov. & $5\%$ coverage & dynamic PIT & marginal PIT \\
Density & Criterion & (\%) & (NR/tested) & (\%) & (NR/tested) & (NR/tested) & $D_{\max}$ \\
\midrule
Gaussian & log & 1.91 & 6/12 & 5.04 & 12/12 & 9/12 & 0.058 \\
Gaussian & density-power & 2.35 & 1/12 & 5.71 & 12/12 & 7/12 & 0.050 \\
Gaussian & CRPS & 2.57 & 1/12 & 6.22 & 10/12 & 5/12 & 0.045 \\
Gaussian & MMD & 2.63 & 1/12 & 6.22 & 11/12 & 0/12 & 0.046 \\
\midrule
Student-$t_6$ & log & 1.46 & 12/12 & 5.39 & 12/12 & 7/12 & 0.038 \\
Student-$t_6$ & density-power & 1.52 & 12/12 & 5.41 & 12/12 & 3/12 & 0.038 \\
Student-$t_6$ & CRPS & 1.54 & 12/12 & 5.44 & 12/12 & 6/12 & 0.038 \\
Student-$t_6$ & MMD & 1.56 & 12/12 & 5.41 & 12/12 & 0/12 & 0.039 \\
\bottomrule
\end{tabular}}
 \caption{Tail diagnostics out of sample on the final $30\%$ of each market. ``Cov.'' is the
equal-market mean realised coverage at the stated target. ``Coverage NR/tested'' counts markets not
rejected after Holm adjustment by the within-market stationary-bootstrap coverage test; all series
in this table have adequate resampling support. ``Dynamic PIT''
reports Holm-adjusted non-rejections of the rank-normal linear-and-quadratic permutation
portmanteau diagnostic over lags $1,\ldots,10$. ``Marginal PIT'' is the block-bootstrap
process statistic $D_{\max}$, the maximum absolute deviation of the equal-market empirical PIT
distribution from uniformity over the declared grid. All diagnostics condition on the fitted
models. Non-rejection is not labelled a pass.}
\label{tab::emp_var}
\end{table}

\begin{table}[H]
\centering\footnotesize
\resizebox{\textwidth}{!}{%
\begin{tabular}{@{}llrrrrccr@{}}
\toprule
Density & Criterion & $r^2$ QLIKE & SE & Parkinson & Garman--Klass & panel MCS & MCS markets & rank \\
\midrule
Gaussian & log & 1.007 & 0.093 & 0.637 & 0.597 & yes & 12/12 & 1 \\
Gaussian & density-power & 1.020 & 0.103 & 0.596 & 0.551 & yes & 12/12 & 3 \\
Gaussian & CRPS & 1.046 & 0.111 & 0.586 & 0.537 & no & 9/12 & 6 \\
Gaussian & MMD & 1.088 & 0.141 & 0.605 & 0.556 & yes & 11/12 & 8 \\
\midrule
Student-$t_6$ & log & 1.016 & 0.097 & 0.641 & 0.601 & yes & 12/12 & 2 \\
Student-$t_6$ & density-power & 1.039 & 0.106 & 0.656 & 0.616 & no & 7/12 & 5 \\
Student-$t_6$ & CRPS & 1.031 & 0.101 & 0.650 & 0.610 & no & 6/12 & 4 \\
Student-$t_6$ & MMD & 1.063 & 0.120 & 0.670 & 0.629 & yes & 10/12 & 7 \\
\midrule
\multicolumn{2}{@{}l}{GARCH(1,1)-N} & 1.000 & 0.094 & 0.619 & 0.578 & -- & -- & -- \\
\multicolumn{2}{@{}l}{EWMA/RiskMetrics} & 1.031 & 0.101 & 0.592 & 0.545 & -- & -- & -- \\
\bottomrule
\end{tabular}
}
 \caption{Point-variance tracking of the density~$\times$~criterion factorial, out of sample on the
final $30\%$ of the twelve markets. The three QLIKE columns are the equal-market mean out-of-sample
$\mathrm{QLIKE}^{\rm fd}$ of~\eqref{eq::qlike} against the squared-return ($r^2$), Parkinson, and
Garman--Klass realised-variance proxies \citep{parkinson1980extreme,garman1980estimation};
``SE'' is the stationary-block-bootstrap standard error of the $r^2$ column (block $20$, $B=800$,
seed $20260430$); ``panel MCS'' records membership of the $90\%$ Model Confidence Set computed once on
the panel; ``MCS markets'' counts the markets on which the cell lies in its own per-market $90\%$ set,
twelve separate decisions with no family-wise control; ``rank'' orders the eight cells by $r^2$ QLIKE.
The Gaussian GARCH$(1,1)$-N \citep{bollerslev1986generalized} and EWMA/RiskMetrics benchmarks are reported
for reference and are not folded into the cell MCS. The ranking is target-dependent: the Gaussian log
leads on the close-to-close squared-return target, whereas the Gaussian robust criteria lead on the
Parkinson and Garman--Klass proxies, which measure intraday rather than close-to-close variation.}
\label{tab::emp_qlike}
\end{table}

Appendix Table~\ref{tab::emp_factorial_support} supplies the supporting decompositions. Panel~A
reports the three paired QLIKE comparisons, all statistically unresolved at this out-of-sample
size. Panel~B evaluates each fitted path under all four proper scores. Descriptively, the training
criterion does not generally minimise the corresponding out-of-sample score; comparisons are made
only within evaluator columns because their affine scales differ. Together with the
target-dependent QLIKE ranking, this cautions against interpreting any one empirical ordering as
criterion invariant.

\FloatBarrier
\subsection{Robustness of the density--criterion verdict}\label{sec::emp_robustness}
Four checks ask whether the factorial pattern is specific to the benchmark, Student tail index,
sample split, or MMD bandwidth; Appendix Table~\ref{tab::emp_robustness_support} reports the full
grids. Holding the GARCH$(1,1)$ variance recursion fixed, replacing Gaussian innovations by
variance-standardised Student-$t_6$ innovations changes squared-return QLIKE only from $0.9998$ to
$1.0057$, while moving mean $1\%$ coverage from $2.00\%$ to $1.37\%$ and the number of markets not
rejected by the Kupiec unconditional-coverage test \citep{kupiec1995techniques} from one to seven. The density contrast therefore reappears in a
standard comparator with little point-loss difference. The Kupiec counts are iid-binomial
sensitivities, however, and do not replace the dependence-robust inference in
Table~\ref{tab::emp_var}.

Within the proposed filters, mean $1\%$ coverage moves monotonically towards the target as the
Student tails become heavier over $\nu\in\{4,5,6,8,10\}$. At each of the $0.60$, $0.70$, and
$0.80$ training fractions, the Student cells have more Kupiec non-rejections than the Gaussian
cells, while the within-density squared-return-QLIKE criterion ordering is unchanged. Finally, at
one-half, one, and twice the baseline MMD bandwidth, the Student-MMD fit has lower predictive log,
CRPS, and squared-return QLIKE than Gaussian MMD, and every fit is admissible in all twelve markets.
These prespecified finite grids preserve the directional density conclusion; they do not establish
global robustness to arbitrary tuning or forecast origins.

\FloatBarrier
\subsection{Misspecification and robustness--adaptation trade-offs}\label{sec::emp_misspec}
The controlled experiments distinguish two departures from a stable latent path. First,
Figure~\ref{fig::permanent_break} holds the Gaussian working density, innovations, target, and
curvature-normalised gain fixed while the conditional variance changes permanently from $1$ to $9$.
The Gaussian-log filter reaches the new level in $29.425$ dates and accumulates $26.234$ units of
oracle excess loss; bounded density-power takes $42.817$ dates and accumulates $41.600$ units.
Together with the isolated-shock evidence in Sections~\ref{sec::emp_outlier}--\ref{sec::emp_heavytail},
this comparison quantifies the other side of robustness: limiting direct shock propagation can
delay learning after a genuine regime change.

\begin{figure}[!htbp]
\centering
\includegraphics[width=\linewidth]{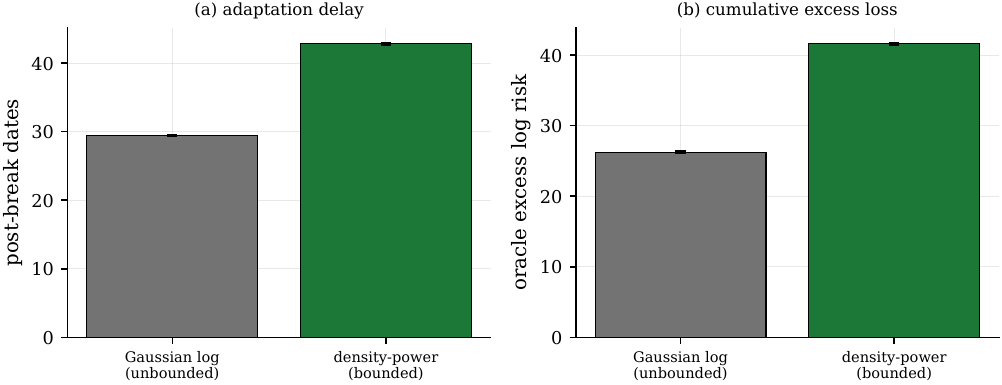}
\caption{Controlled permanent variance break, $1\to9$, across $R=20000$ paired paths. Panels report
(a)~the ten-date $90\%$ adaptation delay and (b)~cumulative oracle Gaussian log-risk excess; bars
show $\pm1$ Monte Carlo standard error. The density-power-minus-log differences are $13.392$ dates
(SE $0.117$) and $15.366$ loss units (SE $0.100$). The full protocol is in
\appref{app::formulas}.}
\label{fig::permanent_break}
\end{figure}
\FloatBarrier

Figure~\ref{fig::misspec} asks the complementary question when one-sided jumps contaminate the
observations but not the latent diffusive log scale. Panel~(a) freezes clean-data fits across the
paired clean and jump arms, whereas panel~(b) refits under jumps. The Gaussian-log optimisations
complete, but their unprojected evaluation paths cross the declared $\pm12$ bound six times in the
frozen-fit arm and once after refitting; the crosses therefore denote inadmissible diagnostics, not
clipped estimates. Among admissible fits, jump-minus-clean RMSE deterioration is $0.016$ in both
arms for Gaussian density-power, $0.047$ and $0.049$ for Student-log, and $0.023$ and $0.025$ for
Student density-power. The similar frozen-fit and refitted values indicate only modest parameter
adaptation on this realised design. Boundedness can therefore enter through either the criterion or
the Student likelihood score. This experiment concerns filtered paths, not predictive-tail
calibration, which is assessed by the panel PIT and coverage diagnostics in
Section~\ref{sec::emp_factorial}.

\begin{figure}[t]
\centering
\includegraphics[width=\linewidth]{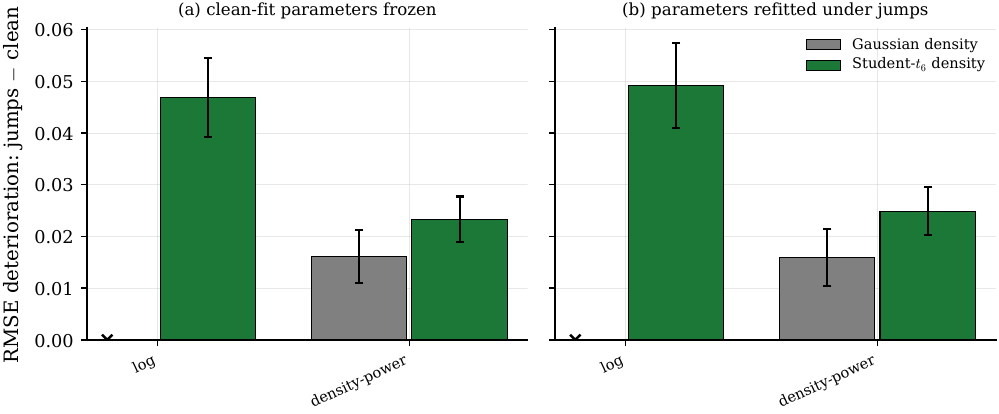}
\caption{One-sided-jump misspecification: paired jump-minus-clean RMSE deterioration relative to the
latent diffusive log scale. (a)~Clean-data fits are frozen across arms; (b)~fits are re-estimated
under jumps. Bars show $\pm1$ conditional stationary-bootstrap standard error. Crosses mark
Gaussian-log path-bound failures; no clipped value is shown. The full protocol is in
\appref{app::formulas}.}
\label{fig::misspec}
\end{figure}

\FloatBarrier
\subsection{Beyond scale: location filtering}\label{sec::emp_paths}
To show that criterion-dependent propagation is not specific to volatility, Figure~\ref{fig::location}
applies Gaussian location filters, $y_t=\mu_t+\sigma\varepsilon_t$, to the Nile annual-flow series
(100 observations, 1871--1970, in $10^8\,\mathrm{m}^3$). Panel~(a) gives a descriptive comparison
across the documented 1899 downward level shift \citep{cobb1978nile}; panel~(b) isolates propagation
from a common artificial outlier. Holding the Gaussian density fixed, all parameters are estimated
on 1871--1944 and frozen before the 1945 injection. The resulting 1946 displacement is
$1.615\widehat\sigma_{\rm log}$ for Gaussian log, $0.465\widehat\sigma_{\rm log}$ for CRPS, and
$0.066\widehat\sigma_{\rm log}$ for density-power. Thus the bounded CRPS and redescending
density-power location updates attenuate isolated contamination while retaining adaptation to the
sustained historical shift.

\begin{figure}[t]
\centering
\includegraphics[width=\linewidth]{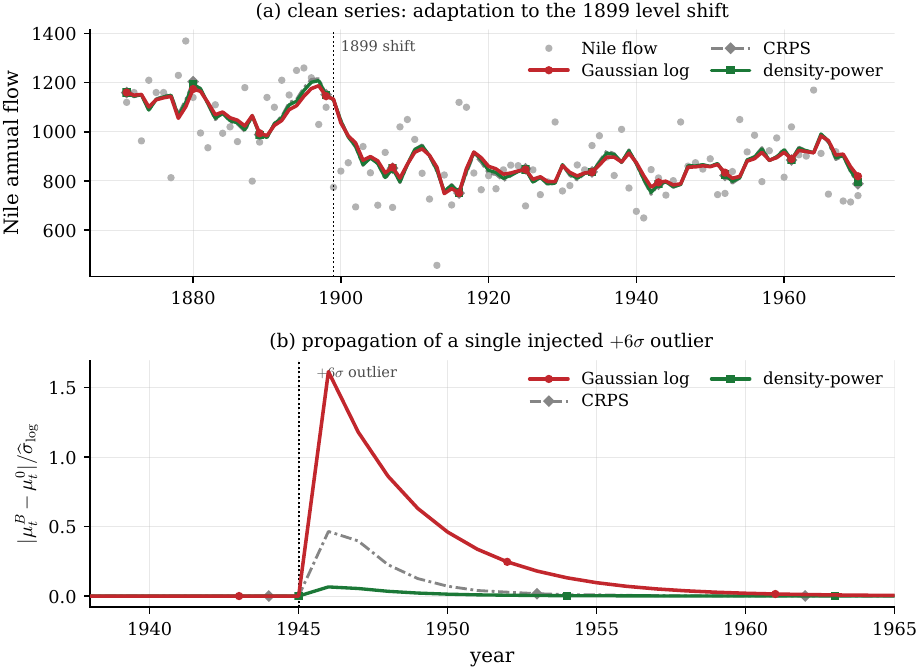}
\caption{Criterion-only Gaussian location exercise, Nile annual flow. (a)~Descriptive clean paths
across the documented 1899 level shift; (b)~the difference between a common
$+6\widehat\sigma_{\rm log}$ injection at 1945 and each method's clean path. Peak displacements in
1946 are $1.615\widehat\sigma_{\rm log}$ for Gaussian log, $0.465\widehat\sigma_{\rm log}$ for CRPS,
and $0.066\widehat\sigma_{\rm log}$ for density-power. The full protocol is in
\appref{app::formulas}.}
\label{fig::location}
\end{figure}
\FloatBarrier

Across these exercises, density choice governs predictive tails, whereas criterion choice governs
the risk projection and propagation mechanism. The evidence therefore gives the two design axes
distinct empirical roles rather than ranking a specification independently of the target or
disturbance.

\FloatBarrier
\section{Conclusion}\label{sec::conclusion}
Observation-driven filters are often formulated as though the predictive family and update
criterion formed a single design choice. Our analysis shows why separating their roles matters.
A differentiable proper scoring rule selects a conditional risk projection, predictable scaling
determines how the corresponding gradient is transmitted to the state, and the autoregressive
coefficients govern persistence and generally shift the conditional-mean equilibrium. The working
family continues to determine the predictive distribution.

Classical GAS is recovered as the logarithmic-score member of this broader class; alternative proper
scores generate different local dynamics and can bound or redescend the response to extreme
observations. The theoretical results make these differences explicit. At finite frequency,
scoring-rule curvature controls local mean reversion, while update variability governs stochastic
dispersion. In high-frequency scale arrays, the centred $\sqrt h$ and non-centred $h$ gain regimes
lead to diffusion and mean-flow limits, respectively. Around a moving risk projection, the stopped
local Ornstein--Uhlenbeck approximation also retains target motion and its covariance with update
noise. For static recursion parameters, minimum scoring risk provides consistent and asymptotically
normal estimation under the stated invertibility, identification, and moment conditions. Correct
specification recovers the structural parameter, while the inverse-sensitivity covariance remains
specific to the logarithmic/Bartlett case.

Across the controlled experiments and the equity-index application, no criterion is uniformly
preferable: different specifications trade resistance to isolated shocks against adaptation to
persistent change, and their predictive ranking depends on the target and disturbance. The
continuous-time analysis is deliberately local and stopped, while the estimation conclusions require
the stated cell-specific conditions. Extending the framework to non-local multivariate dynamics,
augmented-state environments, and joint online estimation remains open. The main implication is
therefore not that one scoring rule should replace the logarithmic score, but that predictive shape and
update geometry should be selected separately for the forecasting problem at hand.

\newpage
\appendix
\section{Proofs for Section~\ref{section::introduction}: scoring-rule-driven filters}
\label{app::appendix_section_2}
The proof appendices follow the order of the results in the main text. Each
proof has its own labelled subsection for direct cross-reference.

\subsection{Gaussian scale example}
\begin{example}\label{ex::gaussian-scale}
Consider the following Gaussian scale family model: $\mathfrak{p}(y \mid \lambda)=e^{-\frac{\lambda}{2}}\phi_{\mathcal N}(e^{-\frac{\lambda}{2}}y)$, where $z=e^{-\frac{\lambda}{2}}y$ and $\phi_{\mathcal N}$ is the standard normal density. In this case $\psi_{\mathsf{S}_{\rm log}}(y,\lambda)=\frac{1}{2}(z^2-1)$, which is unbounded and has finite variance only if the fourth moment of the standardised true innovation is finite. For $\beta>0$, use the paper's fixed density-power normalisation
\begin{equation*}
    \mathsf{S}_{\beta}(P,y)=\int_{-\infty}^{+\infty}\mathfrak{p}(x \mid \lambda)^{1+\beta}\,\ud x -\left(1+\frac1\beta\right)\mathfrak{p}(y \mid \lambda)^{\beta},
\end{equation*}
where $I_{\beta}(\lambda):=\int_{-\infty}^{+\infty}\mathfrak{p}(x \mid \lambda)^{1+\beta}\,\ud x=e^{-\frac{\beta\lambda}{2}}(2 \pi)^{-\beta/2}(1+\beta)^{-1/2}$. Therefore,
\begin{equation*}
    \psi_{\mathsf{S}_{\beta}}(y,\lambda)=\frac{\beta}{2}I_{\beta}(\lambda)+(1+\beta)\mathfrak{p}(y \mid \lambda)^{\beta}\frac{z^2-1}{2},
\end{equation*}
which is bounded in $z$ at each fixed state. The implemented scaling is $G_\beta(\lambda)=2e^{\beta\lambda/2}$, so the fully scaled update is
\[
 u_\beta(z)=\beta(2\pi)^{-\beta/2}(1+\beta)^{-1/2}+(1+\beta)\phi_{\mathcal N}(z)^\beta(z^2-1),
\]
which is state free and bounded. It is this fully scaled object, not the raw derivative alone, that determines path robustness and update moments.
\end{example}

\subsection{Proof of Proposition~\ref{prop::divergence}}\label{app:proof_divergence}
Fix $\omega$ in the probability-one set of the assumptions. By the conventions of Section~\ref{section::introduction} and \eqref{eq::conditional_scoring_risk}, $\mathsf{R}_{\mathsf{S},t}(\omega,\lambda)=\int\mathsf S(P_{\lambda,\theta},y)\widetilde P_t(\omega,\mathrm dy)=\mathsf{S}(\widetilde{P}_t(\omega),P_{\lambda,\theta})$. By the definition of $\mathsf{D}_{\mathsf{S}}$, this is $\mathsf{D}_{\mathsf{S}}(\widetilde{P}_t(\omega),P_{\lambda,\theta})+\mathsf{S}(\widetilde{P}_t(\omega),\widetilde{P}_t(\omega))$. The second term is the finite self-risk and is free of $\lambda$; candidate-risk finiteness and self-risk finiteness are distinct assumptions. Properness gives $\mathsf{D}_{\mathsf{S}}\ge 0$, proving the result almost surely.

\subsection{Proof of Proposition~\ref{prop::proposition_2}}\label{app:proof_mean_update}
Both $\lambda_t$ and $G_{\mathsf{S},t}(\lambda_t)$ are $\mathcal{F}_{t-1}$-measurable. Therefore, by applying conditional expectation on both sides of \eqref{eq::scoring_update}, we obtain
\begin{equation*}
        \mathsf{E}_{t-1}[\lambda_{t+1}]=\lambda_t + \alpha G_{\mathsf{S},t}(\lambda_t)\mathsf{E}_{t-1}[\psi_{\mathsf{S}}(y_t,\lambda_t)].
\end{equation*}
By the definition of $\psi_{\mathsf{S}}$, its conditional mean is $-\mathsf{E}_{t-1}[\partial_{\lambda} \mathsf{S}(P_{\lambda,\theta},y_t)]$. The simultaneous version in Assumption~\ref{ass:conditional_differentiation}, localised on increasing compact sets if necessary, permits evaluation at the predictable random state $\lambda_t$ and gives $-\mathsf{R}^{'}_{\mathsf{S},t}(\lambda_t)$.

\section{Proofs for Section~\ref{section::local_interpretation}: finite-step geometry}
\label{app::appendix_section_3}
\label{app::appendix_section_4}
\subsection{Proof of Proposition~\ref{prop::scoring_direction}}\label{app:proof_scoring_direction}
We define, for fixed $y$, $f(\lambda):=\mathsf{S}(P_{\lambda,\theta},y)$. Differentiability at $\lambda$ gives
\[
f\bigl(\lambda+\alpha u_{\mathsf S}(y,\lambda)\bigr)-f(\lambda)
=\alpha f'(\lambda)u_{\mathsf S}(y,\lambda)+o(\alpha)
\]
as $\alpha\downarrow0$ through admissible updates. Since $f'(\lambda)=-\psi_{\mathsf S}(y,\lambda)$ and $u_{\mathsf S}(y,\lambda)=G_{\mathsf S}(\lambda)\psi_{\mathsf S}(y,\lambda)$, this is \eqref{eq::directionscore}. If $\psi_{\mathsf S}(y,\lambda)\ne0$, the coefficient of $\alpha$ is strictly negative, so the difference is negative for all sufficiently small admissible $\alpha>0$.

\subsection{Proof of Theorem~\ref{th::optimality}}\label{app:proof_optimality}
The argument adapts the contraction step in Theorem~1 of \citet{gorgi2024optimality} to the scoring-rule mean field $H_{\mathsf{S},t}$; we give it in full for completeness. First, for $\lambda_1, \lambda_2 \in I_t$, the mean value theorem gives $H_{\mathsf{S},t}(\lambda_1)-H_{\mathsf{S},t}(\lambda_2)=H^{'}_{\mathsf{S},t}(\bar{\lambda})(\lambda_1-\lambda_2)$ for some $\bar{\lambda}$ between $\lambda_1$ and $\lambda_2$. Therefore,
\begin{equation*}
    \{H_{\mathsf{S},t}(\lambda_1)-H_{\mathsf{S},t}(\lambda_2)\}(\lambda_1-\lambda_2)=\frac{1}{H^{'}_{\mathsf{S},t}(\bar{\lambda})}\{H_{\mathsf{S},t}(\lambda_1)-H_{\mathsf{S},t}(\lambda_2)\}^2.
\end{equation*}
By assumption, $0 < H^{'}_{\mathsf{S},t}(\bar{\lambda})\le\bar c_t$; hence,
\begin{equation*}
    \{H_{\mathsf{S},t}(\lambda_1)-H_{\mathsf{S},t}(\lambda_2)\}(\lambda_1-\lambda_2) \geq \frac{1}{\bar c_t}\{H_{\mathsf{S},t}(\lambda_1)-H_{\mathsf{S},t}(\lambda_2)\}^2.
\end{equation*}
By taking $\lambda_1=\lambda_t$ and $\lambda_2=\lambda^{\star}_{\mathsf{S},t}$, and using the fact that $H_{\mathsf{S},t}(\lambda^{\star}_{\mathsf{S},t})=0$, we obtain $H_{\mathsf{S},t}(\lambda_t)(\lambda_t-\lambda^{\star}_{\mathsf{S},t})\geq (1/\bar c_t) H^2_{\mathsf{S},t}(\lambda_t)$. Then,
\begin{equation*}
    \begin{split}
        \{\mathsf{E}_{t-1}[\lambda_{t+1}]-\lambda^{\star}_{\mathsf{S},t}\}^2&=\{\lambda_t-\alpha H_{\mathsf{S},t}(\lambda_t) - \lambda^{\star}_{\mathsf{S},t}\}^2\\
        &=(\lambda_t-\lambda^{\star}_{\mathsf{S},t})^2 - 2 \alpha H_{\mathsf{S},t}(\lambda_t)(\lambda_t-\lambda^{\star}_{\mathsf{S},t}) + \alpha^2 H_{\mathsf{S},t}^2(\lambda_t)\\
        &\leq (\lambda_t-\lambda^{\star}_{\mathsf{S},t})^2-\frac{2 \alpha}{\bar c_t} H_{\mathsf{S},t}^2(\lambda_t) + \alpha^2 H_{\mathsf{S},t}^2(\lambda_t)\\
        &= (\lambda_t-\lambda^{\star}_{\mathsf{S},t})^2-\alpha\left(\frac{2}{\bar c_t}-\alpha\right) H_{\mathsf{S},t}^2(\lambda_t).
    \end{split}
\end{equation*}
Since $0 < \alpha < 2/\bar c_t$, the coefficient $\alpha\left(\frac{2}{\bar c_t}-\alpha\right)$ is strictly positive. Moreover if $\lambda_t \neq \lambda^{\star}_{\mathsf{S},t}$, then $H_{\mathsf{S},t}^2(\lambda_t) \neq 0$, since $H_{\mathsf{S},t}(\cdot)$ is strictly increasing and $H_{\mathsf{S},t}(\lambda^{\star}_{\mathsf{S},t})=0$. Therefore, $\{\mathsf{E}_{t-1}[\lambda_{t+1}]-\lambda^{\star}_{\mathsf{S},t}\}^2<(\lambda_t-\lambda^{\star}_{\mathsf{S},t})^2$. In the remaining case $\lambda_t=\lambda^{\star}_{\mathsf{S},t}$, the displayed identity holds with equality because $H_{\mathsf{S},t}(\lambda^{\star}_{\mathsf{S},t})=0$, so the strict contraction degenerates to a fixed point; this concludes the proof.

\subsection{Proof of Corollary~\ref{cor:uniform_fixed_gain}}\label{app:proof_uniform_fixed_gain}
The uniform derivative bound permits $\bar c_t=\bar c$ at every date. Hence
$0<\alpha<2/\bar c$ satisfies the gain restriction in
Theorem~\ref{th::optimality} for every realised state and target in the stated
intervals. Applying that theorem date by date gives the claimed contraction.

\subsection{Proof of Proposition~\ref{prop::proposition_mse}}\label{app:proof_mse}
First, the following recursion holds:
\begin{equation*}
    \lambda_{t+1}-\lambda^{\star}_{\mathsf{S},t}=\lambda_t-\lambda^{\star}_{\mathsf{S},t} + \alpha G_{\mathsf{S}}(\lambda_t)\psi_{\mathsf{S}}(y_t,\lambda_t).
\end{equation*}
By squaring and taking the conditional expectation ($\lambda_{t}$ and $G_{\mathsf{S},t}(\lambda_t)$ are predictable by Definition~\ref{def::scoring-rule-filter}, and $\lambda^{\star}_{\mathsf{S},t}$ is predictable by Assumption~\ref{ass:measurable_target}), we obtain
\begin{equation*}
\begin{split}
\mathsf{E}_{t-1}[(\lambda_{t+1}-\lambda^{\star}_{\mathsf{S},t})^2]&=(\lambda_t-\lambda^{\star}_{\mathsf{S},t})^2\\
&+ 2 \alpha G_{\mathsf{S}}(\lambda_t)(\lambda_{t}-\lambda^{\star}_{\mathsf{S},t})\mathsf{E}_{t-1}[\psi_{\mathsf{S}}(y_t,\lambda_t)]\\
&+\alpha^2  G_{\mathsf{S}}^2(\lambda_t)\mathsf{E}_{t-1}[\psi^2_{\mathsf{S}}(y_t,\lambda_t)].
\end{split}
\end{equation*}
By Assumption \ref{ass:conditional_differentiation}, $\mathsf{E}_{t-1}[\psi_{\mathsf{S}}(y_t,\lambda_t)]=-\mathsf{R}^{'}_{\mathsf{S},t}(\lambda_t)$, which, with the definition of $H_{\mathsf{S},t}$, gives \eqref{eq::mse}. Rearranging \eqref{eq::mse} also gives
\begin{equation*}
  \mathsf{E}_{t-1}[(\lambda_{t+1}-\lambda^{\star}_{\mathsf{S},t})^2]-(\lambda_t-\lambda^{\star}_{\mathsf{S},t})^2=-2 \alpha H_{\mathsf{S},t}(\lambda_t)(\lambda_{t}-\lambda^{\star}_{\mathsf{S},t}) + \alpha^2 G_{\mathsf{S}}^2(\lambda_t) \mathsf{E}_{t-1}[\psi^2_{\mathsf{S}}(y_t,\lambda_t)].
\end{equation*}
Therefore, a mean-squared error reduction is equivalent to the following condition
\begin{equation*}
-2 \alpha H_{\mathsf{S},t}(\lambda_t)(\lambda_{t}-\lambda^{\star}_{\mathsf{S},t}) + \alpha^2 G_{\mathsf{S}}^2(\lambda_t) \mathsf{E}_{t-1}[\psi^2_{\mathsf{S}}(y_t,\lambda_t)] < 0.
\end{equation*}
By Assumption \ref{ass:scalar_conditional_geometry}, $H_{\mathsf{S},t}(\lambda^{\star}_{\mathsf{S},t})=0$ and $H_{\mathsf{S},t}(\cdot)$ is strictly increasing on $I_t$, so when $\lambda_t \neq \lambda^{\star}_{\mathsf{S},t}$ the term $H_{\mathsf{S},t}(\lambda_t)$ has the same sign as $(\lambda_{t}-\lambda^{\star}_{\mathsf{S},t})$ and the product $H_{\mathsf{S},t}(\lambda_t) (\lambda_{t}-\lambda^{\star}_{\mathsf{S},t})>0$. Write the left-hand side of the last display as
\begin{equation*}
    \mathfrak q(\alpha):=\alpha^2 G_{\mathsf{S}}^2(\lambda_t) \mathsf{E}_{t-1}[\psi^2_{\mathsf{S}}(y_t,\lambda_t)] -2 \alpha H_{\mathsf{S},t}(\lambda_t)(\lambda_{t}-\lambda^{\star}_{\mathsf{S},t}),
\end{equation*}
so that a mean-squared error reduction is exactly $\mathfrak q(\alpha)<0$. At an off-target state, $\mathsf E_{t-1}[\psi_{\mathsf S}^2(y_t,\lambda_t)]=0$ would imply $\psi_{\mathsf S}(y_t,\lambda_t)=0$ almost surely, hence $\mathsf R'_{\mathsf S,t}(\lambda_t)=H_{\mathsf S,t}(\lambda_t)=0$, contradicting strict monotonicity of $H_{\mathsf S,t}$ and $\lambda_t\ne\lambda^\star_{\mathsf S,t}$. Thus the second moment is strictly positive off target, and $\mathfrak q$ is a convex parabola with roots $0$ and $2\alpha_t^{\rm oracle}$; the claimed interval and optimal vertex follow. At $\lambda_t=\lambda^{\star}_{\mathsf{S},t}$ the linear term vanishes and $\mathfrak q(\alpha)=\alpha^2 G_{\mathsf{S}}^2(\lambda_t)\mathsf{E}_{t-1}[\psi^2_{\mathsf{S}}(y_t,\lambda_t)]$, which is positive for a non-degenerate innovation and zero for a degenerate one.

\section{Proofs for Section~\ref{section::role_scoring_rules}: scoring rules and robustness}\label{app::appendix_section_5}
\subsection{Proof of Proposition~\ref{prop::local_mean}}\label{app:proof_local_mean}
By twice continuous differentiability and centring,
\begin{equation}
    \mathsf{R}^{'}_{\mathsf{S},t}(\lambda_t)=J_{\mathsf{S},t}(\lambda^{\dagger})(\lambda_t-\lambda^{\dagger}) + o(\mid \lambda_t-\lambda^{\dagger} \mid).
\end{equation}
By continuity of the predictable scaling,
\begin{equation*}
    G_{\mathsf{S},t}(\lambda_t)\mathsf{R}^{'}_{\mathsf{S},t}(\lambda_t)=G_{\mathsf{S},t}(\lambda^\dagger)J_{\mathsf{S},t}(\lambda^\dagger)(\lambda_t-\lambda^{\dagger}) + o(|\lambda_t-\lambda^{\dagger}|).
\end{equation*}
Substituting this expansion into the conditional-mean recursion of Proposition \ref{prop::proposition_2} gives \eqref{eq::mean_local_mean}. We turn now to the conditional variance,
\begin{equation*}
    \mathsf{Var}_{t-1}[\lambda_{t+1}]=\alpha^2 G^2_{\mathsf{S},t}(\lambda_t)\mathsf{Var}_{t-1}[\psi_{\mathsf{S}}(y_t,\lambda_t)].
\end{equation*}
Moreover, $\mathsf{E}_{t-1}[\psi_{\mathsf{S}}(y_t,\lambda_t)]=-\mathsf{R}^{'}_{\mathsf{S},t}(\lambda_t)=O(|\lambda_t-\lambda^{\dagger}|)$, so its squared conditional mean is $o(1)$. By continuity of the second moment, $\mathsf{Var}_{t-1}[\psi_{\mathsf{S}}(y_t,\lambda_t)]=K_{\mathsf{S},t}(\lambda^\dagger)+o(1)$. Substituting this and $G^2_{\mathsf S,t}(\lambda_t)=G^2_{\mathsf S,t}(\lambda^\dagger)+o(1)$ gives~\eqref{eq::variance_local_mean}.

\subsection{Proof of Proposition~\ref{prop::outlier}}\label{app:proof_outlier}
At time $t+1$, we have
\begin{equation*}
     \lambda_{t+1}^{(B)}-\lambda_{t+1}^{(0)}=\alpha (g_{\mathsf{S}}(B)-g_{\mathsf{S}}(z_0)).
\end{equation*}
Instead, for all later dates the residual sequences are identical, so the innovation terms cancel and the difference evolves according to
\begin{equation*}
     \lambda_{t+1+k}^{(B)}-\lambda_{t+1+k}^{(0)}= \varphi (\lambda_{t+k}^{(B)}-\lambda_{t+k}^{(0)}),\quad k \geq 1.
\end{equation*}
Iterating the one-step contraction $k$ times from the date-$(t+1)$ difference, we have:
\begin{equation*}
     \lambda_{t+1+k}^{(B)}-\lambda_{t+1+k}^{(0)}= \varphi^{k} \alpha (g_{\mathsf{S}}(B)-g_{\mathsf{S}}(z_0)),\quad k \geq 1.
\end{equation*}
Summing the exact geometric response over all horizons gives $\alpha|g_{\mathsf S}(B)-g_{\mathsf S}(z_0)|/(1-|\varphi|)$. Boundedness of $g_{\mathsf S}$ then gives both stated bounds and concludes the proof.

\subsection{Proof of Proposition~\ref{prop::state_dependent_outlier}}\label{app:proof_state_outlier}
The paths share $\lambda_t$, so their date-$(t+1)$ difference is $F_t(\lambda_t,B)-F_t(\lambda_t,y_0)$. For $k\ge1$ their future raw outcomes coincide, so
\begin{equation*}
    \lambda^{(B)}_{t+1+k}-\lambda^{(0)}_{t+1+k}=F_{t+k}(\lambda^{(B)}_{t+k},y_{t+k})-F_{t+k}(\lambda^{(0)}_{t+k},y_{t+k}),
\end{equation*}
Applying the common-map contraction gives the one-step bound, and iteration proves~\eqref{eq::state_dependent_bound}. Under the scoring-rule decomposition of $F_t$, the common $b_t$ cancels and yields the stated initial displacement. The bound $M_t$ and the geometric sum $\sum_{k\ge0}q^k=(1-q)^{-1}$ then prove the remaining claims.

\subsection{Proof of Proposition~\ref{prop::dynamic_equivalence}}\label{app:proof_dynamic_equivalence}
When current states and observations agree, the common intercepts and autoregressive
coefficients, together with equal scaled update contributions, make the one-step recursions identical.
Induction from the common initial condition therefore gives pathwise equality. The proportionality
condition is immediate by substitution. For the local claim, write
$m_i(\lambda)=\mathsf E_{t-1}[\psi_i(y_t,\lambda)]$. At the common centred target
$m_1(\lambda^\dagger)=m_2(\lambda^\dagger)=0$ and $m_2=a m_1$. Continuity of $a$ at
$\lambda^\dagger$ and existence of the local derivatives give
$m_2'(\lambda^\dagger)=a(\lambda^\dagger)m_1'(\lambda^\dagger)$ by the difference quotient, hence
$J_2=a(\lambda^\dagger)J_1$; pointwise proportionality and finite second moments similarly give
$K_2=a(\lambda^\dagger)^2K_1$. Since $a(\lambda^\dagger)>0$ and $J_1\ne0$, both denominators are nonzero, and division gives $K_2/J_2^2=K_1/J_1^2$.

\section{Proofs for Section~\ref{section::continuous_time}: continuous-time limits}\label{app::appendix_section_6}

\subsection{Proof of Proposition~\ref{prop:verified_score_arrays}}\label{app:proof_verified_arrays}
Write $q=q_h(\lambda)$ and change variables from $y$ to
$z=y/\sqrt q$. Let $f$ and $F_f$ denote the density and distribution function
of $Z$, put $s_f(z)=\partial_z\log f(z)$, and, for independent
$Z',Z''\sim f$, define
\[
 A_f(z)=\mathsf E|Z'-z|-\tfrac12\mathsf E|Z'-Z''|,
 \qquad
 c_f(\beta)=\int f^{1+\beta}.
\]
Since $\partial_\lambda q=q$ and $\partial_\lambda z=-z/2$, direct
differentiation gives
\[
 U_{\log}(z)=-1-zs_f(z),
\]
\[
 U_\beta(z)=\beta c_f(\beta)+(1+\beta)f(z)^\beta[-1-zs_f(z)],
 \qquad
 U_{\rm CRPS}(z)=z\{2F_f(z)-1\}-A_f(z),
\]
and
\[
\begin{array}{c|ccc}
\mathsf S & \log & \text{density-power} & \mathrm{CRPS} \\
\hline
\psi_{\mathsf S,h} & U_{\log}/2 & q^{-\beta/2}U_\beta/2 & q^{1/2}U_{\rm CRPS}/2 \\
G_{\mathsf S,h} & 2 & 2q^{\beta/2} & 2q^{-1/2}
\end{array}
\]
The powers of $q$ follow from $p_q(y)=q^{-1/2}f(z)$ and
$\operatorname{CRPS}(P_q,y)=q^{1/2}\operatorname{CRPS}(P_1,z)$; the displayed
scalings therefore leave the residual-only updates $U_{\log}$, $U_\beta$, and
$U_{\rm CRPS}$.

At the frozen true law, differentiable propriety and differentiation under the
integral give $\mathsf E[U_{\mathsf S}(Z)]=0$. For the two symmetric densities,
each $U_{\mathsf S}$ is even, so $ZU_{\mathsf S}(Z)$ is odd and has expectation
zero. Squaring gives $\Xi_{\mathsf S}=\mathsf E[U_{\mathsf S}(Z)^2]$.
Differentiating the mean driver once more at the target gives
\[
 J_{\log,h}=j_{\log},
 \qquad J_{\beta,h}=q^{-\beta/2}j_\beta,
 \qquad J_{{\rm CRPS},h}=q^{1/2}j_{\rm CRPS},
\]
where
\[
 j_{\log}=\frac14\int f(1+zs_f)^2,
 \quad j_\beta=\frac{1+\beta}{4}\int f^{1+\beta}(1+zs_f)^2,
 \quad j_{\rm CRPS}=\frac12\int z^2f(z)^2\,\mathrm dz.
\]
These quantities are positive and finite for the two stated densities. The
Gaussian logarithmic update is $Z^2-1$; its $(2+\delta)$ moment requires a
$Z$ moment of order $4+2\delta$. The Student logarithmic, density-power, and
CRPS updates are bounded. Gaussian innovations have moments of every order,
while Student--6 has the fourth moment, so a common $(2+\delta)$ sufficient
bound is available in every cell. Because the frozen laws and residual-only
updates do not vary with $h$, the required local uniformity follows. This
verifies Assumption~\ref{ass::assumption_61}-(ii)--(iv); conditions (v)--(vi)
remain separate.

\subsection{Proof of Theorem~\ref{th::diffusion_limit}}\label{app:proof_diffusion}
Write $\Delta_k^{(h)}:=\big(x_{(k+1)h}^{(h)}-x_{kh}^{(h)},\,\lambda_{(k+1)h}^{(h)}-\lambda_{kh}^{(h)}\big)^\top$, and let $b(x,\lambda)=(0,\,\omega-\kappa\lambda)^\top$ and $\mathcal A_{\mathsf S}(\lambda)$ be the candidate covariance in~\eqref{eq::covariance}. We verify conditions (2.4)--(2.6) in Chapter~11 of \citet{stroock2007multidimensional} (see also \citealp[p.~40]{arnold1974stochastic}), locally uniformly on every compact $K\subset\Lambda$.

\emph{(i) Drift.} Equations~\eqref{eq::condition_1}--\eqref{eq::condition_3} and Assumption~\ref{ass::assumption_61}-(ii) give
\[
 h^{-1}\mathsf E_{kh}[\Delta x_k^{(h)}]
 =\frac{\sqrt{\Gamma(\lambda)}}{\sqrt h}\mathsf E_{kh}Z=o(1),
 \qquad
 h^{-1}\alpha_h\mathsf E_{kh}u=o(1),
\]
and hence $h^{-1}\mathsf E_{kh}[\Delta_k^{(h)}]\to b(\cdot,\lambda)$. Thus the $o(\sqrt h)$ approximate-centring rate imposed in Assumption~\ref{ass::assumption_61}-(ii) controls both coordinates.

\emph{(ii) Covariance.} The three characteristic limits in Assumption~\ref{ass::assumption_61}-(iii), together with $\alpha_h/\sqrt h\to\alpha$ and $m_h=O(h)$, give $h^{-1}\mathsf E_{kh}[\Delta_k^{(h)}(\Delta_k^{(h)})^\top]\to\mathcal A_{\mathsf S}(\lambda)$. The vanishing mean corrections do not affect the quadratic characteristic. Cauchy--Schwarz makes $\mathcal A_{\mathsf S}$ nonnegative definite, including the degenerate case.

\emph{(iii) Negligible jumps.} The two conditional Lindeberg conditions in Assumption~\ref{ass::assumption_61}-(iv) imply that the predictable compensator of jumps larger than any fixed $\varepsilon$ vanishes. The fourth-moment display~\eqref{eq::fourth_moments} remains one sufficient score-specific verification, but is not an abstract premise.

Initial convergence, compact containment, and well posedness are imposed in
Assumption~\ref{ass::assumption_61}-(v)--(vi). The locally uniform transition-characteristic
limits and negligible-jump condition therefore verify the Markov-chain martingale-problem
criterion of \citet{stroock2007multidimensional} and \citet{ethier1986markov}, which yields the asserted weak
convergence. The Markov restriction in Assumption~\ref{ass::assumption_61} is essential to this
proof.

\subsection{Proof of Proposition~\ref{prop:compact_containment}}\label{app:proof_compact_containment}
\begin{proof}
Put $U=1+V$ and, for $R>0$, define
$\tau_R^{(h)}:=\inf\{k:V(\lambda_{kh}^{(h)})\ge R\}$.  The one-step drift condition gives, for $k<\tau_R^{(h)}$,
\[
  \mathsf E_k[U(\lambda_{(k+1)h}^{(h)})]
  \le(1+ch)U(\lambda_{kh}^{(h)}).
\]
In particular, after stopping and iterating,
\[
  \mathsf E[U(\lambda_{(n\wedge\tau_R^{(h)})h}^{(h)})]
  \le(1+ch)^n\mathsf E[U(\lambda_0^{(h)})]
  \le e^{cT}\sup_h\mathsf E[U(\lambda_0^{(h)})]
\]
for $n\le T/h$.  On $\{\tau_R^{(h)}\le T/h\}$ the stopped value is at least
$1+R$; hence,
\[
  \sup_h\mathbb P(\tau_R^{(h)}\le T/h)
  \le\frac{e^{cT}\sup_h\mathsf E[U(\lambda_0^{(h)})]}{1+R}
  \longrightarrow0
  \qquad(R\to\infty).
\]
Compact sublevel sets of $V$ prove compact containment of the $\lambda$ array; we notice that no update Lindeberg condition is needed for this Lyapunov step.

For the $x$ coordinate, stop when $\lambda^{(h)}$ leaves one of the compact
sublevel sets just obtained and decompose $x_t^{(h)}-x_0^{(h)}=A_t^{(h)}+M_t^{(h)}$,
where $A_t^{(h)}:=\sum_{k<t/h}\sqrt{h\Gamma(\lambda_{kh}^{(h)})}\,\mathsf E_k[Z_{k+1}]$ and $M_t^{(h)}$ is the corresponding martingale.  The stated
predictable-part condition makes the process maximum $\sup_{t\le T}|A_t^{(h)}|$ tight; under exact centring it is identically zero.  On the stopped compact, local boundedness of $\Gamma$ and of the residual second moments gives $\mathsf E\!\left[\langle M^{(h)}\rangle_T\right]\le C_T$. Therefore, Doob's inequality makes the martingale maximum $\sup_{t\le T}|M_t^{(h)}|$ tight.  Together with tightness of $x_0^{(h)}$ and the preceding $\lambda$ containment, this proves joint compact containment.
\end{proof}

\subsection{Proof of Theorem~\ref{th::mean_flow}}\label{app:proof_mean_flow}
The $O(h)$ gain and local-uniform mean convergence give
\[
h^{-1}\mathsf E_{kh}[\Delta\lambda]
=h^{-1}m_h(\lambda)+h^{-1}a_h\tilde\mu_{\mathsf S,h}(\lambda)
\longrightarrow\omega-\kappa\lambda+\bar\alpha\tilde\mu_{\mathsf S}(\lambda).
\]
Local uniform integrability of $u^2$ and $a_h=O(h)$ imply $h^{-1}\mathsf{Var}_{kh}(a_hu)=O(h)\to0$; Cauchy--Schwarz gives order $O(\sqrt h)$ for the mixed characteristic, which also vanishes. The observation quadratic characteristic converges to $\Gamma(\lambda)\tilde\zeta_2(\lambda)$. The two conditional Lindeberg conditions remove large jumps, so the limiting generator is
\[
(\mathcal A f)(x,\lambda)=\{\omega-\kappa\lambda+\bar\alpha\tilde\mu_{\mathsf S}(\lambda)\}\partial_\lambda f+\tfrac12\Gamma(\lambda)\tilde\zeta_2(\lambda)\partial^2_{xx}f.
\]
Initial convergence, compact containment, and uniqueness/non-explosion in
Assumption~\ref{ass::assumption_63} then allow the same Markov-chain martingale-problem criterion
as in Theorem~\ref{th::diffusion_limit}.

\section{Proofs for Section~\ref{sec::local_consistency}: local tracking and distributional approximation}\label{app::appendix_section_7}

\subsection{Proof of Lemma~\ref{lem:pseudotrue_implicit}}\label{app:proof_pseudotrue_implicit}
The $C^2$ implicit-function theorem applies because
\(\partial_a F_h\) is bounded away from zero in a neighbourhood of the
pseudo-true point. Hence
\(\tilde\lambda\mapsto\lambda^\star_{\mathsf S,h}(\tilde\lambda)\)
is locally \(C^2\). Differentiating the identity
\[
F_h\bigl(\lambda^\star_{\mathsf S,h}(\tilde\lambda),\tilde\lambda\bigr)=0
\]
once with respect to \(\tilde\lambda\) gives the first-derivative formula in the
lemma. Differentiating once more gives
\[
 \partial_{\tilde\lambda\tilde\lambda}^2\lambda^\star_{\mathsf S,h}
 =
 -\frac{
 \partial_{aaa}^3\widetilde{\mathsf R}_{\mathsf S}^{(h)}
 (\partial_{\tilde\lambda}\lambda^\star_{\mathsf S,h})^2
 +2\partial_{aa\tilde\lambda}^3\widetilde{\mathsf R}_{\mathsf S}^{(h)}
 \partial_{\tilde\lambda}\lambda^\star_{\mathsf S,h}
 +\partial_{a\tilde\lambda\tilde\lambda}^3
 \widetilde{\mathsf R}_{\mathsf S}^{(h)}
 }{\partial_{aa}^2\widetilde{\mathsf R}_{\mathsf S}^{(h)}},
\]
with every risk derivative evaluated at the pseudo-true point. The stated
mixed second-derivative bound and curvature lower bound make the first-derivative
formula uniformly bounded; the third-derivative bounds then do the same for the
displayed second derivative.

\subsection{Proof of Proposition~\ref{prop::proposition_recentering}}\label{app:proof_recentering}
By assumption, the minimiser $\lambda^{\star}_{\mathsf{S},h}(\tilde\lambda)$ is
an interior point. Hence its first-order condition is
\[
\left.\partial_{\lambda}
\widetilde{\mathsf{R}}_{\mathsf{S}}^{(h)}(\lambda;\tilde\lambda)
\right|_{\lambda=\lambda^{\star}_{\mathsf{S},h}(\tilde\lambda)}=0.
\]
Exchanging differentiation and integration and using the definition of
$\psi_{\mathsf{S}}^{(h)}$ gives~\eqref{eq::recentering}. The scaled driver
satisfies
\[
u_{\mathsf{S}}^{(h)}
  (y,\lambda^{\star}_{\mathsf{S},h}(\tilde\lambda))
=G_{\mathsf{S}}^{(h)}
  (\lambda^{\star}_{\mathsf{S},h}(\tilde\lambda))
 \psi_{\mathsf{S}}^{(h)}
  (y,\lambda^{\star}_{\mathsf{S},h}(\tilde\lambda)).
\]
The finite scaling is observation-independent, so it factors out of the
conditional expectation. This gives~\eqref{eq::recentering2} and completes
the proof.

\subsection{Proof of Proposition~\ref{prop::latent_space_recursion}}\label{app:proof_latent_recursion}
We set $\Delta \tilde\lambda^{(h)}_{(k+1)h}:=\tilde\lambda^{(h)}_{(k+1)h}-\tilde\lambda^{(h)}_{k h}$ (see \eqref{eq::latent_space_recursion}). Taylor's formula applied to $\tilde\lambda \mapsto \lambda_{\mathsf{S},h}^{\star}(\tilde\lambda)$ implies that, for some $\bar{\tilde\lambda}^{(h)} \in [\tilde\lambda^{(h)}_{k h},\tilde\lambda^{(h)}_{(k+1)h}]$, we can write:

\begin{equation*}
    m_{(k+1) h}^{(h)}-m_{k h}^{(h)} = \partial_{\tilde\lambda}\lambda_{\mathsf{S},h}^{\star}(\tilde\lambda_{k h}^{(h)})\Delta \tilde\lambda^{(h)}_{(k+1)h}+\frac{1}{2}\partial^2_{\tilde\lambda \tilde\lambda}\lambda_{\mathsf{S},h}^{\star}(\bar{\tilde\lambda}^{(h)})(\Delta \tilde\lambda^{(h)}_{(k+1)h})^2
\end{equation*}
Substituting the definition of $\Delta \tilde\lambda^{(h)}_{(k+1)h}$ in the previous equation and setting
\begin{equation*}
r_{(k+1)h}^{(h)}:=h\,\partial_{\tilde\lambda}\lambda_{\mathsf{S},h}^{\star}(\tilde\lambda_{k h}^{(h)})b_h(\tilde\lambda_{k h}^{(h)})+\frac{1}{2}\partial^2_{\tilde\lambda \tilde\lambda}\lambda_{\mathsf{S},h}^{\star}(\bar{\tilde\lambda}^{(h)})(\Delta \tilde\lambda^{(h)}_{(k+1)h})^2
\end{equation*}
gives
\begin{equation}
    m_{(k+1) h}^{(h)}-m_{k h}^{(h)} = \sqrt{h} \,\partial_{\tilde\lambda}\lambda^{\star}_{\mathsf{S},h}(\tilde\lambda_{k h}^{(h)}) \sigma_h(\tilde\lambda_{k h}^{(h)})\eta_{(k+1)h} + r_{(k+1)h}^{(h)},
\end{equation}
which gives \eqref{eq::increments_m}, where $\bar{\Gamma}_{k h}$ is $\mathcal{F}_{k h}$-measurable. It remains to bound the remainder $r_{(k+1)h}^{(h)}$. Let $\bar b,\bar\sigma,L_1,L_2$ denote uniform bounds for $b_h(\cdot)$, $\sigma_h(\cdot)$, $\partial_{\tilde\lambda}\lambda^{\star}_{\mathsf{S},h}(\cdot)$ and $\partial^2_{\tilde\lambda\tilde\lambda}\lambda^{\star}_{\mathsf{S},h}(\cdot)$. From $\Delta\tilde\lambda^{(h)}_{(k+1)h}=h\,b_h(\tilde\lambda^{(h)}_{kh})+\sqrt h\,\sigma_h(\tilde\lambda^{(h)}_{kh})\,\eta_{(k+1)h}$ and the conditional moments $\mathsf E_{kh}[\eta_{(k+1)h}]=0$, $\mathsf E_{kh}[\eta_{(k+1)h}^{4}]\le C_\eta$, collecting powers of $h$ (the leading contribution being $h^{2}\bar\sigma^{4}C_\eta$) gives
\begin{equation*}
    \mathsf E_{kh}\big[(\Delta\tilde\lambda^{(h)}_{(k+1)h})^{4}\big]\le C' h^{2}\qquad (h\le1),
\end{equation*}
for a finite $C'$ depending only on $\bar b,\bar\sigma,C_\eta$. Since $|r_{(k+1)h}^{(h)}|\le h\,L_1\bar b+\tfrac12 L_2(\Delta\tilde\lambda^{(h)}_{(k+1)h})^{2}$, the inequality $(x+y)^2\le2x^2+2y^2$ and the display above yield
\begin{equation*}
    \mathsf E_{kh}[(r_{(k+1)h}^{(h)})^{2}]\le 2h^{2}L_1^{2}\bar b^{2}+\tfrac12 L_2^{2}\,\mathsf E_{kh}\big[(\Delta\tilde\lambda^{(h)}_{(k+1)h})^{4}\big]\le C\,h^{2},
\end{equation*}
with $C=2L_1^{2}\bar b^{2}+\tfrac12 L_2^{2}C'$ finite and independent of $h$ and $k$. This is the stated $O(h^{2})$ remainder bound and concludes the proof.

\subsection{Proof of Proposition~\ref{prop::condition-2}}\label{app:proof_condition_two}
By differentiation under conditional expectation, the predictability of $G_{\mathsf S}^{(h)}(\lambda)$, and the definition of the scoring-rule innovation, we obtain
\[
\bar u_{kh}(\lambda)
=
\mathsf E_{kh}
\left[
u_{\mathsf S}^{(h)}(y_{(k+1)h},\lambda)
\right]
=
-G_{\mathsf S}^{(h)}(\lambda)
\partial_\lambda
\widetilde{\mathsf R}_{\mathsf S,kh}^{(h)}(\lambda)
=-\mathcal H_{h,k}(\lambda)\,.
\]
Since
$
\partial_\lambda
\widetilde{\mathsf R}_{\mathsf S,kh}^{(h)}(m_{kh}^{(h)})
=
0,
$
it follows that $\mathcal H_{h,k}(m_{kh}^{(h)})=\bar u_{kh}(m_{kh}^{(h)})=0$. Now, let $\lambda$ satisfy $|\lambda-m_{kh}^{(h)}|\le r$. By the mean-value theorem, there exists a point $\xi$ between $\lambda$ and $m_{kh}^{(h)}$ such that
\[
\mathcal H_{h,k}(\lambda)
=\mathcal H_{h,k}'(\xi)(\lambda-m_{kh}^{(h)}).
\]
Using the lower derivative bound and $\bar u_{kh}=-\mathcal H_{h,k}$ gives
\[
(\lambda-m_{kh}^{(h)})\bar u_{kh}(\lambda)
=-\mathcal H_{h,k}'(\xi)(\lambda-m_{kh}^{(h)})^2
\le-\underline b(\lambda-m_{kh}^{(h)})^2.
\]
This proves the local mean-reversion inequality in Assumption~\ref{ass:local-tracking}-(ii) with
$\mu=\underline b$. The same mean-value identity and the upper derivative bound give
\[
|\bar u_{kh}(\lambda)|
\le
\overline b
|\lambda-m_{kh}^{(h)}|.
\]
Thus the Lipschitz bound in Assumption~\ref{ass:local-tracking}-(ii) holds with
$L=\overline b$.
This completes the proof.

\subsection{Proof of Theorem~\ref{th::local-tracking}}\label{app:proof_local_tracking}
First, notice that by the definition of $\bar u_{k h}$, the conditional mean-zero assumption on $\eta_{(k+1)h}$, and the predictability of $\bar{\Gamma}_{k h}$, the following martingale difference
\begin{equation}
\xi_{(k+1)h}^{(h)}:=\rho \bigl(u_{\mathsf{S}}^{(h)}(y_{(k+1)h},\lambda_{k h}^{(h)})-\bar{u}_{k h}(\lambda_{k h}^{(h)})\bigr) - \bar{\Gamma}_{k h} \eta_{(k+1)h}
\end{equation}
has zero conditional expectation. Using the pseudo-true path expansion in \eqref{eq::increments_m}, the tracking error satisfies
\begin{equation}
    e_{(k+1)h}^{(h)} = e_{k h}^{(h)} + \rho \sqrt{h} \bar{u}_{k h}(\lambda_{k h}^{(h)}) + \sqrt{h} \xi_{(k+1)h}^{(h)} - r_{(k+1)h}^{(h)}.
\end{equation}
Set $A_{k h}:=e_{k h}^{(h)} + \rho \sqrt{h} \bar{u}_{k h}(\lambda_{k h}^{(h)})$ and $B_{k h}:=\{k < \tau_{r,h}^{(h)}\}$. On $B_{k h}$, $|e_{k h}^{(h)}|\leq r$, so Assumption~\ref{ass:local-tracking}-(ii) gives $A_{k h}^2 \leq (1-c_0\sqrt{h})(e_{k h}^{(h)})^2$ for some $c_0>0$ and all sufficiently small $h$. Since $\mathsf{E}_{k h}[\xi_{(k+1)h}^{(h)}]=0$,
\begin{align*}
\mathsf{E}_{k h}[(e_{(k+1)h}^{(h)})^2]
    &=
    A_{k h}^2
    +
    h\mathsf{E}_{k h}[(\xi_{(k+1)h}^{(h)})^2] +
    \mathsf{E}_{k h}[(r_{(k+1)h}^{(h)})^2]\\
    &\quad
    -2 A_{k h} \mathsf{E}_{k h}[r_{(k+1) h}^{(h)}]- 2\sqrt h\,
    \mathsf{E}_{k h}[\xi_{(k+1)h}^{(h)} r_{(k+1) h}^{(h)}].
\end{align*}
Assumption~\ref{ass:local-tracking}-(iii) and -(i) give $h\mathsf{E}_{k h}[(\xi_{(k+1)h}^{(h)})^2] \leq C h$ and $\mathsf{E}_{k h}[(r_{(k+1)h}^{(h)})^2] \leq C h^2$, respectively. On $B_{kh}$, Jensen's and Cauchy--Schwarz inequalities also give
$|A_{k h}\mathsf{E}_{k h}[r_{(k+1) h}^{(h)}]| \leq C_r h$ and
$\sqrt h\,|\mathsf{E}_{k h}[\xi_{(k+1)h}^{(h)}r_{(k+1) h}^{(h)}]| \leq C h^{3/2}$.
Because $B_{k h}\in\mathcal F_{k h}$ and $B_{(k+1) h}\subset B_{k h}$, these bounds imply, for constants $c>0$ and $C_r<\infty$,
\begin{align*}
\mathsf E[(e_{(k+1)h}^{(h)})^2\mathsf 1_{B_{(k+1)h}}]
&\le \mathsf E\!\left[\mathsf 1_{B_{kh}}\mathsf E_{kh}[(e_{(k+1)h}^{(h)})^2]\right]\\
&\le (1-c\sqrt h)\mathsf E[(e_{kh}^{(h)})^2\mathsf 1_{B_{kh}}]+C_rh.
\end{align*}
Iterating gives
\begin{equation}
    \mathsf{E}[(e_{k h}^{(h)})^2\mathsf{1}_{B_{k h}}] \leq (1-c \sqrt{h})^k \mathsf{E}[(e_0^{(h)})^2] + C_r h \sum_{\ell=0}^{k-1} (1-c \sqrt{h})^{\ell} \leq  (1-c \sqrt{h})^k \mathsf{E}[(e_0^{(h)})^2]  + C_r \sqrt{h}.
\end{equation}
This proves the stopped $L^2$ bound. Finally, fix $t\in(0,T_0]$ and let $k_h(t):=\lfloor t/h \rfloor$. For all sufficiently small $h$,
$(1-c \sqrt{h})^{k_h(t)} \leq \exp(-c t / (2 \sqrt{h}))$.
The stopped contribution is therefore $O(h^{1/2})$. If the exit contribution is $o(\sqrt h)$, then $\mathsf{E}[(e_{k_h(t) h}^{(h)})^2]=O(h^{1/2})$, which is the asserted $O_{L^2}(h^{1/4})$ rate.

\subsection{Proof of Corollary~\ref{cor:primitive_joint_fclt}}\label{app:proof_primitive_joint_fclt}
With $\mathcal H_{h,\tau}:=\mathcal G_{h,n_h(\tau)}$, the process $M_h$ is a
square-integrable martingale with bracket $\langle M_h\rangle(\tau)=
\sum_{j<n_h(\tau)}h^{1/2}q_{\mathsf S,h,(k_T+j)h}^{2}$. Assumption~\ref{ass::local-ou}-(v)
gives uniform convergence of this bracket to $q_{\mathsf S}^{2}(T)\tau$ in probability and the
conditional Lindeberg condition, so $M_h$ is $C$-tight by the martingale functional central limit
theorem \cite[Theorem~7.1.4]{ethier1986markov}. Assumption~\ref{ass::local-ou}-(i) makes
$Z_{\mathsf S,h}(0)$ tight; hence the pair is jointly tight.

Take any joint subsequential limit $(Z_0,M)$ and let $\mathcal H_\tau$ be the completed,
right-continuous filtration generated by $Z_0$ and $\{M(s):s\le\tau\}$. For $s<t$ and every
bounded continuous cylinder functional of $(Z_{\mathsf S,h}(0),M_h|_{[0,s]})$, the martingale
identities for $M_h$ and $M_h^2-\langle M_h\rangle$ hold after multiplication by that functional.
Localisation at a bounded predictable bracket and truncation of the jumps make the increments
uniformly integrable; conditional Lindeberg removes the truncation, and uniform bracket convergence
removes the localisation. Passing to the limit shows that $M$ and
$M(\tau)^2-q_{\mathsf S}^{2}(T)\tau$ are $\mathcal H_\tau$-martingales. Thus $M$ is continuous with
bracket $q_{\mathsf S}^{2}(T)\tau$. If $q_{\mathsf S}(T)>0$, L\'{e}vy's characterisation gives
$M=q_{\mathsf S}(T)W$, where $W$ is Brownian in $(\mathcal H_\tau)$ and therefore independent of
the $\mathcal H_0$-measurable variable $Z_0$. If $q_{\mathsf S}(T)=0$, then $M\equiv0$ and an
independent Brownian motion may be added on an extension. Every joint subsequential limit therefore
has the asserted law, proving convergence of the full sequence.

\subsection{Proof of Theorem~\ref{th::local-ou}}\label{app:proof_local_ou}
We first isolate the two ingredients used in the stopped approximation.
\begin{lemma}[Fast-scale decomposition]
\label{lem:fast-scale-decomposition}
Let $k_T=\lfloor T/h\rfloor$, $k_j=k_T+j$, and define $Z_{\mathsf S,h,j}:=h^{-1/4} e_{k_jh}^{(h)}$. Suppose Assumption~\ref{ass:local-tracking}-(i) holds and
$\bar u_{k_jh}(m_{k_jh}^{(h)})=0$.
Define
\begin{equation*}
    \tilde \xi_{(k+1)h}^{(h)}=\rho u_{\mathsf S}^{(h)}(y_{(k+1)h},m_{kh}^{(h)}) - \bar\Gamma_{kh}\eta_{(k+1)h}
\end{equation*}
and
\begin{equation*}
\begin{aligned}
    \zeta_{(k+1)h}^{(h)}(e)
    &=
    \rho
    \left(
    u_{\mathsf S}^{(h)}(y_{(k+1)h},m_{kh}^{(h)}+e)
    -
    u_{\mathsf S}^{(h)}(y_{(k+1)h},m_{kh}^{(h)})
    \right)
    \\
    &\quad
    -
    \rho
    \left(
    \bar u_{kh}(m_{kh}^{(h)}+e)
    -
    \bar u_{kh}(m_{kh}^{(h)})
    \right)
\end{aligned}
\end{equation*}
Then
\begin{equation*}
    \begin{aligned}
    Z_{\mathsf S,h,j+1}
    &=
    Z_{\mathsf S,h,j}
    +
    \rho h^{1/4}
    \bar u_{k_jh}
    (m_{k_jh}^{(h)}+h^{1/4}Z_{\mathsf S,h,j})
    \\
    &\quad
    +
    h^{1/4}\tilde\xi_{(k_j+1)h}^{(h)}
    +
    h^{1/4}
    \zeta_{(k_j+1)h}^{(h)}
    (h^{1/4}Z_{\mathsf S,h,j})
    -
    h^{-1/4}r^{(h)}_{(k_j+1)h}.
\end{aligned}
\end{equation*}
Moreover, $\mathsf E_{kh}[\tilde\xi_{(k+1)h}^{(h)}]=0$ and $\mathsf E_{kh}[\zeta_{(k+1)h}^{(h)}(e)]=0$ for every $\mathcal F_{kh}$-measurable local $e$.
\end{lemma}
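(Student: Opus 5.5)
The decomposition is pure algebra on the tracking-error recursion \eqref{eq::recursion_tracking_error}, followed by two conditional-mean computations. The recursion is \eqref{eq::dynamics} with gain $\rho\sqrt h$. We start from
\[
e_{(k+1)h}^{(h)}=e_{kh}^{(h)}+\rho\sqrt h\,u_{\mathsf S}^{(h)}(y_{(k+1)h},\lambda_{kh}^{(h)})-(m_{(k+1)h}^{(h)}-m_{kh}^{(h)}),
\]
at $k=k_j$, and substitute the target expansion of Assumption~\ref{ass:local-tracking}-(i),
\[
m_{(k+1)h}^{(h)}-m_{kh}^{(h)}=\sqrt h\,\bar\Gamma_{kh}\eta_{(k+1)h}+r_{(k+1)h}^{(h)}.
\]

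Writing $\lambda_{k_jh}^{(h)}=m_{k_jh}^{(h)}+e$ with $e=e_{k_jh}^{(h)}=h^{1/4}Z_{\mathsf S,h,j}$, we add and subtract two quantities inside the update term: the update evaluated at the target, $u_{\mathsf S}^{(h)}(y,m)$, and the mean-field difference $\bar u(m+e)-\bar u(m)$. Using $\bar u_{k_jh}(m_{k_jh}^{(h)})=0$, this gives
\[
\rho u(y,m+e)=\rho\bar u(m+e)+\rho u(y,m)+\zeta(e),
\]
where $\zeta$ is defined as in the statement. Combining $\rho u(y,m)$ with $-\bar\Gamma\eta$ yields $\tilde\xi$. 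The whole identity is therefore
\[
e_{(k_j+1)h}=e_{k_jh}+\sqrt h\{\rho\bar u(m+e)+\tilde\xi+\zeta(e)\}-r.
\]
Multiplying by $h^{-1/4}$ gives exactly the displayed recursion, with the factors $h^{1/4}$ in front of the middle terms and $h^{-1/4}$ in front of $r$. The only bookkeeping point is to check the $\rho$ placement: $\zeta$ already contains $\rho$, and the mean-field term carries $\rho$ explicitly.

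For the centring claims, we have $\mathsf E_{kh}[u_{\mathsf S}^{(h)}(y_{(k+1)h},m_{kh}^{(h)})]=\bar u_{kh}(m_{kh}^{(h)})=0$ by definition of $\bar u$ and the hypothesis. We also have $\mathsf E_{kh}[\eta_{(k+1)h}]=0$, and $\bar\Gamma_{kh}$ is $\mathcal F_{kh}$-measurable, so $\mathsf E_{kh}[\tilde\xi]=0$.

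For $\zeta(e)$ with $e$ an $\mathcal F_{kh}$-measurable random variable, the expectation must be taken at a random evaluation point. The plan is to argue that $\mathsf E_{kh}[u(y_{(k+1)h},m+x)]=\bar u_{kh}(m+x)$ holds for each deterministic $x$ in the local region. Because $m_{kh}$ and $e$ are both $\mathcal F_{kh}$-measurable, this identity can then be evaluated at $x=e$. The justification is the freezing lemma for regular conditional distributions: it uses joint measurability of $(x,y)\mapsto u$ and conditional integrability, which follows from Assumption~\ref{ass:local-tracking}-(iii). Given that identity, the bracketed centred difference has zero conditional mean.

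The main (though mild) obstacle is precisely this evaluation at a random predictable point. It requires a measurable version of $\bar u_{kh}(\cdot)$, which is the same localisation/measurable-version device used in the proof of Proposition~\ref{prop::proposition_2}. Everything else in the lemma is exact algebra.
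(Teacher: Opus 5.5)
Your proposal is correct and follows the paper's argument. You substitute the target expansion into the tracking-error recursion at $k=k_j$, split the update about $m_{k_jh}^{(h)}$ using $\bar u_{k_jh}(m_{k_jh}^{(h)})=0$, divide by $h^{1/4}$, and read off the two conditional-mean-zero claims. You are slightly more careful than the paper: it cites Proposition~\ref{prop::proposition_recentering} for the centring, whereas you use the lemma's own hypothesis directly and you flag the evaluation at a random $\mathcal F_{kh}$-measurable point. One small adjustment: the conditional integrability you need is already implicit in $\bar u_{kh}$ being defined, so you need not borrow Assumption~\ref{ass:local-tracking}-(iii), which the lemma does not assume.
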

To obtain the recursion, substitute $e_{k_jh}^{(h)}=h^{1/4}Z_{\mathsf S,h,j}$ into the tracking-error recursion~\eqref{eq::recursion_tracking_error} at the index $k=k_j=k_T+j$ and divide by $h^{1/4}$. Splitting the scaled innovation about the pseudo-true point $m_{k_jh}^{(h)}$ separates the centred term $\tilde\xi_{(k_j+1)h}^{(h)}$ from the evaluation-noise term $\zeta_{(k_j+1)h}^{(h)}$ defined above, and the remainder contributes $-h^{-1/4}r^{(h)}_{(k_j+1)h}$; collecting these terms gives the stated recursion. The two zero-mean claims follow from the conditional centring of the update at the pseudo-true point (Proposition~\ref{prop::proposition_recentering}) and from $\mathsf E_{kh}[\eta_{(k+1)h}]=0$ (Assumption~\ref{ass:local-tracking}).
\begin{lemma}[Negligible fast-scale remainders]
\label{lem:fast-scale-remainders}
Suppose Assumption~\ref{ass:local-tracking}-(i) and
Assumption~\ref{ass::local-ou}-(ii)--(iv) hold. Define
$a_{kh}(e):=\bar u_{kh}(m_{kh}^{(h)}+e)+b_{\mathsf S,h,kh}e$ and
\[
 \sigma_{h,K}:=\inf\{\tau\in[0,M]:|Z_{\mathsf S,h}(\tau)|\ge K\}\wedge M.
\]
Then, for every fixed $K,M<\infty$, the following stopped sums satisfy
\begin{equation*}
\begin{gathered}
    \sup_{0\le \tau\le M}
    \left|
    \sum_{j<n_h(\tau\wedge\sigma_{h,K})}
    h^{1/4}
    \zeta_{(k_j+1)h}^{(h)}
    (h^{1/4}Z_{\mathsf S,h,j})
    \right|
    \overset{\rm p}{\rightarrow} 0,\\
    \sup_{0\le \tau\le M}
    \left|
    \sum_{j<n_h(\tau\wedge\sigma_{h,K})}
    \rho h^{1/4}
    a_{k_jh}(h^{1/4}Z_{\mathsf S,h,j})
    \right|
      \overset{\rm p}{\rightarrow} 0
\end{gathered}
\end{equation*}
and
\begin{equation*}
    \sup_{0\le \tau\le M}
    \left|
    \sum_{j<n_h(\tau\wedge\sigma_{h,K})}
    h^{-1/4}r^{(h)}_{(k_j+1)h}
    \right|
     \overset{\rm L^{1}}{\rightarrow} 0
\end{equation*}
\end{lemma}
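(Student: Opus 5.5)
The three stopped sums are handled separately: a martingale maximal inequality for the evaluation noise, a direct appeal to Assumption~\ref{ass::local-ou}-(iii) for the linearisation remainder, and an $L^1$ bound on the target-motion remainder. Throughout, stopping at $\sigma_{h,K}$ keeps $|Z_{\mathsf S,h,j}|<K$ for the summation indices. For all sufficiently small $h$, the local tube of Assumption~\ref{ass::local-ou}-(i) then places every evaluation point $m_{k_jh}^{(h)}+h^{1/4}Z_{\mathsf S,h,j}$ in $\Lambda$. The number of summands is at most $n_h(M)\le M h^{-1/2}$.

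\emph{Evaluation noise.} Set $X_j:=h^{1/4}\zeta^{(h)}_{(k_j+1)h}(h^{1/4}Z_{\mathsf S,h,j})\mathbf 1\{j<n_h(\sigma_{h,K})\}$. The indicator is $\mathcal G_{h,j}$-measurable because $\sigma_{h,K}$ is defined from $Z_{\mathsf S,h}$ at grid points, and $h^{1/4}Z_{\mathsf S,h,j}$ is $\mathcal F_{k_jh}$-measurable. By Lemma~\ref{lem:fast-scale-decomposition}, $\mathsf E[X_j\mid\mathcal G_{h,j}]=0$, so the partial sums of the $X_j$ form a martingale. The Lipschitz bound \eqref{eq::local_driver_lipschitz}, applied with $x=h^{1/4}Z_{\mathsf S,h,j}$ and $|x|\le Kh^{1/4}$, gives
\[
\mathsf E[X_j^2\mid\mathcal G_{h,j}]\le h^{1/2}\rho^2L^2K^2h^{1/2}=\rho^2L^2K^2h .
\]
Doob's $L^2$ maximal inequality then bounds the expected squared supremum over $\tau\le M$ by $4\sum_{j<n_h(M)}\rho^2L^2K^2h\le 4\rho^2L^2K^2Mh^{1/2}\to0$. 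Chebyshev's inequality gives the convergence in probability.

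\emph{Mean-field linearisation.} The second display is exactly the stopped accumulated Taylor remainder in Assumption~\ref{ass::local-ou}-(iii), after identifying $a_{k_jh}(h^{1/4}Z_{\mathsf S,h,j})$ with $\bar u+b_{\mathsf S,h}h^{1/4}Z_{\mathsf S,h}$. No further work is needed beyond checking that the time indices $n_h(\tau\wedge\sigma_{h,K})$ match.

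\emph{Target remainder.} Bound the supremum of the stopped sum by the unstopped sum of absolute values over all $j<n_h(M)$. By Assumption~\ref{ass:local-tracking}-(i) and Jensen's inequality, $\mathsf E|r^{(h)}_{(k_j+1)h}|\le (Ch^2)^{1/2}=C^{1/2}h$. Hence
\[
\mathsf E\sup_{\tau\le M}\Bigl|\sum_{j<n_h(\tau\wedge\sigma_{h,K})} h^{-1/4}r^{(h)}_{(k_j+1)h}\Bigr|\le Mh^{-1/2}\,h^{-1/4}\,C^{1/2}h=MC^{1/2}h^{1/4}\to0,
\]
which is the stated $L^1$ convergence. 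The window $k_T+n_h(M)\le N_h$ is within the range of Assumption~\ref{ass:local-tracking} because $T<T_0$ and $h$ is small.

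\emph{Main obstacle.} I expect the only delicate point to be the measurability needed for the martingale step. One must check that the stopped index set and the evaluation point are $\mathcal G_{h,j}$-predictable, so that Lemma~\ref{lem:fast-scale-decomposition}'s zero-mean claim for predictable $e$ applies. One must also check that \eqref{eq::local_driver_lipschitz} may be used with a random but predictable $x$; this follows by conditioning on $\mathcal F_{k_jh}$, since the bound holds conditionally for each fixed $x$ and extends to predictable $x$ by the usual measurable-substitution argument. The other two sums are elementary.
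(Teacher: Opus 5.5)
Your proposal is correct and follows the paper's proof essentially step for step. For the evaluation noise you use a Doob $L^2$ bound on the stopped martingale, whose predictable quadratic variation is $O(MK^2\sqrt h)$; for the linearisation term you appeal directly to Assumption~\ref{ass::local-ou}-(iii); and for the target remainder you use $\mathsf E|r^{(h)}_{(k+1)h}|\le Ch$, which makes the $L^1$ norm $O(Mh^{1/4})$. Your explicit checks that the stopping indicator is $\mathcal G_{h,j}$-measurable and that $k_T+n_h(M)\le N_h$ fill in details the paper leaves implicit. Your appeal to the local tube in Assumption~\ref{ass::local-ou}-(i) is not among the lemma's hypotheses, but it is also unnecessary: the evaluation point $m^{(h)}_{k_jh}+h^{1/4}Z_{\mathsf S,h,j}$ is the filter state $\lambda^{(h)}_{k_jh}$ itself.
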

\begin{proof}
For the evaluation noise, Assumption~\ref{ass::local-ou}-(iv) and Jensen's
inequality give, for local $e$, $\mathsf E_{kh}[|\zeta_{(k+1)h}^{(h)}(e)|^2] \le C e^2$. On the stopped event, $e=h^{1/4}Z_{\mathsf S,h,j}$ satisfies
$|e|\le h^{1/4}K$. Therefore the predictable quadratic variation of the stopped
    martingale
    \begin{equation*}
    \sum_{j<n_h(\tau\wedge\sigma_{h,K})}
    h^{1/4}
    \zeta_{(k_j+1)h}^{(h)}
    (h^{1/4}Z_{\mathsf S,h,j})
\end{equation*}
is bounded, summing the $O(h^{-1/2})$ per-step contributions, by
\begin{equation*}
    C\sum_{j\le M/\sqrt h}h^{1/2}h^{1/2}K^2
    \le
    CMK^2\sqrt h.
\end{equation*}
Since this bound vanishes, Doob's inequality gives the first claim. The stopped accumulated mean field Taylor remainder is $o_p(1)$ directly by Assumption~\ref{ass::local-ou}-(iii). Finally, Assumption~\ref{ass:local-tracking}-(i) and conditional Cauchy--Schwarz give $\mathsf E_{kh}|r_{(k+1)h}^{(h)}|\le C h$, so
\begin{equation*}
    \mathsf E
    \left[
    \sup_{\tau\le M}
    \left|
    \sum_{j<n_h(\tau\wedge\sigma_{h,K})}
    h^{-1/4}r_{(k_j+1)h}
    \right|
    \right]
    \le
    C\frac{M}{\sqrt h}h^{-1/4}h
    =
    CMh^{1/4}
    \to0.
\end{equation*}
This concludes the proof.
\end{proof}

We are now ready to prove Theorem \ref{th::local-ou}.

By Lemmas~\ref{lem:fast-scale-decomposition} and
\ref{lem:fast-scale-remainders},
\begin{equation*}
    Z_{\mathsf S,h}^K(\tau)
    =
    Z_{\mathsf S,h}(0)
    -
    \rho
    \sum_{j<n_h(\tau\wedge\sigma_{h,K})}
    \sqrt h\,
    b_{\mathsf S,h,k_jh}Z_{\mathsf S,h,j}
    +
    M_h(\tau\wedge\sigma_{h,K})
    +
    o_p(1),
\end{equation*}
uniformly on $[0,M]$.  The unstopped martingale $M_h$ has predictable quadratic variation
\[
 \langle M_h\rangle(\tau)
 =\sum_{j<n_h(\tau)}h^{1/2}q_{\mathsf S,h,k_jh}^{2}
 \longrightarrow_p q_{\mathsf S}^{2}(T)\tau
\]
uniformly by Assumption~\ref{ass::local-ou}-(v).
Corollary~\ref{cor:primitive_joint_fclt} gives the joint functional limit
\begin{equation*}
    (Z_{\mathsf S,h}(0),M_h)\Rightarrow(Z_0,q_{\mathsf S}(T)W),
\end{equation*}
where $W$ is independent of $Z_0$.

It remains to identify the stopped drift. By boundedness before $\sigma_{h,K}$ and Assumption~\ref{ass::local-ou}-(ii),
\begin{equation*}
    \sup_{\tau\le M}
    \left|
    \sum_{j<n_h(\tau\wedge\sigma_{h,K})}
    \sqrt h\,
    b_{\mathsf S,h,k_jh}Z_{\mathsf S,h,j}
    -
    b_{\mathsf S}(T)
        \int_0^{\tau\wedge\sigma_{h,K}} Z_{\mathsf S,h}(s)\,ds
    \right|
 \overset{\rm p}{\rightarrow} 0.
\end{equation*}
Let $\widehat Z_h$ be the unique c\`adl\`ag solution of the linear integral equation
\begin{equation*}
 \widehat Z_h(\tau)
 =Z_{\mathsf S,h}(0)
 -\rho b_{\mathsf S}(T)\int_0^\tau\widehat Z_h(s)\,ds
 +M_h(\tau).
\end{equation*}
The solution map for this linear equation is continuous at continuous driving paths. Hence the
joint convergence above and the continuous-mapping theorem give
$\widehat Z_h\Rightarrow Z_{\mathsf S}$; in particular, $\widehat Z_h$ is $C$-tight.

Put $\widehat\sigma_{h,K}:=\sigma_K(\widehat Z_h)$ and
$\theta_{h,K}:=\sigma_{h,K}\wedge\widehat\sigma_{h,K}$. Up to $\theta_{h,K}$, both processes are
bounded by $K$ apart from their terminal jumps. The stopped decomposition, the Riemann-sum bound
above, and Lemma~\ref{lem:fast-scale-remainders} therefore give
\[
 \sup_{\tau\le\theta_{h,K}}
 |Z_{\mathsf S,h}(\tau)-\widehat Z_h(\tau)|
 \le o_p(1)
 +\rho b_{\mathsf S}(T)
 \int_0^{\theta_{h,K}}
 |Z_{\mathsf S,h}(s)-\widehat Z_h(s)|\,ds.
\]
Gronwall's inequality makes the left-hand side $o_p(1)$. The jumps of $M_h$ vanish uniformly in
probability by the conditional Lindeberg condition, and the remaining increments are uniformly
negligible by the two lemmas. Thus the same comparison holds at the exit points. At a continuity
radius $K$, exit-time stability consequently yields
\[
 \sup_{\tau\le M}
 |\Phi_K(Z_{\mathsf S,h})(\tau)-\Phi_K(\widehat Z_h)(\tau)|\to_p0.
\]
Since $\Phi_K(\widehat Z_h)\Rightarrow\Phi_K(Z_{\mathsf S})$ by the definition of a continuity
radius, the converging-together theorem proves the asserted stopped convergence.

The explicit solution is
\begin{equation*}
    Z_{\mathsf S}(\tau)
    =
    e^{-\rho b_{\mathsf S}(T)\tau}Z_0
    +
    q_{\mathsf S}(T)
    \int_0^\tau
    e^{-\rho b_{\mathsf S}(T)(\tau-s)}
    dW_s.
\end{equation*}
If $Z_0=z$ is deterministic, the transition law is Gaussian with mean $e^{-\rho b_{\mathsf S}(T)\tau}z$ and variance $\frac{q_{\mathsf S}^{2}(T)}
    {2\rho b_{\mathsf S}(T)}
    (1-e^{-2\rho b_{\mathsf S}(T)\tau})$.
Because $b_{\mathsf S}(T)>0$, the transient factor decays as $\tau\to\infty$, leaving the invariant variance of the frozen limiting OU, $V_{\mathsf S}^{\star}(T)
    =
    \frac{q_{\mathsf S}^{2}(T)}
    {2\rho b_{\mathsf S}(T)}$.
\qed

\subsection{Proof of Corollary~\ref{cor:unstopped_local_ou}}\label{app:proof_unstopped_local_ou}
Choose continuity radii $K_\ell\uparrow\infty$. Such a sequence exists for the continuous OU
limit: when $q_{\mathsf S}(T)>0$, paths cross a reached boundary immediately almost surely, and
when $q_{\mathsf S}(T)=0$, it suffices to avoid the at most countably many atoms of $|Z_0|$.
Theorem~\ref{th::local-ou} gives convergence at every $K_\ell$. The compact-containment condition
makes the probability that $Z_{\mathsf S,h}$ differs from its $K_\ell$-stopped version arbitrarily
small, uniformly as $h\to0$, when $\ell\to\infty$. The OU limit is itself compactly contained on
$[0,M]$ because it has continuous paths. The converging-together theorem therefore yields
$Z_{\mathsf S,h}\Rightarrow Z_{\mathsf S}$ in $D([0,M])$.

\subsection{Proof of Proposition~\ref{prop::heavy-tail-diagnostic}}\label{app:proof_heavy_tail}
We write $z_{\lambda}:=\frac{y}{\sqrt{h e^\lambda}}=\frac{y}{\sqrt h\,e^{\lambda/2}}$. Hence, the logarithmic scoring loss is, up to the constant $\frac12\log(2\pi)$, given by $\mathsf S_{\log}(P_\lambda^{(h)},Y)
=\frac12\log(he^\lambda)+\frac12 z_\lambda^2$. Its raw innovation is $\psi_{\log}^{(h)}=\frac12(z_\lambda^2-1)$; the finite-step scaling $G_{\log}=2$ gives the actual update $u_{\log}^{(h)}=z_\lambda^2-1$. The pseudo-true first-order condition is $\mathsf E[u_{\log}^{(h)}(y,\lambda)]=0\Longleftrightarrow e^{\chi_T-\lambda}\mathsf E[\varepsilon^2]=1$. Hence, $m_{\log}(T)=\chi_T+\log \mathsf E[\varepsilon^2]$. At this pseudo-true value,
\begin{equation*}
     u_{\log}^{(h)}(y,m_{\log}(T))
    =
    \left(
    \frac{\varepsilon^2}{\mathsf E[\varepsilon^2]}-1
    \right).
\end{equation*}
In particular, if $\mathsf E[\varepsilon^4]=\infty$, then $\mathsf E[u_{\log}^{(h)}(Y,m_{\log}(T))^2]=\infty$, because the squared driver is proportional to $\varepsilon^4$. If a predictable scaling $G_T$ satisfies $|G_T|\ge c>0$ almost surely, then $\mathsf E[(G_Tu_{\log}^{(h)})^2]\ge c^2\mathsf E[(u_{\log}^{(h)})^2]=\infty$, so the conclusion persists. Now suppose $|\bar\Gamma_T|<\infty$ and $\mathsf E[\eta^2]<\infty$. If the composite innovation $\rho u_{\log}^{(h)}(Y,m_{\log}(T))-\bar\Gamma_T\eta$ were square integrable, then adding back the square-integrable term
$\bar\Gamma_T\eta$ would imply that $u_{\log}^{(h)}(Y,m_{\log}(T))$ is square integrable, a contradiction. Thus the square-integrability requirement in Assumption~\ref{ass::local-ou}-(v) fails, so Theorem~\ref{th::local-ou} cannot be invoked.

For the second claim, let $U$ be the stated neighbourhood. Then
\begin{align*}
C_U&:=\sup_{\lambda\in U}\sup_{z\in\mathbb R}
|g_{\mathsf S}(\lambda,z)|<\infty,\\
\sup_{\lambda\in U}\mathsf E
\left[u_{\mathsf S}^{(h)}(y,\lambda)^2\right]
&\le C_U^2<\infty.
\end{align*}
If also $|\bar\Gamma_T|<\infty$ and $\mathsf E[\eta^2]<\infty$, the composite
innovation $\rho u_{\mathsf S}^{(h)}(y,m_{\mathsf S}(T))-\bar\Gamma_T\eta$
is square integrable. This verifies the moment requirement but does not imply
the predictable variance convergence or conditional Lindeberg condition in
Assumption~\ref{ass::local-ou}-(v). If those remaining conditions hold with
$q_{\mathsf S}(T)<\infty$ and the pseudo-true target is locally identified with
$0<b_{\mathsf S}(T)<\infty$, then
$V_{\mathsf S}^{\star}(T)=
    \frac{q_{\mathsf S}^{2}(T)}
    {2\rho b_{\mathsf S}(T)}
    <
    \infty.
$
This concludes the proof.

\section{Scoring rules, drivers, and tuning constants}\label{app::formulas}
This appendix collects the explicit proper scores, raw derivatives, predictable scalings, implemented updates, and reference-state catalogue used in Sections~\ref{sec::experiments} and~\ref{sec::empirical}. Throughout, $v=e^{\lambda}$ is the predictive variance, $y=\sqrt{v}\,z$, and $z=e^{-\lambda/2}y$ is variance-standardised. Scores are negatively oriented. The common identity is
\begin{equation}\label{eq::driver_convention}
  \psi_{\mathsf S}(\lambda,y)=-\partial_{\lambda}\mathsf{S}(P_{\lambda,\theta},y),
  \qquad
  u_{\mathsf S}(y,\lambda)=G_{\mathsf S}(\lambda)\psi_{\mathsf S}(y,\lambda),
\end{equation}
where the derivative holds $y$ fixed; only afterwards may one substitute $y=\sqrt v z$. The finite-step scalings are rule-specific:
\[
G_{\log}=2,\qquad
G_{\beta}(\lambda)=2e^{\beta\lambda/2},\qquad
G_{\rm CRPS}(\lambda)=2e^{-\lambda/2},\qquad
G_{\rm MMD}=2.
\]
The first three choices cancel scale homogeneity and produce residual-only updates. Fixed-data-bandwidth MMD is non-homogeneous and retains current-state dependence. Curvature scaling $G_{\mathsf S}=J_{\mathsf S}^{-1}$ is a separate oracle or working-model normalisation used only where explicitly stated. Proposition~\ref{prop::dynamic_equivalence} preserves a path only when a positive rescaling of the score derivative is accompanied by the reciprocal rescaling of $G$; an arbitrary change in $G$ changes the path. Raw $J$ and $K$ are not comparable across arbitrary score normalisations, whereas $K_{\mathsf S}/J_{\mathsf S}^2$ is scale invariant.

\medskip\noindent\emph{Tuning constants.} The unit-variance catalogue is evaluated at the fixed shape parameters $\nu=6$ (Student degrees of freedom), $\beta=0.15$ (density-power exponent), and $\ell=2$ (MMD-RBF bandwidth), selected for a clear tail ordering at modest efficiency cost under correct specification. In the empirical application, $\ell$ is instead selected once from each market's training sample in data units and then frozen across candidate states and out of sample, as described in Section~\ref{sec::empirical}. The derived catalogue constants entering the closed-form drivers are
\begin{equation}\label{eq::tuning_constants}
\begin{aligned}
  c_{\beta}&=(2\pi)^{-\beta/2},
  &\kappa_{\beta}&=\beta\,c_{\beta}(1+\beta)^{-1/2},
  &D_0&=\ell^{2}+1,\\
  a_0(z)&=\ell\,D_0^{-1/2}e^{-z^{2}/(2D_0)},
  &b_{\ell}&=2\ell\,(\ell^{2}+2)^{-3/2}.
\end{aligned}
\end{equation}
At these values $c_{\beta}=0.871$, $\kappa_{\beta}=0.122$, and $b_{\ell}=0.272$.
Table~\ref{tab::catalogue} summarises the resulting reference-state curvature, innovation
variance, curvature-normalised scale, and tail classification.

\begin{table}[H]
\centering\footnotesize
\resizebox{\textwidth}{!}{\begin{tabular}{@{}llrrrll@{}}
\toprule
Rule & $g_{\mathsf S}(z)$ & $J_{\mathsf S}$ & $K_{\mathsf S}$ & $\sqrt{K_{\mathsf S}}/|J_{\mathsf S}|$ & tail & recovers \\
\midrule
Gaussian log & $z^2-1$ & 0.500 & 0.500 & 1.414 & unbounded & GAS / GARCH-type log-vol \\
density-power & $(1+\beta)c_\beta e^{-\beta z^2/2}(z^2-1)+\kappa_\beta$ & 0.357 & 0.268 & 1.451 & tail-saturating & density-power / $\beta$-divergence \\
CRPS & $\pi^{-1/2}-\sqrt{2/\pi}\,e^{-z^2/2}$ & 0.071 & 0.012 & 1.573 & tail-saturating & CRPS filter \\
MMD-RBF & $2a(z)\!\left(\tfrac{z^2}{D^2}-\tfrac1D\right)+b_\ell$ & 0.034 & 0.002 & 1.456 & tail-saturating & energy / MMD filter \\
\bottomrule
\end{tabular}
 }
\caption{The scoring-rule catalogue on the fixed Gaussian working density. $J_{\mathsf S}$ is the
curvature of the scoring risk (the sensitivity governing local mean reversion in
Proposition~\ref{prop::local_mean}); $K_{\mathsf S}$ is the local innovation variance. The column
$g_{\mathsf S}(z)$ is the scaled recursion driver of Definition~\ref{def::scoring-rule-filter},
whereas $J_{\mathsf S}$ and $K_{\mathsf S}$ are computed from the raw innovation
$\psi_{\mathsf S}=-\partial_\lambda\mathsf S$ of~\eqref{eq::distinction} (for the log score
$g_{\mathsf S}(z)=z^2-1$ while $\psi_{\mathsf S}=\tfrac12(z^2-1)$), so $K_{\mathsf S}$ is the variance
of $\psi_{\mathsf S}$, not of $g_{\mathsf S}$. The log
score satisfies $J_{\mathsf S}=K_{\mathsf S}$ (the second Bartlett identity); the bounded,
tail-saturating criteria do not. The curvature-normalised scale $\sqrt{K_{\mathsf S}}/|J_{\mathsf S}|$
is comparable across rules ($1.41$--$1.57$): after curvature scaling the local mean dynamics
align, but $K_{\mathsf S}/J_{\mathsf S}^{2}$ and the driver shapes can still differ throughout the
residual support. The scores, drivers, and the constants
$c_\beta,\kappa_\beta,a(z),D,b_\ell$ appearing in the driver column are defined in this appendix.}
\label{tab::catalogue}
\end{table}

\medskip\noindent\emph{Scores and updates} (Gaussian working density). For each criterion we give the per-observation proper score and the update $u_{\mathsf S}=G_{\mathsf S}\psi_{\mathsf S}$, together with its tail limit.
\begin{itemize}
\item \emph{Gaussian log-score:} $\mathsf{S}=\tfrac12\lambda+\tfrac12 e^{-\lambda}y^{2}$, so $g_{\mathsf S}(z)=z^{2}-1$; \emph{unbounded}.
\item \emph{Student-$t$ log-score} ($\nu$): $\mathsf{S}=\tfrac12\lambda+\tfrac12(\nu+1)\log\!\big(1+\tfrac{z^{2}}{\nu-2}\big)$, so $g_{\mathsf S}(z)=\dfrac{(\nu+1)z^{2}}{(\nu-2)+z^{2}}-1$; tail limit $\nu$ (tail-saturating).
\item \emph{Density-power} ($\beta$): the score and residual update are
\[
\begin{aligned}
\mathsf S
&=c_{\beta}e^{-\beta\lambda/2}
  \left[(1+\beta)^{-1/2}-(1+\tfrac1\beta)e^{-\beta z^{2}/2}\right],\\
g_{\mathsf S}(z)
&=(1+\beta)c_{\beta}e^{-\beta z^{2}/2}(z^{2}-1)+\kappa_{\beta}.
\end{aligned}
\]
Its tail limit is $\kappa_{\beta}$ (tail-saturating).
\item \emph{CRPS:} $\mathsf{S}=e^{\lambda/2}\big[z(2\Phi_{\mathcal N}(z)-1)+2\phi_{\mathcal N}(z)-\pi^{-1/2}\big]$, so $g_{\mathsf S}(z)=\pi^{-1/2}-\sqrt{2/\pi}\,e^{-z^{2}/2}$; tail limit $\pi^{-1/2}$ (tail-saturating).
\item \emph{MMD-RBF} ($\ell$): with a bandwidth $\ell>0$ fixed in data units across all candidate states,
\[
\mathsf{S}=1-\frac{2\ell}{\sqrt{\ell^{2}+v}}e^{-y^{2}/\{2(\ell^{2}+v)\}}+\frac{\ell}{\sqrt{\ell^{2}+2v}}.
\]
Put $r_{\lambda}=\ell e^{-\lambda/2}$ and $D_{\lambda}=r_{\lambda}^{2}+1$. The current-state update is
\[
u_{\mathsf S}(\lambda,z)
=2\frac{r_{\lambda}}{\sqrt{D_{\lambda}}}e^{-z^{2}/(2D_{\lambda})}
\left(\frac{z^{2}}{D_{\lambda}^{2}}-\frac1{D_{\lambda}}\right)
+\frac{2r_{\lambda}}{(r_{\lambda}^{2}+2)^{3/2}},
\]
with tail limit $2r_{\lambda}(r_{\lambda}^{2}+2)^{-3/2}$ (tail-saturating). At the unit-variance reference state $\lambda=0$, this reduces to the former $a_0,D_0,b_{\ell}$ display. No scalar state factor converts fixed-bandwidth MMD into a universal residual-only update.
\end{itemize}
At a fixed mesh, only the Gaussian log-score update is unbounded; the other four converge to the finite, non-zero limits above and are tail-saturating rather than redescending (Section~\ref{section::role_scoring_rules}).

\medskip\noindent\emph{High-frequency MMD scope.} For Gaussian candidate variance $he^a$, true variance $he^v$, and bandwidth $\ell$ fixed in data units, the population risk is
\[
 \mathsf R_{{\rm MMD},h}(a;v)=1+\ell(\ell^2+2he^a)^{-1/2}
 -2\ell\{\ell^2+h(e^a+e^v)\}^{-1/2}.
\]
At the target $a=v$, differentiation with respect to log variance gives
$\psi_{{\rm MMD},h}/J_{{\rm MMD},h}\to z^2-1$. If $a$ and $v$ instead denote
variance multipliers in $ha$ and $hv$, differentiation with respect to $a$
gives $v(z^2-1)$. Thus fixed bandwidth has an unbounded pointwise
curvature-normalised limit. A mesh bandwidth $\ell_h=\sqrt h\ell_0$ defines a
separate design whose uniform theory and implementation conditions fall
outside the present high-frequency result.

\medskip\noindent\emph{CRPS on the Student-$t$ working density.} Let $f_{\nu}$ and $F_{\nu}$ denote the density and distribution function of the variance-standardised variable $Z=\sqrt{(\nu-2)/\nu}\,T_{\nu}$, so that $\mathsf{Var}(Z)=1$, and put $z=y/\sqrt v$ and $d_{\nu}=\nu-2$. The CRPS formula of \citet{jordan2019evaluating} becomes
\begin{equation}\label{eq::crps_student}
  \mathrm{CRPS}_{t_\nu}(v,y)=\sqrt v\left\{z\big(2F_{\nu}(z)-1\big)+2f_{\nu}(z)\frac{d_{\nu}+z^{2}}{\nu-1}-c_{\nu}\right\},
  \qquad
  c_{\nu}=\frac{2\sqrt{\nu-2}}{\nu-1}\frac{B(\tfrac12,\nu-\tfrac12)}{B(\tfrac12,\tfrac{\nu}{2})^{2}},
\end{equation}
with $B(\cdot,\cdot)$ the beta function. The state $v=e^{\lambda}$ is predictive variance for both working densities; there is no additional factor $\nu/(\nu-2)$ in prediction or evaluation.

\begin{table}[!htbp]
\centering\footnotesize
\begin{tabular}{@{}lrrrl@{}}
\toprule
Rule & $\tilde g_{\mathsf S}(B)$ & initial $\alpha|\tilde g_{\mathsf S}(B)-\tilde g_{\mathsf S}(0)|$ & cumulative bound & tail \\
\midrule
Gaussian log & 63.000 & 3.200 & $\infty$ & unbounded \\
density-power & 0.898 & 0.106 & 21.842 & tail-saturating \\
CRPS & 4.000 & 0.283 & 13.333 & tail-saturating \\
MMD-RBF & 4.103 & 0.268 & 21.155 & tail-saturating \\
\midrule
Student log$^{\dagger}$ & 7.472 & 0.440 & 26.741 & tail-saturating \\
\bottomrule
\end{tabular}
 \caption{Impulse-response calibration for the additive recursion ($\alpha=0.05$, $\varphi=0.97$, outlier $B=8$), using the curvature-normalised driver $\tilde g_{\mathsf S}=\psi_{\mathsf S}/J_{\mathsf S}$. The initial response is $\alpha|\tilde g_{\mathsf S}(B)-\tilde g_{\mathsf S}(0)|$; the cumulative tail-envelope bound is $2\alpha M/(1-|\varphi|)$, where $M=\sup_z|\tilde g_{\mathsf S}(z)|$, and is therefore infinite for the Gaussian log rule. $^{\dagger}$The Student-$t$ log row changes the density and is shown only as a benchmark.}
\label{tab::impulse}
\end{table}

\begin{table}[!htbp]
\centering\scriptsize
\begin{tabularx}{\textwidth}{@{}YrrY@{}}
\toprule
Experiment (section) & Reps $R$ & Length $T$ & Key parameters \\
\midrule
Driver geometry (\S\ref{sec::exp_geometry}) & -- & -- & analytic; $401$-point display grids on $z\in[-5,5]$ and $|z|\in[0,10]$; no quadrature. Catalogue constants $J_{\mathsf S},K_{\mathsf S}$: $64$-node Gauss--Hermite, checked against $128$ nodes \\
Pseudo-true target (\S\ref{sec::exp_target}) & -- & -- & $Q_{\epsilon,\tau}$, $\epsilon=0.05$, $\tau=80$; risk root by $96$-node Gauss--Hermite per mixture component; $6000$-point root scan \\
Outlier impulse (\S\ref{sec::exp_impulse}) & -- & -- & $\alpha=0.05$, $\varphi=0.97$, outlier $B=8$ \\
Contaminated filtering (\S\ref{sec::exp_filtering}) & 20000 & 4000 & gain $\alpha=0.1$; paired Bernoulli variance mixture: $p=0.02$, innovation multiplier $\kappa=6$ \\
Mesh: centred diffusion (\S\ref{sec::exp_hf}) & 8000 & 200 & $\alpha=1$, $\kappa=0.5$; $h\in\{2^{-1},\dots,2^{-7}\}$ \\
Mesh: non-centred mean flow (\S\ref{sec::exp_hf}) & 8000 & 30 & $\bar\alpha=1$, $\kappa=0.5$; true residual $t_5$; $h\in\{2^{-1},\ldots,2^{-7}\}$ \\
Mesh: local Ornstein--Uhlenbeck (\S\ref{sec::exp_hf}) & 12000 & $M=3$ & $\rho=0.8$, $\Gamma=0.65$, $K=3.5$; $Z_0=0$ plus an independent $U[-1,1]$ arm at $h=2^{-16}$; $h\in\{2^{-4},\ldots,2^{-12}\}$ \\
Heavy-tail stress (\S\ref{sec::emp_heavytail}) & bootstrap 500 & 10000 & train $60\%$; $\nu\in\{1,1.5,2,3,5,\infty\}$ \\
Misspecified jumps (\S\ref{sec::emp_misspec}) & bootstrap 800 & 12000 & train $60\%$; $p=0.02$ one-sided jumps with half-normal scale $5\sigma$ \\
Permanent scale break (\S\ref{sec::emp_misspec}) & 20000 & 100+300 & variance $1\to9$; gain $0.05$; $10$-date trailing adaptation rule \\
\bottomrule
\end{tabularx}
 \par
\caption{Design of the simulation experiments of Sections~\ref{sec::experiments} and~\ref{sec::empirical}. ``Reps'' is the Monte Carlo replication count; ``--'' marks analytic or bootstrap-only blocks. Seed $20260430$ applies to the stochastic simulation and bootstrap blocks listed explicitly; deterministic quadrature and deterministic fits have no simulation seed.}
\label{tab::repro}
\end{table}

\begin{table}[!htbp]
\centering\small
\resizebox{\textwidth}{!}{%
\begin{tabular}{@{}llrr@{}}
\toprule
Cell & $(J_{\mathsf S},K_{\mathsf S},\Xi_{\mathsf S},C_{\mathsf S},\Delta_{\mathsf S})$ & $\widehat{\mathsf E}[g_{\mathsf S,h}]$ & MC SE \\
\midrule
Gaussian log & $(0.500000,0.500000,2.000000,0.000000,2.000000)$ & -0.022522 & 0.015299 \\
Gaussian density-power & $(0.357206,0.268493,1.073970,0.000000,1.073970)$ & -0.013318 & 0.011268 \\
Gaussian CRPS & $(0.070524,0.012311,0.049243,-0.000000,0.049243)$ & -0.001388 & 0.002447 \\
\bottomrule
\end{tabular}
}
 \caption{Centred-diffusion diagnostics under $P_\lambda^{(h)}=\mathcal N(0,h e^\lambda)$, target $\lambda^\star=0$, and gain $\alpha_h=\alpha\sqrt h$. Each row uses the raw derivative $\psi_{\mathsf S,h}=-\partial_\lambda\mathsf S(P_\lambda^{(h)},Y)$ and the rule-specific finite-step scaling defined above. The tuple reports raw-score curvature and variance $(J_{\mathsf S},K_{\mathsf S})$, implemented-update variance and residual covariance $(\Xi_{\mathsf S},C_{\mathsf S})$, and $\Delta_{\mathsf S}=\Xi_{\mathsf S}-C_{\mathsf S}^{2}/\zeta_2$. The final columns use $R=8000$ common Gaussian draws at seed $20260430$; independent quadrature gives $\max_h|\mathsf E[g_{\mathsf S,h}]|<10^{-15}$ in every row. The standardised updates are mesh- and state-invariant: unbounded for Gaussian log and bounded for density-power and CRPS.}
\label{tab::hf_diagnostics}
\end{table}
\FloatBarrier

\medskip\noindent\emph{Sensitivity to the tuning constants.} The catalogue fixes $\nu=6$, $\beta=0.15$, $\ell=2$, but its qualitative comparisons persist over the reported grid. Table~\ref{tab::sensitivity} sweeps each constant and reports three scale-invariant diagnostics, all evaluated at the Gaussian reference residual: the curvature-normalised innovation variance $K_{\mathsf S}/J_{\mathsf S}^2$ (efficiency, $2.0$ for the log score, and equal to the correct-specification efficiency for the rows that retain the Gaussian density), the gross-outlier response $|\widetilde g_{\mathsf S}(10)|$ under curvature normalisation (Proposition~\ref{prop::outlier}, $99$ for the Gaussian log score), and the contaminated pseudo-true variance $v^\star$ under $Q=(1-\epsilon)N(0,1)+\epsilon N(0,\tau)$ with $\epsilon=0.05$, $\tau=80$ (Proposition~\ref{prop::divergence}, $v^\star_{\log}=4.95$). The density-power, MMD and CRPS rows retain the fixed Gaussian working density; the separately labelled Student-log sweep changes the density and serves only as the heavy-density benchmark. The table therefore reports a smooth efficiency--robustness trade-off without attributing the Student-$\nu$ comparison to the criterion axis.

\begin{table}[ht]
\centering\small
\begin{tabular}{@{}llrrr@{}}
\toprule
Rule & Constant & $K_{\mathsf S}/J_{\mathsf S}^2$ & $|\tilde g_{\mathsf S}(10)|$ & $v^\star$ \\
\midrule
Gaussian log (ref.) & --- & 2.00 & 99.00 & 4.95 \\
\midrule
\multirow{5}{*}{Student log} & $\nu=4$ & 2.59 & 5.70 & 1.79 \\
 & $\nu=5$ & 2.37 & 6.69 & 1.64 \\
 & $\nu=6$ & 2.27 & 7.66 & 1.59 \\
 & $\nu=8$ & 2.16 & 9.52 & 1.60 \\
 & $\nu=10$ & 2.11 & 11.27 & 1.65 \\
\midrule
\multirow{5}{*}{density-power} & $\beta=0.05$ & 2.01 & 9.22 & 1.70 \\
 & $\beta=0.1$ & 2.05 & 0.95 & 1.23 \\
 & $\beta=0.15$ & 2.10 & 0.25 & 1.13 \\
 & $\beta=0.2$ & 2.17 & 0.24 & 1.09 \\
 & $\beta=0.3$ & 2.34 & 0.37 & 1.07 \\
\midrule
\multirow{5}{*}{MMD-RBF} & $\ell=1$ & 2.56 & 2.00 & 1.11 \\
 & $\ell=1.5$ & 2.25 & 2.83 & 1.15 \\
 & $\ell=2$ & 2.12 & 4.00 & 1.21 \\
 & $\ell=3$ & 2.03 & 7.85 & 1.38 \\
 & $\ell=4$ & 2.01 & 15.37 & 1.59 \\
\midrule
CRPS & --- & 2.48 & 4.00 & 1.20 \\
\bottomrule
\end{tabular}
 \par
\caption{Sensitivity of the driver diagnostics to the tuning constants, all evaluated at the Gaussian reference residual. $K_{\mathsf S}/J_{\mathsf S}^2$ is the curvature-normalised innovation variance (efficiency; for the Student-$\nu$ rows this is the Gaussian-reference value, not the correct-specification one, which the second Bartlett identity fixes at $1/J_{\mathsf S}$); $|\widetilde g_{\mathsf S}(10)|=|g_{\mathsf S}(10)/(2J_{\mathsf S})|$ is the curvature-normalised response to a $10\sigma$ residual; $v^\star$ is the pseudo-true variance under the variance-contamination law ($\epsilon=0.05$, $\tau=80$; $v^\star_{\log}=4.95$). The Student-$\nu$ rows are a density benchmark; the remaining sweeps hold the Gaussian density fixed. Numerical integration rules and convergence diagnostics are recorded in the result manifest.}
\label{tab::sensitivity}
\end{table}

\subsection{Empirical protocols and panel diagnostics}\label{app::panel_fit_diagnostics}
Table~\ref{tab::panel_data} records the panel composition and sample split, and
Table~\ref{tab::emp_mcs_sensitivity} reports the Model Confidence Set sensitivity grid.
Table~\ref{tab::emp_factorial_support} gives the predeclared paired QLIKE comparisons and the
four-rule cross-evaluation of the fitted paths.
Table~\ref{tab::emp_robustness_support} collects the benchmark, tail-index, forecast-origin, and
MMD-bandwidth robustness checks.
Table~\ref{tab::emp_market_calibration} gives the market-level evidence behind the aggregate
tail diagnostics: realised hit counts; dependence-robust stationary-bootstrap intervals and
Holm-adjusted $p$-values; iid-binomial Clopper--Pearson sensitivity intervals; and
Holm-adjusted dynamic-PIT $p$-values. Tables~\ref{tab::emp_stability} and
\ref{tab::emp_optimiser} report the stability, multistart, and curvature diagnostics, while
Tables~\ref{tab::emp_parameters_gauss} and \ref{tab::emp_parameters_student} report the fitted
Gaussian and Student recursion coefficients. Their entries are generated from all $96$
market--cell records rather than reconstructed from rounded manuscript values.

\paragraph{\textbf{Implementation}} Each market's MMD bandwidth is the median pairwise absolute
distance among training returns, computed on a deterministic evenly spaced subsample of at most
1,024 observations and then frozen during estimation and evaluation. Log, density-power, CRPS, and
Gaussian MMD losses are evaluated in closed form. Student CRPS uses~\eqref{eq::crps_student};
Student MMD uses a deterministic 256-node Gauss--Legendre rule after a
probability-integral-transform change of variables. The objective is evaluated by one compiled
recursion and minimised by L-BFGS-B with automatic-differentiation gradients from eight fixed
starts. The lowest finite objective is retained only if the optimiser, gradient, Hessian,
conditioning, sample log-Lipschitz, parameter, path-finiteness, no-projection, and
multistart-resolution gates all hold. The Gaussian GARCH benchmark uses two fixed starts and is
accepted only under $0\leq a_{\rm G}<0.3$, $0\leq b_{\rm G}<0.98$, and
$a_{\rm G}+b_{\rm G}<0.999$; EWMA uses decay $0.94$ and the variance of the first 50
training-mean-demeaned observations as its initial value.

\paragraph{\textbf{Inference}} Forecast-loss summaries average first within markets and then
equally across markets. QLIKE standard errors, Model Confidence Sets, paired comparisons, and the
cross-evaluation cube use geometric stationary blocks on one common axis of provider date labels,
with market means recomputed inside each draw. The provider-date alignment is daily because the raw
files do not contain close timestamps or time zones. Within each market, all $1\%$ and $5\%$ hit
series share one stationary-bootstrap stream ($B=1999$, mean block length $20$). Zero or all hits,
or zero bootstrap variance, give insufficient resampling support; fewer than five hits or non-hits
defines a low-event-count result. Holm adjustment is applied across the twelve markets separately
for each cell and level. Low-event and insufficient-support series are excluded from the primary
Holm non-rejection count, while low-event raw $p$-values and iid-binomial Clopper--Pearson intervals
are retained as sensitivities. Marginal PIT uniformity uses the equal-market block
empirical-process test. Dynamic adequacy uses midrank normal PIT scores and a linear-and-quadratic
portmanteau statistic over lags $1,\ldots,10$, calibrated by $1999$ time permutations and
Holm-adjusted across markets within each cell. All inference conditions on fitted parameters and
tuning values.

\begin{table}[H]
\centering\small
\resizebox{\textwidth}{!}{%
\begin{tabular}{@{}lllccrrr@{}}
\toprule
Index & Ticker & Country & Start & End & Obs. & Train & Test \\
\midrule
S\&P 500       & \texttt{\textasciicircum GSPC}     & United States  & 2000-01-04 & 2024-12-31 & 6288 & 4401 & 1887 \\
NASDAQ Composite & \texttt{\textasciicircum IXIC}   & United States  & 2000-01-04 & 2024-12-31 & 6288 & 4401 & 1887 \\
Dow Jones      & \texttt{\textasciicircum DJI}      & United States  & 2000-01-04 & 2024-12-31 & 6288 & 4401 & 1887 \\
FTSE 100       & \texttt{\textasciicircum FTSE}     & United Kingdom & 2000-01-05 & 2024-12-31 & 6313 & 4419 & 1894 \\
DAX            & \texttt{\textasciicircum GDAXI}    & Germany        & 2000-01-04 & 2024-12-30 & 6347 & 4442 & 1905 \\
CAC 40         & \texttt{\textasciicircum FCHI}     & France         & 2000-01-04 & 2024-12-31 & 6389 & 4472 & 1917 \\
EURO STOXX 50  & \texttt{\textasciicircum STOXX50E} & Euro area      & 2007-04-02 & 2024-12-30 & 4451 & 3115 & 1336 \\
Nikkei 225     & \texttt{\textasciicircum N225}     & Japan          & 2000-01-05 & 2024-12-30 & 6125 & 4287 & 1838 \\
Hang Seng      & \texttt{\textasciicircum HSI}      & Hong Kong      & 2000-01-04 & 2024-12-31 & 6159 & 4311 & 1848 \\
S\&P/TSX       & \texttt{\textasciicircum GSPTSE}   & Canada         & 2000-01-05 & 2024-12-31 & 6278 & 4394 & 1884 \\
ASX 200        & \texttt{\textasciicircum AXJO}     & Australia      & 2000-01-05 & 2024-12-31 & 6315 & 4420 & 1895 \\
Bovespa        & \texttt{\textasciicircum BVSP}     & Brazil         & 2000-01-04 & 2024-12-30 & 6190 & 4333 & 1857 \\
\bottomrule
\end{tabular}
}
\caption{The twelve-index equity panel. Daily open--high--low--close quotes from Yahoo Finance
(download tickers as shown). ``Obs.'' is the number of daily log-returns after listwise deletion of
days with a missing quote; ``Train'' is the first $70\%$ of each series, on which the static
parameters are estimated, and ``Test'' the final $30\%$, on which all diagnostics are computed out of
sample. Returns are close-to-close log-returns in per cent, demeaned by the training-sample mean; the
Parkinson and Garman--Klass range proxies are formed from the same OHLC quotes. The out-of-sample
QLIKE target is the squared-return proxy.}
\label{tab::panel_data}
\end{table}

\begin{table}[H]
\centering\footnotesize
\begin{tabular}{@{}rrrlrr@{}}
\toprule
$B$ & mean block & retained & included cells & terminal $p$ & MCSE \\
\midrule
800 & 10 & 5 & \textsc{G-DP}, \textsc{G-L}, \textsc{G-M}, \textsc{T-L}, \textsc{T-M} & 0.144 & 0.012 \\
2000 & 10 & 5 & \textsc{G-DP}, \textsc{G-L}, \textsc{G-M}, \textsc{T-L}, \textsc{T-M} & 0.120 & 0.007 \\
5000 & 10 & 5 & \textsc{G-DP}, \textsc{G-L}, \textsc{G-M}, \textsc{T-L}, \textsc{T-M} & 0.110 & 0.004 \\
800 & 20 & 5 & \textsc{G-DP}, \textsc{G-L}, \textsc{G-M}, \textsc{T-L}, \textsc{T-M} & 0.162 & 0.013 \\
2000 & 20 & 5 & \textsc{G-DP}, \textsc{G-L}, \textsc{G-M}, \textsc{T-L}, \textsc{T-M} & 0.147 & 0.008 \\
5000 & 20 & 5 & \textsc{G-DP}, \textsc{G-L}, \textsc{G-M}, \textsc{T-L}, \textsc{T-M} & 0.137 & 0.005 \\
800 & 40 & 5 & \textsc{G-DP}, \textsc{G-L}, \textsc{G-M}, \textsc{T-L}, \textsc{T-M} & 0.167 & 0.013 \\
2000 & 40 & 5 & \textsc{G-DP}, \textsc{G-L}, \textsc{G-M}, \textsc{T-L}, \textsc{T-M} & 0.161 & 0.008 \\
5000 & 40 & 5 & \textsc{G-DP}, \textsc{G-L}, \textsc{G-M}, \textsc{T-L}, \textsc{T-M} & 0.150 & 0.005 \\
\bottomrule
\end{tabular}
 \caption{Sensitivity of the panel $90\%$ Model Confidence Set to the stationary-bootstrap
design. The full Cartesian grid uses $B\in\{800,2000,5000\}$ and mean block lengths
$\{10,20,40\}$, always with base seed $20260530$, the same equal-market squared-return-QLIKE
estimand, and the same eight fitted paths. Abbreviations combine density
(\textsc{G}, Gaussian; \textsc{T}, variance-standardised Student-$t_6$) and training criterion
(\textsc{L}, log; \textsc{DP}, density-power; \textsc{C}, CRPS; \textsc{M}, MMD).
The table reports stability over a prespecified finite grid of block lengths and bootstrap sizes.}
\label{tab::emp_mcs_sensitivity}
\end{table}

\begin{table}[H]
\centering\scriptsize
\textit{Panel A: Predeclared squared-return-QLIKE comparisons}\par\smallskip
\begin{tabular}{@{}lrrrr@{}}
\toprule
Comparison ($A$ vs. $B$) & mean $L_A-L_B$ & SE & raw $p$ & Holm $p$ \\
\midrule
Gaussian log vs Gaussian density-power & -0.013 & 0.014 & 0.371 & 0.742 \\
Student log vs Student density-power & -0.023 & 0.013 & 0.061 & 0.184 \\
Gaussian density-power vs Student log & +0.004 & 0.008 & 0.649 & 0.742 \\
\bottomrule
\end{tabular}
\par\medskip
\textit{Panel B: Four-rule cross-evaluation}\par\smallskip
\resizebox{\textwidth}{!}{%
\begin{tabular}{@{}llrrrr@{}}
\toprule
Density & Training criterion & predictive log & CRPS & density-power & MMD \\
\midrule
Gaussian & log & 1.4223 (0.0469) & 0.5978 (0.0370) & -5.4447 (0.0355) & 0.3790 (0.0149) \\
Gaussian & density-power & 1.4290 (0.0514) & 0.5969 (0.0379) & -5.4476 (0.0373) & 0.3773 (0.0152) \\
Gaussian & CRPS & 1.4420 (0.0555) & 0.5976 (0.0385) & -5.4444 (0.0386) & 0.3776 (0.0154) \\
Gaussian & MMD & 1.4629 (0.0700) & 0.5998 (0.0399) & -5.4385 (0.0413) & 0.3785 (0.0158) \\
\midrule
Student-$t_6$ & log & 1.3940 (0.0467) & 0.5958 (0.0379) & -5.4622 (0.0368) & 0.3762 (0.0153) \\
Student-$t_6$ & density-power & 1.3994 (0.0481) & 0.5974 (0.0387) & -5.4588 (0.0375) & 0.3771 (0.0155) \\
Student-$t_6$ & CRPS & 1.3978 (0.0474) & 0.5968 (0.0383) & -5.4596 (0.0372) & 0.3769 (0.0154) \\
Student-$t_6$ & MMD & 1.4049 (0.0502) & 0.5991 (0.0396) & -5.4554 (0.0385) & 0.3780 (0.0157) \\
\bottomrule
\end{tabular}}
\caption{Supporting inference for the density~$\times$~criterion factorial. In Panel~A,
$L_A-L_B<0$ favours $A$; standard errors and $p$-values use the common-path geometric stationary
block bootstrap (block $20$, $B=800$, seed $20260430$), and Holm adjustment is across the three
comparisons. All three are statistically unresolved. Panel~B reports out-of-sample scores averaged
within market and then equally across markets, with stationary-bootstrap standard errors in
parentheses; lower is better within a column. Evaluators use each path's predictive family;
density-power fixes $\beta=0.15$, and MMD uses each market's frozen training-only bandwidth.
Evaluator columns have different affine scales and are not compared with one another.}
\label{tab::emp_factorial_support}
\end{table}

\begin{table}[H]
\centering\scriptsize
\setlength{\tabcolsep}{5pt}
\textit{Panel A: Common-recursion GARCH density comparison}\par\smallskip
\begin{tabular}{@{}lrrr@{}}
\toprule
Benchmark & QLIKE & mean VaR-$1\%$ coverage & Kupiec NR markets \\
\midrule
Gaussian GARCH & 0.9998 & 0.0200 & 1/12 \\
Student-$t_6$ GARCH & 1.0057 & 0.0137 & 7/12 \\
\bottomrule
\end{tabular}

\medskip
\textit{Panel B: Student tail-index sensitivity}\par\smallskip
\begin{tabular}{@{}lrrrrr@{}}
\toprule
$\nu$ & 4 & 5 & 6 & 8 & 10 \\
\midrule
Student-log coverage & 0.0103 & 0.0127 & 0.0146 & 0.0168 & 0.0182 \\
Student-log Kupiec NR & 11/12 & 8/12 & 5/12 & 3/12 & 2/12 \\
Student-CRPS coverage & 0.0102 & 0.0132 & 0.0154 & 0.0179 & 0.0198 \\
Student-CRPS Kupiec NR & 11/12 & 8/12 & 5/12 & 2/12 & 1/12 \\
\bottomrule
\end{tabular}

\medskip
\textit{Panel C: Forecast-origin sensitivity}\par\smallskip
\begin{tabular}{@{}lrr@{}}
\toprule
Train fraction & Gaussian Kupiec NR & Student Kupiec NR \\
\midrule
0.60 & 2/36 & 18/36 \\
0.70 & 1/36 & 14/36 \\
0.80 & 1/36 & 21/36 \\
\bottomrule
\end{tabular}

\medskip
\textit{Panel D: MMD-bandwidth sensitivity}\par\smallskip
\begin{tabular}{@{}llrrrrr@{}}
\toprule
Density & bandwidth & predictive log & CRPS & QLIKE & $\Delta$ QLIKE & admissible \\
\midrule
Gaussian & $0.5\times$ & 1.5031 & 0.6015 & 1.1684 & +0.0805 & 12/12 \\
Gaussian & $1\times$ & 1.4629 & 0.5998 & 1.0879 & +0.0000 & 12/12 \\
Gaussian & $2\times$ & 1.4492 & 0.5997 & 1.0604 & -0.0275 & 12/12 \\
\midrule
Student-$t_6$ & $0.5\times$ & 1.4132 & 0.6009 & 1.1138 & +0.0503 & 12/12 \\
Student-$t_6$ & $1\times$ & 1.4049 & 0.5991 & 1.0635 & +0.0000 & 12/12 \\
Student-$t_6$ & $2\times$ & 1.4048 & 0.5982 & 1.0547 & -0.0088 & 12/12 \\
\bottomrule
\end{tabular}
\caption{Robustness of the density--criterion verdict. Panel~A holds the GARCH$(1,1)$ recursion
fixed and changes only the innovation density. Panel~B refits the Student-log and Student-CRPS
cells at each $\nu$; density-power and MMD are omitted from this grid for computational cost, and
the Gaussian family is its thin-tail limit.
Panel~C re-estimates log, density-power, and CRPS at each split, so each count is out of
$3\times12=36$ market--cell results; MMD is again omitted. Panel~D refits Gaussian and Student MMD
at one-half, one, and twice each market's frozen training-only median-distance bandwidth; the
$1\times$ rows reuse the retained fits, and the others are fresh refits under the same eight-start,
no-projection policy. Scores are equal-market out-of-sample means, lower is better, and $\Delta$
QLIKE is relative to the corresponding $1\times$ row. ``Admissible'' counts markets passing the
optimiser, curvature, stability, finiteness, and zero-boundary-hit gates. Kupiec counts are
iid-binomial sensitivities, not the primary dependence-robust coverage inference.}
\label{tab::emp_robustness_support}
\end{table}

\begin{landscape}
\begingroup
\scriptsize
\setlength{\tabcolsep}{2.4pt}
\renewcommand{\arraystretch}{0.94}
\begin{longtable}{@{}llrrcccrcccc@{}}
\caption{Complete market-level tail-calibration evidence for all 96 fitted market--cell combinations. Intervals are in percentage points and hits are counts. ``SB'' denotes the within-market geometric stationary-bootstrap 95\% interval with 1,999 replications and mean block length 20; ``CP'' is the exact Clopper--Pearson interval under an iid-binomial sampling model and is a sensitivity only. $p^{\rm H}$ is the Holm-adjusted stationary-bootstrap coverage $p$-value across the twelve markets within a cell and level; $p_{\rm dyn}^{\rm H}$ is the corresponding Holm-adjusted rank-normal linear-and-quadratic PIT permutation diagnostic over lags 1--10. LI marks a low-event-count series excluded from Holm adjustment; IR marks insufficient resampling or rank support. All inference conditions on fitted parameters.}\label{tab::emp_market_calibration}\\
\toprule
& & & \multicolumn{4}{c}{$1\%$ coverage} & \multicolumn{4}{c}{$5\%$ coverage} & dynamic PIT \\
Cell & Market & $n$ & hits & SB 95\% & CP 95\% & $p^{\rm H}$ & hits & SB 95\% & CP 95\% & $p^{\rm H}$ & $p_{\rm dyn}^{\rm H}$ \\
\midrule
\endfirsthead
\multicolumn{12}{c}{\tablename\ \thetable\ continued} \\
\toprule
& & & \multicolumn{4}{c}{$1\%$ coverage} & \multicolumn{4}{c}{$5\%$ coverage} & dynamic PIT \\
Cell & Market & $n$ & hits & SB 95\% & CP 95\% & $p^{\rm H}$ & hits & SB 95\% & CP 95\% & $p^{\rm H}$ & $p_{\rm dyn}^{\rm H}$ \\
\midrule
\endhead
\midrule
\multicolumn{12}{r}{continued on next page} \\
\endfoot
\bottomrule
\endlastfoot
\textsc{G-L} & SPX & 1887 & 41 & 1.48--3.02 & 1.56--2.94 & 0.020 & 100 & 4.24--6.47 & 4.33--6.41 & 1.000 & 0.006 \\
\textsc{G-L} & NDX & 1887 & 39 & 1.38--2.86 & 1.47--2.81 & 0.032 & 105 & 4.45--6.78 & 4.57--6.70 & 1.000 & 0.006 \\
\textsc{G-L} & DJI & 1887 & 37 & 1.32--2.65 & 1.38--2.69 & 0.057 & 100 & 4.19--6.41 & 4.33--6.41 & 1.000 & 0.121 \\
\textsc{G-L} & FTSE & 1894 & 39 & 1.48--2.69 & 1.47--2.80 & 0.011 & 95 & 4.12--5.97 & 4.08--6.10 & 1.000 & 1.000 \\
\textsc{G-L} & DAX & 1905 & 33 & 1.21--2.31 & 1.20--2.42 & 0.083 & 101 & 4.36--6.30 & 4.34--6.41 & 1.000 & 1.000 \\
\textsc{G-L} & CAC & 1917 & 34 & 1.20--2.45 & 1.23--2.47 & 0.083 & 92 & 3.81--5.79 & 3.89--5.85 & 1.000 & 1.000 \\
\textsc{G-L} & STOXX50 & 1336 & 29 & 1.42--2.99 & 1.46--3.10 & 0.022 & 68 & 3.97--6.36 & 3.97--6.41 & 1.000 & 1.000 \\
\textsc{G-L} & NIKKEI & 1838 & 29 & 0.98--2.23 & 1.06--2.26 & 0.139 & 84 & 3.53--5.60 & 3.66--5.63 & 1.000 & 1.000 \\
\textsc{G-L} & HSI & 1848 & 30 & 1.08--2.22 & 1.10--2.31 & 0.093 & 89 & 3.79--5.95 & 3.89--5.89 & 1.000 & 1.000 \\
\textsc{G-L} & TSX & 1884 & 39 & 1.38--2.81 & 1.48--2.82 & 0.042 & 97 & 4.03--6.37 & 4.19--6.25 & 1.000 & 0.010 \\
\textsc{G-L} & ASX & 1895 & 50 & 1.79--3.54 & 1.96--3.46 & 0.006 & 109 & 4.70--6.91 & 4.75--6.90 & 1.000 & 1.000 \\
\textsc{G-L} & BVSP & 1857 & 21 & 0.59--1.78 & 0.70--1.72 & 0.671 & 70 & 2.80--4.90 & 2.95--4.74 & 0.282 & 0.244 \\
\addlinespace
\textsc{G-DP} & SPX & 1887 & 50 & 1.85--3.55 & 1.97--3.48 & 0.010 & 108 & 4.61--6.84 & 4.72--6.87 & 1.000 & 0.006 \\
\textsc{G-DP} & NDX & 1887 & 41 & 1.48--2.97 & 1.56--2.94 & 0.018 & 108 & 4.66--6.89 & 4.72--6.87 & 1.000 & 0.006 \\
\textsc{G-DP} & DJI & 1887 & 49 & 1.80--3.44 & 1.93--3.42 & 0.012 & 109 & 4.66--6.94 & 4.77--6.93 & 1.000 & 0.016 \\
\textsc{G-DP} & FTSE & 1894 & 45 & 1.74--3.06 & 1.74--3.17 & 0.006 & 104 & 4.49--6.55 & 4.51--6.61 & 1.000 & 0.432 \\
\textsc{G-DP} & DAX & 1905 & 45 & 1.68--3.20 & 1.73--3.15 & 0.018 & 123 & 5.35--7.61 & 5.39--7.66 & 0.186 & 0.432 \\
\textsc{G-DP} & CAC & 1917 & 48 & 1.72--3.44 & 1.85--3.31 & 0.010 & 111 & 4.64--7.04 & 4.79--6.93 & 1.000 & 0.172 \\
\textsc{G-DP} & STOXX50 & 1336 & 36 & 1.72--3.82 & 1.89--3.71 & 0.018 & 80 & 4.72--7.49 & 4.78--7.40 & 1.000 & 0.315 \\
\textsc{G-DP} & NIKKEI & 1838 & 39 & 1.41--2.94 & 1.51--2.89 & 0.018 & 103 & 4.46--6.75 & 4.60--6.76 & 1.000 & 0.315 \\
\textsc{G-DP} & HSI & 1848 & 36 & 1.30--2.65 & 1.37--2.69 & 0.021 & 97 & 4.17--6.39 & 4.28--6.37 & 1.000 & 0.432 \\
\textsc{G-DP} & TSX & 1884 & 45 & 1.54--3.50 & 1.75--3.18 & 0.021 & 109 & 4.51--7.11 & 4.77--6.94 & 1.000 & 0.006 \\
\textsc{G-DP} & ASX & 1895 & 53 & 1.95--3.80 & 2.10--3.64 & 0.006 & 121 & 5.22--7.65 & 5.33--7.58 & 0.220 & 0.315 \\
\textsc{G-DP} & BVSP & 1857 & 29 & 0.86--2.37 & 1.05--2.24 & 0.145 & 84 & 3.45--5.65 & 3.62--5.57 & 1.000 & 0.006 \\
\addlinespace
\textsc{G-C} & SPX & 1887 & 51 & 1.91--3.60 & 2.02--3.54 & 0.006 & 119 & 5.03--7.74 & 5.25--7.50 & 0.363 & 0.006 \\
\textsc{G-C} & NDX & 1887 & 46 & 1.70--3.34 & 1.79--3.24 & 0.009 & 117 & 4.98--7.63 & 5.15--7.38 & 0.363 & 0.006 \\
\textsc{G-C} & DJI & 1887 & 50 & 1.85--3.50 & 1.97--3.48 & 0.009 & 120 & 5.14--7.63 & 5.30--7.56 & 0.288 & 0.006 \\
\textsc{G-C} & FTSE & 1894 & 52 & 2.01--3.54 & 2.06--3.58 & 0.006 & 118 & 5.17--7.34 & 5.18--7.41 & 0.225 & 0.274 \\
\textsc{G-C} & DAX & 1905 & 52 & 1.89--3.73 & 2.05--3.56 & 0.009 & 133 & 5.88--8.19 & 5.88--8.22 & 0.018 & 0.203 \\
\textsc{G-C} & CAC & 1917 & 52 & 1.88--3.76 & 2.03--3.54 & 0.009 & 119 & 5.06--7.41 & 5.17--7.38 & 0.301 & 0.027 \\
\textsc{G-C} & STOXX50 & 1336 & 41 & 2.02--4.34 & 2.21--4.14 & 0.009 & 89 & 5.39--8.16 & 5.38--8.13 & 0.225 & 0.152 \\
\textsc{G-C} & NIKKEI & 1838 & 44 & 1.69--3.21 & 1.74--3.20 & 0.009 & 110 & 4.79--7.18 & 4.94--7.17 & 0.363 & 0.152 \\
\textsc{G-C} & HSI & 1848 & 40 & 1.46--2.92 & 1.55--2.94 & 0.011 & 113 & 4.98--7.36 & 5.07--7.31 & 0.363 & 0.274 \\
\textsc{G-C} & TSX & 1884 & 51 & 1.80--3.77 & 2.02--3.54 & 0.011 & 111 & 4.67--7.27 & 4.87--7.05 & 0.381 & 0.006 \\
\textsc{G-C} & ASX & 1895 & 57 & 2.06--4.17 & 2.29--3.88 & 0.006 & 132 & 5.75--8.34 & 5.86--8.21 & 0.033 & 0.006 \\
\textsc{G-C} & BVSP & 1857 & 29 & 0.81--2.48 & 1.05--2.24 & 0.182 & 89 & 3.66--5.98 & 3.87--5.86 & 0.760 & 0.006 \\
\addlinespace
\textsc{G-M} & SPX & 1887 & 55 & 1.96--4.13 & 2.20--3.78 & 0.013 & 120 & 5.03--7.90 & 5.30--7.56 & 0.414 & 0.006 \\
\textsc{G-M} & NDX & 1887 & 51 & 1.80--3.76 & 2.02--3.54 & 0.011 & 122 & 5.09--8.00 & 5.40--7.67 & 0.405 & 0.006 \\
\textsc{G-M} & DJI & 1887 & 56 & 1.96--4.13 & 2.25--3.84 & 0.011 & 122 & 5.14--7.95 & 5.40--7.67 & 0.405 & 0.006 \\
\textsc{G-M} & FTSE & 1894 & 48 & 1.74--3.38 & 1.87--3.35 & 0.006 & 118 & 5.07--7.50 & 5.18--7.41 & 0.405 & 0.006 \\
\textsc{G-M} & DAX & 1905 & 48 & 1.68--3.46 & 1.86--3.33 & 0.018 & 134 & 5.83--8.29 & 5.93--8.28 & 0.030 & 0.006 \\
\textsc{G-M} & CAC & 1917 & 47 & 1.67--3.50 & 1.81--3.25 & 0.018 & 115 & 4.85--7.20 & 4.98--7.16 & 0.418 & 0.006 \\
\textsc{G-M} & STOXX50 & 1336 & 40 & 1.87--4.34 & 2.15--4.05 & 0.018 & 84 & 4.94--7.86 & 5.05--7.73 & 0.414 & 0.006 \\
\textsc{G-M} & NIKKEI & 1838 & 40 & 1.41--2.99 & 1.56--2.95 & 0.018 & 104 & 4.41--6.91 & 4.65--6.81 & 0.581 & 0.006 \\
\textsc{G-M} & HSI & 1848 & 48 & 1.73--3.57 & 1.92--3.43 & 0.018 & 117 & 5.14--7.63 & 5.26--7.54 & 0.360 & 0.006 \\
\textsc{G-M} & TSX & 1884 & 53 & 1.80--4.09 & 2.11--3.66 & 0.018 & 114 & 4.72--7.49 & 5.02--7.22 & 0.459 & 0.006 \\
\textsc{G-M} & ASX & 1895 & 62 & 2.22--4.64 & 2.52--4.17 & 0.013 & 131 & 5.59--8.39 & 5.81--8.15 & 0.066 & 0.006 \\
\textsc{G-M} & BVSP & 1857 & 29 & 0.81--2.53 & 1.05--2.24 & 0.223 & 89 & 3.61--6.09 & 3.87--5.86 & 0.772 & 0.006 \\
\addlinespace
\textsc{T-L} & SPX & 1887 & 31 & 0.95--2.49 & 1.12--2.32 & 0.704 & 102 & 4.40--6.52 & 4.43--6.52 & 1.000 & 0.006 \\
\textsc{T-L} & NDX & 1887 & 30 & 0.95--2.33 & 1.08--2.26 & 0.704 & 100 & 4.29--6.41 & 4.33--6.41 & 1.000 & 0.006 \\
\textsc{T-L} & DJI & 1887 & 34 & 1.11--2.65 & 1.25--2.51 & 0.412 & 106 & 4.50--6.78 & 4.62--6.75 & 1.000 & 0.012 \\
\textsc{T-L} & FTSE & 1894 & 28 & 0.90--2.11 & 0.98--2.13 & 0.704 & 98 & 4.17--6.23 & 4.22--6.27 & 1.000 & 0.756 \\
\textsc{T-L} & DAX & 1905 & 30 & 1.05--2.15 & 1.06--2.24 & 0.446 & 116 & 5.04--7.14 & 5.06--7.26 & 0.528 & 0.756 \\
\textsc{T-L} & CAC & 1917 & 26 & 0.83--1.98 & 0.89--1.98 & 0.924 & 106 & 4.43--6.78 & 4.55--6.65 & 1.000 & 0.756 \\
\textsc{T-L} & STOXX50 & 1336 & 26 & 1.19--2.77 & 1.28--2.84 & 0.312 & 75 & 4.42--7.04 & 4.44--6.99 & 1.000 & 0.756 \\
\textsc{T-L} & NIKKEI & 1838 & 19 & 0.49--1.69 & 0.62--1.61 & 1.000 & 96 & 4.13--6.31 & 4.25--6.34 & 1.000 & 0.406 \\
\textsc{T-L} & HSI & 1848 & 19 & 0.60--1.52 & 0.62--1.60 & 1.000 & 94 & 4.00--6.28 & 4.13--6.19 & 1.000 & 0.756 \\
\textsc{T-L} & TSX & 1884 & 29 & 0.90--2.29 & 1.03--2.20 & 0.704 & 104 & 4.30--6.85 & 4.53--6.65 & 1.000 & 0.006 \\
\textsc{T-L} & ASX & 1895 & 33 & 1.06--2.53 & 1.20--2.44 & 0.412 & 111 & 4.75--7.12 & 4.84--7.01 & 1.000 & 0.756 \\
\textsc{T-L} & BVSP & 1857 & 14 & 0.27--1.40 & 0.41--1.26 & 1.000 & 79 & 3.23--5.33 & 3.38--5.27 & 1.000 & 0.006 \\
\addlinespace
\textsc{T-DP} & SPX & 1887 & 32 & 1.01--2.49 & 1.16--2.39 & 0.536 & 106 & 4.40--7.00 & 4.62--6.75 & 1.000 & 0.006 \\
\textsc{T-DP} & NDX & 1887 & 30 & 0.95--2.33 & 1.08--2.26 & 0.616 & 102 & 4.24--6.68 & 4.43--6.52 & 1.000 & 0.006 \\
\textsc{T-DP} & DJI & 1887 & 37 & 1.22--2.76 & 1.38--2.69 & 0.240 & 108 & 4.56--6.94 & 4.72--6.87 & 1.000 & 0.006 \\
\textsc{T-DP} & FTSE & 1894 & 28 & 0.90--2.11 & 0.98--2.13 & 0.630 & 100 & 4.17--6.39 & 4.32--6.38 & 1.000 & 0.075 \\
\textsc{T-DP} & DAX & 1905 & 34 & 1.10--2.57 & 1.24--2.49 & 0.420 & 119 & 5.14--7.40 & 5.20--7.43 & 0.390 & 0.058 \\
\textsc{T-DP} & CAC & 1917 & 29 & 0.89--2.35 & 1.02--2.17 & 0.792 & 106 & 4.38--6.78 & 4.55--6.65 & 1.000 & 0.006 \\
\textsc{T-DP} & STOXX50 & 1336 & 27 & 1.12--2.99 & 1.34--2.93 & 0.330 & 75 & 4.42--7.04 & 4.44--6.99 & 1.000 & 0.058 \\
\textsc{T-DP} & NIKKEI & 1838 & 19 & 0.49--1.69 & 0.62--1.61 & 1.000 & 95 & 3.97--6.37 & 4.20--6.28 & 1.000 & 0.048 \\
\textsc{T-DP} & HSI & 1848 & 19 & 0.60--1.52 & 0.62--1.60 & 1.000 & 94 & 4.00--6.23 & 4.13--6.19 & 1.000 & 0.020 \\
\textsc{T-DP} & TSX & 1884 & 30 & 0.85--2.60 & 1.08--2.27 & 0.792 & 103 & 4.19--6.90 & 4.48--6.59 & 1.000 & 0.006 \\
\textsc{T-DP} & ASX & 1895 & 33 & 1.06--2.53 & 1.20--2.44 & 0.420 & 108 & 4.49--7.07 & 4.70--6.84 & 1.000 & 0.006 \\
\textsc{T-DP} & BVSP & 1857 & 14 & 0.22--1.62 & 0.41--1.26 & 1.000 & 75 & 3.02--5.17 & 3.19--5.04 & 0.995 & 0.006 \\
\addlinespace
\textsc{T-C} & SPX & 1887 & 33 & 1.01--2.60 & 1.21--2.45 & 0.420 & 104 & 4.35--6.78 & 4.53--6.64 & 1.000 & 0.006 \\
\textsc{T-C} & NDX & 1887 & 30 & 0.95--2.33 & 1.08--2.26 & 0.616 & 99 & 4.13--6.52 & 4.28--6.35 & 1.000 & 0.006 \\
\textsc{T-C} & DJI & 1887 & 39 & 1.27--2.97 & 1.47--2.81 & 0.174 & 109 & 4.56--7.00 & 4.77--6.93 & 1.000 & 0.006 \\
\textsc{T-C} & FTSE & 1894 & 28 & 0.90--2.11 & 0.98--2.13 & 0.616 & 100 & 4.22--6.34 & 4.32--6.38 & 1.000 & 0.312 \\
\textsc{T-C} & DAX & 1905 & 33 & 1.10--2.47 & 1.20--2.42 & 0.369 & 119 & 5.14--7.35 & 5.20--7.43 & 0.348 & 0.292 \\
\textsc{T-C} & CAC & 1917 & 28 & 0.88--2.24 & 0.97--2.10 & 0.750 & 105 & 4.33--6.73 & 4.50--6.59 & 1.000 & 0.063 \\
\textsc{T-C} & STOXX50 & 1336 & 28 & 1.20--3.07 & 1.40--3.01 & 0.236 & 77 & 4.57--7.19 & 4.57--7.15 & 1.000 & 0.216 \\
\textsc{T-C} & NIKKEI & 1838 & 20 & 0.54--1.74 & 0.67--1.68 & 1.000 & 97 & 4.08--6.42 & 4.30--6.40 & 1.000 & 0.190 \\
\textsc{T-C} & HSI & 1848 & 19 & 0.60--1.52 & 0.62--1.60 & 1.000 & 94 & 4.00--6.28 & 4.13--6.19 & 1.000 & 0.312 \\
\textsc{T-C} & TSX & 1884 & 31 & 0.96--2.55 & 1.12--2.33 & 0.616 & 107 & 4.41--7.01 & 4.68--6.82 & 1.000 & 0.006 \\
\textsc{T-C} & ASX & 1895 & 34 & 1.06--2.59 & 1.25--2.50 & 0.320 & 112 & 4.75--7.28 & 4.89--7.07 & 1.000 & 0.042 \\
\textsc{T-C} & BVSP & 1857 & 13 & 0.22--1.35 & 0.37--1.19 & 0.919 & 75 & 3.07--5.12 & 3.19--5.04 & 0.808 & 0.006 \\
\addlinespace
\textsc{T-M} & SPX & 1887 & 37 & 1.22--2.86 & 1.38--2.69 & 0.270 & 107 & 4.40--7.10 & 4.67--6.81 & 1.000 & 0.006 \\
\textsc{T-M} & NDX & 1887 & 29 & 0.90--2.38 & 1.03--2.20 & 0.956 & 105 & 4.19--7.05 & 4.57--6.70 & 1.000 & 0.006 \\
\textsc{T-M} & DJI & 1887 & 37 & 1.22--2.86 & 1.38--2.69 & 0.270 & 109 & 4.50--7.15 & 4.77--6.93 & 1.000 & 0.006 \\
\textsc{T-M} & FTSE & 1894 & 29 & 0.90--2.27 & 1.03--2.19 & 0.824 & 104 & 4.38--6.60 & 4.51--6.61 & 1.000 & 0.011 \\
\textsc{T-M} & DAX & 1905 & 30 & 0.84--2.41 & 1.06--2.24 & 0.956 & 110 & 4.72--6.88 & 4.77--6.92 & 1.000 & 0.006 \\
\textsc{T-M} & CAC & 1917 & 30 & 0.89--2.50 & 1.06--2.23 & 0.956 & 106 & 4.38--6.78 & 4.55--6.65 & 1.000 & 0.006 \\
\textsc{T-M} & STOXX50 & 1336 & 27 & 1.05--3.22 & 1.34--2.93 & 0.549 & 73 & 4.27--6.89 & 4.31--6.82 & 1.000 & 0.006 \\
\textsc{T-M} & NIKKEI & 1838 & 19 & 0.44--1.74 & 0.62--1.61 & 1.000 & 95 & 3.92--6.42 & 4.20--6.28 & 1.000 & 0.006 \\
\textsc{T-M} & HSI & 1848 & 20 & 0.54--1.73 & 0.66--1.67 & 1.000 & 93 & 3.95--6.22 & 4.08--6.13 & 1.000 & 0.006 \\
\textsc{T-M} & TSX & 1884 & 31 & 0.90--2.71 & 1.12--2.33 & 0.956 & 105 & 4.30--7.01 & 4.58--6.71 & 1.000 & 0.006 \\
\textsc{T-M} & ASX & 1895 & 38 & 1.21--2.90 & 1.42--2.74 & 0.240 & 112 & 4.59--7.49 & 4.89--7.07 & 1.000 & 0.006 \\
\textsc{T-M} & BVSP & 1857 & 15 & 0.27--1.67 & 0.45--1.33 & 1.000 & 74 & 2.85--5.17 & 3.14--4.98 & 1.000 & 0.006 \\
\end{longtable}
\endgroup
\end{landscape}
 
\begin{table}[ht]
\centering\small
\begin{tabular}{@{}llrrrrr@{}}
\toprule
Density & Criterion & min $\bar\ell$ & max $\bar\ell$ & max $|A_t|$ & train hits & test hits \\
\midrule
Gaussian & log & -0.1317 & -0.0625 & 1.9689 & 0 & 0 \\
Gaussian & density-power & -0.1134 & -0.0586 & 1.4501 & 0 & 0 \\
Gaussian & CRPS & -0.1152 & -0.0631 & 0.9952 & 0 & 0 \\
Gaussian & MMD & -0.0944 & -0.0474 & 1.1100 & 0 & 0 \\
\midrule
Student-$t_6$ & log & -0.1113 & -0.0573 & 0.9942 & 0 & 0 \\
Student-$t_6$ & density-power & -0.1028 & -0.0510 & 1.0725 & 0 & 0 \\
Student-$t_6$ & CRPS & -0.1153 & -0.0633 & 0.9950 & 0 & 0 \\
Student-$t_6$ & MMD & -0.0937 & -0.0468 & 1.1041 & 0 & 0 \\
\bottomrule
\end{tabular}
 \caption{Per-cell stability diagnostics across the twelve markets. For each cell,
$\bar\ell$ is the training-sample mean of the log absolute random Lipschitz coefficient; the table
reports its minimum and maximum over markets. $A_t$ is the proposed recursion derivative with
respect to the previous state, and ``max $|A_t|$'' is its maximum over every training date and
market. Every worst-case sample Lyapunov mean is strictly negative even where the pointwise
derivative exceeds one, so the empirical gate uses average contraction rather than falsely claiming
uniform contraction. ``Hits'' totals proposed-to-actual projection changes over all training or
test paths. The declared state projection is none and all counts are zero.}
\label{tab::emp_stability}
\end{table}

\begin{table}[ht]
\centering\scriptsize
\begin{tabular}{@{}llrrrrrrr@{}}
\toprule
Density & Criterion & accepted & all-start markets & gap min & gap max & max $\|\nabla\|$ & min eig. & max cond. \\
\midrule
Gaussian & log & 96/96 & 12/12 & 2.98e-13 & 1.55e-10 & 4.29e-06 & 5.74e-03 & 9.15 \\
Gaussian & density-power & 96/96 & 12/12 & 1.63e-12 & 3.31e-10 & 1.66e-06 & 1.28e-03 & 21.47 \\
Gaussian & CRPS & 96/96 & 12/12 & 2.15e-11 & 3.24e-09 & 4.08e-06 & 2.11e-04 & 25.04 \\
Gaussian & MMD & 96/96 & 12/12 & 9.49e-11 & 3.69e-09 & 4.79e-06 & 1.33e-04 & 28.84 \\
\midrule
Student-$t_6$ & log & 96/96 & 12/12 & 1.41e-13 & 2.71e-10 & 1.99e-06 & 1.08e-03 & 24.19 \\
Student-$t_6$ & density-power & 96/96 & 12/12 & 1.03e-12 & 3.64e-10 & 1.90e-06 & 3.46e-04 & 54.47 \\
Student-$t_6$ & CRPS & 96/96 & 12/12 & 5.32e-12 & 3.69e-09 & 3.07e-06 & 2.43e-04 & 18.95 \\
Student-$t_6$ & MMD & 96/96 & 12/12 & 1.63e-11 & 3.91e-09 & 1.68e-06 & 1.14e-04 & 30.47 \\
\bottomrule
\end{tabular}
 \caption{Production multistart and curvature diagnostics by cell, pooled only for reporting across
the twelve separately fitted markets. ``Accepted'' counts optimiser endpoints passing the finite
success/gradient checks; ``all-start markets'' counts markets on which all eight predeclared starts
are accepted. ``Gap'' is the best-minus-second-best absolute training-objective separation across
the production eight-start fits; the last three columns give the worst final gradient norm, the
smallest Hessian eigenvalue, and the largest Hessian condition number.}
\label{tab::emp_optimiser}
\end{table}

\begin{table}[p]
\centering\footnotesize
\resizebox{\textwidth}{!}{\begin{tabular}{@{}lrrrr@{}}
\toprule
Market & log & density-power & CRPS & MMD \\
\midrule
SPX & $(7.798e-05,0.98388,0.0620)$ & $(0.0001482,0.98792,0.1137)$ & $(-0.0003954,0.99080,0.5824)$ & $(-0.0001821,0.99161,0.3931)$ \\
NDX & $(0.005519,0.99134,0.0584)$ & $(0.005192,0.99383,0.0995)$ & $(0.005261,0.99524,0.4900)$ & $(0.002682,0.99875,0.3157)$ \\
DJI & $(-0.002278,0.98120,0.0658)$ & $(-0.001875,0.98657,0.1195)$ & $(-0.001644,0.99016,0.5908)$ & $(-0.001008,0.99106,0.3920)$ \\
FTSE & $(-0.0003076,0.98317,0.0785)$ & $(-0.0004186,0.98484,0.1278)$ & $(-0.0001894,0.98445,0.6811)$ & $(-0.0001834,0.98683,0.4685)$ \\
DAX & $(0.007825,0.98498,0.0646)$ & $(0.005315,0.98969,0.1027)$ & $(0.004885,0.99038,0.5179)$ & $(0.004436,0.99186,0.3601)$ \\
CAC & $(0.007888,0.98414,0.0665)$ & $(0.005154,0.98916,0.1006)$ & $(0.004834,0.98889,0.5200)$ & $(0.004337,0.98995,0.3604)$ \\
STOXX50 & $(0.007505,0.97954,0.0627)$ & $(0.003157,0.98655,0.1092)$ & $(0.002512,0.98665,0.5837)$ & $(0.002903,0.98657,0.3901)$ \\
NIKKEI & $(0.0202,0.96747,0.0830)$ & $(0.009393,0.98393,0.1024)$ & $(0.00885,0.98440,0.5005)$ & $(0.006634,0.98770,0.2850)$ \\
HSI & $(0.005366,0.99237,0.0502)$ & $(0.003846,0.99334,0.0734)$ & $(0.003481,0.99357,0.4139)$ & $(0.002425,0.99606,0.2473)$ \\
TSX & $(-0.0008664,0.99114,0.0585)$ & $(-0.001635,0.98953,0.1003)$ & $(-0.0004857,0.99276,0.5158)$ & $(-4.324e-05,0.99419,0.3378)$ \\
ASX & $(-0.004189,0.98520,0.0642)$ & $(-0.002757,0.98934,0.0935)$ & $(-0.002444,0.99048,0.5047)$ & $(-0.002254,0.99150,0.3304)$ \\
BVSP & $(0.02158,0.97938,0.0477)$ & $(0.01528,0.98503,0.0780)$ & $(0.01323,0.98694,0.3960)$ & $(0.0125,0.98731,0.2454)$ \\
\bottomrule
\end{tabular}
 }
\caption{Fitted Gaussian-family recursion coefficients for every market and training criterion.
Each entry is $(\omega,\varphi,\alpha)$ in the recursion of Section~\ref{sec::empirical}. The
stationary level is constrained by the common transformed parameterisation,
$|\varphi|<1$, and $\alpha>0$; no state projection is used.}
\label{tab::emp_parameters_gauss}
\end{table}

\begin{table}[p]
\centering\footnotesize
\resizebox{\textwidth}{!}{\begin{tabular}{@{}lrrrr@{}}
\toprule
Market & log & density-power & CRPS & MMD \\
\midrule
SPX & $(0.001117,0.98834,0.1139)$ & $(0.001388,0.98931,0.1381)$ & $(0.001154,0.99076,0.6831)$ & $(0.001475,0.99157,0.4485)$ \\
NDX & $(0.005596,0.99424,0.0993)$ & $(0.005394,0.99545,0.1194)$ & $(0.006119,0.99503,0.5770)$ & $(0.002907,0.99881,0.3588)$ \\
DJI & $(-0.0005241,0.98709,0.1192)$ & $(-6.159e-05,0.98766,0.1480)$ & $(6.035e-05,0.99012,0.6913)$ & $(0.0007717,0.99098,0.4488)$ \\
FTSE & $(0.001504,0.98446,0.1281)$ & $(0.002359,0.98496,0.1568)$ & $(0.002438,0.98431,0.7961)$ & $(0.002388,0.98681,0.5351)$ \\
DAX & $(0.006437,0.98979,0.1030)$ & $(0.006548,0.99034,0.1268)$ & $(0.006542,0.99040,0.6079)$ & $(0.00609,0.99184,0.4149)$ \\
CAC & $(0.006586,0.98867,0.1025)$ & $(0.006813,0.98895,0.1278)$ & $(0.006908,0.98874,0.6112)$ & $(0.006382,0.99004,0.4139)$ \\
STOXX50 & $(0.005085,0.98562,0.1130)$ & $(0.005808,0.98526,0.1446)$ & $(0.004978,0.98645,0.6827)$ & $(0.005527,0.98659,0.4455)$ \\
NIKKEI & $(0.012,0.98291,0.1012)$ & $(0.01004,0.98653,0.1121)$ & $(0.01183,0.98396,0.5955)$ & $(0.009352,0.98735,0.3331)$ \\
HSI & $(0.004272,0.99380,0.0733)$ & $(0.003847,0.99473,0.0874)$ & $(0.004634,0.99355,0.4849)$ & $(0.003243,0.99611,0.2812)$ \\
TSX & $(-0.0002089,0.99066,0.0987)$ & $(0.0003842,0.99157,0.1167)$ & $(0.0007502,0.99284,0.5979)$ & $(0.001143,0.99416,0.3830)$ \\
ASX & $(-0.001552,0.98943,0.0957)$ & $(-0.0008771,0.99049,0.1143)$ & $(-0.0008149,0.99036,0.5906)$ & $(-0.0005385,0.99145,0.3768)$ \\
BVSP & $(0.01676,0.98534,0.0764)$ & $(0.01638,0.98617,0.0900)$ & $(0.01555,0.98693,0.4618)$ & $(0.0153,0.98715,0.2828)$ \\
\bottomrule
\end{tabular}
 }
\caption{Fitted variance-standardised Student-$t_6$ recursion coefficients for every market and
training criterion. Entries and constraints are as in Table~\ref{tab::emp_parameters_gauss}; the
density shape $\nu=6$ is fixed rather than estimated.}
\label{tab::emp_parameters_student}
\end{table}

\FloatBarrier
\paragraph{\textbf{Event-path design}} For each descriptive path, the demeaning mean and recursion
parameters are estimated over a declared five-year window ending strictly before the event and are
then frozen. Annualised volatility is evaluated on a $\log_{10}$ scale directly from log variance,
without exponentiation or numerical clipping. For the controlled SPX counterfactual, parameters are
estimated on 2012--2016 and frozen over 2017. The 3 July 2017 return alone receives the additive
perturbation $B\widehat\sigma_{\rm ref}$, where $\widehat\sigma_{\rm ref}$ is the root mean squared
demeaned return over the preceding 60 sessions and is common to all methods. No state projection is
applied to the clean or perturbed paths. The reported growth and plateau comparisons are computed
directly from those paths for every displayed $B$, without imposing the asymptotic shape in
Proposition~\ref{prop::outlier}.

\paragraph{\textbf{Location-path design}} The Nile series contains 100 annual observations over
1871--1970, measured in $10^8\,\mathrm{m}^3$. The Gaussian working density is fixed, and all
parameters are estimated on 1871--1944 and frozen thereafter. Fixed-bandwidth MMD is not displayed
because its pre-1945 objective has a near-singular Hessian; Gaussian log, CRPS, and density-power
pass the optimiser, Hessian, conditioning, Lyapunov, finiteness, and zero-projection gates. The
clean path through the 1899 shift is descriptive. The controlled arm adds a common
$6\widehat\sigma_{\rm log}$ outlier at 1945 and measures displacement relative to each method's
clean path; the first half-decay years are 1949, 1948, and 1949 for Gaussian log, CRPS, and
density-power, respectively.

\paragraph{\textbf{Heavy-tail stress design}} The common latent-scale path has length $10{,}000$;
the first $60\%$ is used for fitting, and the innovation grid is
$\nu\in\{1,1.5,2,3,5,\infty\}$ with population median absolute innovation fixed at $0.67449$.
Every cell is refitted. Display requires optimiser completion, gradient norm at most $10^{-3}$,
Hessian minimum eigenvalue above $10^{-8}$, condition number at most $10^{10}$, a negative sample
log-Lipschitz diagnostic, finite non-degenerate paths, and zero projection hits. A cross marks a
completed finite fit that fails this gate; an exception aborts the exhibit. Recorded statuses
separate optimiser, curvature, contraction, non-finite-state, path-bound, and undefined-estimand
outcomes. The $\pm2$ bands use $500$ stationary-block-bootstrap draws with mean block length $50$,
holding fitted parameters and the realised path fixed while resampling post-fit pairs.

\paragraph{\textbf{Break and jump protocols}} Table~\ref{tab::repro} gives the replication counts,
horizons, training fraction, gain, and jump frequency. Both designs use no state projection. The
permanent-break comparison holds the Gaussian density and innovations fixed, starts log-variance at
zero, and applies the gain to the raw score divided by current working-model curvature. Adaptation
delay is the first post-break date on which the ten-date trailing mean reaches $90\%$ of the new
log-variance; no path is right-censored. Loss is twice cumulative population Gaussian log-risk
excess relative to the oracle, $v^\star/v_t+\log v_t-1-\log v^\star$.

In the jump design, $-5\exp(x_t/2)|Z_t|$ contamination, with $Z_t$ standard Gaussian, leaves the
latent component $x_t$ unchanged; its half-normal scale is five and its conditional mean absolute
size is $5\sqrt{2/\pi}\simeq3.99$ standard deviations. The frozen-fit arm applies clean-training
parameters to both paths; the refitted arm estimates under jumps. Displayed contrasts require both
fits to pass the optimiser, curvature, conditioning,
log-Lipschitz, finiteness, and $\pm12$ path-bound gate; crosses mark completed failures without
clipping, and exceptions abort the exhibit. Gaussian log crosses the bound six times in the
frozen-fit arm and once after refitting. Paired-bootstrap SEs resample errors with fits and paths
fixed.

\section{Vector local tracking extension}\label{app::multivariate}

\subsection{Scope and vector recursion}

This appendix records a vector version of the local moving-target approximation.  It is stated
as a stopped result because local smoothness does not itself prove that the normalised tracking
error remains in the region where the expansion is valid.  An unstopped result is obtained only
under the compact-containment condition in Corollary~\ref{cor:vector-unstopped}.  No global
multivariate stability claim is made.

Let $\Lambda\subset\mathbb R^d$ be open and let $\mathcal F_{kh}$ denote the information available
at time $kh$. All state and observation arrays below are adapted to this filtration. For a
differentiable proper scoring rule $\mathsf S$, write
\[
 \psi_h(y,\lambda)=-\nabla_\lambda\mathsf S(P^{(h)}_\lambda,y),
 \qquad
 u_h(y,\lambda)=G_h(\lambda)\psi_h(y,\lambda),
\]
where $\psi_h,u_h\in\mathbb R^d$ and $G_h(\lambda)$ is a predictable $d\times d$ scaling matrix.
The predictable time index on $G_h$ and hence on $u_h$ is suppressed throughout; at step $k$
these objects are $\mathcal F_{kh}$-measurable before $Y_{(k+1)h}$ is observed. Let
\[
 R_{h,k}(\lambda)
 :=\mathsf E\{\mathsf S(P^{(h)}_\lambda,Y_{(k+1)h})\mid\mathcal F_{kh}\},
\]
and let $m^{(h)}_{kh}$ be a predictable unique interior minimiser of $R_{h,k}$. Assume
differentiation may be interchanged with conditional expectation locally at this target and that
$G_h(m^{(h)}_{kh})$ is finite. Put
\[
 \bar u_{h,k}(\lambda)
 =\mathsf E\{u_h(Y_{(k+1)h},\lambda)\mid\mathcal F_{kh}\}.
\]
The first-order condition then gives $\bar u_{h,k}(m^{(h)}_{kh})=0$. The filter and this
predictable pseudo-true target satisfy
\begin{align}
 \lambda^{(h)}_{(k+1)h}
 &=\lambda^{(h)}_{kh}+\rho\sqrt h\,
      u_h(Y_{(k+1)h},\lambda^{(h)}_{kh}), \label{eq:vector-filter}\\
 m^{(h)}_{(k+1)h}-m^{(h)}_{kh}
 &=\sqrt h\,\Gamma_{kh}\eta_{(k+1)h}+r^m_{h,k+1}, \label{eq:target-motion}
\end{align}
where $\rho>0$, $\eta_{(k+1)h}\in\mathbb R^q$, $\Gamma_{kh}\in\mathbb R^{d\times q}$ is
predictable, and $\mathsf E[\eta_{(k+1)h}\mid\mathcal F_{kh}]=0$. Define
\[
 B_{h,k}=-D_\lambda\bar u_{h,k}(m^{(h)}_{kh}),
 \qquad
 \xi^0_{h,k+1}
 =\rho u_h(Y_{(k+1)h},m^{(h)}_{kh})-\Gamma_{kh}\eta_{(k+1)h}.
\]
Fix $T>0$, set $k_T=\lfloor T/h\rfloor$, and work on the local time interval
$0\le\tau\le M$.  With $k_j=k_T+j$, define
\[
 Z_h(\tau)=h^{-1/4}
 \{\lambda^{(h)}_{(k_T+\lfloor\tau/\sqrt h\rfloor)h}
      -m^{(h)}_{(k_T+\lfloor\tau/\sqrt h\rfloor)h}\}.
\]
For $K>0$, let
\[
 \sigma_{h,K}=\inf\{\tau\in[0,M]:\|Z_h(\tau)\|\ge K\}\wedge M,
 \qquad
 Z_h^K(\tau)=Z_h(\tau\wedge\sigma_{h,K}).
\]
For exact finite-grid indexing, write
\[
 n_h(\tau)=\left\lfloor\frac{\tau}{\sqrt h}\right\rfloor,
 \qquad
 n_{h,K}(\tau)=\left\lfloor\frac{\tau\wedge\sigma_{h,K}}{\sqrt h}\right\rfloor .
\]

\subsection{Stopped local approximation}

\begin{assumption}[Vector local array]\label{ass:vector-local-array}
For each fixed $M,K<\infty$, the following conditions hold as $h\downarrow0$.
\begin{enumerate}[leftmargin=2em,label=(\roman*)]
 \item The initial normalised error $Z_h(0)$ is $\mathcal F_{k_Th}$-measurable and
 $Z_h(0)\Rightarrow Z_0$. The recursion \eqref{eq:vector-filter} is well-defined and remains in
 $\Lambda$ through the local window $k_T\le k\le k_T+n_h(M)$. Moreover, for every fixed $M,K$,
 the entire local tube is contained in the parameter domain almost surely for all sufficiently
 small $h$:
 \[
  m^{(h)}_{k_jh}+h^{1/4}\{z\in\mathbb R^d:\|z\|\le K\}\subset\Lambda,
  \qquad 0\le j\le n_h(M).
 \]
 This condition is automatic when $\Lambda=\mathbb R^d$.

 \item There is a deterministic matrix $B\in\mathbb R^{d\times d}$ whose eigenvalues have
 strictly positive real parts such that
 \[
  \max_{j\le M/\sqrt h}\|B_{h,k_j}-B\|_{\rm op}\xrightarrow{p}0.
 \]

 \item The stopped mean field expansion is uniform in cumulative local time:
 \[
 \sup_{\tau\le M}\left\|
  \sum_{j<n_{h,K}(\tau)}
  h^{1/4}\bigl[
   \bar u_{h,k_j}(m^{(h)}_{k_jh}+h^{1/4}Z_h(j\sqrt h))
   +h^{1/4}B_{h,k_j}Z_h(j\sqrt h)
  \bigr]
 \right\|\xrightarrow{p}0.
 \]

 \item Put $z_{h,j}=Z_h(j\sqrt h)$ and define the centred evaluation difference
 \begin{align*}
  \Delta_{h,j}
  &={}u_h(Y_{(k_j+1)h},m^{(h)}_{k_jh}+h^{1/4}z_{h,j})
     -\bar u_{h,k_j}(m^{(h)}_{k_jh}+h^{1/4}z_{h,j})\\
  &\quad-u_h(Y_{(k_j+1)h},m^{(h)}_{k_jh}).
 \end{align*}
 Evaluation away from the target contributes no additional martingale at the local scale:
 \[
  \sup_{\tau\le M}\left\|
   \sum_{j<n_{h,K}(\tau)}h^{1/4}\Delta_{h,j}
  \right\|\xrightarrow{p}0.
 \]

 \item The target-motion remainder is negligible:
 \[
  h^{-1/4}\max_{n\le M/\sqrt h}
  \left\|\sum_{j<n}r^m_{h,k_j+1}\right\|\xrightarrow{p}0.
 \]

 \item The composite array is square-integrable and conditionally centred, and for every
 $\varepsilon>0$,
 \begin{align}
  \sup_{\tau\le M}\left\|
   \sum_{j<n_h(\tau)}\sqrt h\,
   \mathsf E[\xi^0_{h,k_j+1}(\xi^0_{h,k_j+1})^\top\mid\mathcal F_{k_jh}]
   -\tau Q
  \right\|_{\rm op}&\xrightarrow{p}0, \label{eq:vector-qv}\\
  \sum_{j<n_h(M)}\sqrt h\,
  \mathsf E\!\left[
   \|\xi^0_{h,k_j+1}\|^2
   \mathbf 1_{\{h^{1/4}\|\xi^0_{h,k_j+1}\|>\varepsilon\}}
   \mid\mathcal F_{k_jh}
  \right]&\xrightarrow{p}0, \label{eq:vector-lindeberg}
 \end{align}
 for a deterministic symmetric nonnegative-definite matrix $Q\in\mathbb R^{d\times d}$.

\end{enumerate}
\end{assumption}

Define $\mathcal M_h(\tau):=\sum_{j<n_h(\tau)}h^{1/4}\xi^0_{h,k_j+1}$.
The multidimensional martingale FCLT and the joint-tightness argument in the proof of
Corollary~\ref{cor:primitive_joint_fclt}, applied to linear projections, show that items~(i) and~(vi)
imply
\[
 \left(Z_h(0),\mathcal M_h\right)
 \Longrightarrow
 (Z_0,Q^{1/2}W)
 \quad\text{in }\mathbb R^d\times D([0,M],\mathbb R^d),
\]
where $W$ may be chosen as a standard $d$-dimensional Brownian motion independent of $Z_0$.

\begin{theorem}[Stopped vector local Ornstein--Uhlenbeck limit]
\label{thm:vector-stopped-ou}
Under Assumption~\ref{ass:vector-local-array}, for every $K$ at which the exit-time map of the
limiting continuous process is almost surely continuous,
\[
 Z_h^K\Longrightarrow Z(\,\cdot\wedge\sigma_K)
 \quad\text{in }D([0,M],\mathbb R^d),
\]
where
\[
 dZ(\tau)=-\rho BZ(\tau)\,d\tau+Q^{1/2}\,dW_\tau,
 \qquad Z(0)=Z_0,
 \qquad
 \sigma_K=\inf\{\tau:\|Z(\tau)\|\ge K\}\wedge M.
\]
The Brownian motion is independent of $Z_0$.  If the frozen, unstopped process is initialized in
its invariant law, its covariance $V_\infty$ is the unique symmetric nonnegative-definite solution
of
\[
 \rho BV_\infty+V_\infty\rho B^\top=Q,
 \qquad
 V_\infty=\int_0^\infty e^{-\rho Bs}Qe^{-\rho B^\top s}\,ds.
\]
This is an invariant covariance of the frozen limiting process, not a finite-$h$ stationary
covariance and not the covariance at a finite local time for arbitrary $Z_0$.
\end{theorem}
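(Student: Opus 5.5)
The plan mirrors the scalar proof of Theorem~\ref{th::local-ou}, with scalar contraction replaced by the matrix drift $-\rho B$.

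\textbf{Step 1: fast-scale decomposition.} Substitute $e_{k_jh}=h^{1/4}Z_h(j\sqrt h)$ into \eqref{eq:vector-filter} and subtract \eqref{eq:target-motion}. Dividing by $h^{1/4}$ gives
\[
 Z_{h,j+1}=Z_{h,j}+\rho h^{1/4}\bar u_{h,k_j}(m_{k_jh}+h^{1/4}Z_{h,j})+h^{1/4}\xi^0_{h,k_j+1}+\rho h^{1/4}\Delta_{h,j}-h^{-1/4}r^m_{h,k_j+1}.
\]
This is the vector analogue of Lemma~\ref{lem:fast-scale-decomposition}.

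\textbf{Step 2: reduce to a linear equation.} Sum the recursion up to $n_{h,K}(\tau)$. By item (iii), the mean-field term equals $-\rho\sum_j\sqrt h\,B_{h,k_j}Z_{h,j}+o_p(1)$, uniformly in $\tau$. Item (iv) removes the evaluation noise. Item (v) removes the remainder. Before exit we have $\|Z_{h,j}\|\le K$, so item (ii) lets us replace $B_{h,k_j}$ by $B$ with error at most $\rho K M\max_j\|B_{h,k_j}-B\|\to_p0$. The Riemann sum then becomes $\int_0^{\tau\wedge\sigma_{h,K}}Z_h(s)\,ds$, up to one boundary term of order $\sqrt h K$. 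The result is
\[
 Z_h^K(\tau)=Z_h(0)-\rho B\int_0^{\tau\wedge\sigma_{h,K}}Z_h(s)\,ds+\mathcal M_h(\tau\wedge\sigma_{h,K})+o_p(1),
\]
uniformly on $[0,M]$.

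\textbf{Step 3: comparison process.} Let $\widehat Z_h$ solve $\widehat Z_h(\tau)=Z_h(0)-\rho B\int_0^\tau\widehat Z_h\,ds+\mathcal M_h(\tau)$. Its explicit form is $e^{-\rho B\tau}Z_h(0)+\mathcal M_h(\tau)-\rho B\int_0^\tau e^{-\rho B(\tau-s)}\mathcal M_h(s)\,ds$. This solution map is continuous from $\mathbb R^d\times D$ into $D$ at continuous paths. The joint FCLT stated just before the theorem, $(Z_h(0),\mathcal M_h)\Rightarrow(Z_0,Q^{1/2}W)$ with $W$ independent of $Z_0$, then gives $\widehat Z_h\Rightarrow Z$ by continuous mapping.

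\textbf{Step 4: match the stopped processes.} Set $\theta_{h,K}=\sigma_{h,K}\wedge\sigma_K(\widehat Z_h)$. Up to $\theta_{h,K}$, Gronwall's inequality with constant $\rho\|B\|_{\rm op}$ gives $\sup_{\tau\le\theta_{h,K}}\|Z_h-\widehat Z_h\|=o_p(1)$. This extends through the exit instants for two reasons: jumps of $\mathcal M_h$ are uniformly negligible by the Lindeberg condition \eqref{eq:vector-lindeberg}, and the other increments are $o_p(1)$.

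\textbf{Main obstacle: exit-time stability.} We must show that $\sigma_{h,K}$ and $\sigma_K(\widehat Z_h)$ become close, so that $\Phi_K(Z_h)-\Phi_K(\widehat Z_h)\to_p0$. The plan is a Skorokhod representation in which $\widehat Z_h\to Z$ almost surely. Then $Z_h$ is uniformly close to $Z$ up to $\theta_{h,K}$. Because $K$ is a continuity radius, the limit actually crosses the sphere $\|z\|=K$ at its exit time. Hence any near-exit of one process forces exit of the other within $o(1)$ time. The converging-together theorem then yields $Z_h^K\Rightarrow Z(\cdot\wedge\sigma_K)$.

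\textbf{Invariant covariance.} Since the eigenvalues of $B$ have positive real parts, $\|e^{-\rho Bs}\|\le Ce^{-cs}$. So $V_\infty=\int_0^\infty e^{-\rho Bs}Qe^{-\rho B^\top s}\,ds$ converges, and it is the limit of the It\^o-isometry covariance of the explicit OU solution. Differentiating $e^{-\rho Bs}Qe^{-\rho B^\top s}$ and integrating over $s$ gives the Lyapunov equation $\rho BV_\infty+V_\infty\rho B^\top=Q$. Uniqueness holds because the operator $X\mapsto\rho BX+X\rho B^\top$ has eigenvalues $\rho(\mu_i+\mu_j)$ with $\mu_i$ the eigenvalues of $B$. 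These all have positive real part, so the operator is nonsingular.
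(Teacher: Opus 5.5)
Your proposal is correct. Its first and last steps match the paper's proof of Theorem~\ref{thm:vector-stopped-ou}:
\begin{itemize}
\item the fast-scale decomposition, in which items (ii)--(v) of Assumption~\ref{ass:vector-local-array} absorb the Jacobian, mean-field, evaluation and target remainders;
\item the Hurwitz argument for $V_\infty$. Your Sylvester-spectrum justification of uniqueness fills in a point the paper only asserts.
\end{itemize}

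The core convergence step takes a different route. It follows the paper's \emph{scalar} proof of Theorem~\ref{th::local-ou} rather than the vector proof. The vector proof argues abstractly. It uses the Jacobian convergence and the joint martingale limit to identify every subsequential limit of the stopped array with the stopped linear equation. It then concludes by pathwise uniqueness, continuous mapping at continuity points of the exit map, and localisation.

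You instead exploit linearity. Your $\widehat Z_h$ is an explicit, sup-norm Lipschitz functional of $(Z_h(0),\mathcal M_h)$. So $\widehat Z_h\Rightarrow Z$ follows from the joint FCLT by continuous mapping, with no tightness or martingale-problem identification for the stopped process. Gronwall then couples $Z_h$ to $\widehat Z_h$ pathwise.

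What each route buys:
\begin{itemize}
\item Yours is constructive and confines the continuity-radius hypothesis to a single exit-time step.
\item The paper's needs no closed-form solution map, so it would extend to nonlinear limiting drifts with pathwise-unique solutions. However, it leaves implicit the tightness of $Z_h^K$ and the identification of the limit of $\sigma_{h,K}$, which is exactly what your exit-time step handles.
\end{itemize}

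Two refinements make that exit-time step airtight.

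First, the Gronwall comparison holds on all of $[0,\sigma_{h,K}]$, not only up to $\theta_{h,K}$. This is because $\widehat Z_h$ satisfies its linear equation globally, and the remainders are controlled by stopping at $\sigma_{h,K}$ alone. You need this in the case $\sigma_K(\widehat Z_h)<\sigma_{h,K}$: there, ``near-exit forces exit'' requires $Z_h$ to track $\widehat Z_h$ slightly beyond $\sigma_K(\widehat Z_h)$.

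Second, a Skorokhod representation of $\widehat Z_h$ alone does not carry $Z_h$ onto the new probability space. You have two options.
\begin{itemize}
\item Represent the jointly tight triple $(\widehat Z_h,Z_h^K,\sigma_{h,K})$. This is in effect the paper's subsequential-limit argument.
\item Argue directly. Write $\sigma_r(x)$ for the exit time of a path $x$ from the ball of radius $r$, capped at $M$, and set $\Delta_h:=\sup_{\tau\le\sigma_{h,K}}\|Z_h(\tau)-\widehat Z_h(\tau)\|$. On $\{\Delta_h\le\delta\}$ one has $\sigma_{K-\delta}(\widehat Z_h)\le\sigma_{h,K}\le\sigma_{K+\delta}(\widehat Z_h)$.
\end{itemize}
For the direct argument, combine three facts:
\begin{itemize}
\item at a continuity radius, $\sigma_{K\pm\delta}(Z)\to\sigma_K(Z)$ as $\delta\downarrow0$;
\item exit times are semicontinuous at continuous paths;
\item $\widehat Z_h$ is $C$-tight.
\end{itemize}
Together these give $\sup_{\tau\le M}\|Z_h^K(\tau)-\widehat Z_h(\tau\wedge\sigma_K(\widehat Z_h))\|\to_p0$, and converging-together finishes the proof.
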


\begin{proof}
Subtract \eqref{eq:target-motion} from \eqref{eq:vector-filter}, divide by $h^{1/4}$, and stop at
$\sigma_{h,K}$.  Assumption~\ref{ass:vector-local-array}(iii)--(v) makes the cumulative nonlinear,
evaluation, and target remainders $o_p(1)$ uniformly on $[0,M]$.  Hence the stopped recursion has
the asymptotic integral form
\[
 Z_h^K(\tau)
 =Z_h(0)-\rho\sum_{j<n_{h,K}(\tau)}
       \sqrt h\,B_{h,k_j}Z_h(j\sqrt h)
   +\sum_{j<n_{h,K}(\tau)}
       h^{1/4}\xi^0_{h,k_j+1}+o_p(1).
\]
The uniform Jacobian convergence and the joint martingale limit identify every subsequential
limit with the stopped linear stochastic equation.  Pathwise uniqueness for that equation, the
continuous mapping theorem at continuity points of the exit map, and localization yield the stated
weak convergence.  Positive stability of $B$ makes $-\rho B$ Hurwitz; the displayed integral is
therefore finite and is the unique solution of the continuous-time Lyapunov equation.
\end{proof}

\begin{corollary}[Removal of the stop]\label{cor:vector-unstopped}
If, in addition,
\[
 \lim_{K\to\infty}\limsup_{h\downarrow0}
 \mathsf P\!\left(\sup_{\tau\le M}\|Z_h(\tau)\|\ge K\right)=0,
\]
then $Z_h\Longrightarrow Z$ in $D([0,M],\mathbb R^d)$.
\end{corollary}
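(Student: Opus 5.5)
The argument mirrors the scalar proof of Corollary~\ref{cor:unstopped_local_ou}, transplanted to $\mathbb R^d$ with Theorem~\ref{thm:vector-stopped-ou} supplying the stopped limits. The plan has two ingredients: a sequence of good stopping radii, and a converging-together argument that uses the displayed compact-containment condition to make stopping asymptotically invisible.

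\textbf{Choice of radii.} First I would choose radii $K_\ell\uparrow\infty$ at which the exit map $x\mapsto x(\cdot\wedge\sigma_K(x))$ is almost surely continuous at the limit $Z$, so that Theorem~\ref{thm:vector-stopped-ou} applies.

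The limit $Z$ is a continuous Gaussian–Ornstein--Uhlenbeck process driven by $Q^{1/2}W$. The set of radii at which $K\mapsto\sigma_K(Z)$ jumps, or at which the path merely touches the sphere $\{\|z\|=K\}$ without crossing it, should be at most countable almost surely. I would argue this via the monotone process $K\mapsto\sigma_K(Z)$, which has countably many discontinuities pathwise. Fubini then shows that almost every deterministic $K$ is a continuity radius. Atoms of $\|Z_0\|$ are avoided separately, and if $Q$ is degenerate the same monotonicity argument still applies. Establishing that almost every $K$ is a continuity radius is the only step needing care: in the vector case one cannot invoke ``immediate crossing'' as in the scalar proof, so the countability argument is what I would try first.

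\textbf{Converging together.} Take a bounded uniformly continuous $F$ on $D([0,M],\mathbb R^d)$. For each $\ell$,
\[
 |\mathsf E F(Z_h)-\mathsf E F(Z)|
 \le 2\|F\|_\infty\,\mathsf P\bigl(Z_h\ne Z_h^{K_\ell}\bigr)
 +|\mathsf E F(Z_h^{K_\ell})-\mathsf E F(Z^{K_\ell})|
 +2\|F\|_\infty\,\mathsf P\bigl(Z\ne Z^{K_\ell}\bigr).
\]
I bound the three terms in turn.

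\emph{First term.} Since $Z_h\ne Z_h^{K_\ell}$ implies $\sup_{\tau\le M}\|Z_h(\tau)\|\ge K_\ell$, the first term's $\limsup_{h\downarrow0}$ tends to zero as $\ell\to\infty$ by the hypothesis.

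\emph{Middle term.} This vanishes as $h\to0$ for each fixed $\ell$, by Theorem~\ref{thm:vector-stopped-ou}.

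\emph{Last term.} This tends to zero because $Z$ has continuous paths, so $\sup_{[0,M]}\|Z\|<\infty$ almost surely.

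Letting $h\to0$ and then $\ell\to\infty$ gives $Z_h\Rightarrow Z$, by the Portmanteau characterisation via bounded uniformly continuous functionals.
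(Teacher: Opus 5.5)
Your proof is correct and uses the same converging-together argument the paper gives for the scalar Corollary~\ref{cor:unstopped_local_ou}; the vector corollary itself is stated without a separate proof. Both arguments take continuity radii $K_\ell\uparrow\infty$, apply the stopped theorem at each radius, use the containment hypothesis to make the stop invisible for $Z_h$, and use path continuity for $Z$. The one difference is how you show continuity radii exist: you use the countably many jumps of the monotone map $K\mapsto\sigma_K(Z)$ plus Fubini, instead of the paper's case split (immediate crossing when the noise is nondegenerate, avoiding atoms of $|Z_0|$ otherwise), and your version covers a degenerate or rank-deficient $Q$ in dimension $d$ with no extra argument.
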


\subsection{Composite covariance and matrix stability}

At finite $h$ the conditional covariance of the composite innovation is exactly
\begin{align}
 Q_{h,k}
 &=\rho^2\Var_{k}\{u_h(Y_{(k+1)h},m^{(h)}_{kh})\}
   +\Gamma_{kh}\Var_k(\eta_{(k+1)h})\Gamma_{kh}^\top \notag\\
 &\quad-\rho\Cov_k\{u_h(Y_{(k+1)h},m^{(h)}_{kh}),\eta_{(k+1)h}\}\Gamma_{kh}^\top \notag\\
 &\quad-\rho\Gamma_{kh}
      \Cov_k\{\eta_{(k+1)h},u_h(Y_{(k+1)h},m^{(h)}_{kh})\}. \label{eq:full-vector-q}
\end{align}
Thus $Q$ in Theorem~\ref{thm:vector-stopped-ou} is the limiting covariance density of the
cumulative predictable characteristic built from \eqref{eq:full-vector-q}; pointwise convergence
of $Q_{h,k}$ is not required. Dropping the target-motion and cross-covariance terms is valid only
under additional restrictions such as a locally deterministic target.

\begin{remark}[Positive stability is not Euclidean contraction]
If, at the pseudo-true point,
\[
 B=G\nabla_\lambda^2R(m),
\]
with $G$ and $\nabla_\lambda^2R(m)$ symmetric positive definite, then $B$ is positive stable
because it is similar to the symmetric positive-definite matrix
$G^{1/2}\nabla_\lambda^2R(m)G^{1/2}$.  This is enough for the frozen linear system to possess the
invariant covariance above.  It does not imply that
$\operatorname{sym}(G\nabla_\lambda^2R(m))$ is positive definite.  A Euclidean one-step contraction
or strong-monotonicity result requires a separate lower bound on that symmetric part (or a
specified weighted norm).  The two conditions must not be used interchangeably.
\end{remark}

\section{Static-parameter estimation}\label{app:static-estimation}
This appendix establishes the large-sample properties of the batch minimum-scoring-risk estimator used in Section~\ref{sec::empirical}.  Throughout the appendix, we fix one density-criterion-scaling cell.  Thus, the working family, the negatively oriented differentiable scoring rule, the predictable scaling, and all density and criterion tuning constants are held fixed.  We suppress the cell index and write
\[
  \ell(y,\lambda):=\mathsf{S}(P_\lambda,y),
  \qquad
  \psi_{\mathsf{S}}(y,\lambda):=-\partial_\lambda \ell(y,\lambda),
  \qquad
  u_{\mathsf{S}}(y,\lambda):=G_{\mathsf{S}}(\lambda)\psi_{\mathsf{S}}(y,\lambda).
\]
For \(\vartheta=(\omega,\phi,\alpha)'\), we define
\[
  f_\vartheta(\lambda,y)
  :=\omega+\phi\lambda+\alpha u_{\mathsf{S}}(y,\lambda).
\]
The finite-start filter used in computation is
\begin{equation}\label{eq:est-finite-filter}
  \widehat\lambda_{t+1}(\vartheta)
  =f_\vartheta(\widehat\lambda_t(\vartheta),y_t),
  \qquad
  \widehat\lambda_1(\vartheta)=a(\vartheta):=\frac{\omega}{1-\phi}.
\end{equation}
Its sample criterion is defined as
\begin{equation}\label{eq:est-sample-objective}
  \widehat Q_T(\vartheta)
  :=\frac1T\sum_{t=1}^T
  \ell\bigl(y_t,\widehat\lambda_t(\vartheta)\bigr).
\end{equation}
We allow for a numerical near-minimiser satisfying
\begin{equation}\label{eq:est-near-minimiser}
  \widehat Q_T(\widehat\vartheta_T)
  \leq \inf_{\vartheta\in\Theta}\widehat Q_T(\vartheta)+r_T,
  \qquad r_T\geq0.
\end{equation}

The population objective must instead be defined from the stationary,
infinite-past filter.  Therefore, let \(\lambda_t(\vartheta)\) be a stationary causal
solution of
\begin{equation}\label{eq:est-stationary-filter}
  \lambda_{t+1}(\vartheta)
  =f_\vartheta(\lambda_t(\vartheta),y_t),
  \qquad
  \lambda_t(\vartheta)\ \text{is }\mathcal F_{t-1}\text{-measurable},
\end{equation}
where \(\mathcal F_t=\sigma(y_s:s\leq t)\).  Also, we define
\begin{equation}\label{eq:est-population-objective}
  q_t(\vartheta)
  :=\ell\bigl(y_t,\lambda_t(\vartheta)\bigr),
  \qquad
  Q(\vartheta):=\mathsf E[q_0(\vartheta)],
  \qquad
  \vartheta^\star
  :=\underset{\vartheta\in\Theta}{\arg\min}\,Q(\vartheta).
\end{equation}
Under misspecification, \(\vartheta^\star\) is the static pseudo-true
\emph{recursion} parameter.  It minimises the stationary scoring risk
induced jointly by the scoring rule, the working family, the scaling, and
the autoregressive recursion.  It is therefore distinct, in general, from
the date-specific scoring-risk minimiser \(\lambda^\star_{\mathsf S,t}\) of \eqref{eq::pseudotrueset}, and also from the zero of the one-date composite mean field in
Subsection~\ref{subsec:applied_ar_recursion}.

In the next subsection, we write the assumptions under which our static-parameter estimation is valid.

\subsection{Assumptions}\label{app:est-assumptions}

For a continuous function \(x:\Theta\to\mathbb R\), write
\(\|x\|_\Theta:=\sup_{\vartheta\in\Theta}|x(\vartheta)|\); we use the same notation with the operator norm for vector- and matrix-valued functions.

\begin{assumption}[Data and parameter set]\label{ass:est-data}
The process \(\{y_t\}_{t\in\mathbb Z}\) is strictly stationary and ergodic.
The parameter set \(\Theta\) is a compact subset of
\(\mathbb R\times(-1,1)\times(0,\infty)\), and there is
\(\varepsilon_\phi>0\) such that
\(|\phi|\leq1-\varepsilon_\phi\) on \(\Theta\).  The initialisation map
\(a(\vartheta)=\omega/(1-\phi)\) is continuous and bounded on \(\Theta\).
\end{assumption}

\begin{assumption}[Uniform invertibility and finite-start forgetting]
\label{ass:est-invertibility}
There is a stationary and ergodic \(C(\Theta)\)-valued process
\(\lambda_t(\cdot)\) satisfying~\eqref{eq:est-stationary-filter}.  It is
the unique finite-valued stationary causal solution.  Moreover, there are
a deterministic \(\rho_\lambda\in(0,1)\) and an almost surely finite random
variable \(C_\lambda\) such that
\begin{equation}\label{eq:est-state-forgetting}
  \|\widehat\lambda_t-\lambda_t\|_\Theta
  \leq C_\lambda\rho_\lambda^t,
  \qquad t\geq1,
  \quad\text{almost surely}.
\end{equation}
\end{assumption}

\begin{assumption}[Criterion regularity and identification]
\label{ass:est-criterion}
For almost every sample path, \(\vartheta\mapsto q_t(\vartheta)\) is
continuous on \(\Theta\).  There are stationary nonnegative random
variables \(B_t,M_t,D_t\), with
\(\mathsf E[B_0+M_0+D_0]<\infty\), and one fixed
\(\bar\vartheta\in\Theta\), such that, almost surely,
\begin{align}
  |q_t(\bar\vartheta)|&\leq B_t,\label{eq:est-anchor}\\
  |q_t(\vartheta)-q_t(\vartheta')|
  &\leq M_t\|\vartheta-\vartheta'\|,
  \qquad \vartheta,\vartheta'\in\Theta,\label{eq:est-param-lipschitz}\\
  \sup_{\vartheta\in\Theta}
  \left|
  \ell(y_t,\widehat\lambda_t(\vartheta))
  -\ell(y_t,\lambda_t(\vartheta))
  \right|
  &\leq D_t\|\widehat\lambda_t-\lambda_t\|_\Theta.
  \label{eq:est-state-lipschitz}
\end{align}
The population criterion \(Q\) has the unique minimiser
\(\vartheta^\star\) in~\eqref{eq:est-population-objective}.
\end{assumption}

For asymptotic normality, let \(N\) be an open neighbourhood of
\(\vartheta^\star\) whose closure is contained in
\(\operatorname{int}(\Theta)\).  Define
\[
  \dot\lambda_t(\vartheta)
  :=\nabla_\vartheta\lambda_t(\vartheta),
  \qquad
  \ddot\lambda_t(\vartheta)
  :=\nabla_\vartheta^2\lambda_t(\vartheta),
\]
and let the hatted versions be obtained by differentiating~\eqref{eq:est-finite-filter}.  Also put
\[
  \mathfrak s_t(\vartheta)
  :=\nabla_\vartheta q_t(\vartheta),
  \qquad
  \mathfrak h_t(\vartheta)
  :=\nabla_\vartheta^2 q_t(\vartheta),
\]
with finite-start analogues
\(\widehat{\mathfrak s}_t\) and \(\widehat{\mathfrak h}_t\).

\begin{assumption}[Twice differentiable invertibility]
\label{ass:est-differentiability}
On \(N\), the loss and update are twice continuously differentiable in
\(\lambda\) for almost every outcome.  The stationary filter is twice
continuously differentiable in \(\vartheta\), and
\(\dot\lambda_t(\cdot)\) and \(\ddot\lambda_t(\cdot)\) are stationary,
ergodic, and \(\mathcal F_{t-1}\)-measurable.  There are stationary
nonnegative random variables \(R_t,L_t,R_t^D\), with
\(\mathsf E[R_0+L_0+R_0^D]<\infty\), a deterministic
\(\rho_D\in(0,1)\), and an almost surely finite \(C_D\), such that
\begin{align}
  \sup_{\vartheta\in N}
  \bigl\{\|\mathfrak s_t(\vartheta)\|
       +\|\mathfrak h_t(\vartheta)\|\bigr\}
  &\leq R_t,\label{eq:est-derivative-envelope}\\
  \|\mathfrak h_t(\vartheta)-\mathfrak h_t(\vartheta')\|
  &\leq L_t\|\vartheta-\vartheta'\|,
  \qquad \vartheta,\vartheta'\in N,\label{eq:est-hessian-lipschitz}\\
  \sup_{\vartheta\in N}
  \bigl\{
  \|\widehat{\mathfrak s}_t(\vartheta)-\mathfrak s_t(\vartheta)\|
  +\|\widehat{\mathfrak h}_t(\vartheta)-\mathfrak h_t(\vartheta)\|
  \bigr\}
  &\leq C_D R_t^D\rho_D^t.
  \label{eq:est-derivative-forgetting}
\end{align}
\end{assumption}

\begin{assumption}[Score central limit theorem and sensitivity]
\label{ass:est-clt}
At the pseudo-true parameter,
\begin{equation}\label{eq:est-score-clt}
  \frac1{\sqrt T}\sum_{t=1}^T
  \mathfrak s_t(\vartheta^\star)
  \ \Longrightarrow\ \mathcal{N}(0,\Omega),
\end{equation}
where \(\Omega\) is finite.  When the covariance series is absolutely
summable,
\begin{equation}\label{eq:est-lrv}
  \Omega
  =\sum_{k=-\infty}^{\infty}
  \operatorname{Cov}\bigl(
  \mathfrak s_0(\vartheta^\star),
  \mathfrak s_k(\vartheta^\star)
  \bigr).
\end{equation}
The matrix
\begin{equation}\label{eq:est-A}
  \mathcal A
  :=\mathsf E[\mathfrak h_0(\vartheta^\star)]
\end{equation}
exists and is positive definite.
\end{assumption}

Assumption~\ref{ass:est-clt} is stated directly because, under
misspecification, the stationary parameter score need not be a martingale
difference.  A mixing or near-epoch-dependent central limit theorem is one
route to~\eqref{eq:est-score-clt}.  Under correct conditional
specification, Corollary~\ref{cor:est-correct-spec} below supplies the more
primitive martingale route.

\subsection{A primitive sufficient condition for uniform invertibility}
\label{app:est-sre}

Assumption~\ref{ass:est-invertibility} is stated in terms of the property used
by the estimator.  The following proposition gives a convenient global
sufficient condition.  It is not necessary, and the Gaussian-log cell requires
the non-global qualification stated at the end of this appendix.

\begin{proposition}[Uniform stochastic-recurrence contraction]
\label{prop:est-sre}
For $x\in C(\Theta)$ define the random map
\[
  [\Phi_t(x)](\vartheta)
  :=f_\vartheta(x(\vartheta),y_t).
\]
Suppose that $\Phi_t$ maps $C(\Theta)$ into itself and has the finite random
Lipschitz coefficient
\[
  \mathcal L_t
  :=\sup_{\vartheta\in\Theta}\sup_{z\ne z'}
  \frac{|f_\vartheta(z,y_t)-f_\vartheta(z',y_t)|}{|z-z'|}
  \quad\text{almost surely}.
\]
Assume $0<\mathcal L_0<\infty$ almost surely and, for one constant function
$x_0\in C(\Theta)$,
\begin{equation}\label{eq:est-sre-conditions}
  \mathsf E[|\log\mathcal L_0|]<\infty,
  \qquad
  \mathsf E[\log\mathcal L_0]<0,
  \qquad
  \mathsf E[\log^+\|\Phi_0(x_0)-x_0\|_\Theta]<\infty.
\end{equation}
If $z\mapsto f_\vartheta(z,y_t)$ is differentiable, the inner supremum may be
bounded by $\sup_z|\partial_z f_\vartheta(z,y_t)|$.  Then a finite-valued
stationary causal solution of~\eqref{eq:est-stationary-filter} exists, is
unique among finite-valued stationary causal solutions, and the exponential
finite-start forgetting in~\eqref{eq:est-state-forgetting} holds for every
continuous bounded initialisation map.
\end{proposition}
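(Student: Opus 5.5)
The plan is to apply the Bougerol / Straumann--Mikosch theory of stochastic recurrence equations in the separable Banach space $C(\Theta)$ with the sup norm. Under the map $\Phi_t$, the recursion $\lambda_{t+1}=\Phi_t(\lambda_t)$ becomes a random-map recursion with stationary ergodic driving sequence $\{y_t\}$ (Assumption~\ref{ass:est-data}).

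The first step is to check that $\Phi_t$ is a random Lipschitz map on $C(\Theta)$ with constant $\mathcal L_t$. For $x,x'\in C(\Theta)$ and each $\vartheta$,
\[
|[\Phi_t(x)](\vartheta)-[\Phi_t(x')](\vartheta)|\le \mathcal L_t|x(\vartheta)-x'(\vartheta)|,
\]
and taking the supremum gives $\|\Phi_t(x)-\Phi_t(x')\|_\Theta\le\mathcal L_t\|x-x'\|_\Theta$. In the differentiable case, the mean value theorem bounds the inner supremum by $\sup_z|\partial_zf_\vartheta|$. Measurability of $\mathcal L_t$ and of $\Phi_t$ as a random element of the Lipschitz maps follows from separability of $C(\Theta)$, since the suprema can be restricted to countable dense sets.

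Next I would construct the stationary solution as the backward iterate $\lambda_t:=\lim_{m\to\infty}\Phi_{t-1}\circ\cdots\circ\Phi_{t-m}(x_0)$. The increments are bounded by $\prod_{j=1}^{m}\mathcal L_{t-j}\,\|\Phi_{t-m-1}(x_0)-x_0\|_\Theta$. By the ergodic theorem and $\mathsf E[\log\mathcal L_0]<0$, the product decays like $e^{m(\mathsf E\log\mathcal L_0+\epsilon)}$ almost surely. The log-moment condition together with Borel--Cantelli gives $\|\Phi_{-m}(x_0)-x_0\|_\Theta=e^{o(m)}$ almost surely. The series therefore converges absolutely in $C(\Theta)$. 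This makes $\lambda_t$ a measurable function of $(y_{t-1},y_{t-2},\ldots)$, hence causal and stationary ergodic, and it solves the recursion by continuity of $\Phi_{t-1}$.

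For uniqueness, let $\lambda'_t$ be another finite stationary causal solution. Then $\|\lambda_t-\lambda'_t\|_\Theta\le\prod_{j=1}^{m}\mathcal L_{t-j}\|\lambda_{t-m}-\lambda'_{t-m}\|_\Theta$. Stationarity makes the last factor tight, while the product tends to zero almost surely. Hence $\|\lambda_t-\lambda'_t\|_\Theta\to0$ in probability, so the two solutions are equal. Forgetting follows in the same way. The constant-in-time initialisation $a(\cdot)$ is continuous and bounded by Assumption~\ref{ass:est-data}, so $\widehat\lambda_1\in C(\Theta)$, and
\[
\|\widehat\lambda_t-\lambda_t\|_\Theta\le\prod_{j=1}^{t-1}\mathcal L_j\,\|a-\lambda_1\|_\Theta .
\]
The ergodic theorem forward in time then yields $\prod_{j<t}\mathcal L_j\le C\rho_\lambda^t$ for a deterministic $\rho_\lambda\in(e^{\mathsf E\log\mathcal L_0},1)$ and an almost surely finite random $C$. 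Combining this with $\|a-\lambda_1\|_\Theta<\infty$ gives \eqref{eq:est-state-forgetting}.

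The main obstacle is the step that makes the limit causal stationary while obtaining a deterministic rate with an almost surely finite random constant. This requires care that the ergodic-theorem bound holds uniformly over $t$; I would absorb the finite initial exceedances into $C_\lambda$. The argument also needs measurability of $\Phi_t$ as a random element of the Lipschitz maps of $C(\Theta)$, as in Straumann and Mikosch's Theorem~2.8, which I would cite rather than reprove.
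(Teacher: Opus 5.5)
Your proposal is correct and follows essentially the same route as the paper. You use backward iteration of $\Phi_t$ in $C(\Theta)$, ergodic-theorem decay of the Lipschitz product with Borel--Cantelli control of the anchor term, uniqueness by playing the tight stationary difference against an almost surely vanishing product, and forward-product forgetting with the early exceedances absorbed into $C_\lambda$. The only difference is that you defer the measurability of $\Phi_t$ and of the limit to Straumann--Mikosch, whereas the paper simply asserts it.
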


\begin{proof}
Work in the complete metric space $C(\Theta)$ with the supremum norm.  For a
fixed $t$, define the backward iterates
\[
  X_{t,n}
  :=\Phi_{t-1}\circ\Phi_{t-2}\circ\cdots\circ\Phi_{t-n}(x_0),
  \qquad n\ge1.
\]
Repeated use of the Lipschitz bounds gives
\[
  \|X_{t,n+1}-X_{t,n}\|_\Theta
  \le
  \left(\prod_{j=1}^{n}\mathcal L_{t-j}\right)
  \|\Phi_{t-n-1}(x_0)-x_0\|_\Theta.
\]
Put $\gamma:=-\mathsf E[\log\mathcal L_0]>0$.  By the ergodic theorem, for
almost every sample path and all sufficiently large $n$,
\[
  \prod_{j=1}^{n}\mathcal L_{t-j}\le e^{-3\gamma n/4}.
\]
The logarithmic anchor moment in~\eqref{eq:est-sre-conditions} implies
subexponential growth of the second factor: for every $\eta>0$,
\[
  \|\Phi_{t-n-1}(x_0)-x_0\|_\Theta\le e^{\eta n}
\]
eventually almost surely.  This follows from the tail-sum formula and the
Borel--Cantelli lemma applied to
$\log^+\|\Phi_{t-n-1}(x_0)-x_0\|_\Theta$.  Choosing $\eta<\gamma/4$, the
successive differences are eventually bounded by $e^{-\gamma n/2}$.
Consequently $(X_{t,n})_n$ is almost surely Cauchy in $C(\Theta)$ and converges
uniformly to a finite continuous random function $\lambda_t(\cdot)$.

Passing to the limit in
$X_{t+1,n}=\Phi_t(X_{t,n-1})$ gives
$\lambda_{t+1}=\Phi_t(\lambda_t)$.  The limit is measurable with respect to
the infinite past, stationary, and ergodic because it is a measurable factor
of the stationary ergodic data sequence.

For uniqueness, let $\lambda_t^{(1)}$ and $\lambda_t^{(2)}$ be two finite
stationary causal solutions and put
$D_t:=\|\lambda_t^{(1)}-\lambda_t^{(2)}\|_\Theta$.  Iteration gives
\[
  D_t\le P_{t,n}D_{t-n},
  \qquad
  P_{t,n}:=\prod_{j=1}^{n}\mathcal L_{t-j}\longrightarrow0
  \quad\text{almost surely}.
\]
For $a,M>0$, stationarity implies
\[
  \mathbb P(D_t>a)
  \le \mathbb P(P_{t,n}>a/M)+\mathbb P(D_t>M).
\]
Letting first $n\to\infty$ and then $M\to\infty$ proves $D_t=0$ almost surely.

Finally, a forward recursion started from any continuous bounded map satisfies
\[
  \|\widehat\lambda_t-\lambda_t\|_\Theta
  \le
  \left(\prod_{j=1}^{t-1}\mathcal L_j\right)
  \|\widehat\lambda_1-\lambda_1\|_\Theta.
\]
For every deterministic
$\rho_\lambda\in(\exp\{\mathsf E[\log\mathcal L_0]\},1)$, the right-hand side
is bounded by $C_\lambda\rho_\lambda^t$ for an almost surely finite
$C_\lambda$.  This proves~\eqref{eq:est-state-forgetting}.
\end{proof}

\subsection{Proof of Theorem~\ref{thm:static-estimation}}
\label{app:est-proof}
The proof is divided into several steps.

\paragraph{\emph{Step 1: continuity and finiteness of the population criterion.}}
Equations~\eqref{eq:est-anchor}--\eqref{eq:est-param-lipschitz} give
\[
  |q_t(\vartheta)|
  \leq B_t+M_t\operatorname{diam}(\Theta),
\]
so the stationary criterion is integrable uniformly over \(\Theta\).  In
addition,
\[
  |Q(\vartheta)-Q(\vartheta')|
  \leq \mathsf E[M_0]\|\vartheta-\vartheta'\|,
\]
and hence \(Q\) is continuous on \(\Theta\).

\paragraph{\emph{Step 2: uniform law of large numbers for the stationary
criterion.}}
Let
\[
  Q_T^\circ(\vartheta)
  :=\frac1T\sum_{t=1}^Tq_t(\vartheta).
\]
Fix \(\delta>0\), and choose a finite \(\delta\)-net
\(\{\vartheta_1,\ldots,\vartheta_{N_\delta}\}\) of the compact set
\(\Theta\).  For each \(\vartheta\), choose \(j(\vartheta)\) such that
\(\|\vartheta-\vartheta_{j(\vartheta)}\|\leq\delta\).  Then
\begin{align*}
  |Q_T^\circ(\vartheta)-Q(\vartheta)|
  &\leq
  |Q_T^\circ(\vartheta_{j(\vartheta)})
    -Q(\vartheta_{j(\vartheta)})|\\
  &\quad+
  \delta\left\{\frac1T\sum_{t=1}^TM_t+\mathsf E[M_0]\right\}.
\end{align*}
Taking the supremum over \(\Theta\), applying the ergodic theorem at the
finitely many net points and to \(M_t\), and then letting
\(\delta\downarrow0\), yields
\begin{equation}\label{eq:est-stationary-ulln}
  \sup_{\vartheta\in\Theta}
  |Q_T^\circ(\vartheta)-Q(\vartheta)|
  \longrightarrow0
  \qquad\text{almost surely}.
\end{equation}

\paragraph{\emph{Step 3: the finite initial value is asymptotically negligible.}}
By~\eqref{eq:est-state-lipschitz} and
\eqref{eq:est-state-forgetting},
\begin{align*}
  \sup_{\vartheta\in\Theta}
  |\widehat Q_T(\vartheta)-Q_T^\circ(\vartheta)|
  &\leq
  \frac{C_\lambda}{T}\sum_{t=1}^TD_t\rho_\lambda^t.
\end{align*}
Since \(D_t\) is stationary and integrable,
\[
  \mathsf E\left[\sum_{t=1}^{\infty}D_t\rho_\lambda^t\right]
  =\frac{\rho_\lambda}{1-\rho_\lambda}\mathsf E[D_0]<\infty.
\]
The infinite series is therefore finite almost surely, and
\begin{equation}\label{eq:est-init-objective}
  \sup_{\vartheta\in\Theta}
  |\widehat Q_T(\vartheta)-Q_T^\circ(\vartheta)|
  \longrightarrow0
  \qquad\text{almost surely}.
\end{equation}
Combining~\eqref{eq:est-stationary-ulln} and
\eqref{eq:est-init-objective} proves the uniform convergence asserted in
Theorem~\ref{thm:static-estimation}.

\paragraph{\emph{Step 4: consistency of a numerical near-minimiser.}}
Fix \(\varepsilon>0\) and set
\[
  C_\varepsilon
  :=\{\vartheta\in\Theta:
  \|\vartheta-\vartheta^\star\|\geq\varepsilon\}.
\]
Compactness, continuity of \(Q\), and uniqueness of the minimiser imply the
strict identification gap
\begin{equation}\label{eq:est-id-gap}
  \eta_\varepsilon
  :=\inf_{\vartheta\in C_\varepsilon}
  \{Q(\vartheta)-Q(\vartheta^\star)\}>0.
\end{equation}
On the event
\[
  \sup_{\vartheta\in\Theta}
  |\widehat Q_T(\vartheta)-Q(\vartheta)|
  <\eta_\varepsilon/4,
  \qquad
  r_T<\eta_\varepsilon/4,
\]
every \(\vartheta\in C_\varepsilon\) satisfies
\[
  \widehat Q_T(\vartheta)
  >\widehat Q_T(\vartheta^\star)+r_T
  \geq \inf_{\vartheta'\in\Theta}
  \widehat Q_T(\vartheta')+r_T.
\]
Thus a near-minimiser satisfying~\eqref{eq:est-near-minimiser} cannot
belong to \(C_\varepsilon\).  The uniform law of large numbers and
\(r_T=o_p(1)\) prove
\(\widehat\vartheta_T\xrightarrow{p}\vartheta^\star\).  If \(r_T\to0\) almost surely,
the same argument gives almost-sure consistency.

\paragraph{\emph{Step 5: population first-order condition.}}
The integrable score envelope in
Assumption~\ref{ass:est-differentiability} permits differentiation under the
expectation on \(N\), so
\[
  \nabla_\vartheta Q(\vartheta)
  =\mathsf E[\mathfrak s_0(\vartheta)].
\]
Since \(\vartheta^\star\) is an interior minimiser,
\begin{equation}\label{eq:est-pop-foc}
  \mathsf E[\mathfrak s_0(\vartheta^\star)]=0.
\end{equation}

\paragraph{\emph{Step 6: finite-start score replacement.}}
By~\eqref{eq:est-derivative-forgetting},
\begin{align*}
  \sqrt T\left\|
  \nabla_\vartheta\widehat Q_T(\vartheta^\star)
  -\frac1T\sum_{t=1}^T\mathfrak s_t(\vartheta^\star)
  \right\|
  &\leq
  \frac{C_D}{\sqrt T}\sum_{t=1}^TR_t^D\rho_D^t.
\end{align*}
The infinite weighted sum is finite almost surely because \(R_t^D\) is
stationary and integrable.  Consequently,
\begin{equation}\label{eq:est-gradient-replacement}
  \sqrt T\nabla_\vartheta\widehat Q_T(\vartheta^\star)
  =\frac1{\sqrt T}\sum_{t=1}^T
  \mathfrak s_t(\vartheta^\star)+o_p(1),
\end{equation}
and Assumption~\ref{ass:est-clt} gives
\begin{equation}\label{eq:est-feasible-score-clt}
  \sqrt T\nabla_\vartheta\widehat Q_T(\vartheta^\star)
  \Longrightarrow \mathcal{N}(0,\Omega).
\end{equation}

\paragraph{\emph{Step 7: uniform convergence of the feasible Hessian.}}
Let \(N_0\) be any compact neighbourhood of \(\vartheta^\star\) contained
in \(N\).  The finite-net argument from \emph{Step~2}, now applied to the
stationary matrix-valued function \(\mathfrak h_t(\vartheta)\), together
with~\eqref{eq:est-derivative-envelope} and
\eqref{eq:est-hessian-lipschitz}, yields
\begin{equation}\label{eq:est-stationary-hessian-ulln}
  \sup_{\vartheta\in N_0}
  \left\|
  \frac1T\sum_{t=1}^T\mathfrak h_t(\vartheta)
  -\mathsf E[\mathfrak h_0(\vartheta)]
  \right\|
  \longrightarrow0
  \qquad\text{almost surely}.
\end{equation}
The finite-start replacement in~\eqref{eq:est-derivative-forgetting} implies
\begin{equation}\label{eq:est-feasible-hessian-ulln}
  \sup_{\vartheta\in N_0}
  \left\|
  \nabla_\vartheta^2\widehat Q_T(\vartheta)
  -\mathsf E[\mathfrak h_0(\vartheta)]
  \right\|
  \longrightarrow0
  \qquad\text{in probability}.
\end{equation}
The expected Hessian is continuous on \(N_0\) by the integrable Lipschitz
envelope.

\paragraph{\emph{Step 8: Taylor expansion of the empirical first-order condition.}}
Consistency and interiority imply that, with probability tending to one,
the line segment joining \(\vartheta^\star\) and
\(\widehat\vartheta_T\) lies in \(N_0\).  The integral Taylor expansion is
\begin{align}
  \nabla_\vartheta\widehat Q_T(\widehat\vartheta_T)
  &=\nabla_\vartheta\widehat Q_T(\vartheta^\star)
  +\overline{\mathcal A}_T
  (\widehat\vartheta_T-\vartheta^\star),\label{eq:est-taylor}\\
  \overline{\mathcal A}_T
  &:=\int_0^1
  \nabla_\vartheta^2\widehat Q_T
  \bigl(\vartheta^\star+s(\widehat\vartheta_T-\vartheta^\star)\bigr)
  \,ds.
\end{align}
Equation~\eqref{eq:est-feasible-hessian-ulln}, consistency, and continuity
give
\(\overline{\mathcal A}_T\xrightarrow{p}\mathcal A\).  Positive definiteness of
\(\mathcal A\) makes \(\overline{\mathcal A}_T\) nonsingular with
probability tending to one.  If
\(\|\nabla_\vartheta\widehat Q_T(\widehat\vartheta_T)\|
=o_p(T^{-1/2})\), rearranging~\eqref{eq:est-taylor} gives
\[
  \sqrt T(\widehat\vartheta_T-\vartheta^\star)
  =-\overline{\mathcal A}_T^{-1}
  \sqrt T\nabla_\vartheta\widehat Q_T(\vartheta^\star)+o_p(1).
\]
Equation~\eqref{eq:est-feasible-score-clt} and Slutsky's theorem prove the
asymptotic normality in Theorem~\ref{thm:static-estimation}.
\hfill\(\square\)

\subsection{Derivative recursions}\label{app:est-derivatives}

The score and Hessian in the theorem can be computed recursively.  Suppress
the parameter argument and put
\[
  u_t:=u(y_t,\lambda_t),
  \qquad
  u_{\lambda,t}:=\partial_\lambda u(y_t,\lambda_t),
  \qquad
  u_{\lambda\lambda,t}:=\partial_\lambda^2u(y_t,\lambda_t).
\]
Define
\begin{equation}\label{eq:est-ABC}
  A_t:=\phi+\alpha u_{\lambda,t},
  \qquad
  b_t:=\begin{pmatrix}1\\ \lambda_t\\ u_t\end{pmatrix},
  \qquad
  c_t:=\begin{pmatrix}0\\ 1\\ u_{\lambda,t}\end{pmatrix}.
\end{equation}
Then
\begin{align}
  \dot\lambda_{t+1}
  &=A_t\dot\lambda_t+b_t,
  \label{eq:est-first-derivative}\\
  \ddot\lambda_{t+1}
  &=A_t\ddot\lambda_t
  +\alpha u_{\lambda\lambda,t}
   \dot\lambda_t\dot\lambda_t^{\top}
  +c_t\dot\lambda_t^{\top}
  +\dot\lambda_t c_t^{\top}.
  \label{eq:est-second-derivative}
\end{align}
For the finite-start recursion,
\begin{equation}\label{eq:est-initial-derivatives}
  \widehat{\dot\lambda}_1
  =\begin{pmatrix}
  (1-\phi)^{-1}\\[1mm]
  \omega(1-\phi)^{-2}\\[1mm]
  0
  \end{pmatrix},
  \qquad
  \widehat{\ddot\lambda}_1
  =\begin{pmatrix}
  0&(1-\phi)^{-2}&0\\
  (1-\phi)^{-2}&2\omega(1-\phi)^{-3}&0\\
  0&0&0
  \end{pmatrix}.
\end{equation}
Within a fixed cell, the loss has no direct dependence on
\((\omega,\phi,\alpha)\), and therefore
\begin{align}
  \mathfrak s_t(\vartheta)
  &=\ell_\lambda(y_t,\lambda_t)\dot\lambda_t
  =-\psi(y_t,\lambda_t)\dot\lambda_t,
  \label{eq:est-score-explicit}\\
  \mathfrak h_t(\vartheta)
  &=\ell_{\lambda\lambda}(y_t,\lambda_t)
  \dot\lambda_t\dot\lambda_t^{\top}
  +\ell_\lambda(y_t,\lambda_t)\ddot\lambda_t.
  \label{eq:est-hessian-explicit}
\end{align}
The feasible versions use the hatted state and derivative filters.  These
are the analytic quantities computed by the automatic-differentiation
implementation.

\subsection{Correct specification and the dynamic form}
\label{app:est-correct-spec}

\begin{corollary}[Correct conditional specification]\label{cor:est-correct-spec}
Suppose the score-domain and finiteness conditions of Section~2 hold, and
suppose there is \(\vartheta_0\in\Theta\) such that
\[
  \widetilde P_t=P_{\lambda_t(\vartheta_0)}
  \qquad\text{almost surely for every }t.
\]
Assume that \(\mathsf S\) is strictly proper and that the following dynamic
identification condition holds:
\[
  P_{\lambda_t(\vartheta)}
  =P_{\lambda_t(\vartheta_0)}
  \quad\text{almost surely for every }t
  \quad\Longrightarrow\quad
  \vartheta=\vartheta_0.
\]
Then \(\vartheta^\star=\vartheta_0\).

If, in addition, \(\vartheta_0\) is interior, the required derivatives and
moments exist, and differentiation can be passed twice through the true
conditional expectation, then
\[
  \mathsf E[
  \mathfrak s_t(\vartheta_0)\mid\mathcal F_{t-1}]=0.
\]
Let
\begin{align*}
  J_{\mathsf{S},t}
  &:=\mathsf E\bigl[
  \ell_{\lambda\lambda}(y_t,\lambda_t(\vartheta_0))
  \mid\mathcal F_{t-1}\bigr],\\
  K_{\mathsf{S},t}
  &:=\mathsf E\bigl[
  \psi(y_t,\lambda_t(\vartheta_0))^2
  \mid\mathcal F_{t-1}\bigr].
\end{align*}
Then
\begin{equation}\label{eq:est-godambe}
  \mathcal A
  =\mathsf E[J_{\mathsf{S},t}\dot\lambda_t\dot\lambda_t^{\top}],
  \qquad
  \Omega
  =\mathsf E[K_{\mathsf{S},t}\dot\lambda_t\dot\lambda_t^{\top}].
\end{equation}
If
\(\mathsf E\|\mathfrak s_0(\vartheta_0)\|^{2+\delta}<\infty\) for some
\(\delta>0\), the martingale central limit theorem verifies
Assumption~\ref{ass:est-clt}.  For the logarithmic score under the
conditional Bartlett identity, \(J_{\mathsf{S},t}=K_{\mathsf{S},t}\) almost surely and the
limiting covariance reduces to \(\mathcal A^{-1}\).  For a general proper
score, the inverse Hessian alone is not the asymptotic covariance, even
under correct specification.
\end{corollary}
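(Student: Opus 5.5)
The proof splits into three parts: identification of $\vartheta^\star$, conditional centring of the score, and the two sandwich matrices, followed by the CLT and the Bartlett remark.

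\emph{Identification.} Because the recursion is driven by the data, $\lambda_t(\vartheta)$ is $\mathcal F_{t-1}$-measurable. Conditioning on $\mathcal F_{t-1}$ and applying Proposition~\ref{prop::divergence} with $\widetilde P_t=P_{\lambda_t(\vartheta_0)}$ gives
\[
Q(\vartheta)=\mathsf E\bigl[\mathsf D_{\mathsf S}(P_{\lambda_t(\vartheta_0)},P_{\lambda_t(\vartheta)})\bigr]+\mathsf E\bigl[\mathsf S(\widetilde P_t,\widetilde P_t)\bigr].
\]
The second term is free of $\vartheta$ and finite under the score-domain conditions. Properness gives $\mathsf D_{\mathsf S}\ge0$, so $Q(\vartheta)\ge Q(\vartheta_0)$. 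Equality forces the divergence to vanish almost surely, and strict propriety then gives $P_{\lambda_t(\vartheta)}=P_{\lambda_t(\vartheta_0)}$ almost surely. By stationarity this holds for every $t$, and dynamic identification yields $\vartheta=\vartheta_0$. Hence $\vartheta_0$ is the unique minimiser and $\vartheta^\star=\vartheta_0$.

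\emph{Centring and sandwich matrices.} Use \eqref{eq:est-score-explicit}: $\mathfrak s_t=-\psi(y_t,\lambda_t)\dot\lambda_t$, where $\dot\lambda_t$ is predictable. Under correct specification, $\lambda_t(\vartheta_0)$ is the true state, and propriety with the differentiation interchange (as in Proposition~\ref{prop::proposition_recentering}) gives $\mathsf E_{t-1}[\psi(y_t,\lambda_t(\vartheta_0))]=0$. Therefore $\mathsf E[\mathfrak s_t\mid\mathcal F_{t-1}]=0$, and the score is a stationary ergodic martingale difference sequence.

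For $\Omega$, the cross-covariances vanish: for $k>0$, condition on $\mathcal F_{k-1}$ and use the martingale property. In \eqref{eq:est-lrv} only the lag-zero term survives, and conditioning gives $\Omega=\mathsf E[K_{\mathsf S,t}\dot\lambda_t\dot\lambda_t^\top]$. For $\mathcal A$, take conditional expectations of \eqref{eq:est-hessian-explicit}. The term $\ell_\lambda\ddot\lambda_t$ has conditional mean $\ddot\lambda_t\,\mathsf E_{t-1}[\ell_\lambda]=0$, since $\ddot\lambda_t$ is predictable. The first term yields $J_{\mathsf S,t}\dot\lambda_t\dot\lambda_t^\top$, with $J_{\mathsf S,t}$ equal to the risk curvature by the second-order interchange.

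\emph{CLT and the Bartlett case.} With the $(2+\delta)$-moment bound, the conditional Lindeberg condition holds by stationarity, and the conditional variance average converges to $\Omega$ by the ergodic theorem. The stationary ergodic martingale CLT (Billingsley) then gives \eqref{eq:est-score-clt}. For the log score, the conditional information identity $\mathsf E_{t-1}[\ell_{\lambda\lambda}]=\mathsf E_{t-1}[\psi^2]$ yields $J=K$, so $\mathcal A=\Omega$ and the sandwich collapses to $\mathcal A^{-1}$. For a general proper score, $J\neq K$ in general, and the inverse Hessian alone is not the asymptotic covariance.

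\emph{Main obstacle.} The delicate step is identification. Passing from $\mathsf E[\mathsf D]=0$ to pathwise equality of laws needs measurable-kernel care and the finiteness of the self-risks. Obtaining a strict inequality away from $\vartheta_0$ also requires the integrability bounds assumed earlier.
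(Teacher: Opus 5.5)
Your proposal is correct and follows the paper's proof essentially step for step. Identification comes from the conditional divergence representation together with strict propriety and dynamic identification. Conditional centring of $\psi$ with predictable $\dot\lambda_t$ makes the score a martingale difference, so only the lag-zero term survives in $\Omega$, and the $\ell_\lambda\ddot\lambda_t$ term drops out of $\mathcal A$; the martingale CLT and the Bartlett identity then finish the argument.

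There are two cosmetic differences. You split $Q$ into an expected divergence plus an expected self-risk; this needs $\mathsf E[\mathsf S(\widetilde P_t,\widetilde P_t)]<\infty$, which follows here from integrability of $q_t(\vartheta_0)$. The paper instead works directly with the conditional loss difference. You also make explicit the stationarity step that extends the $t=0$ conclusion to every $t$, which the paper leaves implicit.
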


\begin{proof}
Conditional on \(\mathcal F_{t-1}\), strict propriety and the divergence
representation of Proposition~\ref{prop::divergence} give
\begin{align*}
 \mathsf E\bigl[
  \ell(y_t,\lambda_t(\vartheta))
  -\ell(y_t,\lambda_t(\vartheta_0))
  \mid\mathcal F_{t-1}\bigr]=
  \mathsf D_{\mathsf S}\bigl(\widetilde P_t,P_{\lambda_t(\vartheta)}\bigr)\geq0.
\end{align*}
Taking expectations shows that \(Q(\vartheta)\geq Q(\vartheta_0)\).
Equality forces the divergence to vanish almost surely; strict propriety
and dynamic identification then yield \(\vartheta=\vartheta_0\).

At the correctly specified state, differentiable propriety gives
\(\mathsf E[\psi(y_t,\lambda_t(\vartheta_0))\mid\mathcal F_{t-1}]=0\).
Since \(\dot\lambda_t\) is predictable,~\eqref{eq:est-score-explicit} shows that the parameter score is a
martingale difference.  Hence all nonzero-lag score autocovariances vanish,
and
\[
  \mathsf E[
  \mathfrak s_t\mathfrak s_t^{\top}\mid\mathcal F_{t-1}]
  =K_{\mathsf S,t}\dot\lambda_t\dot\lambda_t^{\top}.
\]
Taking expectations gives the formula for \(\Omega\).  Conditioning~\eqref{eq:est-hessian-explicit} on \(\mathcal F_{t-1}\), the term
containing \(\ell_\lambda\ddot\lambda_t\) vanishes by conditional
centring, leaving
\[
  \mathsf E[\mathfrak h_t\mid\mathcal F_{t-1}]
  =J_{\mathsf S,t}\dot\lambda_t\dot\lambda_t^{\top}.
\]
This gives the formula for \(\mathcal A\).  The stated moment condition
verifies the conditional Lindeberg condition, so the martingale central
limit theorem applies.  The final assertion follows from the conditional
Bartlett identity in the logarithmic case.
\end{proof}

\subsection{Feasible covariance and preliminary estimated quantities}
\label{app:est-feasible}

At \(\widehat\vartheta_T\), compute
\[
  \widehat{\mathcal A}_T
  :=\frac1T\sum_{t=1}^T
  \widehat{\mathfrak h}_t(\widehat\vartheta_T).
\]
Under misspecification, \(\Omega\) is a long-run covariance.  A feasible
estimator is therefore a HAC estimator based on the fitted parameter-score
sequence
\(\widehat{\mathfrak s}_t(\widehat\vartheta_T)\).  If
\(\widehat\Omega_T\) is any consistent HAC estimator, then
\[
  \widehat V_T
  :=\widehat{\mathcal A}_T^{-1}
  \widehat\Omega_T
  (\widehat{\mathcal A}_T^{-1})^{\top}
\]
estimates the covariance of
\(\sqrt T(\widehat\vartheta_T-\vartheta^\star)\), and
\(\widehat V_T/T\) estimates the covariance of
\(\widehat\vartheta_T\).  An unlagged OPG/BHHH matrix is generally
insufficient under misspecification because it estimates only the lag-zero
score variability.  Under correct specification the parameter score is a
martingale difference, but the inverse Hessian is still justified only in
the logarithmic/Bartlett case.

Theorem~\ref{thm:static-estimation} conditions on all preliminary quantities
that define a cell.  This is exact for fixed $\nu$ and $\beta$, and it is the
conditional interpretation used by the forecast-evaluation bootstrap in
Section~\ref{sec::empirical}.  The following corollary records the extension needed when a
preliminary quantity is estimated from the same training sample.

\begin{corollary}[Estimated preliminary quantities]\label{cor:est-nuisance}
Let $\eta\in\mathcal H$ collect preliminary quantities, such as the demeaning mean, and write the criterion as $\widehat Q_T(\vartheta,\eta)$.  Suppose
$\widehat\eta_T\xrightarrow{p}\eta_0$ and
\begin{equation}\label{eq:est-plugin-ulln}
  \sup_{\vartheta\in\Theta}
  |\widehat Q_T(\vartheta,\widehat\eta_T)-Q(\vartheta,\eta_0)|
  \xrightarrow{p}0,
\end{equation}
where $Q(\cdot,\eta_0)$ has the unique minimiser $\vartheta^\star$.
Then every $o_p(1)$ numerical near-minimiser is consistent for
$\vartheta^\star$.

For asymptotic normality, suppose the objective is twice continuously
differentiable jointly in $(\vartheta,\eta)$ near
$(\vartheta^\star,\eta_0)$, the finite-start derivative replacements are
negligible at the required rates, and the derivative laws of large numbers
hold uniformly on a neighbourhood, with
\begin{align*}
  \nabla_\vartheta^2
  \widehat Q_T(\vartheta^\star,\eta_0)
  &\xrightarrow{p}\mathcal A,\\
  \partial_\eta\nabla_\vartheta
  \widehat Q_T(\vartheta^\star,\eta_0)
  &\xrightarrow{p}\mathcal B,
  \qquad
  \mathcal B
  :=\mathsf E\bigl[
  \partial_\eta\nabla_\vartheta
  q_0(\vartheta^\star,\eta_0)
  \bigr],\\
  \sqrt T(\widehat\eta_T-\eta_0)
  &=\frac1{\sqrt T}\sum_{t=1}^Th_t+o_p(1).
\end{align*}
If the retained solution satisfies the $o_p(T^{-1/2})$ first-order condition,
then
\begin{equation}\label{eq:est-plugin}
  \sqrt T(\widehat\vartheta_T-\vartheta^\star)
  =-\mathcal A^{-1}
  \left\{
  \frac1{\sqrt T}\sum_{t=1}^T
  \mathfrak s_t(\vartheta^\star,\eta_0)
  +\mathcal B\frac1{\sqrt T}\sum_{t=1}^Th_t
  \right\}+o_p(1).
\end{equation}
Consequently, if the combined influence process
$\mathfrak s_t+\mathcal B h_t$ satisfies a central limit theorem with
long-run covariance $\Omega_\eta$, then
\[
  \sqrt T(\widehat\vartheta_T-\vartheta^\star)
  \Longrightarrow
   \mathcal{N}\!\left(0,
  \mathcal A^{-1}\Omega_\eta(\mathcal A^{-1})^{\top}\right).
\]
\end{corollary}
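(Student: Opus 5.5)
The plan follows the template of the proof of Theorem~\ref{thm:static-estimation}. Write $\widehat Q_T(\vartheta,\eta)$ and expand around $(\vartheta^\star,\eta_0)$. Consistency comes first and is nearly immediate. Condition~\eqref{eq:est-plugin-ulln} replaces the uniform law of Steps~2--3. Compactness of $\Theta$, continuity of $Q(\cdot,\eta_0)$ and uniqueness of $\vartheta^\star$ give the identification gap $\eta_\varepsilon>0$ of~\eqref{eq:est-id-gap}. The argument of Step~4 then applies verbatim on the event where $\sup_\vartheta|\widehat Q_T(\vartheta,\widehat\eta_T)-Q(\vartheta,\eta_0)|<\eta_\varepsilon/4$ and $r_T<\eta_\varepsilon/4$, whose probability tends to one. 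No almost-sure statement is needed, since only convergence in probability is asserted.

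For asymptotic normality, I would Taylor expand the feasible first-order condition $\nabla_\vartheta\widehat Q_T(\widehat\vartheta_T,\widehat\eta_T)=o_p(T^{-1/2})$ jointly in $(\vartheta,\eta)$ around $(\vartheta^\star,\eta_0)$, using the integral form:
\[
\nabla_\vartheta\widehat Q_T(\widehat\vartheta_T,\widehat\eta_T)
=\nabla_\vartheta\widehat Q_T(\vartheta^\star,\eta_0)
+\overline{\mathcal A}_T(\widehat\vartheta_T-\vartheta^\star)
+\overline{\mathcal B}_T(\widehat\eta_T-\eta_0).
\]
Here $\overline{\mathcal A}_T$ and $\overline{\mathcal B}_T$ are averages of $\nabla^2_\vartheta\widehat Q_T$ and $\partial_\eta\nabla_\vartheta\widehat Q_T$ along the segment joining the two points. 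By consistency of $(\widehat\vartheta_T,\widehat\eta_T)$, this segment lies in the stated neighbourhood with probability tending to one. The uniform derivative laws of large numbers, together with continuity of the limits, give $\overline{\mathcal A}_T\xrightarrow{p}\mathcal A$ and $\overline{\mathcal B}_T\xrightarrow{p}\mathcal B$, exactly as in Steps~7--8. Positive definiteness of $\mathcal A$ then makes $\overline{\mathcal A}_T$ invertible with probability tending to one.

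Next I would replace the score. The finite-start negligibility assumption, used as in Step~6, gives $\sqrt T\nabla_\vartheta\widehat Q_T(\vartheta^\star,\eta_0)=T^{-1/2}\sum_t\mathfrak s_t(\vartheta^\star,\eta_0)+o_p(1)$. The asymptotically linear representation shows $\sqrt T(\widehat\eta_T-\eta_0)=O_p(1)$, so replacing $\overline{\mathcal B}_T$ by $\mathcal B$ costs only $o_p(1)$. Multiplying the expansion by $\sqrt T$ and solving gives~\eqref{eq:est-plugin}. The final claim then follows from the assumed CLT for $\mathfrak s_t+\mathcal B h_t$, the continuous mapping theorem and Slutsky's theorem.

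I expect the main obstacle to be the uniform convergence of $\overline{\mathcal A}_T$ and $\overline{\mathcal B}_T$ at the random point $\widehat\eta_T$. The displayed assumptions state the derivative limits at $\eta_0$. I would therefore use the stated uniform derivative laws on a joint $(\vartheta,\eta)$ neighbourhood, plus continuity of the limiting matrices, so that evaluation along the random segment converges. I would also note explicitly that $\mathcal A$ and $\mathcal B$ are understood as these uniform limits. Population centring at the plug-in value, $\mathsf E[\mathfrak s_0(\vartheta^\star,\eta_0)]=0$, follows as in Step~5 from interiority of $\vartheta^\star$.
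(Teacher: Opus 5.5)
Your proposal is correct and follows the paper's own argument. Consistency comes from the uniform plug-in law together with the Step-4 identification gap. The distributional result comes from a joint expansion of the empirical first-order condition around $(\vartheta^\star,\eta_0)$, finite-start score replacement, the linear representation of $\widehat\eta_T$, and the assumed CLT with Slutsky's theorem.

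Keeping the segment-averaged $\overline{\mathcal A}_T$ and inverting it is slightly more careful than the paper's display, which silently absorbs $(\overline{\mathcal A}_T-\mathcal A)(\widehat\vartheta_T-\vartheta^\star)$ into the $o_p(T^{-1/2})$ remainder. One caveat: your claim that the asymptotically linear representation yields $\sqrt T(\widehat\eta_T-\eta_0)=O_p(1)$ actually requires $T^{-1/2}\sum_t h_t=O_p(1)$, and likewise an $O_p(1)$ score sum. The representation alone does not supply this, and both your argument and the paper's take it for granted.
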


\begin{proof}
Equation~\eqref{eq:est-plugin-ulln} and the strict identification gap used in
Step~4 of the proof of Theorem~\ref{thm:static-estimation} give consistency.
For the distributional result, expand the empirical first-order condition
jointly around $(\vartheta^\star,\eta_0)$:
\begin{align*}
  0
  &=\nabla_\vartheta
  \widehat Q_T(\vartheta^\star,\eta_0)
  +\mathcal A(\widehat\vartheta_T-\vartheta^\star)
  +\mathcal B(\widehat\eta_T-\eta_0)
  +o_p(T^{-1/2}).
\end{align*}
The finite-start score replacement and the asymptotic linear representation
of $\widehat\eta_T$ give~\eqref{eq:est-plugin}; the final normal limit
follows by the assumed central limit theorem and Slutsky's theorem.
\end{proof}

Thus unconditional inference must use the long-run covariance of the combined
influence process, including all cross-lag covariances between the criterion
score and the preliminary estimator.  If optimisation is performed in
unconstrained coordinates and then mapped smoothly to
$(\omega,\phi,\alpha)$, the covariance is transformed by the Jacobian of that
map.

\paragraph{\textbf{Population invertibility versus sample diagnostics.}}
Assumption~\ref{ass:est-invertibility} is a population, uniform-in-parameter
condition.  Proposition~\ref{prop:est-sre} gives one standard global route through a negative top Lyapunov exponent for a uniform random Lipschitz coefficient of \(f_\vartheta(\cdot,y_t)\), together with the usual logarithmic moment
conditions; see also \citet{bougerol1993kalman} and \citet{straumann2006quasi}. The realised averages reported in Table~\ref{tab::emp_stability} are useful fit diagnostics, but they do not verify this population assumption.  In particular, for the Gaussian log-scale update
\(u_{\mathsf{S}}(y,\lambda)=y^2e^{-\lambda}-1\),
\[
  \partial_\lambda f_\vartheta(\lambda,y)
  =\phi-\alpha y^2e^{-\lambda},
\]
whose supremum over \(\lambda\in\mathbb R\) is infinite. That cell therefore requires a non-global continuous-invertibility argument, a forward-invariant random region, or an explicit state restriction; Table~\ref{tab::emp_stability} is not presented
as such a proof. Theorem~\ref{thm:static-estimation} is consequently not claimed for the empirical
Gaussian-log cell. Nor does Corollary~\ref{cor:est-nuisance} cover the same-sample MMD bandwidth:
the capped subsample need not converge to a deterministic tuning value, and no joint influence
expansion for the bandwidth is supplied. The theorem is not claimed for the empirical MMD cells
either.


\begin{thebibliography}{99}

\bibitem[Arnold(1974)]{arnold1974stochastic}
Arnold, Ludwig. 1974. \emph{Stochastic Differential Equations: Theory and Applications}. New York: Wiley.

\bibitem[Artemova et al.(2022)]{artemova2022score}
Artemova, Mariia, Francisco Blasques, Janneke van Brummelen, and Siem Jan Koopman. 2022. ``Score-Driven Models: Methodology and Theory.'' In \emph{Oxford Research Encyclopedia of Economics and Finance}, 1--33. Oxford: Oxford University Press. doi:10.1093/acrefore/9780190625979.013.672.

\bibitem[Beutner et al.(2026)]{beutner2026consistency}
Beutner, Eric, Yicong Lin, and Andr\'e Lucas. 2026. ``Consistency, Distributional Convergence, and Optimality of Time-Varying Parameters in Score-Driven Models.'' \emph{Journal of Econometrics} 255: 1--21. Article 106218. doi:10.1016/j.jeconom.2026.106218.

\bibitem[Blasques et al.(2014)]{blasques2014stationarity}
Blasques, Francisco, Siem Jan Koopman, and Andr\'e Lucas. 2014. ``Stationarity and Ergodicity of Univariate Generalized Autoregressive Score Processes.'' \emph{Electronic Journal of Statistics} 8 (1): 1088--1112. doi:10.1214/14-EJS924.

\bibitem[Blasques et al.(2015)]{blasques2015information}
Blasques, Francisco, Siem Jan Koopman, and Andr\'e Lucas. 2015. ``Information-Theoretic Optimality of Observation-Driven Time Series Models for Continuous Responses.'' \emph{Biometrika} 102 (2): 325--343. doi:10.1093/biomet/asu076.

\bibitem[Blasques et al.(2018)]{blasques2018feasible}
Blasques, Francisco, Paolo Gorgi, Siem Jan Koopman, and Olivier Wintenberger. 2018. ``Feasible Invertibility Conditions and Maximum Likelihood Estimation for Observation-Driven Models.'' \emph{Electronic Journal of Statistics} 12 (1): 1019--1052. doi:10.1214/18-EJS1416.

\bibitem[Blasques et al.(2019)]{blasques2019accelerating}
Blasques, Francisco, Paolo Gorgi, and Siem Jan Koopman. 2019. ``Accelerating Score-Driven Time Series Models.'' \emph{Journal of Econometrics} 212 (2): 359--376. doi:10.1016/j.jeconom.2019.03.005.

\bibitem[Federal Reserve Board(n.d.)]{fred_dexthus}
Board of Governors of the Federal Reserve System (US). \emph{Thai Baht to U.S. Dollar Spot Exchange Rate} [DEXTHUS]. Retrieved from FRED, Federal Reserve Bank of St.~Louis. \url{https://fred.stlouisfed.org/series/DEXTHUS}.

\bibitem[Bollerslev(1986)]{bollerslev1986generalized}
Bollerslev, Tim. 1986. ``Generalized Autoregressive Conditional Heteroskedasticity.'' \emph{Journal of Econometrics} 31 (3): 307--327.

\bibitem[Bougerol(1993)]{bougerol1993kalman}
Bougerol, Philippe. 1993. ``Kalman Filtering with Random Coefficients and Contractions.'' \emph{SIAM Journal on Control and Optimization} 31 (4): 942--959. doi:10.1137/0331041.

\bibitem[Buccheri et al.(2021)]{buccheri2021continuous}
Buccheri, Giuseppe, Fulvio Corsi, Franco Flandoli, and Giulia Livieri. 2021. ``The Continuous-Time Limit of Score-Driven Volatility Models.'' \emph{Journal of Econometrics} 221 (2): 655--675. doi:10.1016/j.jeconom.2020.07.042.

\bibitem[Catania et al.(2025)]{catania2025cdf}
Catania, Leopoldo, Andrew C.\ Harvey, and Alessandra Luati. 2025. ``Robust CDF-Filtering of a Location Parameter.'' \emph{Journal of Time Series Analysis}. doi:10.1111/jtsa.70026.

\bibitem[Cobb(1978)]{cobb1978nile}
Cobb, George W. 1978. ``The Problem of the Nile: Conditional Solution to a Changepoint Problem.'' \emph{Biometrika} 65 (2): 243--251.

\bibitem[Corradi(2000)]{corradi2000reconsidering}
Corradi, Valentina. 2000. ``Reconsidering the Continuous-Time Limit of the GARCH(1, 1) Process.'' \emph{Journal of Econometrics} 96 (1): 145--153. doi:10.1016/S0304-4076(99)00053-6.

\bibitem[Cox(1981)]{cox1981statistical}
Cox, David R. 1981. ``Statistical Analysis of Time Series: Some Recent Developments [with Discussion and Reply].'' \emph{Scandinavian Journal of Statistics} 8 (2): 93--115.

\bibitem[Creal et al.(2013)]{creal2013gas}
Creal, Drew, Siem Jan Koopman, and Andr\'e Lucas. 2013. ``Generalized Autoregressive Score Models with Applications.'' \emph{Journal of Applied Econometrics} 28 (5): 777--795. doi:10.1002/jae.1279.

\bibitem[Creal et al.(2024)]{creal2024moment}
Creal, Drew, Siem Jan Koopman, Andr\'e Lucas, and Marcin Zamojski. 2024. ``Observation-Driven Filtering of Time-Varying Parameters Using Moment Conditions.'' \emph{Journal of Econometrics} 238 (2): Article 105635. doi:10.1016/j.jeconom.2023.105635.

\bibitem[Dawid(1986)]{dawid2004probability}
Dawid, A. Philip. 1986. ``Probability Forecasting.'' In \emph{Encyclopedia of Statistical Sciences}, edited by Samuel Kotz, Norman L. Johnson, and Campbell B. Read, vol. 7, 210--218. New York: Wiley.

\bibitem[Dawid et al.(2016)]{dawid2016minimum}
Dawid, A. Philip, Monica Musio, and Laura Ventura. 2016. ``Minimum Scoring Rule Inference.'' \emph{Scandinavian Journal of Statistics} 43 (1): 123--138. doi:10.1111/sjos.12168.

\bibitem[de Punder, Dimitriadis, and Lange(2026)]{de2024kullback}
de Punder, Ramon, Timo Dimitriadis, and Rutger-Jan Lange. 2026. ``Expected Kullback--Leibler-Based Characterizations of Score-Driven Updates.'' arXiv:2408.02391 [math.ST].

\bibitem[de Punder et al.(2026)]{depunder2026localizing}
de Punder, Ramon F.\ A., Cees G.\ H.\ Diks, Roger J.\ A.\ Laeven, and Dick J.\ C.\ van Dijk. 2026. ``Localizing Strictly Proper Scoring Rules.'' \emph{Journal of the American Statistical Association}. doi:10.1080/01621459.2025.2576189.

\bibitem[de Punder(2026)]{depunder2026prada}
de Punder, Ramon F.\ A. 2026. ``Proper and Robust Autoregressive Derivative Adaptive Models.'' Tinbergen Institute Discussion Paper TI 2026-022/III. \url{https://papers.tinbergen.nl/26022.pdf}.

\bibitem[Donker van Heel et al.(2026)]{donker2025stability}
Donker van Heel, Simon W., Rutger-Jan Lange, Bram van Os, and Dick van Dijk. 2026. ``Gradient-Based Filtering under Misspecification: Stability and Error Bounds.'' arXiv:2502.05021 [stat.ME].

\bibitem[Ethier and Kurtz(1986)]{ethier1986markov}
Ethier, Stewart N., and Thomas G. Kurtz. 1986. \emph{Markov Processes: Characterization and Convergence}. New York: Wiley.

\bibitem[Ethier and Kurtz(1994)]{ethier1994convergence}
Ethier, Stewart N., and Thomas G. Kurtz. 1994. ``Convergence to Fleming--Viot Processes in the Weak Atomic Topology.'' \emph{Stochastic Processes and Their Applications} 54 (1): 1--27. doi:10.1016/0304-4149(94)00006-9.

\bibitem[Garman and Klass(1980)]{garman1980estimation}
Garman, Mark B., and Michael J. Klass. 1980. ``On the Estimation of Security Price Volatilities from Historical Data.'' \emph{Journal of Business} 53 (1): 67--78.

\bibitem[Gneiting et al.(2007)]{gneiting2007probabilistic}
Gneiting, Tilmann, Fadoua Balabdaoui, and Adrian E. Raftery. 2007. ``Probabilistic Forecasts, Calibration and Sharpness.'' \emph{Journal of the Royal Statistical Society: Series B} 69 (2): 243--268.

\bibitem[Gneiting and Raftery(2007)]{gneiting2007strictly}
Gneiting, Tilmann, and Adrian E. Raftery. 2007. ``Strictly Proper Scoring Rules, Prediction, and Estimation.'' \emph{Journal of the American Statistical Association} 102 (477): 359--378. doi:10.1198/016214506000001437.

\bibitem[Gorgi et al.(2024)]{gorgi2024optimality}
Gorgi, Paolo, C. S. A. Lauria, and Alessandra Luati. 2024. ``On the Optimality of Score-Driven Models.'' \emph{Biometrika} 111 (3): 865--880. doi:10.1093/biomet/asad067.

\bibitem[Hansen et al.(2011)]{hansen2011model}
Hansen, Peter R., Asger Lunde, and James M. Nason. 2011. ``The Model Confidence Set.'' \emph{Econometrica} 79 (2): 453--497.

\bibitem[Harvey(2013)]{harvey2013dynamic}
Harvey, Andrew C. 2013. \emph{Dynamic Models for Volatility and Heavy Tails: With Applications to Financial and Economic Time Series}. Cambridge: Cambridge University Press. doi:10.1017/CBO9781139540933.

\bibitem[Jordan et al.(2019)]{jordan2019evaluating}
Jordan, Alexander, Fabian Kr\"uger, and Sebastian Lerch. 2019. ``Evaluating Probabilistic Forecasts with scoringRules.'' \emph{Journal of Statistical Software} 90 (12): 1--37. doi:10.18637/jss.v090.i12.

\bibitem[Koopman et al.(2016)]{koopman2016predicting}
Koopman, Siem Jan, Andr\'e Lucas, and Marcel Scharth. 2016. ``Predicting Time-Varying Parameters with Parameter-Driven and Observation-Driven Models.'' \emph{The Review of Economics and Statistics} 98 (1): 97--110.

\bibitem[Kupiec(1995)]{kupiec1995techniques}
Kupiec, Paul H. 1995. ``Techniques for Verifying the Accuracy of Risk Measurement Models.'' \emph{Journal of Derivatives} 3 (2): 73--84.

\bibitem[Kushner(1984)]{kushner1984approximation}
Kushner, Harold J. 1984. \emph{Approximation and Weak Convergence Methods for Random Processes, with Applications to Stochastic Systems Theory}. MIT Press Series in Signal Processing, Optimization, and Control 6. Cambridge, MA: MIT Press.

\bibitem[Nelson(1990)]{nelson1990arch}
Nelson, Daniel B. 1990. ``ARCH Models as Diffusion Approximations.'' \emph{Journal of Econometrics} 45 (1--2): 7--38. doi:10.1016/0304-4076(90)90092-8.

\bibitem[Parkinson(1980)]{parkinson1980extreme}
Parkinson, Michael. 1980. ``The Extreme Value Method for Estimating the Variance of the Rate of Return.'' \emph{Journal of Business} 53 (1): 61--65.

\bibitem[Patton(2011)]{patton2011volatility}
Patton, Andrew J. 2011. ``Volatility Forecast Comparison Using Imperfect Volatility Proxies.'' \emph{Journal of Econometrics} 160 (1): 246--256.

\bibitem[Savage(1954)]{savage1954foundations}
Savage, Leonard J. 1954. \emph{The Foundations of Statistics}. New York: Wiley.

\bibitem[Straumann and Mikosch(2006)]{straumann2006quasi}
Straumann, Daniel, and Thomas Mikosch. 2006. ``Quasi-Maximum-Likelihood Estimation in Conditionally Heteroscedastic Time Series: A Stochastic Recurrence Equations Approach.'' \emph{The Annals of Statistics} 34 (5): 2449--2495. doi:10.1214/009053606000000803.

\bibitem[Stroock and Varadhan(1979)]{stroock2007multidimensional}
Stroock, Daniel W., and S. R. S. Varadhan. 1979. \emph{Multidimensional Diffusion Processes}. Grundlehren der Mathematischen Wissenschaften 233. Berlin: Springer.

\bibitem[Wu and He(2026)]{wuhe2026continuous}
Wu, Yinhao, and Ping He. 2026. ``The Continuous-Time Limit of Quasi Score-Driven Volatility Models.'' \emph{Journal of Time Series Analysis} 47 (5): 924--938. doi:10.1111/jtsa.12848.

\end{thebibliography}
\end{document}